\documentclass[12pt,pdftex,a4paper]{amsart}

\selectfont

\usepackage{simplewick}
\usepackage{qtree}
\usepackage{forest}

\usepackage{caption}

\usepackage{etoolbox}
\usepackage{epic,eepic,ecltree}
\usepackage{graphicx}
\usepackage{tikz}
\usetikzlibrary{decorations.text}
\usepackage{amssymb,color, euscript, enumerate}
\usepackage{amsthm}
\usepackage{amsmath}
\usepackage{braket}
\usepackage{amscd}
\usepackage{txfonts}
\usepackage{comment}
\usepackage{bm}
\usepackage{amscd}
\numberwithin{equation}{section}

\newcommand{\introitem}[1]{\smallskip\noindent\textbf{#1.}\ }

\newcommand{\introheading}[1]{%
  \par\medskip
  \noindent{\bfseries\boldmath #1.}\quad
}

\makeatletter
\let\old@tocline\@tocline
\let\section@tocline\@tocline
\newcommand{\subsection@dotsep}{4.5}
\newcommand{\subsubsection@dotsep}{4.5}
\patchcmd{\@tocline}
  {\hfil}
  {\nobreak
     \leaders\hbox{$\m@th
        \mkern \subsection@dotsep mu\hbox{.}\mkern \subsection@dotsep mu$}\hfill
     \nobreak}{}{}
\let\subsection@tocline\@tocline
\let\@tocline\old@tocline

\patchcmd{\@tocline}
  {\hfil}
  {\nobreak
     \leaders\hbox{$\m@th
        \mkern \subsubsection@dotsep mu\hbox{.}\mkern \subsubsection@dotsep mu$}\hfill
     \nobreak}{}{}
\let\subsubsection@tocline\@tocline
\let\@tocline\old@tocline

\let\old@l@subsection\l@subsection
\let\old@l@subsubsection\l@subsubsection

\def\@tocwriteb#1#2#3{%
  \begingroup
    \@xp\def\csname #2@tocline\endcsname##1##2##3##4##5##6{%
      \ifnum##1>\c@tocdepth
      \else \sbox\z@{##5\let\indentlabel\@tochangmeasure##6}\fi}%
    \csname l@#2\endcsname{#1{\csname#2name\endcsname}{\@secnumber}{}}%
  \endgroup
  \addcontentsline{toc}{#2}%
    {\protect#1{\csname#2name\endcsname}{\@secnumber}{#3}}}%

\newlength{\@tocsectionindent}
\newlength{\@tocsubsectionindent}
\newlength{\@tocsubsubsectionindent}
\newlength{\@tocsectionnumwidth}
\newlength{\@tocsubsectionnumwidth}
\newlength{\@tocsubsubsectionnumwidth}
\newcommand{\settocsectionnumwidth}[1]{\setlength{\@tocsectionnumwidth}{#1}}
\newcommand{\settocsubsectionnumwidth}[1]{\setlength{\@tocsubsectionnumwidth}{#1}}
\newcommand{\settocsubsubsectionnumwidth}[1]{\setlength{\@tocsubsubsectionnumwidth}{#1}}
\newcommand{\settocsectionindent}[1]{\setlength{\@tocsectionindent}{#1}}
\newcommand{\settocsubsectionindent}[1]{\setlength{\@tocsubsectionindent}{#1}}
\newcommand{\settocsubsubsectionindent}[1]{\setlength{\@tocsubsubsectionindent}{#1}}

\renewcommand{\l@section}{\section@tocline{1}{\@tocsectionvskip}{\@tocsectionindent}{}{\@tocsectionformat}}%
\renewcommand{\l@subsection}{\subsection@tocline{2}{\@tocsubsectionvskip}{\@tocsubsectionindent}{}{\@tocsubsectionformat}}%
\renewcommand{\l@subsubsection}{\subsubsection@tocline{3}{\@tocsubsubsectionvskip}{\@tocsubsubsectionindent}{}{\@tocsubsubsectionformat}}%
\newcommand{\@tocsectionformat}{}
\newcommand{\@tocsubsectionformat}{}
\newcommand{\@tocsubsubsectionformat}{}
\expandafter\def\csname toc@1format\endcsname{\@tocsectionformat}
\expandafter\def\csname toc@2format\endcsname{\@tocsubsectionformat}
\expandafter\def\csname toc@3format\endcsname{\@tocsubsubsectionformat}
\newcommand{\settocsectionformat}[1]{\renewcommand{\@tocsectionformat}{#1}}
\newcommand{\settocsubsectionformat}[1]{\renewcommand{\@tocsubsectionformat}{#1}}
\newcommand{\settocsubsubsectionformat}[1]{\renewcommand{\@tocsubsubsectionformat}{#1}}
\newlength{\@tocsectionvskip}
\newcommand{\settocsectionvskip}[1]{\setlength{\@tocsectionvskip}{#1}}
\newlength{\@tocsubsectionvskip}
\newcommand{\settocsubsectionvskip}[1]{\setlength{\@tocsubsectionvskip}{#1}}
\newlength{\@tocsubsubsectionvskip}
\newcommand{\settocsubsubsectionvskip}[1]{\setlength{\@tocsubsubsectionvskip}{#1}}

\patchcmd{\tocsection}{\indentlabel}{\makebox[\@tocsectionnumwidth][l]}{}{}
\patchcmd{\tocsubsection}{\indentlabel}{\makebox[\@tocsubsectionnumwidth][l]}{}{}
\patchcmd{\tocsubsubsection}{\indentlabel}{\makebox[\@tocsubsubsectionnumwidth][l]}{}{}

\newcommand{\@sectypepnumformat}{}
\renewcommand{\contentsline}[1]{%
  \expandafter\let\expandafter\@sectypepnumformat\csname @toc#1pnumformat\endcsname%
  \csname l@#1\endcsname}
\newcommand{\@tocsectionpnumformat}{}
\newcommand{\@tocsubsectionpnumformat}{}
\newcommand{\@tocsubsubsectionpnumformat}{}
\newcommand{\setsectionpnumformat}[1]{\renewcommand{\@tocsectionpnumformat}{#1}}
\newcommand{\setsubsectionpnumformat}[1]{\renewcommand{\@tocsubsectionpnumformat}{#1}}
\newcommand{\setsubsubsectionpnumformat}[1]{\renewcommand{\@tocsubsubsectionpnumformat}{#1}}
\renewcommand{\@tocpagenum}[1]{%
  \hfill {\mdseries\@sectypepnumformat #1}}

\let\oldappendix\appendix
\renewcommand{\appendix}{%
  \leavevmode\oldappendix%
  \addtocontents{toc}{%
    \protect\settowidth{\protect\@tocsectionnumwidth}{\protect\@tocsectionformat\sectionname\space}%
    \protect\addtolength{\protect\@tocsectionnumwidth}{2em}}%
}
\makeatother

\makeatletter
\settocsectionnumwidth{2em}
\settocsubsectionnumwidth{2.5em}
\settocsubsubsectionnumwidth{3em}
\settocsectionindent{1pc}%
\settocsubsectionindent{\dimexpr\@tocsectionindent+\@tocsectionnumwidth}%
\settocsubsubsectionindent{\dimexpr\@tocsubsectionindent+\@tocsubsectionnumwidth}%
\makeatother

\settocsectionvskip{10pt}
\settocsubsectionvskip{0pt}
\settocsubsubsectionvskip{0pt}
    
\settocsectionformat{\bfseries}
\settocsubsectionformat{\mdseries}
\settocsubsubsectionformat{\mdseries}
\setsectionpnumformat{\bfseries}
\setsubsectionpnumformat{\mdseries}
\setsubsubsectionpnumformat{\mdseries}

\let\oldtableofcontents\tableofcontents
\renewcommand{\tableofcontents}{%
  \vspace*{-\linespacing}% Default gap to top of CONTENTS is \linespacing.
  \oldtableofcontents}

\newcommand{\CP}{\C P^1}

\newcommand{\F}{{\overline{H}}}

\newcommand{\ad}{\mathrm{ad}}

\newcommand{\ar}{{a_{[n]}}}
\newcommand{\zr}{{z_{[n]}}}

\newcommand{\twi}{{\mathrm{twist}}}

\newcommand{\Hom}{{\mathrm{Hom}}}

\newcommand{\an}{{\mathrm{an}}}

\newcommand{\df}{{\ket{\frac{2-d}{2}}}}
\newcommand{\dn}{{\frac{d-2}{2}}}

\newcommand{\da}{\dagger}

\newcommand{\N}{\mathbb{N}}
\newcommand{\Z}{\mathbb{Z}}
\newcommand{\R}{\mathbb{R}}
\newcommand{\C}{\mathbb{C}}

\newcommand{\pr}{{\mathrm{pr}}}

\newcommand{\Xr}{{X_r(\C)}}
\newcommand{\Xrd}{{X_r \times X_r}}

\newcommand{\Or}{\mathrm{O}_{\Xr}}

\newcommand{\Conf}{\mathrm{Conf}}

\newcommand{\Mr}{{M_{[0;r]}}}

\newcommand{\xr}{{x_{[n]}}}

\newcommand{\Dr}{{\mathrm{D}_{\Xrd}}}

\newcommand{\sod}{\mathfrak{so}(d+1,1)}

\newcommand{\std}{{\text{std}}}

\newcommand{\va}{\bm{1}}
\newcommand{\1}{\bm{1}}

\newcommand{\id}{{\mathrm{id}}}

\newcommand{\z}{{\bar{z}}}

\newcommand{\SO}{\mathrm{SO}}

\newcommand{\h}{{\bar{h}}}

\newcommand{\uz}{\underline{z}}

\newcommand{\m}{{\mathbb{Y}}}

\newcommand{\pa}{{\partial}}

\newcommand{\al}{\alpha}
\newcommand{\Harm}{\mathrm{Harm}}
\newcommand{\ep}{\epsilon}
\newcommand{\be}{\beta}
\newcommand{\ga}{\gamma}
\newcommand{\ze}{\zeta}
\newcommand{\de}{\delta}
\newcommand{\D}{\Delta}
\newcommand{\om}{\omega}

\newcommand{\si}{\sigma}

\newcommand{\ft}{\frac{1}{2}}

\newcommand{\CB}{{\mathcal{CB}}}

\newcommand{\Ld}{{\overline{L}}}

\newcommand{\Ad}{{\hat{\mathfrak{A}}_{d,\al}}}
\newcommand{\Hd}{H_{d,\al}}
\newcommand{\Sym}{\mathrm{Sym}}

\def\todo{{\textbf{TODO}}}

\newcommand{\Hf}{{\tilde{H}}}

\newcommand{\End}{\mathrm{End}}

\newcommand{\cut}{\mathrm{cut}}

\newcommand{\CE}{\mathbb{CE}}

\newcommand{\bD}{\overline{D}}

\newcommand{\Bord}{\mathrm{Bord}_d^{\mathrm{CF}}}

\newcommand{\so}{{\mathfrak{so}}}

\newtheorem{thm}{Theorem}[section]
\newtheorem{dfn}[thm]{Definition}
\newtheorem{lem}[thm]{Lemma}
\newtheorem{prop}[thm]{Proposition}
\newtheorem{cor}[thm]{Corollary}
\newtheorem{rem}[thm]{Remark}

\begin{document}

\begin{center}
{{\LARGE \bf Higher-dimensional full vertex algebras
and systems of conformally covariant correlation functions
}
} \par \bigskip

\renewcommand*{\thefootnote}{\fnsymbol{footnote}}
{\normalsize
Yuto Moriwaki \footnote{email: \texttt{moriwaki.yuto (at) gmail.com}}
}
\par \bigskip
{\footnotesize Interdisciplinary Theoretical and Mathematical Science Program (iTHEMS)\\
Wako, Saitama 351-0198, Japan}

\par \bigskip
\end{center}

\vspace{5mm}
\noindent

\begin{center}
\textbf{\large Abstract}
\end{center}
We formulate a class of systems of Euclidean correlation functions for compact $d$-dimensional conformal field theories which are not necessarily
unitary. We also introduce conformal $d$-vertex algebras, $d$-dimensional analogues of full vertex algebras, and prove that such an algebra is obtained from every system of conformally covariant correlation functions.  Finally, for each $d\geq 2$, we construct a family of examples satisfying these axioms, corresponding to generalized free scalar fields.
For $d>2$, this family contains both unitary and non-unitary examples.

\vspace{3mm}

\tableofcontents

\begin{center}
\textbf{\large Introduction}
\end{center}

One formulation of quantum field theory is in terms of correlation functions.  The Osterwalder--Schrader axioms formulate Euclidean quantum
field theory as a sequence of tempered distributions satisfying Euclidean covariance, permutation invariance, reflection positivity, the cluster property, and the linear growth condition \cite{OS1,OS2}. In dimension two, a conformal version of these axioms has recently been established for a class of unitary {full vertex operator algebras} in \cite{AMT}.

The purpose of the present paper is to give a related, but different, formulation of conformal field theory in dimension \(d\geq 2\) in terms of correlation functions.  In the Osterwalder--Schrader axioms, reflection positivity plays an essential role.  In contrast, we do not impose reflection positivity; instead, we impose a {\it strong cluster decomposition} for systems of correlation functions, corresponding to the {\it operator product expansion} (OPE). 
Thus the present formulation also includes non-unitary conformal field theories.

%The consistency and associativity of OPEs are central to the {\it conformal bootstrap} in physics \cite{Polyakov,BPZ,RRTV}.
% Hollands \cite{HollandsOPE} proposed an axiomatic formulation of quantum field theory in terms of OPE coefficients satisfying associativity conditions.
%The consistency and associativity of the OPE are central to the {\it conformal bootstrap} \cite{Polyakov,FGG,BPZ,RRTV}. An axiomatic approach to quantum field theory formulated in terms of the OPE and its associativity was also proposed by Hollands \cite{HollandsOPE}.

%The idea of regarding consistency and associativity of the OPE as fundamental structures is central to the {\it conformal bootstrap} in physics \cite{Polyakov,BPZ,RRTV} (see also the axiomatic approach to quantum field theory based on OPEs proposed by Hollands \cite{HollandsOPE}).

The systems of correlation functions introduced below are higher-dimensional analogues, with some modifications, of the \emph{full field algebras} of Huang and Kong in dimension two \cite{HK}.  As in the work of Huang and Kong, we extract formal vertex operators, or operator product expansions, from such systems.  
Products of these \(d\)-dimensional vertex operators are obtained as expansions, in different regions, of a single real analytic function,
which corresponds to the associativity and commutativity of vertex algebras \cite{FLM,LL,FB}.
This leads to the notion of a \emph{conformal \(d\)-vertex algebra}, which is a higher-dimensional generalization of \emph{chiral and full vertex algebras} \cite{M1}.
\footnote{
Higher-dimensional vertex algebras have previously been studied in \cite{Nikolov,BN}. Nikolov's formulation is based on an algebraic locality
condition, which is a higher-dimensional analogue of the locality axiom for chiral vertex algebras. Such a condition is too restrictive
for the general class of non-chiral two-dimensional and higher-dimensional CFTs. Our formulation is instead intended as a higher-dimensional extension of the full field algebra and full vertex algebra formalisms for non-chiral two-dimensional CFT.
%Such a condition is too restrictive for the class of conformal field theories considered here; in particular, general non-chiral two-dimensional CFTs and higher-dimensional CFTs need not satisfy this type of algebraic locality. Our formulation should therefore be viewed not as a higher-dimensional extension of chiral vertex algebras, but rather as a higher-dimensional extension of the full field algebra and full vertex algebra, which formulates non-chiral two-dimensional CFT.
}

We first recall the corresponding picture for vertex operator algebras.
Let \(V\) be a vertex operator algebra and \(V^\vee=\bigoplus_{h\in\Z} V_h^*\) the restricted dual. 
For $u \in V^\vee$ and $a_1,\dots,a_n \in V$, the formal power series
\begin{align*}
\langle u,Y(a_1,z_1)\cdots Y(a_n,z_n)\va\rangle
\end{align*}
converges in the region $|z_1|>\cdots>|z_n|$ and analytically continues to a rational function on the configuration spaces \(\Conf_n(\C)=\{(z_1,\dots,z_n)\in\C^n \mid z_i\neq z_j\}\) \cite[Proposition 3.5.1]{FHL}.
Equivalently, one obtains a sequence of maps
%解析接続により $\Conf_n(\C)$上の有理多項式関数に解析接続される。すなわち
\begin{align}
S_n:V^\vee \otimes V^{\otimes n} \rightarrow \C[z_i, (z_i-z_j)^{-1} \mid i\neq j].\label{intro_chiral_cor}
\end{align}
Conversely, one can see that such a system of correlation functions determines the vertex algebra (see Theorem \ref{thm_from} and Proposition \ref{prop_vertex_algebra}).
%を得る。逆に頂点作用素代数はこうした相関関数系から復元することができる。
Thus, in the chiral case, the vertex algebra and the system of correlation functions can be viewed as two descriptions of essentially the same algebraic data. One of the aims of the present paper is to formulate an analogous relation in a non-holomorphic higher-dimensional setting.

In non-chiral two-dimensional conformal field theory, and also in
dimensions \(d\geq 3\), correlation functions are generally not
holomorphic; rather, they are real analytic functions on configuration
spaces \cite{LM,HK,M8}.
In this paper we formulate a class of conformally covariant systems of
real analytic correlation functions in dimension \(d\geq 2\), and show how
formal vertex operators, or operator product expansions, are obtained from
such systems.
We also construct, for every \(d\geq2\), a family of examples associated
with generalized free scalar fields. For \(d>2\), this family contains both
unitary and non-unitary examples.

Before describing the formulation and the main results, we spell out a
basic dictionary for readers familiar with vertex algebras.

\begin{table}[h]
\centering
\renewcommand{\arraystretch}{1.25}
\begin{tabular}{c|c|c|c}
\hline
 & chiral \(d=2\)
 & full \(d=2\)
 & \(d\)-dimensional theory \\ \hline
global symmetry
 & \(\mathfrak{sl}_2\subset\mathrm{Vir}\)
 & \(\mathfrak{sl}_2\oplus\mathfrak{sl}_2\)
 & \(\sod\) \\
grading
 & \(L(0)\)
 & \(D=L(0)+\bar L(0)\)
 & \(D\) \\
state space
 & \( \bigoplus_{h\in\Z} H_h\)
 & \( \bigoplus_{\substack{h,\bar h\in\R\\h-\h\in\Z}}H_{h,\bar h}\)
 & \( \bigoplus_{\Delta\in\R}H_\Delta\) \\
%coordinates
% & \(z\)
% & \((z,\bar z)\)
% & \(x=(x_1,\ldots,x_d)\) \\
formal series
 & \(H((z))\)
 & \(H((z,\bar z,|z|^\R))\)
 & \(H((x_1,\ldots,x_d,|x|^\R))\) \\
correlation functions
 & holomorphic
 & real analytic on \(\Conf_n(\R^2)\)
 & real analytic on \(\Conf_n(\R^d)\) \\
\hline
\end{tabular}
%\caption{A dictionary between vertex algebras, two-dimensional full
%conformal field theory, and the \(d\)-dimensional formalism of this paper.}
\label{tab:dictionary}
\end{table}

\introitem{Symmetry}
The Virasoro algebra, and in particular its subalgebra
\(\mathfrak{sl}_2\), plays a central role in the theory of vertex operator
algebras.  In the \(d\)-dimensional setting considered in this paper, it
is replaced by the global conformal Lie algebra
\[\sod.\]
Geometrically, this Lie algebra is realized on flat Euclidean space
\(\R^d\) by infinitesimal conformal transformations generated by
translations, rotations, dilations, and special conformal transformations \footnote{
For \(d\geq3\), Liouville's theorem implies that local conformal
transformations of Euclidean space are restrictions of conformal
transformations of the standard \(d\)-sphere \cite{Liouville}; the corresponding global
conformal Lie algebra is \(\so(d+1,1)\).
For \(d=2\), the group
\(\mathrm{SO}^+(3,1)\cong\mathrm{PSL}_2(\mathbb C)\)
acts on \(S^2\cong\mathbb{CP}^1\) by fractional linear transformations.}.
%Geometrically, this Lie algebra is induced by the action of
%\(\mathrm{SO}^+(d+1,1)\) on the \(d\)-sphere.
%For \(d\geq 3\),
%Liouville's theorem implies that local conformal transformations of
%Euclidean space are restrictions of
%global conformal transformations on the standard $d$-sphere, which is isomorphic to 
%\(\mathrm{SO}^+(d+1,1)\).  For \(d=2\), the group
%\(\mathrm{SO}^+(3,1)\cong \mathrm{PSL}_2(\C)\) acts on
%\(S^2\cong\CP\) by fractional linear transformations.}

\introitem{Grading}
In two-dimensional CFT, the dilation operator
$D=L(0)+\Ld(0)$
gives an eigenspace decomposition of the state space.
%In two-dimensional CFT, the state space is graded by
%the dilation operator
%\[
%D=L(0)+\bar L(0).
%\]
One usually requires this decomposition to be lower truncated with finite-dimensional eigenspaces; for example, a vertex operator algebra
has
\[
V=\bigoplus_{n\in\mathbb Z}V_n,
\quad
V_n=0\ \text{for }n\ll0,
\quad
\dim V_n<\infty.
\]
%In algebraic formulations, it is natural to impose lower-truncation and
%finite-dimensionality conditions on this grading; for instance, a vertex
%operator algebra is usually assumed to have a lower-truncated grading
%\(V=\bigoplus_{n\in\Z}V_n\) with finite-dimensional homogeneous subspaces.
As a higher-dimensional analogue of such grading restrictions, we
introduce compact \(\sod\)-modules in Definition \ref{def_compact}.  Thus
the dilation operator \(D\) acts semisimply, and the state space has a
decomposition
\[
H=\bigoplus_{\Delta\in\R}H_\Delta
\]
satisfying the finiteness conditions specified there.

\introitem{Formal series}
For a vertex algebra, vertex operators take values in Laurent series
\(V((z))\).  In the two-dimensional full case, the corresponding formal
series involve both \(z\) and \(\bar z\), and have the form
\(H((z,\bar z,|z|^\R))\) \cite{M1}. In the \(d\)-dimensional setting, we introduce
the space
\[
H((x_1,\ldots,x_d,|x|^\R)),
\]
where real powers of
$|x|=(x_1^2+\cdots+x_d^2)^{1/2}$ are allowed, and the series are lower-truncated with respect to the total
degree.  This space is defined in Section \ref{sec_Laurent}.

%こうした定式化と主結果を説明する前に、頂点代数になじみのある読者のために次のテーブルにある基本的な辞書を説明したい。

%
%\begin{description}
%\item[symmetry.]
%頂点作用素代数で重要な役割を果たす Virasoro algebra (とくにその部分代数である$\mathfrak{sl}_2$)は、\(d\geq 2\)
%においては global conformal symmetry である、Lie algebra \(\sod\)に置き換わる\footnote{$d \geq 3$においてユークリッド空間$\R^d$上の任意の局所共形変換は、Liouville の定理から $\mathrm{SO}^+(d+1,1)$の$S^d$への作用の制限によって与えられる。$d=2$の場合、$\mathrm{SO}^+(3,1) \cong \mathrm{PSL}_2\C$の$S^2 \cong \CP$への作用であり、これは一次分数変換の高次元への一般化である。}。
%\item[grading)]
%また二次元の共形場理論のunderlying vector space は スケール変換 (dilation)に対応する $D=L(0)+\Ld(0)$ によって$\R$次数付けを持つが、
%その代数的定式化においては、その spectrum が下に有界かつ離散的であることを課すのは自然である (たとえば vertex operator algebra の場合は、$V=\bigoplus_{n\in\Z}V_n$が下に有界であることを仮定する)。
%こうした条件の高次元類似として、我々は$\sod$-module $H$に対して、compact という条件を導入する \footnote{この条件は$\{e^{-tD}\}_{t>0}$によって定めまる半群の表現が trace-class operator になることに関係する}. 
%\item[formal series)]
%$d$次元の頂点作用素(operator product expansion) を定義するため、
%$|x|=\sqrt{x_1^2+\dots+x_d^2}$の実数べきを許した下に有界かつ離散的な級数の空間
%\[
%H((x_1,\ldots,x_d,|x|^\R)),
%\]
%is introduced in Definition \ref{def_spherical_laurent}.  These spaces play,
%in the \(d\)-dimensional setting, the role played by \(H((z))\) in
%the theory of vertex algebras and by \(H((z,\bar z,|z|^\R))\) in the
%two-dimensional full case.
%\end{description}

\vspace{3mm}

Let \(H\) be a compact \(\sod\)-module, and let
\[
H^\vee=\bigoplus_{\Delta\in\R}H_\Delta^*,\qquad \overline{H}= \prod_{\D \in \R} H_\D
\]
be its restricted dual and its algebraic completion.  A \emph{\(d\)-dimensional conformally covariant system
of correlation functions}, or simply a \emph{\(d\)-CC system}, consists of a
family of maps
\[
\m:\Conf_n(\R^d)\longrightarrow \Hom_\C(H^{\otimes n},\overline{H}),
\qquad
(x_1,\dots,x_n)\longmapsto \m_{x_1,\dots,x_n},
\]
satisfying the analyticity, the vacuum property, permutation invariance,
translation covariance, conformal covariance, and the strong cluster
decomposition property.  The analyticity condition requires that, for
every \(u\in H^\vee\) and \(a_1,\ldots,a_n\in H\), the function
\[
\Conf_n(\R^d) \rightarrow \C, \qquad
(x_1,\dots,x_n)\mapsto
\langle u,\m_{x_1,\dots,x_n}(a_1,\ldots,a_n)\rangle
\]
is real analytic on \(\Conf_n(\R^d)\).

We now explain the strong cluster decomposition.  Let
\((x_1,\ldots,x_n,w)\in \Conf_{n+1}(\R^d)\) and
\((y_1,\ldots,y_m)\in \Conf_m(\R^d)\), and assume that
\begin{align}
\min_{1\leq i\leq n}|x_i-w|
>
\max_{1\leq j\leq m}|y_j|.
\label{intro_domain}
\end{align}
Then
$(x_1,\dots,x_n,w+y_1,\dots,w+y_m)\in \Conf_{n+m}(\R^d)$.

The cluster decomposition concerns the relation between
$\m_{x_1,\dots,x_n,w+y_1,\dots,w+y_m}(\bullet,\dots,\bullet)$
and $\m_{x_1,\dots,x_n,w}
\bigl(
\bullet,\dots,\bullet,
\m_{y_1,\dots,y_m}(\bullet,\dots,\bullet)
\bigr)$.
There is, however, an important point. The image of \(\m_{y_1,\ldots,y_m}\) lies in \(\overline H\), whereas the last argument of 
\(\m_{x_1,\ldots,x_n,w}\) is required to lie in \(H\). Thus the two maps cannot be composed directly.
%There is, however,
%an important point.  The image of $\m_{y_1,\dots,y_m}$ is $\overline{H}$, whereas the domains of \(\m_{x_1,\dots,x_n,w}\) is \(H\).  Thus the two maps cannot be
%composed directly.
\footnote{This is a common difficulty in the theory of vertex operator
algebras.  In Huang's pioneering work \cite{Huang}, vertex operator
algebras were described as algebras over partial operads, whose
compositions are only partially defined.  The composability condition, such
as \eqref{intro_domain}, is imposed by the underlying cobordism-type
geometry.  In \cite{M9}, for \(d\geq 2\), this point of view was replaced
by a (non-partial) operad of conformal disk embeddings. In that formulation, the same kind of constraint is
interpreted both geometrically and as the domains of unbounded operators on Hilbert spaces.}
One therefore decomposes the intermediate state with respect to the
\(D\)-eigenspace decomposition.  Let
\[
\pr_\Delta:\overline H=\prod_{\Delta'\in\R}H_{\Delta'}
\longrightarrow H_\Delta
\]
be the projection onto the \(\Delta\)-eigenspace.  The strong cluster
decomposition requires that, for every \(u\in H^\vee\) and
\(a_1,\ldots,a_{n+m}\in H\), the series
\[
\sum_{\Delta\in\R}
\left\langle
u,
\m_{x_1,\dots,x_n,w}
\left(
a_1,\ldots,a_n,
\pr_\Delta
\m_{y_1,\dots,y_m}(a_{n+1},\ldots,a_{n+m})
\right)
\right\rangle
\]
converges absolutely and locally uniformly on compact subsets of the
region \eqref{intro_domain}, and its limit is
\[
\left\langle
u,
\m_{x_1,\dots,x_n,w+y_1,\dots,w+y_m}
(a_1,\ldots,a_{n+m})
\right\rangle .
\]
This is a $d$-dimensional variant of the convergence property in the
definition of a full field algebra of Huang--Kong (see also Remark \ref{rem_cluster_HK}).

%
%We now explain the strong cluster decomposition.  Let
%$(x_1,\ldots,x_n,w)\in \Conf_{n+1}(\R^d)$ and $(y_1,\ldots,y_m)\in \Conf_m(\R^d)$, and assume that
%\begin{align}
%\min_{1\leq i\leq n}|x_i-w|
%>
%\max_{1\leq j\leq m}|y_j|.
%\label{intro_domain}
%\end{align}
%Then $(x_1,\dots,x_n,w+y_1,\dots,w+y_m) \in \Conf_{n+m-1}(\R^d)$.
%cluster 分解は、$\m_{(x_1,\dots,x_n,w+y_1,\dots,w+y_m)}(\bullet,\dots,\bullet)$ と $\m_{x_1,\dots,x_n,w}(\bullet,\dots,\bullet, \m_{y_1,\dots,y_m}(\bullet,\dots,\bullet))$を比べるものである。ただし、重要なのは\(\m_{y_1,\dots,y_m}(\bullet,\ldots,\bullet)\)の像は in \(\overline H\), while the domains of \(Y_{x_1,\dots,x_n,w}\) is \(H\).  Thus the two \(Y\)'s cannot be composed literally \footnote{
%これは頂点作用素代数の理論における共通の困難である。Huang の先駆的な研究\cite{Huang} では、頂点作用素代数は (合成が部分的にしか well-defined でない) partial operad の代数と理解できることが示された。partial operad の合成可能性\eqref{intro_domain}は cobordism 的な幾何学的制約条件から来る。
%\cite{M9}では、$d\geq 2$の場合に、点でなく半径を持った gluing を考えることで、別の(partial でない) operad を導入し、こうした制約条件が幾何と共に頂点作用素の非有界性とその定義域としても理解できることが示された。}.
%代わりに$\pr_\D: H=\bigoplus_{\Delta\in\R}H_\Delta \rightarrow H_\D$を射影として、各$\Delta$成分ごとの収束を要求する.
%すなわち強いcluster 分解とは、
%\[
%\sum_{\Delta\in\R}
%\m_{x_1,\dots,x_n,w}
%\left(
%a_1,\ldots,a_n,
%\pr_\Delta \m_{y_1,\dots,y_m}(a_{n+1},\ldots,a_{n+m})
%\right)
%\]
%が、領域\eqref{intro_domain}において、
%\[
%Y_{(x_1,\dots,x_n,y_1+w,\dots,y_m+w)}(a_1,\ldots,a_{n+m}).
%\]
%に広義一様かつ絶対収束するという条件である。
%This is the higher-dimensional analogue of the convergence property in the
%definition of a full field algebra of Huang--Kong.

Starting from a \(d\)-CC system, we define an analytic vertex operator by
placing one point at the origin:
\[
\tilde{Y}(a,x)b=\m_{(x,0)}(a,b),
\]
which is real analytic on $x\in\R^d\setminus \{0\}$ (see \cite[Section 1]{HK} for $d=2$). Then, we show that the conformal covariance implies that
this operator admits a formal expansion in $H((x_1,\ldots,x_d,|x|^\R))$.
Hence, we obtain a $d$-dimensional vertex operator
\begin{align*}
Y(-,x):H\otimes H \rightarrow H((x_1,\ldots,x_d,|x|^\R)).
\end{align*}
Moreover, the strong cluster decomposition implies that products of these vertex operators are obtained as expansions, in different regions, of the same real analytic function. This is the \(d\)-dimensional counterpart of the associativity and
commutativity properties of vertex algebras in the chiral two-dimensional
case \cite{LL,FB}.

Such consistency of OPEs under different iterated expansions is central to the {\it conformal bootstrap} in physics \cite{Polyakov,BPZ,RRTV}.
In an axiomatic direction, Hollands \cite{HollandsOPE} formulated general quantum field theory in terms of OPE coefficients satisfying associativity conditions. In the two-dimensional full case, the associativity and commutativity of full vertex operators are essentially equivalent to the conformal bootstrap equations \cite{M2}. For locally \(C_1\)-cofinite full vertex operator algebras, the convergence and compatibility of arbitrary iterated OPEs were established in \cite[Theorem~3.9]{M8}.
Thus, the identities obtained here express, in a higher-dimensional setting, the OPE consistency underlying the conformal bootstrap.
This leads naturally to the notion of a {\it conformal \(d\)-vertex algebra}.

%
%Moreover, the strong cluster decomposition implies that products of these vertex operators are obtained as expansions, in different regions, of the same real analytic function. 
%This is the \(d\)-dimensional counterpart of the associativity and commutativity properties of vertex algebras in the chiral two-dimensional case. In the two-dimensional full case, the analogous associativity and commutativity
%of full vertex operators are essentially equivalent to the conformal bootstrap equations \cite{M2}.
%For locally $C_1$-cofinite full vertex operator algebras, the convergence and
%compatibility of arbitrary iterated OPEs were established in \cite[Theorem~3.9]{M8}. 
%Thus, the identities obtained here express, in a higher-dimensional setting, the OPE consistency underlying the conformal bootstrap in physics \cite{Polyakov,BPZ,RRTV}.
%Thus the identities obtained here
%are regarded as the OPE consistency expressed by the conformal bootstrap equations in physics. 

When \(d=2\), the exceptional isomorphism
\[
\so(3,1)_\C\simeq \mathfrak{sl}_2\C\oplus\mathfrak{sl}_2\C
\]
allows one to compare conformal \(2\)-vertex algebras with ordinary vertex algebras.
We show that the chiral and anti-chiral subspaces of a conformal \(2\)-vertex algebra carry vertex algebra structures, and that the underlying state space is naturally a module over each of these vertex algebras
 \footnote{A closely related statement was proved in our previous paper \cite{M1}.  Since the present axioms are formulated in a slightly different setting, we include the argument in the present notation.}.

We also construct a family of examples corresponding to the generalized free scalar field theory.
%$d$-CC system の underlying vector space は compact $\sod$-module であり、この論文では parabolic induction を用いて、定義される $\sod$ の parameter $\al \in\R$を持った既約な highest および lowest weight modules $L(\al), L(\al)^\da$を考える。
%
%These examples depend on a real parameter \(\alpha\).  This parameter is the scaling dimension of the generating scalar field: the field constructed
%below satisfies
%\[
%[D,\phi(x)]=\left(\sum_{\mu=1}^d x_\mu \frac{d}{dx_\mu}+\alpha\right)\phi(x).
%\]
For $\alpha \in\R$, let \(L(\alpha)\) and \(L(\alpha)^\dagger\) be, respectively, irreducible highest and lowest weight representations of 
\(\sod\) obtained from parabolically induced modules (see Section \ref{sec_Verma}).
The construction is carried out for generic values of \(\alpha\)
\[
\alpha\in
\R_{>0}\setminus
\left\{
\frac{d-2}{2},\frac{d-4}{2},\frac{d-6}{2},\dots
\right\},
\]
where the parabolic Verma modules are irreducible.  We also treat
the distinguished value
$\alpha=\frac{d-2}{2}$
for \(d>2\).
%,which corresponds the massless scalar field theory.
%, where the corresponding quotient module is described in terms
%of harmonic polynomials.
%and their two-point correlation function is $\frac{1}{||x-y||^{2\alpha}}$. The construction works for generic \(\alpha \in \R \setminus \{\frac{d-2}{2},\frac{d-4}{2},\dots,0,-1,-2,\dots\}\), and
%also at the distinguished value
%$\alpha=\frac{d-2}{2}$ when \(d>2\),
%which corresponds to the usual massless free scalar field. 
  Using the
natural pairing
$\langle-,-\rangle:L(\alpha)^\dagger\otimes L(\alpha)\longrightarrow \R$,
we define a Heisenberg Lie algebra
\[
\widehat A_{d,\alpha}
=
L(\alpha)^\dagger\oplus L(\alpha)\oplus \R c
\]
by
\[
[a,a']=0,\quad
[a^\dagger,b^\dagger]=0,\quad
[a,a^\dagger]=\langle a^\dagger,a\rangle c,
\]
for \(a,a'\in L(\alpha)\) and \(a^\dagger,b^\dagger\in
L(\alpha)^\dagger\), where $c$ is in the center.  
%Then, the underlying vector space
%Let
%$\widehat A^+_{d,\alpha}=L(\alpha)\oplus \R c$ and let \(\R 1\) be the one-dimensional \(\widehat A^+_{d,\alpha}\)-module
%on which \(c\) acts as \(1\) and \(L(\alpha)\) acts trivially. Set
%\[
%H_{d,\alpha}
%=
%\mathrm{Ind}_{\widehat A^+_{d,\alpha}}^{\widehat A_{d,\alpha}}\R 1.
%\]
%By the PBW theorem, \(H_{d,\alpha}\) is naturally identified with the
%symmetric algebra of \(L(\alpha)^\dagger\).  Thus \(H_{d,\alpha}\) plays
%the role of the Fock representation of the Heisenberg algebra.
Set $\widehat{A}^+_{d,\alpha}=L(\alpha)\oplus \R c$ and
\[
H_{d,\alpha}=
\mathrm{Ind}_{\widehat A^+_{d,\alpha}}^{\widehat A_{d,\alpha}}\R 1,
\]
where \(\R1\) is the one-dimensional module on which \(c\) acts as \(1\)
and \(L(\alpha)\) trivially.
The compact \(\so(d+1,1)\)-module \(H_{d,\alpha}\) carries a conformally covariant scalar field
\[
\phi(x)=\phi^+(x)+\phi^-(x) \in \End\,H_{d,\alpha}[[x_1,\dots,x_d,|x|^\R]],
\]
satisfying
\[
[\phi^+(x),\phi^-(y)]= \frac{1}{|x-y|^{2\alpha}}\Bigl|_{|x|>|y|}.
\]
This is an analogue of $[h^+(z),h^-(w)]=\frac{1}{(z-w)^2} \Bigl|_{|z|>|w|}$ for the affine Heisenberg vertex algebra.
Using normal ordered products, we define multilinear maps
\[
\m_{x_1,\dots,x_n}:H_{d,\alpha}^{\otimes n}\longrightarrow \overline H_{d,\alpha}
\]
on configuration spaces, and show that these maps define a \(d\)-CC system, and hence a conformal \(d\)-vertex algebra.

The natural invariant bilinear form on \(H_{d,\alpha}\) is positive-definite precisely when
$\alpha\geq \frac{d-2}{2}$.
Thus the examples with \(0<\alpha<(d-2)/2\), whenever \(\alpha\) is generic, are non-unitary.

%We note that the natural invariant bilinear form on $H_{d,\alpha}$ is positive-definite precisely for
%\(\alpha\geq (d-2)/2\).
%For a general value of the parameter \(\alpha\), the theory associated with
%\(H_{d,\alpha}\) is non-unitary.

In Appendix~C, we consider the distinguished value \(\alpha=\dn\) and show that the complexification of the lowest weight module
$L\left(\dn\right)^\dagger\otimes_{\mathbb R}\mathbb C$ is, under the identification
$\so(d+1,1)_{\mathbb C}\cong\so(d,2)_{\mathbb C}$, the Harish--Chandra module of a projective unitary representation \(H^+\) of \(\mathrm{SO}_e(d,2)\). The representation \(H^+\) can be realized on a space of solutions to the Klein--Gordon equation and has been studied in \cite{BZ,KO1,KO2,HSS}. Moreover, its restriction to the Poincaré subgroup is the positive-energy massless scalar representation (see Proposition \ref{prop_Hplus_Poincare}). Consequently, the Hilbert-space completion of $H_{d,\dn}\cong \Sym \,L\left(\dn\right)^\dagger$
with respect to its canonical positive-definite inner product is naturally isomorphic to the symmetric Fock space \(\Gamma_s(H^+)\), namely the state space of the free massless scalar field on Minkowski spacetime \cite{Arai,HL} (see Corollary \ref{cor_app_Fock}).
This may be viewed as a higher-dimensional, representation-theoretic counterpart of the relation between full two-dimensional CFT and quantum field theory on Minkowski spacetime satisfying the Wightman axioms; see \cite{AGT} for explicit constructions in the case of pointed representation categories. A general construction starting from unitary full vertex operator algebras will appear in a revised and extended version of \cite{AMT}.

%This may be viewed as a higher-dimensional, representation-theoretic counterpart of the relation between full vertex operator algebras and Wightman conformal field theory in dimension two: for a unitary full vertex operator algebra, the full vertex operators give rise to Wightman fields on the Hilbert-space completion of the state space; see \cite{AGT,AMT}\footnote{A revised version of \cite{AMT}, containing this Wightman-field construction, is currently in preparation.}.

%
%In appendix, 境界の値$\alpha=\dn$において、我々は lowest weight module の複素化 $L\left(\dn\right)^\da \otimes \C$ が、$\sod_\C \cong \so(d,2)$-module として、ある$\mathrm{SO}_e(d,2)$の射影的ユニタリ表現$H^+$の Harish-Chandra module であることを見る。
%$H^+$は Klein-Gordon 方程式の解空間を用いて構成できる$\mathrm{SO}_e(d,2)$の被覆群のユニタリ表現であり、\cite{HSS,}などで調べられた。
%とくに$\Sym H_{d,\alpha}$の自然な内積によるヒルベルト空間としての完備化は $H^+$上のFock spaceであり、これは Minkowski時空上の massless scalar 場の state space に他ならない。
%この結果は、unitary full vertex operator algebra の Hilbert space としての完備化の上に full vertex operator algebra の頂点作用素を用いて Wightman field が定義できるという\cite{}の結果の高次元における表現論のレベルにおける対応物である。

The paper is organized as follows. In Section 1, we recall the infinitesimal conformal action of $\so(d+1,1)$ on $\R[x_1,\dots,x_d]$, introduce the space of formal series $H((x_1,\ldots,x_d,|x|^\R))$, and define conformally covariant vertex operators. In Section 2, we define $d$-CC systems and construct conformal $d$-vertex algebras from them. We also discuss the case $d=2$ and its relation with ordinary vertex algebras. In Section 3, we construct the examples described above. 
Appendix A contains the representation-theoretic results on the parabolic Verma modules used in Section 3. Appendix B proves the commutator formula at the distinguished value $\alpha=(d-2)/2$. Appendix C studies the same distinguished case from the viewpoint of unitary representation theory.

\section{Preliminary}
Throughout this paper, we assume that $d \geq 2$.
In this section, we describe the action of the global conformal symmetry
$\mathfrak{so}(d+1,1)$ on polynomial rings and spaces of formal series, and,
based on these actions, introduce vertex operators in $d$-dimensional
conformal field theory.

\subsection{Infinitesimal conformally flat geometry}\label{sec_conf_inf}

In this section, we recall the action of $\mathfrak{so}(d+1,1)$ on the polynomial ring $\R[x_1,\dots,x_d]$ by differential operators. For the
geometric meaning of this action, see the following remark.

\begin{rem}
Let $S^d$ be the $d$-dimensional sphere equipped with the standard
Riemannian metric $g_\std$. The group of orientation-preserving conformal
diffeomorphisms $\mathrm{Conf}^+(S^d,g_\std)$ is isomorphic to
$\mathrm{SO}^+(d+1,1)$. Via the stereographic projection
$\R^d \hookrightarrow S^d$, the Lie algebra $\mathfrak{so}(d+1,1)$ acts on
the polynomial ring on $\R^d$.

When $d=2$, we have
$\mathrm{Conf}^+(S^2,g_\std)\cong \mathrm{SO}^+(3,1)\cong \mathrm{PSL}_2\C$,
and its action on $S^2$ is given by fractional linear transformations on
the complex projective line $\mathbb{C}P^1$.
%This action gives the
%differential-operator representation of
%$\mathfrak{so}(3,1) \cong \mathrm{sl}_2\C$,
%\[
%L(n) \mapsto -z^{n+1} \frac{d}{dz}
%\]
%for $n=-1,0,1$. 
For $d \geq 3$, the action of
$\mathrm{SO}^+(d+1,1)$ on $S^d$ is a higher-dimensional analogue of
fractional linear transformations. For more detailed explanations of
these actions, see, for example, \cite[Section 1.2]{M9}.
\end{rem}

%この論文を通じて$d \geq 2$とする。
%この章では大域的な共形対称性$\mathfrak{so}(d+1,1)$の多項式環や形式的級数環への作用を与え、それらに基づき$d$次元の共形場理論の頂点作用素を導入する。
%
%\subsection{Infinitesimal conformally flat geometry}\label{sec_conf_inf}

%この章では、$\mathfrak{so}(d+1,1)$の多項式環$\R[x_1,\dots,x_d]$への微分作用素による作用を振り返る (その幾何学的な意味については次のRemark をみよ)。
%\begin{rem}
%$d$次元の標準的なリーマン計量を入れた球面$S^d$の向きを保つ共形微分同相群$\mathrm{Conf}^+(S^d,g_\std)$は、$\mathrm{SO}^+(d+1,1)$と同型であり、stereographic projection $\R^d \hookrightarrow S^d$ を通じて、$\mathfrak{so}(d+1,1)$は$\R^d$上の多項式環に作用する。
%$d=2$の場合、$\mathrm{Conf}^+(S^2,g_\std)$は$\mathrm{SO}^+(3,1)\cong \mathrm{PSL}_2\C$であり、その$S^2$への作用は複素射影直線$\mathbb{C}P^1$への一次分数変換によって与えられる。こうした作用は$\mathfrak{so}(3,1) \cong \mathrm{sl}_2\C$の微分作用素表現$L(n) \mapsto -z^{n+1} \frac{d}{dz}$ ($n=-1,0,1$)を与える。$d \geq 3$における$\mathrm{SO}^+(d+1,1)$の$S^d$への作用は一次分数変換の高次元への拡張である (これらのより詳細な説明はたとえば\cite{}を参照)。
%\end{rem}
%
%
%$d$次元の
%$d$次元共形場理論の大域的な時空の対称性はメビウス変換群$M(S^d)$で与えられる。
%そのリー代数は符号数$(d+1,1)$の直交リー代数
%$\mathfrak{so}(d+1,1)$である。この章では$\mathfrak{so}(d+1,1)$の 関数環 $C^\infty(S^d,\R)$ および多項式環$\R[x^1,\dots,x^d]$への作用を調べる。

Let $\{g_{\mu,\nu}\}_{\mu,\nu \in \{0,1, \dots,d+1\}}$ be
the symmetric $(d+2)\times (d+2)$-matrix defined by
$$g_{\mu,\nu}= \begin{cases}
   0  & (\nu \neq \mu) \\
  -1 & (\nu=\mu=0) \\
   1 & (\nu=\mu \in \{1,\dots,d+1\}),
\end{cases}
$$
which gives a Lorentzian metric on $\R^{d+1,1}$.
%Then, $\mathfrak{so}(d+1,1)$ be the Lie algebra of the orthogonal group on $(\R^{d+1,1},g_{\mu,\nu})$.
Let $J_{\mu,\nu}$ be a $(d+2)\times (d+2)$-matrix whose $ij$-component is
$$
(J_{\mu,\nu})_{ij}= g_{i,\mu}\de_{\nu,j}-g_{i,\nu}\de_{j,\mu}.
$$

% search for combinient def.
The Lie algebra $\mathfrak{so}(d+1,1)$
is a $\frac{1}{2}(d+1)(d+2)$-dimensional Lie algebra with a basis $\{J_{\mu,\nu}\}_{\mu,\nu \in \{0,\dots,d+1\}}$ which satisfies the relations
\begin{align*}
[J_{\mu,\nu},J_{\rho,\sigma}]&=
g_{\nu,\rho}J_{\mu,\si}
- g_{\mu,\rho}J_{\nu,\si}
-g_{\nu,\si}J_{\mu,\rho}
+g_{\mu,\si}J_{\nu,\rho}
%[J_{0,\rho},J_{\mu,\nu}]&= \\
%[J_{0,\rho},J_{0,\sigma}]&=J_{\rho,\sigma} \\
\end{align*}
with $J_{\mu,\nu}=-J_{\nu,\mu}$ for $\mu,\nu,\rho,\si \in \{0,1,2,\dots,d+1 \}$.
The action of $\mathrm{SO}^+(d+1,1)$ on $S^d$ gives geometric meanings
to the elements of $\sod$, such as translations, rotations, and dilations.
It is therefore convenient to use the following basis, which is adapted to
this geometric interpretation:
%$\mathrm{SO}^+(d+1,1)$の$S^d$への作用から $\sod$の元は平行移動や回転、dilationなどの幾何学的な意味を持っている。
%幾何との関係から次の基底を用いることが便利である:
\begin{align*}
D&= J_{0,d+1}\\
P_\mu &= J_{0,\mu} + J_{d+1,\mu}\\
K_\mu &= -J_{0,\mu} + J_{d+1,\mu}
\end{align*}
for $\mu =1,\dots,d$.
These elements correspond respectively to dilations, translations, and
special conformal transformations (see \cite{M9}).
Then, $\{D, J_{\mu,\nu},K_\mu,P_\mu\}_{\mu,\nu \in \{1,\dots,d\}}$ forms a basis of $\sod$, which satisfies the following  relations
%The $d$-dimensional conformal group is a Lie algebra satisfying the following relations:
\begin{align}
\begin{split}
[D,K_\mu]&=  -K_\mu \\
[D,P_\mu]&= P_\mu \\
[D, J_{\mu,\nu}]&= 0 \\
[P_\mu, K_\nu] &= -2(\de_{\mu,\nu}D + J_{\mu,\nu}) \\
[J_{\mu,\nu},P_\rho]&=-\de_{\mu,\rho}P_\nu + \de_{\nu,\rho} P_\mu \\
[J_{\mu,\nu},K_\rho]&= -\de_{\mu,\rho}K_\nu + \de_{\nu,\rho} K_\mu  \\
[J_{\mu \nu}, J_{\rho,\si}]&=
\delta_{\nu,\rho}J_{\mu,\si}
- \de_{\mu,\rho}J_{\nu,\si}
-\de_{\nu,\si}J_{\mu,\rho}
+\de_{\mu,\si}J_{\nu,\rho}.
\end{split}
\label{eq_com_Lie}
\end{align}

\begin{rem}\label{rem_sod_sl2}
%頂点作用素代数になじみがある読者は、$\mathfrak{so}(3,1)$の複素化が、$\mathrm{sl}_2\C \oplus \mathrm{sl}_2\C$と同型であることに注意されたい。この同型は次のように与えられる:
Readers familiar with vertex operator algebras may note that the
complexification of $\mathfrak{so}(3,1)$ is isomorphic to
$\mathrm{sl}_2\C \oplus \mathrm{sl}_2\C$. This isomorphism is given as
follows:
\begin{align}
\begin{split}
L(1)=\frac{1}{2} (K_1+iK_2), \;\;&\Ld(1)=  \frac{1}{2}(K_1- iK_2) \\
L(0)= \frac{1}{2}(D + iJ_{12}),\;\; &\Ld(0)= \frac{1}{2}(D - iJ_{12})\\
L(-1)= \ft(P_1-iP_2), \;\;&\Ld(-1)= \ft(P_1+ iP_2),
\end{split}
\label{eq_L_sod}
\end{align}
where $\{L(n)\}_{n=-1,0,1}$ forms a subalgebra of the Virasoro algebra.
More generally, for every \(d\geq 2\), the elements appearing in
\eqref{eq_L_sod} span a subalgebra of \(\sod_\C\) isomorphic to
\(\mathrm{sl}_2\C \oplus \mathrm{sl}_2\C\).
%より一般に、\eqref{eq_L_sod}は$d$次元においても、$\sod_\C$の$\mathrm{sl}_2\C \oplus \mathrm{sl}_2\C$に同型な部分代数を定める。
\end{rem}
%and
%\begin{align*}
%z=x^1+ix^2,\;\; &\z=x^1-ix^2 \\
%\pa_z =\frac{1}{2}( \pa_1-i\pa_2),\;\;&\pa_\z = \frac{1}{2}( \pa_1+i\pa_2).
%\end{align*}

%\subsection{Action of conformal symmetry on function space}\label{sec_conf_action}
Let $\R[x_1,\partial_1,\dots,
x_d,\partial_d]$ be the ring of differential operators on $\R^d$.
Set $$|x|^2=
||x||^2=x_1^2+\dots+x_d^2 \in \R[x_1,\partial_1,\dots,
x_d,\partial_d]$$
and $$E(x)=\sum_{i=1}^d x_i \pa_i \in \R[x_1,\partial_1,\dots,
x_d,\partial_d],$$ the Euler operator.
Let $d:\mathfrak{so}(d+1,1) \rightarrow \R[x_1,\partial_1,\dots,
x_d,\partial_d]$ be a linear map defined by
\begin{align*}
d(D)&= - E(x) \\
d(K_\mu) &= ||x||^2\pa_\mu -2x_\mu E_x \\
d(P_\mu) &= -\pa_\mu \\
d(J_{\mu,\nu}) &= x_\mu \pa_\nu-x_\nu \pa_\mu
\end{align*}
for $\mu,\nu \in \{1,2,\dots,d \}$.
It is straightforward to check the following lemma:
\begin{lem}\label{lem_sod_dmod}
The linear map $d:\mathfrak{so}(d+1,1) \rightarrow \R[x_1,\partial_1,\dots,x_d,\partial_d]$ is a Lie algebra homomorphism,
where $\R[x_1,\partial_1,\dots,x_d,\partial_d]$ is regarded
as a Lie algebra by the commutators.
\end{lem}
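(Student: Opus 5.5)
Since $d$ is linear and both sides of the homomorphism identity are bilinear and antisymmetric, it suffices to verify $[d(X),d(Y)]=d([X,Y])$ for $X,Y$ running over the basis $\{D,P_\mu,K_\mu,J_{\mu,\nu}\}$, i.e.\ to check each relation in \eqref{eq_com_Lie}. All images are vector fields $\sum_i f_i\partial_i$ with polynomial coefficients. For vector fields the bracket is again a vector field, $[\sum f_i\pa_i,\sum g_j\pa_j]=\sum_j\bigl(\sum_i f_i\pa_i g_j-g_i\pa_i f_j\bigr)\pa_j$, so each check reduces to computing derivatives of the coefficient polynomials. I will use three elementary facts: $[E(x),x_\mu]=x_\mu$, $[E(x),\pa_\mu]=-\pa_\mu$, and $E(x)$ acts on a homogeneous coefficient of degree $k$ by multiplication by $k$.

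I will do the checks in increasing order of difficulty. First, the relations involving only $P$ and $J$: $[\pa_\mu,\pa_\nu]=0$ is immediate, and $[x_\mu\pa_\nu-x_\nu\pa_\mu,-\pa_\rho]=-\de_{\mu\rho}\pa_\nu+\de_{\nu\rho}\pa_\mu$ matches $d(-\de_{\mu,\rho}P_\nu+\de_{\nu,\rho}P_\mu)$. The $J$–$J$ relation is the standard $\so(d)$ computation for rotation vector fields. Next, the relations involving $D$: $d(P_\mu)$, $d(J_{\mu,\nu})$ and $d(K_\mu)$ are homogeneous vector fields of degrees $-1$, $0$, $+1$ under the Euler grading. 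Hence $[-E,X]=-(\deg X)X$, which gives $[D,P_\mu]=P_\mu$, $[D,J_{\mu,\nu}]=0$ and $[D,K_\mu]=-K_\mu$.

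For $[J_{\mu,\nu},K_\rho]$, I note that $|x|^2$ and $E(x)$ commute with rotation vector fields. The computation therefore reduces to $[x_\mu\pa_\nu-x_\nu\pa_\mu,\ |x|^2\pa_\rho-2x_\rho E]$, which, by the same index bookkeeping as in the $P$ case, yields $-\de_{\mu\rho}d(K_\nu)+\de_{\nu\rho}d(K_\mu)$. For $[K_\mu,K_\nu]=0$ I will use the trick that $d(K_\mu)$ is the conjugate of $-d(P_\mu)$ under the inversion $x\mapsto x/|x|^2$; this transfers the commutation of the $P$'s. If that feels too informal, I will instead directly compute $[|x|^2\pa_\mu-2x_\mu E,\ |x|^2\pa_\nu-2x_\nu E]$ and observe that the result is antisymmetric-cancelling.

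The main obstacle, and the only computation I expect to need care with signs, is $[P_\mu,K_\nu]=-2(\de_{\mu,\nu}D+J_{\mu,\nu})$. Here
\[
[-\pa_\mu,\ |x|^2\pa_\nu-2x_\nu E]=-2x_\mu\pa_\nu+2\de_{\mu\nu}E+2x_\nu\pa_\mu,
\]
using $\pa_\mu(|x|^2)=2x_\mu$ and $[\pa_\mu,E]=\pa_\mu$. This equals $2\de_{\mu\nu}E-2(x_\mu\pa_\nu-x_\nu\pa_\mu)=d(-2\de_{\mu\nu}D-2J_{\mu,\nu})$, as required.
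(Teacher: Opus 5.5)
Your proposal is correct and is the same basis-by-basis verification of the relations \eqref{eq_com_Lie} that the paper leaves as ``straightforward to check.'' One small sign slip: $[x_\mu\partial_\nu-x_\nu\partial_\mu,\,-\partial_\rho]$ equals $\delta_{\mu\rho}\partial_\nu-\delta_{\nu\rho}\partial_\mu$, not its negative. It is this corrected value that matches $d(-\delta_{\mu,\rho}P_\nu+\delta_{\nu,\rho}P_\mu)$, because $d(P_\nu)=-\partial_\nu$, so the relation still holds.
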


\begin{rem}\label{rem_chiral_d}
In the case of $d=2$, we have:
\begin{align*}
d(L(-1)) &= \ft d(P_1-iP_2) = - \ft (\pa_1-i\pa_2)=-\pa_z,\\
d(L(0))&= \ft d(D+iJ_{12}) = -\ft (x_1\pa_1+x_2\pa_2-i x_1\pa_2+ix_2\pa_1) =-z\pa_z,\\
d(L(1)) &= \ft d(K_1+iK_2) =\ft(z\z (\pa_1+i\pa_2)-2z (z\pa_z+\z\pa_\z))=-z^2\pa_z,
\end{align*}
which is the representation of $\mathrm{sl}_2\C$ induced from the action of $\mathrm{PSL}_2\C$ on $\CP$.
\end{rem}

\subsection{Spaces of spherical Laurent series}\label{sec_Laurent}

Let \(\mathbb R[t^{\mathbb R}]\) be the group algebra of the additive
group \(\mathbb R\). Thus \(\mathbb R[t^{\mathbb R}]\) has a basis
\(\{t^r\}_{r\in\mathbb R}\) and multiplication given by $t^r t^s=t^{r+s}$.
Set
\[
\mathbb R[x_1,\ldots,x_d,|x|^{\mathbb R}]
=
\frac{\mathbb R[x_1,\ldots,x_d]\otimes \mathbb R[t^{\mathbb R}]}
{(x_1^2+\dots+x_d^2-t^2)}.
\]
In what follows we write \(t=|x|\) formally.

We give \(\mathbb R[x_1,\ldots,x_d,|x|^{\mathbb R}]\) an
\(\mathbb R\)-grading by
\[
\deg x_\mu=1,\qquad \deg |x|^r=r.
\]
We denote the homogeneous component of degree \(\lambda\) by
\[
\mathbb R[x_1,\ldots,x_d,|x|^{\mathbb R}]_\lambda.
\]

Let \(H\) be a vector space. We set
\[
H[x_1,\ldots,x_d,|x|^{\mathbb R}]_\lambda
=
H\otimes
\mathbb R[x_1,\ldots,x_d,|x|^{\mathbb R}]_\lambda
\]
and
\[
H[x_1,\ldots,x_d,|x|^{\mathbb R}]
=
H\otimes
\mathbb R[x_1,\ldots,x_d,|x|^{\mathbb R}].
\]
We also define its formal completion by
\[
H[[x_1,\ldots,x_d,|x|^{\mathbb R}]]
=
\prod_{\lambda\in\mathbb R}
H[x_1,\ldots,x_d,|x|^{\mathbb R}]_\lambda.
\]

The space of \(H\)-valued {\bf spherical Laurent series} is the subspace
\[
H((x_1,\ldots,x_d,|x|^{\mathbb R}))
\subset
H[[x_1,\ldots,x_d,|x|^{\mathbb R}]]
\]
consisting of elements \(f=(f_\lambda)_{\lambda\in\mathbb R}\) such that
for every \(M\in\mathbb R\),
\[
\{\lambda\in\mathbb R\mid f_\lambda\neq 0,\ \lambda<M\}
\]
is finite.

Define \(\partial_\mu\) on
these spaces by
\[
\partial_\mu\bigl(P(x)|x|^r\bigr)
=
(\partial_\mu P)(x)|x|^r
+
r x_\mu P(x)|x|^{r-2}.
\]
Then,
\[
H[x_1,\ldots,x_d,|x|^{\mathbb R}],\quad
H[[x_1,\ldots,x_d,|x|^{\mathbb R}]],\quad
H((x_1,\ldots,x_d,|x|^{\mathbb R}))
\]
are modules over $\mathbb R[x_1,\ldots,x_d,\partial_1,\ldots,\partial_d].$
Moreover, \(x_\mu\) raises the degree by \(1\), and \(\partial_\mu\) lowers
the degree by \(1\).

An element of
\(\mathbb C[x_1,\ldots,x_d,|x|^{\mathbb R}]\) can be regarded as a
real analytic function on \(\mathbb R^d\setminus\{0\}\) by putting
\(t=|x|=(x_1^2+\cdots+x_d^2)^{1/2}\). Indeed, by the polar decomposition
\begin{align}
\mathbb R^d\setminus\{0\}
\cong
\mathbb R_{>0}\times S^{d-1},
\qquad
x\mapsto
\left(|x|,\frac{x}{|x|}\right),
\label{eq_polar_regard}
\end{align}
each monomial \(P(x)|x|^r\) is real analytic on
\(\mathbb R^d\setminus\{0\}\).

\begin{dfn}
We denote by \(\mathcal O(S^{d-1})\) the subspace of
\(C^\infty(S^{d-1},\C)\) consisting of \(\SO(d)\)-finite functions, that is,
functions \(f\) such that the vector space spanned by the \(\SO(d)\)-orbit
of \(f\) is finite-dimensional.
\end{dfn}
It is clear that \(\mathcal O(S^{d-1})\) is a subalgebra of
\(C^\infty(S^{d-1})\).
The following proposition is well-known (see for example \cite[Theorem 5.7 and Theorem 5.12]{ABR}):
\begin{prop}
\label{prop_rep_func_space}
The image of the natural restriction map
\[
\mathbb C[y_1,\ldots,y_d]/(y_1^2+\cdots+y_d^2-1)
\longrightarrow
C^\infty(S^{d-1})
\]
is \(\mathcal O(S^{d-1})\).
\end{prop}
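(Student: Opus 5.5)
We prove the two inclusions separately; the image is denoted by $R$, the restrictions to $S^{d-1}$ of complex polynomials in $y_1,\dots,y_d$.

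\emph{Every restricted polynomial is $\SO(d)$-finite.} Let $p$ be a polynomial of degree at most $N$. For $g\in\SO(d)$, the rotated function $p\circ g^{-1}$ is again a polynomial of degree at most $N$, because $g$ acts linearly on the coordinates. Hence the span of the orbit of $p|_{S^{d-1}}$ lies in the image of $\C[y]_{\le N}$, which is finite-dimensional. This gives $R\subset\mathcal O(S^{d-1})$.

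\emph{Every $\SO(d)$-finite function is a restricted polynomial.} Let $f\in\mathcal O(S^{d-1})$, and let $W$ be the finite-dimensional span of its orbit. Then $W$ is a finite-dimensional continuous representation of the compact group $\SO(d)$, so it is completely reducible: $W$ splits into irreducible subrepresentations, and $f$ is a finite sum of vectors lying in irreducible, $\SO(d)$-stable, finite-dimensional subspaces $U\subset C^\infty(S^{d-1})\subset L^2(S^{d-1})$. It therefore suffices to show that each such $U$ is contained in $R$.

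\emph{Key input: density and $L^2$ orthogonality.} First, $R$ is dense in $L^2(S^{d-1})$. Indeed, by Stone--Weierstrass, $R$ is dense in $C(S^{d-1})$: it is a conjugation-closed algebra that separates points and contains the constants. Second, $R=\bigoplus_{N}\mathcal H_N$, where $\mathcal H_N$ is the restriction of the harmonic homogeneous polynomials of degree $N$. This is the standard decomposition $\C[y]_N=\mathcal H_N\oplus|y|^2\C[y]_{N-2}$, restricted to the sphere. The spaces $\mathcal H_N$ are mutually orthogonal in $L^2$, being eigenspaces of the self-adjoint spherical Laplacian with distinct eigenvalues $-N(N+d-2)$. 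Density then gives $L^2(S^{d-1})=\widehat{\bigoplus}_N\mathcal H_N$.

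Now let $P_N$ be the orthogonal projection onto $\mathcal H_N$. Since $P_N$ commutes with $\SO(d)$, the restriction $P_N|_U$ is an intertwiner, so it is either zero or injective by irreducibility. If it were injective for infinitely many $N$, some nonzero $u\in U$ would have nonzero components in infinitely many $\mathcal H_N$. To rule this out, note that $U$ is stable under the spherical Laplacian $\Delta_S$, which is the Casimir of $\SO(d)$ acting on smooth vectors. By Schur's lemma $\Delta_S$ acts on $U$ by a single scalar $c$. Then $P_N u=0$ unless $-N(N+d-2)=c$, because $\Delta_S$ is symmetric, so $\langle u,h\rangle(c+N(N+d-2))=0$ for all $h\in\mathcal H_N$. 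Hence $u$ lies in a single $\mathcal H_N$, so $U\subset R$.

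The main obstacle is this last step, identifying the Casimir with $\Delta_S$ on smooth functions so that Schur's lemma applies. An alternative is Peter--Weyl: the $\SO(d)$-finite vectors in $L^2$ are the algebraic direct sum of isotypic components, and density of the $\SO(d)$-stable subspace $R$ forces each isotypic component to lie in $R$. In either form this is the content of \cite[Theorems 5.7, 5.12]{ABR}, which may simply be cited.
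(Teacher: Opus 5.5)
Your argument is correct, and it is essentially the paper's route: it uses exactly the inputs behind the paper's citation of Axler--Bourdon--Ramey, namely the decomposition $\C[y]_N=\mathcal H_N\oplus|y|^2\C[y]_{N-2}$ and the spherical harmonic decomposition $L^2(S^{d-1})=\widehat{\bigoplus}_N\mathcal H_N$, and it spells out the step from $\SO(d)$-finiteness to finitely many harmonic components that the paper leaves implicit. The step you flag as the main obstacle is routine: for smooth $f$ the difference quotients $t^{-1}(f\circ e^{-tX}-f)\in W$ converge uniformly to the rotation vector field applied to $f$, and $W$ is closed because it is finite-dimensional, so $W$ is stable under $\Delta_S=\sum_{i<j}(y_i\partial_j-y_j\partial_i)^2$; you can then drop complete reducibility and Schur altogether by diagonalizing the symmetric operator $\Delta_S|_W$ directly.
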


%For \(\lambda\in\R\), define a linear map
%\[
%\eta_\lambda:
%\C[y_1,\ldots,y_d]/(y_1^2+\cdots+y_d^2-1)
%\rightarrow
%\C[x_1,\ldots,x_d,|x|^\R]_\lambda
%\]
%as follows. If \(P(y)=\sum_{m\ge0}P_m(y)\) is the homogeneous decomposition,
%with \(P_m\in\C[y_1,\ldots,y_d]_m\), set
%\[
%\eta_\lambda([P])
%=
%\sum_{m\ge0}P_m(x)|x|^{\lambda-m}.
%\]
%Then, it is easy to show that \(\eta_\lambda\) is
%well-defined and is a linear isomorphism.
%
%First we check that \(\eta_\lambda\) is well-defined. Let
%\[
%Q_y=y_1^2+\cdots+y_d^2,\qquad Q_x=x_1^2+\cdots+x_d^2.
%\]
%For \(R_m\in \C[y_1,\ldots,y_d]_m\), we have
%\[
%\eta_\lambda(Q_yR_m-R_m)
%=
%Q_xR_m(x)|x|^{\lambda-m-2}
%-
%R_m(x)|x|^{\lambda-m}.
%\]
%This is zero in \(A_\lambda\), because \(Q_x=|x|^2\) in \(A\). Hence
%\(\eta_\lambda\) descends to the quotient by \((Q_y-1)\).
%
%Conversely, define
%\[
%\pi_\lambda:A_\lambda
%\longrightarrow
%\C[y_1,\ldots,y_d]/(Q_y-1)
%\]
%by
%\[
%\pi_\lambda\left(P_m(x)|x|^{\lambda-m}\right)
%=
%[P_m(y)]
%\]
%for \(P_m\in\C[x_1,\ldots,x_d]_m\). This is well-defined, since
%\[
%\pi_\lambda\left(Q_xP_m(x)|x|^{\lambda-m-2}\right)
%=
%[Q_yP_m(y)]
%=
%[P_m(y)]
%=
%\pi_\lambda\left(P_m(x)|x|^{\lambda-m}\right).
%\]
%It is immediate that
%\[
%\pi_\lambda\circ\eta_\lambda=\mathrm{id},
%\qquad
%\eta_\lambda\circ\pi_\lambda=\mathrm{id}.
%\]
%Thus \(\eta_\lambda\) is an isomorphism.

The following proposition follows from Proposition \ref{prop_rep_func_space}:
\begin{prop}
\label{prop_analytic_Laurent}
The natural map
\[
\C[x_1,\ldots,x_d,|x|^\R]
\longrightarrow
C^\omega(\R^d\setminus\{0\})
\]
defined by
$|x|^r=(x_1^2+\cdots+x_d^2)^{r/2}$
is injective. Moreover, for any \(\lambda\in\R\), the image of
$\C[x_1,\ldots,x_d,|x|^\R]_\lambda$
is exactly the space of real analytic functions \(f\) on
\(\R^d\setminus\{0\}\) satisfying
\[
E(x) f=\lambda f
\]
and such that the vector space spanned by $\SO(d)$-orbit of $f$ is finite dimensional.
\end{prop}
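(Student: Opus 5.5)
Our approach is to decompose $\C[x_1,\ldots,x_d,|x|^\R]$ into its homogeneous components and identify each one with $\C[y]/(|y|^2-1)$ via polar coordinates \eqref{eq_polar_regard}; both statements then reduce to Proposition \ref{prop_rep_func_space}.

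\textbf{Step 1: normal form of a homogeneous component.} Fix $\lambda\in\R$. Every element of $\C[x_1,\ldots,x_d,|x|^\R]_\lambda$ is a finite sum $\sum_m P_m(x)|x|^{\lambda-m}$ with $P_m$ homogeneous of degree $m$. Define
\[
\eta_\lambda:\C[y_1,\ldots,y_d]/(y_1^2+\cdots+y_d^2-1)\to \C[x_1,\ldots,x_d,|x|^\R]_\lambda,\qquad \Big[\sum_m P_m(y)\Big]\mapsto \sum_m P_m(x)|x|^{\lambda-m}.
\]
This map is well defined, because $\eta_\lambda(Q_yR_m-R_m)=Q_xR_m|x|^{\lambda-m-2}-R_m|x|^{\lambda-m}=0$, using $Q_x=|x|^2$ in the quotient ring. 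It is an isomorphism: its inverse is $P_m(x)|x|^{\lambda-m}\mapsto[P_m(y)]$, which is also well defined for the same reason.

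\textbf{Step 2: compose with evaluation.} Under \eqref{eq_polar_regard}, $x=r\omega$, the function attached to $\eta_\lambda([P])$ is $r^\lambda\,P|_{S^{d-1}}(\omega)$. So evaluation restricted to degree $\lambda$ equals the composite of the restriction map of Proposition \ref{prop_rep_func_space} with the injective map $g\mapsto r^\lambda g(\omega)$.

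\textbf{Step 3: injectivity.} Proposition \ref{prop_rep_func_space} identifies the image of the restriction map but not its kernel, so this point needs its own check. Suppose a polynomial $P$ vanishes on $S^{d-1}$. Then $P\in(Q_y-1)$. One way to see this: $Q_y-1$ is irreducible for $d\ge2$, and its real zero set is Zariski dense in its complex zero set, so Hilbert's Nullstellensatz together with irreducibility gives the claim. So each degree-$\lambda$ piece maps injectively.

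It remains to rule out cancellation between different degrees. Suppose $\sum_{i=1}^k r^{\lambda_i} g_i(\omega)=0$ with $\lambda_i$ distinct. For each fixed $\omega$, the functions $r^{\lambda_i}$ are linearly independent on $(0,\infty)$, since they are distinct exponentials in $\log r$. Hence every $g_i(\omega)=0$. Therefore the whole map is injective.

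\textbf{Step 4: image of degree $\lambda$.} The inclusion in one direction is direct. The function $r^\lambda g(\omega)$ satisfies $E(x)f=\lambda f$, because $E=r\partial_r$. Its $\SO(d)$-span is finite dimensional, because the rotations act only on $g\in\mathcal O(S^{d-1})$.

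For the converse, let $f$ be real analytic with $Ef=\lambda f$. Homogeneity along rays gives $f(r\omega)=r^\lambda f(\omega)$. Finiteness of the $\SO(d)$-span of $f$ implies the same for $g=f|_{S^{d-1}}$, since rotations commute with restriction and with scaling, so $g\in\mathcal O(S^{d-1})$. By Proposition \ref{prop_rep_func_space}, $g=P|_{S^{d-1}}$ for some polynomial $P$, and then $f=\eta_\lambda([P])$.

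The main obstacle is the kernel claim in Step 3. If the Nullstellensatz argument is considered too heavy, we would instead use the harmonic decomposition $\C[y]=\bigoplus_k |y|^{2k}\Harm$. Modulo $(Q_y-1)$, every class is then represented by a harmonic polynomial, and harmonic polynomials are determined by their restriction to the sphere (maximum principle, or uniqueness for the Dirichlet problem).
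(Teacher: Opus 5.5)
Your proposal is correct and follows the paper's route. The paper only says that the result follows from Proposition~\ref{prop_rep_func_space}; its suppressed argument is your Step~1 isomorphism $\eta_\lambda\colon \C[y_1,\ldots,y_d]/(y_1^2+\cdots+y_d^2-1)\to\C[x_1,\ldots,x_d,|x|^\R]_\lambda$, with inverse $P_m(x)|x|^{\lambda-m}\mapsto[P_m(y)]$, combined with polar coordinates. Your Step~3 also fills in what the paper leaves implicit, since Proposition~\ref{prop_rep_func_space} as stated only identifies the image: you correctly show that a polynomial vanishing on $S^{d-1}$ lies in $(y_1^2+\cdots+y_d^2-1)$, and that distinct powers $r^{\lambda_i}$ cannot cancel.
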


In the case \(d=2\), the space \(\C((x_1,x_2,|x|^\R))\) can be described
in terms of the complex coordinate \(z\) and its complex conjugate \(\z\) as follows.
%$d=2$の場合は、複素座標$z$とその共役$\z$を用いて、$\C((x_1,x_2,|x|^\R))$を与えることができる。以下でこのことを説明する。
%\cite{M1}で、我々は二次元共形場理論を定義するために形式的級数の空間を導入した。以下で、$d=2$の場合に上記のは\cite{M1}の定義を少しだけ変更したものになっていることを見る。
Let $z,\z$ be independent formal variables and $\C[[z,\z,|z|^\R]]$ the space of formal power series spanned by
\begin{align}
\sum_{r,s\in \R}a_{r,s} z^r\z^s\quad\quad (a_{r,s}\in \C)
\label{eq_series_bound}
\end{align}
such that:
\begin{itemize}
\item
$a_{r,s}=0$ unless $r-s \in\Z$.
\end{itemize}
Denote by $\C((z,\z,|z|^\R))$ the subspace of $\C[[z,\z,|z|^\R]]$ spanned by vectors of the form \eqref{eq_series_bound} such that:
\begin{enumerate}
\item[M1)]
For any $M\in \R$, 
$\{(r,s)\in \R^2 \mid a_{r,s}\neq 0 \text{ and }r+s < M \}$
is a finite set.
%\item
%There exists $N \in \R$ such that $a_{r,s}=0$ if $r<N$ or $s<N$.
\end{enumerate}

For any $n,m\in \Z_{\geq 0}$ and $r\in \R$, by
\begin{align*}
x_1^n x_2^m|x|^r \mapsto \left(\frac{z+\z}{2}\right)^n\left(\frac{z-\z}{2i}\right)^m (z\z)^{\frac{r}{2}} \in \C((z,\z,|z|^\R)),
\end{align*}
we have a linear isomorphism:
\begin{align*}
\C((x_1,x_2,|x|^\R)) \rightarrow \C((z,\z,|z|^\R)).
\end{align*}

%
%
%\begin{rem}
%\cite{M1}において我々は下に有界な形式的級数の空間を$\C[[z,\z,|z|^\R]]$の部分空間であって、(M1)および次の条件(M2)を満たすものとして定義した。
%\begin{itemize}
%\item[M2)]
%There exists $N \in \R$ such that $a_{r,s}=0$ if $r<N$ or $s<N$
%\end{itemize}
%またその定義を元に full vertex algebra を導入し様々な性質を証明した。しかしそうした証明は多くの場合、(M2) を仮定することなく証明できることが分かる。高次元における定義の自然さからも full vertex algebra の定義から (M2) は外すべきかもしれない。
%(ただし有理的共形場理論やユニタリ共形場理論では(M2)が自然に満たされており、我々はM2が成り立たない自然な例を知らない).
%\end{rem}
%

\subsection{Space of vertex operators}\label{sec_space_vertex}
In this section, we define a space of $d$-dimensional vertex operators.

\begin{dfn}\label{def_compact}
We call a $\mathfrak{so}(d+1,1)$-module $H$ 
\textbf{compact} if it satisfies the following conditions:
\begin{enumerate}
\item[C1)]
$D$ acts semisimply on $H$ with real eigenvalues, i.e.,
$H=\bigoplus_{\D\in \R}H_\D$, where $H_\D=\{a\in H\mid Da=\D a\}$ for $\D \in \R$;
\item[C2)]
For any $M \in \R$, $\sum_{\D \leq M}\dim H_\D$ is finite;
%$\{\D \in \R\mid H_\D \neq 0\text{ and }\D \leq M\}$ is a finite set;
\end{enumerate}
We note that for any $\D \in \R$ $J_{\nu,\mu}H_\D \subset H_\D$, $P_\mu H_\D \subset H_{\D+1}$ and $K_\mu H_\D \subset H_{\D-1}$. In particular, by (C2), $K_\mu$ is locally nilpotent and $H_\D$ is a module of the subalgebra $\mathfrak{so}(d)=\langle J_{\nu,\mu} \rangle \subset \mathfrak{so}(d+1,1)$.
\begin{enumerate}
\item[C3)]
For any $\D \in \R$, the action of $\mathfrak{so}(d)$ on $H_\D$ can be exponentiated to the Lie group $\mathrm{SO}(d)$,
i.e., it is properly a representation of $\mathrm{SO}(d)$ and not of $\mathrm{Spin}(d)$.
% and a direct sum of finite dimensional representations of $\mathrm{SO}(d)$.
\end{enumerate}
\end{dfn}

Let $\mathfrak{so}(d+1,1) \otimes_\R \R[x_1,x_2,\dots,x_d]$ be the Lie algebra defined by
\begin{align*}
[X\otimes f,Y\otimes g] = [X,Y]\otimes fg
\end{align*}
for any $X,Y \in \sod$ and $f,g \in \R[x_1,\dots,x_d]$.
Since $P_\mu$ is ad-nilpotent for all $\mu =1,\dots,d$,
$\exp(-\sum_{\rho} x_\rho \mathrm{ad} P_\rho) )$
define a Lie algebra automorphism of $\mathfrak{so}(d+1,1) \otimes_\R \R[x_1,x_2,\dots,x_d]$.

\begin{rem}
\label{rem_shift_automorphism}
This automorphism appears for the following reason.
In conformal field theory, the underlying vector space \(H\) is the space of states of particles localized at the
origin \(0 \in \R^d\). The state obtained by inserting a state \(a \in H\)
at a point \(x\) is given by
\[
\exp\left(\sum_{\rho} x_\rho P_\rho\right)a.
\]
For readers familiar with vertex operator algebras, this is a special case
of skew-symmetry:
\begin{align*}
Y(a,z)\va = \exp(zL(-1))a.
\end{align*}
Then the action of \(X \in \mathfrak{so}(d+1,1)\) on
\(\exp(\sum_{\rho} x_\rho P_\rho) a\) can be formally written as
\begin{align*}
X\exp(\sum_{\rho} x_\rho P_\rho) a 
&= \exp(\sum_{\rho} x_\rho P_\rho)\exp(-\sum_{\rho} x_\rho P_\rho)X \exp(\sum_{\rho} x_\rho P_\rho) a\\
&=\exp(\sum_{\rho} x_\rho P_\rho) 
\left(\exp(-\sum_{\rho} x_\rho \mathrm{ad}P_\rho)X\right)a.
\end{align*}
For example, in the case of a vertex operator algebra, we have
\begin{align*}
\exp(-z \mathrm{ad} L(-1))L(1) = L(1)+2zL(0)+z^2L(-1),
\end{align*}
which appears precisely in the well-known formula (see for example \cite{FHL})
\begin{align*}
[L(1),Y(a,z)] = \sum_{k=0}^2 \binom{2}{k} Y( L(1-k)a ,z)z^{k}.
\end{align*}
Thus, in order to formulate the conformal covariance of vertex operators,
one needs to twist by this automorphism.
%この自己同型群は次のような理由で現れる。
%The vector space $H$は原点$0 \in \R^d$に局所的に存在する粒子たちの状態のなすベクトル空間である。
%点$x$に状態$a \in H$を挿入した状態は $\exp(\sum_{\rho} x_\rho P_\rho) a$とかける。
%頂点作用素代数になじみがある場合、これは skew-symmetry の特別な場合である:
%\begin{align*}
%Y(a,\uz)\va = \exp(zL(-1))a.
%\end{align*}
%このとき、$\exp(\sum_{\rho} x_\rho P_\rho) a$への$X \in \mathfrak{so}(d+1,1)$の作用は、
%\begin{align*}
%X\exp(\sum_{\rho} x_\rho P_\rho) a 
%&= \exp(\sum_{\rho} x_\rho P_\rho)\exp(-\sum_{\rho} x_\rho P_\rho)X \exp(\sum_{\rho} x_\rho P_\rho) a\\
%&=\exp(\sum_{\rho} x_\rho P_\rho) 
%\left(\exp(-\sum_{\rho} x_\rho \mathrm{ad}P_\rho)X\right)a
%\end{align*}
%とかける。たとえば頂点作用代数の場合、
%\begin{align*}
%\exp(-z \mathrm{ad} L(-1))L(1) = L(1)+2zL(0)+z^2L(-1)
%\end{align*}
%であるが、これは良く知られた公式
%\begin{align*}
%[L(1),Y(a,z)] = \sum_{k=0}^2 \binom{2}{k} Y( L(1-k)a ,z)z^{k}
%\end{align*}
%に他ならない。
\end{rem}

\begin{lem}\label{lem_covariance_Lie}
For any $\mu,\nu \in \{1,2,\dots,d\}$,
\begin{align*}
\exp(-\sum_{\rho=1}^d x_\rho \mathrm{ad} P_\rho)P_\mu
&=P_\mu \\
\exp(-\sum_{\rho=1}^d x_\rho \mathrm{ad} P_\rho)J_{\mu,\nu}
&=J_{\mu,\nu}+ x_\nu P_\mu-x_\mu P_\nu \\
\exp(-\sum_{\rho=1}^d x_\rho \mathrm{ad} P_\rho)D
&=D + \sum_{\rho=1}^d x_\rho P_\rho \\
\exp(-\sum_{\rho} x_\rho \mathrm{ad} P_\rho)K_\mu
&=K_\mu+ 2x_\mu D + 2 \sum_{\rho =1}^d x_ \rho J_{\rho,\mu}
+ 2x_\mu \sum_{\rho=1}^d x_\rho P_\rho
- ||x||^2P_\mu.
\end{align*}
\end{lem}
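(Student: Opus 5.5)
Write $X=\sum_{\rho=1}^d x_\rho P_\rho$, so the operator in question is $\exp(-\mathrm{ad}\,X)$. Since the $x_\rho$ commute with everything in $\sod\otimes_\R\R[x_1,\dots,x_d]$, it suffices to compute $\mathrm{ad}\,X$ iteratively on each basis element using the relations \eqref{eq_com_Lie}. The series terminates because $\mathrm{ad}\,P_\rho$ raises the $D$-eigenvalue by one and $\sod$ only has $D$-eigenvalues $-1,0,1$. Concretely, I would compute
\[
\exp(-\mathrm{ad}\,X)Y=Y-[X,Y]+\tfrac12[X,[X,Y]]
\]
for each $Y\in\{P_\mu,J_{\mu,\nu},D,K_\mu\}$. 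All higher terms vanish, since $[X,[X,[X,K_\mu]]]$ has degree $2$.

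\textbf{First three cases.}
\begin{itemize}
\item For $P_\mu$: translations commute, so $[X,P_\mu]=0$ and the first identity follows.
\item For $D$: $[P_\rho,D]=-P_\rho$, so $-[X,D]=X$. Then $[X,X]=0$ gives $D+\sum_\rho x_\rho P_\rho$.
\item For $J_{\mu,\nu}$: $[J_{\mu,\nu},P_\rho]=-\delta_{\mu,\rho}P_\nu+\delta_{\nu,\rho}P_\mu$. Hence
\[
-[X,J_{\mu,\nu}]=\sum_\rho x_\rho[J_{\mu,\nu},P_\rho]=-x_\mu P_\nu+x_\nu P_\mu,
\]
and the next bracket vanishes because $P$'s commute.
\end{itemize}

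\textbf{Case of $K_\mu$.} This is the only genuinely second-order computation and the place where sign conventions must be tracked carefully.

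First order: $[P_\rho,K_\mu]=-2(\delta_{\rho,\mu}D+J_{\rho,\mu})$. Therefore
\[
-[X,K_\mu]=2x_\mu D+2\sum_\rho x_\rho J_{\rho,\mu}.
\]

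Second order: apply $-\mathrm{ad}\,X$ to this term and multiply by $\tfrac12$. Using the previous two cases,
\[
-[X,D]=X,\qquad -[X,J_{\rho,\mu}]=x_\mu P_\rho-x_\rho P_\mu.
\]
So the second-order term is
\[
\tfrac12\Bigl(2x_\mu X+2\sum_\rho x_\rho(x_\mu P_\rho-x_\rho P_\mu)\Bigr)=2x_\mu X-\|x\|^2P_\mu .
\]
This matches the stated formula $K_\mu+2x_\mu D+2\sum_\rho x_\rho J_{\rho,\mu}+2x_\mu\sum_\rho x_\rho P_\rho-\|x\|^2P_\mu$.

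The main obstacle is purely bookkeeping: getting the sign of $[J_{\rho,\mu},P_\sigma]$ right with the index order $J_{\rho,\mu}$ rather than $J_{\mu,\rho}$. I would double-check this at the end by applying the homomorphism $d$ of Lemma \ref{lem_sod_dmod} in the case $d=2$ and comparing with the formula $\exp(-z\,\mathrm{ad}L(-1))L(1)=L(1)+2zL(0)+z^2L(-1)$ from Remark \ref{rem_shift_automorphism}, via the identifications \eqref{eq_L_sod}.
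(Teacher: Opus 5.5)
Your proposal is correct and takes the same approach as the paper. Both expand $\exp(-\mathrm{ad}\,X)$ as a terminating series of iterated brackets computed from \eqref{eq_com_Lie}, and in the only nontrivial case, $K_\mu$, your first- and second-order terms reproduce exactly the paper's brackets $[-X,K_\mu]=2x_\mu D+2\sum_\rho x_\rho J_{\rho,\mu}$, $[-X,x_\mu D]=x_\mu X$ and $[-X,\sum_\rho x_\rho J_{\rho,\mu}]=x_\mu X-\|x\|^2P_\mu$ (your termination argument via the $D$-grading and the $d=2$ sanity check, which the paper carries out in Remark \ref{rem_chiral_cov}, are fine additions).
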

\begin{proof}
We will check only the last equation.
Since $[-\sum_{\rho} x_\rho P_\rho, K_\mu] = 2x_\mu D+2\sum_\rho x_\rho J_{\rho,\mu}$
, $ [-\sum_{\rho} x_\rho P_\rho, x_\mu D] =x_\mu \sum_\rho x_\rho P_\rho$ and $[-\sum_{\rho} x_\rho P_\rho, \sum_\rho x_\rho J_{\rho,\mu}] = x_\mu \sum_{\rho}x_\rho P_\rho- ||x||^2P_\mu$
we have:
\begin{align*}
\exp(-\sum_{\rho} x_\rho \mathrm{ad} P_\rho)K_\mu
&=K_\mu+ 2x_\mu D + 2 \sum_{\rho =1}^d x_ \rho J_{\rho,\mu}
+ 2x_\mu \sum_{\rho=1}^d x_\rho P_\rho
- ||x||^2P_\mu.
\end{align*}
\end{proof}

\begin{rem}\label{rem_chiral_cov}
By applying Lemma \ref{lem_covariance_Lie}
in the case of $d=2$, we obtain
\begin{align*}
\exp(-x_1 \mathrm{ad}P_1-x_2 \mathrm{ad} P_2) K_1 = K_1+2x_1 D - 2x_2 J_{12} + 2x_1(x_1P_1+x_2P_2)- ||x||^2P_1,\\
\exp(-x_1 \mathrm{ad}P_1-x_2 \mathrm{ad} P_2) K_2 = K_2+2x_2 D + 2x_1 J_{12} + 2x_2(x_1P_1+x_2P_2)- ||x||^2P_2
\end{align*}
By Remark \ref{rem_sod_sl2}, we have $L(1)=\ft( K_1+iK_2)$. Setting $z=x_1+ix_2$, we get
\begin{align*}
\exp(-x_1 \mathrm{ad}P_1-x_2 \mathrm{ad} P_2)L(1) &= L(1)+z D +izJ_{12}+z(x_1P_1+x_2P_2) -\ft z\z(P_1+iP_2)\\
&= L(1)+2zL(0)+\ft z^2 P_1- \frac{i}{2}z^2P_2 \\
&= L(1)+2zL(0)+z^2 L(-1),
\end{align*}
which is consistent with Remark \ref{rem_shift_automorphism}.
\end{rem}

%In section \ref{sec_conf_inf}, we introduce 
%a $\R[x_1,\pa_1,\dots,x_d,\pa_d]$-modules $H[x_1,\dots,x_d,|x|^\R]$, $H[[x_1,\dots,x_d,|x|^\R]]$ and $H((x_1,\dots,x_d,|x|^\R))$.
%By Lemma \ref{lem_sod_dmod}, they are $\sod$-modules.
%Assume $H$ is a compact $\sod$-module.
%Then, we can consider the tensor product representation of $\sod$ on $H[x_1,\dots,x_d,|x|^\R] = H \otimes_\C \R[x_1,\dots,x_d,|x|^\R]$, and similarly on $H[[x_1,\dots,x_d,|x|^\R]]$ and $H((x_1,\dots,x_d,|x|^\R))$.
%Hereafter, we will think of these as the tensor product module.
Let $H$ be a compact $\sod$-module.
For a linear map
\begin{align*}
F \in \mathrm{Hom}_\C\left(H\otimes H, H((x_1,\dots,x_d,|x|^\R))\right)
\end{align*}
define an action of $\sod$ by
\begin{align*}
\left(A\cdot F\right) (a,b) = A F(a,b) -F(a,A b) -F((\exp(-\sum_\rho x_\rho \ad P_\rho ) A) a,b)
\end{align*}
for $A\in \sod$ and $a,b\in H$.
%which is a usual action of Lie algebra on a tensor product and a $\mathrm{Hom}$-space.

\begin{dfn}\label{def_cov}
A linear map $F \in \mathrm{Hom}_\C\left(H\otimes H, H((x_1,\dots,x_d,|x|^\R))\right)$ is called a \textbf{ conformally covariant vertex operator} if
$F(P_\mu a,b)=\frac{d}{dx_\mu} F(a,b)$ and $A\cdot F=0$ for any $A\in \sod$.
Following the conventions of vertex algebra, we denote it by
\begin{align*}
Y(-,x): H\otimes H\rightarrow H((x_1,\dots,x_d,|x|^\R)),\quad a\otimes b \mapsto Y(a,x)b.
\end{align*}
\end{dfn}

More explicitly, by Lemma \ref{lem_covariance_Lie}, a linear map $Y(-,x)$ is conformally covariant if 
\begin{enumerate}
\item
$Y(P_\mu a,x)=\frac{d}{dx_\mu} Y(a,x)$ for any $\mu =1,\dots,d$ and $a\in H$;
\item
For any $a,b\in H$,
\begin{align}
\begin{split}
P_\mu {Y}(a,x)b &= {Y}(P_\mu a,x)b +{Y}(a,x)P_\mu b\\
J_{\mu,\nu} {Y}(a,x)b
&={Y}(J_{\mu,\nu}a,x)b+ {Y}(a,x)J_{\mu,\nu} b+
\left(x_\nu \frac{d}{dx_\mu}-x_\mu \frac{d}{dx_\nu}\right){Y}(a,x)b\\
D {Y}(a,x)b &=
{Y}(D a,x)b+{Y}(a,x) Db+E(x){Y}(a,x)b\\
K_\mu {Y}(a,x)b
 &= {Y}(K_\mu + 2x_\mu D+2 \sum_\rho x^\rho J_{\rho,\mu}a,x)b + {Y}(a,x)K_\mu b+\left(2x_\mu E(x)-||x||^2 \frac{d}{dx_\mu}\right){Y}(a,x)b.
\end{split}
\label{eq_cov_def}
\end{align}
\end{enumerate}

\section{System of correlation functions in conformal field theory}
In this section, we formulate and study systems of correlation functions
for \(d\)-dimensional conformal field theories whose underlying vector spaces (the state spaces) are
compact \(\sod\)-modules. We call such theories compact conformal field
theories.

In Section \ref{sec_cluster}, we give the definition of a system of correlation
functions. In Section \ref{sec_from_d}, we prove that conformally covariant vertex
operators can be extracted from such systems of correlation functions. 
In Section \ref{sec_def_vertex}, we show that compositions of these vertex operators are realized as different expansions of a single real analytic function, and that they satisfy identities corresponding to
associativity and commutativity. These identities correspond to the conformal bootstrap equations in physics.
In Section
\ref{sec_rel_vertex}, we study the case \(d=2\) in detail and, using the exceptional
isomorphism
\(\mathfrak{so}(3,1)_\C\cong \mathrm{sl}_2\C\oplus \mathrm{sl}_2\C\),
derive the relation with vertex algebras.
%この章では state 空間が compact $\sod$-module となるような$d$次元の共形場理論(コンパクト共形場理論)の相関関数系を定式化しその性質を調べる。
%Section \ref{} では相関関数系の定式化を与える。
%Section \ref{} では、こうした相関関数系からconformally covariant な頂点作用素が取り出せることを証明する。Section \ref{} では、この頂点作用素の合成が解析接続をすると、結合法則や交換法則にあたる等式を満たすことを示す。こうした等式は物理におけるconformal bootstrap equation に対応する。
%Section \ref{} では、$d=2$ の場合を詳しく調べ、例外同型$\mathfrak{so}(3,1)_\C\cong \mathrm{sl}_2\C\oplus \mathrm{sl}_2\C$に基づき、頂点代数との関係を導出する。
%
%
% conformally flat d-algebra $(F,\m,\va)$から頂点作用素
%\begin{align}
%Y(\bullet,x):F \rightarrow \End (F)[[x_1,\dots,x_d,|x|^\R]]
%\label{eq_vertex_intro2}
%\end{align}
%を取り出す。
%Section \ref{sec_space_vertex}では\eqref{eq_vertex_intro2}の形の線形作用素の空間への$\mathfrak{so}(3,1)$の作用を定義する。
%二次元の場合は \cite{HK} の？章が対応する (see also \cite[Section ]{})。
%Section \ref{sec_from_d}では
%conformally flat $d$-algebra から頂点作用素を取り出し、その性質を調べる。
%Section \ref{sec_def_vertex}では$\mathfrak{so}(3,1)$不変な頂点作用素が満たすべき性質を用いて、full d-vertex algebra を定義する。
%Section \ref{sec_2d}では$d=2$の場合に、頂点代数やfull頂点代数との関係を調べる。とくに full $d$-vertex algebra は (full) 頂点代数の$d$次元への一般化と思える。

\subsection{Conformally covariant systems of correlation functions}
\label{sec_cluster}

Let $H$ be a compact $\mathfrak{so}(d+1,1)$-module. Set 
\begin{align}
\overline{H} = \Pi_{\D \in \R} H_\D\quad\text{and}\quad
H^\vee =\bigoplus_{\D\in \R} H_{\D}^*,\label{eq_restricted_dual}
\end{align}
where $H_\D^*$ is the dual vector space.
Denote the canonical pairing on $H^\vee \otimes \F \rightarrow \C$ by $\langle -,-\rangle$.
Let
\begin{align*}
\pr_\D:\F \rightarrow H_\D,
\end{align*}
be the projection for $\D\in\R$.
For $n \geq 1$ and $u \in H^\vee$ and $a_i \in H$ ($i=1,\dots,n$)
and $f\in \End\, H$, $p \in \{1,\dots,n\}$,
we will use the following notation:
\begin{align*}
[n]&= \{1,\dots,n\}\\
\ar&=a_1\otimes \cdots\otimes a_n \in  H^{\otimes n},\\
f_p  \ar &=  a_1\otimes \cdots \otimes f(a_p)\otimes \cdots\otimes a_n \in H^{\otimes n}.
%f^*  u &= u \circ f \in H^\vee.
\end{align*}
%where $(f^* m_0^*)(v)=m_0^*(f(v))$ for $v\in F$.
If \(f\in\End\, H\) is homogeneous with respect to the \(D\)-grading, we set
\[
f^*u=u\circ f\in H^\vee.
\]

Set
\begin{align*}
\Conf_n(\R^d)=\{(x^{(1)},\dots,x^{(n)}) \in (\R^d)^n \mid x^{(i)} \neq x^{(j)}\},
\end{align*}
the $n$-point configuration space of $\R^d$. Denote $(x^{(1)},\dots,x^{(n)})\in \Conf_n(\R^d)$ by $\xr$ as above.

For $n,m \geq 1$, set
\begin{align}
\begin{split}
U_{n,m}&=\{
((x^{(1)},\dots,x^{n},w),(x^{(n+1)},\dots,x^{(n+m)}) )\in \Conf_{n+1}(\R^d)\times \Conf_m(\R^d)\\
&\mid
\min_{k \in [n]}|x^{(k)}-w|>\max_{l \in [m]} |x^{(n+l)}| \Bigr\},
\end{split}
\label{eq_upq_def}
\end{align}
an open subset of $\Conf_{n+1}(\R^d)\times \Conf_m(\R^d)$. Set
\begin{align*}
U_{0,m}=\Conf_1(\R^d) \times \Conf_m(\R^d).
\end{align*}

The following definition is a higher-dimensional generalization, with some
modifications, of the full field algebra introduced by Huang and Kong in \cite{HK} to formulate two-dimensional non-chiral conformal field theory.
%以下の定義は二次元のchiral でない共形場理論を定式化するために、 \cite{HK}が導入した full field algebra の高次元への (一部を修正した) 一般化である。

%
%この章では、完備化$\F$を用いて、我々は permutation 不変な写像の列
%\begin{align*}
%\m:\Conf_r(\R^d) \rightarrow \Hom_\C(H^{\otimes r},\F)
%\end{align*}
%を考える。ここでいくつかの注意をする。まず$r=0$の場合は、
%\begin{align*}
%\m:* \rightarrow \Hom_\C(H^{\otimes 0},\F)=\Hom_\C(\C,\F)=\F
%\end{align*}
%であり、$*$の像はある特別なベクトル$\va \in \F$を定める。
%これは場の量子論の真空にあたり$\C \va$は$\sod$の自明表現を定めることを仮定するのが自然である (Wightman 公理の vacuum もみよ)。このベクトルは代数の単位元に対応する。$r=1$の場合は
%\begin{align*}
%\m:\Conf_1(\R^d)=\R^d \rightarrow \Hom_\C(H,\F)
%\end{align*}
%であり、$0 \in \Conf_1(\R^d)$がoperadのunitであることから$\m(0)=\id_H$を要請するのが自然である。
%最後に\eqref{eq_operad_comp_ex}の解釈を与える。
%permutation invariance から 合成演算$\circ_i$は 
%\begin{align*}
%\circ_{p+1}:\Conf_{p+1}(\R^d) \times \Conf_q(\R^d) \rightarrow \Conf_{p+q}(\R^d)
%\end{align*}
%のように$p+1$番目の変数に対してのみ考えて一般性を失わない。合成を行う座標を$w$を用いて書くことにし、$(x_1,\dots,x_p,w) \in \Conf_{p+1}(\R^d)$という記法を用いる。また合成をすると、二番目の球面は$w$だけ平行移動されることから、二番目の変数は初めから平行移動された変数 $(x_{p+1}-w,\dots,x_{p+q}-w)$を用いることにする。
%すると合成可能なとき
%\begin{align}
%(x_1,\dots,x_p,w)\circ_{p+1}
%(x_{p+1}-w,\dots,x_{p+q}-w)
%= (x_1,\dots,x_{p+q}) \in \Conf_{p+q}(\R^d)
%\label{eq_comp_well_pq}
%\end{align}
%が成り立つ。\eqref{eq_comp_well_pq}が定義可能になる領域は、Proposition \ref{prop_operad_D}より
%によって与えられる。
%すなわち合成 $\cM_{(x_1,\dots,x_p,w)}(a_1,\dots,a_p,\cM_{(x_{p+1}-w,\dots,x_{p+q}-w)}(a_{p+1},\dots,a_{p+q}))$ (for $a_i \in H$)が、実解析的関数として well-defined になってほしい。
%More precisely, we define a conformally flat d-algebra as follows:
\begin{dfn}\label{def_conformal_algebra}
A \textbf{$d$-dimensional conformally covariant system of correlation functions},
or simply a \textbf{$d$-CC system}, is a compact $\mathfrak{so}(d+1,1)$-module $H$
equipped with a sequence of maps 
\begin{align*}
\m:\Conf_n(\R^d) \rightarrow  \Hom_\C(H^{\otimes n},\F),\quad\quad 
(\xr, a_1,\dots,a_n) \mapsto \m_{\xr}(a_1,\dots,a_n)
\end{align*}
($n=1,2,3,\dots$),
which are linear in $H^{\otimes n}$, and a distinguished vector $\va \in H_0$ such that:
\begin{description}
\item[unit)]
For any $a\in H$, $\m_{(0)}(a) = a$;
\item[analyticity)]
For any $\ar \in H^{\otimes n}$ and $u \in H^\vee$,
\begin{align*}
\Conf_n(\R^d) \rightarrow \C,\quad
\xr \mapsto \langle u, \m_{\xr}(\ar)\rangle
\end{align*}
is a real analytic function on $\Conf_n(\R^d)$;
\item[permutation invariance)]
For any $\si$ in the permutation group $S_n$,
\begin{align*}
\m_{(x^{(1)},\dots,x^{(n)})}(a_1,\dots,a_n)=\m_{(x^{(\si 1)},\dots,x^{(\si n)})}(a_{\si1},\dots,a_{\si n});
\end{align*}
\item[vacuum property)]
For any $n \geq 1$,
\begin{align*}
\m_{(x^{(1)},\dots,x^{(n)},x^{(n+1)})}(a_1,\dots,a_n,\1)=\m_{(x^{(1)},\dots,x^{(n)})}(a_1,\dots,a_n);
\end{align*}
\item[invariance of vacuum)]
$\C \va$ is a trivial representation of $\sod$, that is, $X\va=0$ for any $X\in \sod$;
\item[translation invariance)]
For any $p=1,\dots,n$ and $\mu \in\{1,\dots,d\}$,
\begin{align*}
\m_{\xr}((P_\mu)_p \ar) &= \frac{d}{dx_\mu^{(p)}} \m_{\xr}(\ar);
\end{align*}
\item[conformal covariance)]
For any $X\in \mathfrak{so}(d+1,1)$,
\begin{align*}
X \m_{\xr}(\ar) = \sum_{q=1}^n \m_{\xr}\left(\left(\exp(-\sum_{\rho=1}^d x_\rho^{(q)} \mathrm{ad} P_\rho)X\right)_q \ar\right).
\end{align*}
%$p=1,\dots,r$ and $\mu,\nu \in\{1,\dots,d\}$,
\item[strong cluster decomposition)]
Let $n \geq 0$ and $m \geq 1$.
Then, for any $a_i \in H$ ($i=1,\dots,n+m$) and $u\in H^\vee$ and $(x_1,\dots,x_{n+m},w) \in U_{n,m}$,
\begin{align}
\sum_{\D \in \R}
\langle u, \m_{(x^{(1)},\dots,x^{(n)},w)}(a_1,\dots,a_n, \pr_\D \m_{(x^{(n+1)},\dots,x^{(n+m)})}(a_{n+1},\dots,a_{n+m})\rangle
\label{eq_cluster}
\end{align}
is absolutely convergent and moreover \eqref{eq_cluster} is locally uniformly convergent to 
\begin{align*}
\langle u, \m_{(x^{(1)},\dots,x^{(n)},x^{(n+1)}+w,\dots, x^{(n+m)}+w)}(a_1,\dots,a_{n+m})\rangle,
\end{align*}
that is, for any compact subset $K \subset U_{n,m}$ and $\ep >0$, there exists $M >0$ such that
\begin{align*}
\left|
\langle u,
\m_{(x_{[n]},x_{[m]}+w)}\rangle -
\sum_{\D < M'}
\langle u, \m_{(x_{[n]},w)}(a_1,\dots,a_n, \pr_\D \m_{(x_{[n+1]})}(a_{n+1},\dots,a_{n+m})\rangle
 \right|<\ep
\end{align*}
holds for any $(x_{[n+m]},w) \in K$ and $M' \geq M$.
%Let $(x^{(1)},\dots,x^{n},w)\in \Conf_{n+1}(\R^d)$ and $(x^{(n+1)},\dots,x^{(n+m)}) \in  \Conf_m(\R^d)$ satisfy
%\begin{align*}
%\min_{k \in [n]}|x^{(k)}-w|>\max_{l \in [m]} |x^{(n+l)}| \Bigr\}.
%\end{align*}
%Then, for any $n \geq 0$ and $m \geq 1$ and $a_i \in H$ ($i=1,\dots,n+m$),
%% and $u\in H^\vee$ and $(x_1,\dots,x_{n+m},w) \in U_{n,m}$,
%\begin{align}
%\sum_{\D \in \R}
%\langle u, \m_{(x^{(1)},\dots,x^{(n)},w)}(a_1,\dots,a_n, \pr_\D \m_{(x^{(n+1)},\dots,x^{(n+m)})}(a_{n+1},\dots,a_{n+m})\rangle
%\label{eq_cluster}
%\end{align}
%is absolutely convergent and \eqref{eq_cluster} is convergent to 
%\begin{align*}
%\m_{(x^{(1)},\dots,x^{(n)},x^{(n+1)}+w,\dots, x^{(n+m)}+w)}(a_1,\dots,a_{n+m}).
%\end{align*}
\end{description}
\end{dfn}

%It is also natural to impose locally uniform convergence, in addition to
%absolute convergence, in the definition, as follows. Indeed, this stronger cluster decomposition is satisfied, for example, in
%two-dimensional rational conformal field theories.
%定義において、以下のように絶対収束だけでなく広義一様収束を課すことも自然である。

%\begin{description}
%\item[uniform cluster decomposition)]
%For any $n \geq 0$ and $m \geq 1$ and $a_i \in H$ ($i=1,\dots,n+m$) and $u\in H^\vee$ and $(x_1,\dots,x_{n+m},w) \in U_{n,m}$, \eqref{eq_cluster} is absolutely convergent and moreover \eqref{eq_cluster} is locally uniformly convergent to 
%\begin{align*}
%\m_{(x^{(1)},\dots,x^{(n)},x^{(n+1)}+w,\dots, x^{(n+m)}+w)}(a_1,\dots,a_{n+m}),
%\end{align*}
%that is, for any compact subset $K \subset U_{n,m}$ and $\ep >0$, there exists $M >0$ such that
%\begin{align*}
%\left|
%\langle u,
%\m_{(x_{[n]},x_{[m]}+w)}\rangle -
%\sum_{\D < M'}
%\langle u, \m_{(x_{[n]},w)}(a_1,\dots,a_n, \pr_\D \m_{(x_{[n+1]})}(a_{n+1},\dots,a_{n+m})\rangle
% \right|<\ep
%\end{align*}
%holds for any $(x_{[n+m]},w) \in K$ and $M' \geq M$.
%\end{description}
\begin{rem}\label{rem_cluster_HK}
This condition should be compared with the convergence property in the
definition of a full field algebra of Huang--Kong \cite{HK}. There, as here, the composition is formulated by inserting projections to homogeneous intermediate states and summing over them.  The Huang--Kong axiom requires
absolute convergence in the corresponding convergence domains.  Our strong cluster decomposition also requires local uniform convergence
on compact subsets.
%The cluster decomposition is an expansion of correlation functions in
%conformal field theory with respect to the conformal energy \(\Delta\), namely
%an operator product expansion. For such expansions, one expects that the
%energy cutoff scale \(M\) can be chosen uniformly on compact neighborhoods.
%However, the various structures derived from a \(d\)-CC system in this paper
%do not necessarily use locally uniform convergence. For this reason, we also
%consider a weaker version in which only absolute convergence is assumed.
%実際、二次元の有理的な共形場理論などでは、この strong clustering decomposition が満たされている。
%cluster decomposition は、共形場理論の相関関数の共形エネルギー$\Delta$による展開 
% (operator product expansion) 
%であり、こうした展開ではエネルギーカットオフのスケール$M$が、コンパクト近傍上で一様に取れることを期待したい。ただし、この論文で d-CC system から導出される様々な構造は、広義一様収束を必ずしも用いないため、絶対収束性のみを仮定する弱いバージョンも考える。
\end{rem}

Let $(H,\m,\va)$ be a $d$-CC system.
Let $a_1,\dots,a_n \in H$.
Then, $\exp(-P_\mu x_\mu^{(n)})\m(a_1,\dots,a_n)$ is real analytic on $\Conf_n(\R^d)$.
%We first note that $\exp(-\sum_\rho x_2^\rho P_\rho) \m_{(x_1,x_2)}(a,b)$ is a well-defined real analytic function on $\Conf_2(\R^d)$ since $P_\rho$ is an operator of degree $1$ with respect to $D$, from (C2) for any $u \in H^\vee$ the following sum for $k \geq 0$ is finite:
%\begin{align*}
In fact, since $P_\mu^*$ is an operator of degree $-1$ with respect to $D$ on $H^\vee$, from (C2) for any $u \in H^\vee$,
\begin{align*}
u(\exp(-P_\mu x_\mu^{(n)})\bullet) &= \sum_{k_\mu \geq 0}
\prod_{\mu=1}^d\left(\frac{1}{k_\mu!}(-x_\mu^{(n)})^{k_\mu} (P_\mu^{k_\mu})^*u\right)(\bullet)
\end{align*}
is a finite sum. The following lemma is useful:
\begin{lem}\label{lem_translation}
Let $(H,\m,\va)$ be a $d$-CC system, $u\in H^\vee$ and $a_1,\dots,a_n \in H$. Then, the real analytic function $F(x^{(1)},\dots,x^{(n)}) = \langle \exp(-P_\mu^* x_\mu^{(n)}) u, \m(a_1,\dots,a_n)\rangle$ on $\Conf_n(\R^d)$  satisfies
\begin{align*}
F(x^{(1)},\dots,x^{(n)})=F(x^{(1)}+v,\dots,x^{(n)}+v)
\end{align*}
for any $v \in \R^d$, that is, $F$ is translation invariant.
\end{lem}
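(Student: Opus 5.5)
The plan is to show that the derivative of $F$ along the diagonal translation direction vanishes:
\[
\sum_{q=1}^n \frac{\partial}{\partial x^{(q)}_\mu} F = 0 \qquad \text{for each } \mu.
\]
Since $\Conf_n(\R^d)$ is invariant under diagonal translations, and each orbit $t\mapsto (x^{(1)}+tv,\dots,x^{(n)}+tv)$ is a line contained in $\Conf_n(\R^d)$, the function $t\mapsto F(x_{[n]}+tv)$ will then have zero derivative on $\R$. It is therefore constant, which gives the claim.

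First I will differentiate the pairing. Write $G_{\xr}=\langle u',\m_{\xr}(\ar)\rangle$, with the pairing taken against a fixed $u'\in H^\vee$. By the finiteness remark before the lemma, $\exp(-P^*\cdot x^{(n)})u$ is a finite sum
\[
\sum_k \frac{(-x^{(n)})^k}{k!}\,(P^k)^*u
\]
of homogeneous elements of $H^\vee$, where only finitely many multi-indices $k$ occur (uniformly in $x^{(n)}$, by (C2)). Hence $F$ is a polynomial in $x^{(n)}$ with coefficients that are analytic functions of the form $G$, and it can be differentiated termwise. Since the $P_\rho$ commute, the derivative of the first factor with respect to $x^{(n)}_\mu$ is $-\exp(-P^*\cdot x^{(n)})P_\mu^*u$. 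This gives
\[
\sum_q \partial_{x^{(q)}_\mu} F
= -\langle \exp(-P^*\cdot x^{(n)})u,\; P_\mu \m_{\xr}(\ar)\rangle
+ \sum_q \langle \exp(-P^*\cdot x^{(n)})u,\; \partial_{x^{(q)}_\mu}\m_{\xr}(\ar)\rangle .
\]

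Next I will rewrite each derivative using translation invariance:
\[
\partial_{x^{(q)}_\mu}\m_{\xr}(\ar)=\m_{\xr}((P_\mu)_q\ar).
\]
For the term $P_\mu\m_{\xr}(\ar)$ I will use conformal covariance with $X=P_\mu$. By Lemma~\ref{lem_covariance_Lie}, $\exp(-\sum_\rho x_\rho\,\ad P_\rho)P_\mu=P_\mu$, so
\[
P_\mu\m_{\xr}(\ar)=\sum_q \m_{\xr}((P_\mu)_q\ar).
\]
The two contributions then cancel, and the diagonal derivative of $F$ is zero. Both equalities hold in $\F$, and pairing with an element of $H^\vee$ is componentwise, so they persist after pairing.

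The main technical point is to justify pulling $P_\mu^*$ across the pairing, that is, $\langle P_\mu^* w, \xi\rangle=\langle w, P_\mu\xi\rangle$ for $\xi\in\F$. This is where the action of $P_\mu$ on $\F$ must be understood. Because $P_\mu$ raises degree by one, it acts componentwise on $\F=\prod_\D H_\D$. For homogeneous $w$ of degree $\D+1$ the pairing only sees $\pr_\D\xi$, so the identity holds, and the case of general $w$ follows by linearity. I will also check that conformal covariance is an identity in $\F$ with $X$ acting componentwise, which makes this step legitimate.
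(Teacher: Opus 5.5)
Your proposal is correct and follows essentially the same route as the paper: you differentiate $F$ along the diagonal translation direction, use translation invariance to turn each $\partial_{x^{(q)}_\mu}$ into an insertion of $P_\mu$, and cancel this against conformal covariance with $X=P_\mu$, for which $\exp(-\sum_\rho x_\rho\,\ad P_\rho)P_\mu=P_\mu$. Your extra remarks on the finiteness of the exponential sum, on $P_\mu$ acting componentwise on $\F$, and on integrating along the line $t\mapsto x_{[n]}+tv\subset\Conf_n(\R^d)$ make explicit details that the paper leaves implicit.
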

\begin{proof}
By translation invariance, for any $\mu=1,\dots,d$,
\begin{align*}
&\left(\frac{d}{dx_\mu^{(1)}}+\dots + \frac{d}{dx_\mu^{(n)}}\right)F(x^{(1)},\dots,x^{(n)})\\
&=\left(\frac{d}{dx_\mu^{(1)}}+\dots + \frac{d}{dx_\mu^{(n)}}\right)\langle \exp(-P_\mu^* x_\mu^{(n)}) u, \m(a_1,\dots,a_n)\rangle\\
&=
-\langle \exp(-P_\mu^* x_\mu^{(n)})u, P_\mu \m(a_1,\dots,a_n)\rangle
+\sum_{p=1}^n \langle \exp(-P_\mu^* x_\mu^{(n)})u, \m(a_1,\dots,P_\mu a_p,\dots, a_n)\rangle,
\end{align*}
which is zero by conformal invariance. Hence, the assertion holds.
\end{proof}

It is enough to verify the strong cluster decomposition in the case where
the gluing point \(w\) is equal to \(0\). In practice, this gives a more
convenient form for constructing \(d\)-CC systems.
%まずはじめに strong cluster decomposition は、gluing をする点$w$が、$w=0$の場合に示せば十分であることを見る。これは $d$-CC system を構成する上でより使い易い形である。
\begin{prop}\label{prop_scd0}
Let $(H,\m)$ in Definition \ref{def_conformal_algebra} satisfy the analyticity, conformal covariance and translation invariance. Set
\begin{align*}
U_{n,m}^0 &= \{(x^{(1)},\dots,x^{(n+m)})\in \mathrm{Conf}_{n+m}(\R^d) \mid \min_{i = 1,\dots,n} |x^{(i)}| > \max_{j=1,\dots, m} |x^{(n+j)}|.
\end{align*}
Assume that for any $n,m \geq 1$ and $a_i \in H$ ($i=1,\dots,n+m$) and $u\in H^\vee$ and $x_{[n+m]} \in U_{n,m}^0$,
\begin{align}
\sum_{\D \in \R}
\langle u, \m_{(x^{(1)},\dots,x^{(n)},0)}(a_1,\dots,a_n, \pr_\D \m_{(x^{(n+1)},\dots,x^{(n+m)})}(a_{n+1},\dots,a_{n+m})\rangle
\label{eq_cluster0}
\end{align}
is absolutely convergent and \eqref{eq_cluster0} is  convergent to $\langle u,\m_{(x^{(1)},\dots,x^{(n+m)})}(a_1,\dots,a_{n+m})\rangle$
with respect to the sup-norm on any compact subset $K \subset U_{n,m}^0$.
Then, the strong cluster decomposition holds.
\end{prop}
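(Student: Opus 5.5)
The plan is to reduce the general gluing point \(w\) to \(w=0\) by translation covariance. The only tool needed is an identity in the spirit of Lemma \ref{lem_translation}, which moves a translation from the insertion points onto the dual vector \(u\).

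First I will establish, for \(u\in H^\vee\), \(a_{[N]}\in H^{\otimes N}\), \(x_{[N]}\in\Conf_N(\R^d)\) and \(v\in\R^d\), the identity
\begin{align*}
\langle u,\m_{(x^{(1)}+v,\dots,x^{(N)}+v)}(a_{[N]})\rangle=\langle \exp(\textstyle\sum_\rho v_\rho P_\rho^*)u,\m_{x_{[N]}}(a_{[N]})\rangle .
\end{align*}
Since \(P_\rho^*\) lowers the \(D\)-degree by one on \(H^\vee\), condition (C2) makes \(\exp(\sum_\rho v_\rho P_\rho^*)u\) a finite sum \(\sum_k c_k(v)u_k\). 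Here the \(u_k\in H^\vee\) are homogeneous and independent of \(v\), and the \(c_k(v)\) are polynomials in \(v\).

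To prove the identity I differentiate \(G(v)=\langle \exp(-\sum_\rho v_\rho P_\rho^*)u,\m_{x_{[N]}+v}(a_{[N]})\rangle\) in \(v_\mu\). Translation invariance gives \(\sum_p \partial/\partial x^{(p)}_\mu\,\m=\sum_p\m((P_\mu)_p a_{[N]})\). Conformal covariance with \(X=P_\mu\), using \(\exp(-\sum_\rho x_\rho\ad P_\rho)P_\mu=P_\mu\) from Lemma \ref{lem_covariance_Lie}, turns this into \(P_\mu\m_{x_{[N]}+v}(a_{[N]})\). This cancels the term coming from the derivative of the exponential, exactly as in the proof of Lemma \ref{lem_translation}. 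Hence \(G\) is constant, and its value is \(G(0)\). Replacing \(u\) by \(\exp(\sum_\rho v_\rho P_\rho^*)u\) then yields the identity.

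Next I treat the case \(n\ge 1\). Fix \(((x_{[n]},w),y_{[m]})\in U_{n,m}\) and write \(b_\D=\pr_\D\m_{y_{[m]}}(a_{n+1},\dots,a_{n+m})\). Applying the identity with \(v=w\) gives
\begin{align*}
\langle u,\m_{(x_{[n]},w)}(a_{[n]},b_\D)\rangle=\sum_k c_k(w)\,\langle u_k,\m_{(x^{(1)}-w,\dots,x^{(n)}-w,0)}(a_{[n]},b_\D)\rangle .
\end{align*}
The point \((x_{[n]}-w,y_{[m]})\) lies in \(U^0_{n,m}\). The map \((x_{[n]},w,y_{[m]})\mapsto(x_{[n]}-w,y_{[m]})\) is continuous from \(U_{n,m}\) to \(U^0_{n,m}\), so it sends compact sets to compact sets.

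By hypothesis, each of the finitely many series in \(k\) converges absolutely, and uniformly on compact sets, to \(\langle u_k,\m_{(x_{[n]}-w,y_{[m]})}\rangle\). The polynomial coefficients \(c_k(w)\) are bounded on compact sets. Therefore the \(\D\)-sum converges absolutely, since the sum of absolute values is bounded by \(\sum_k|c_k|\) times finite quantities. It also converges locally uniformly, to \(\sum_k c_k(w)\langle u_k,\m_{(x_{[n]}-w,y_{[m]})}\rangle\). By the identity again, this limit equals \(\langle u,\m_{(x_{[n]},y_{[m]}+w)}\rangle\).

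The case \(n=0\), where \(U_{0,m}=\Conf_1\times\Conf_m\), is handled directly. By the identity with \(N=1\) and the unit axiom, \(\langle u,\m_{(w)}(b)\rangle=\sum_k c_k(w)\langle u_k,b\rangle\). Hence only the finitely many \(\D\) equal to degrees of the \(u_k\) contribute, and the sum is finite with polynomial coefficients. Its value is \(\langle \exp(\sum_\rho w_\rho P_\rho^*)u,\m_{y_{[m]}}\rangle=\langle u,\m_{y_{[m]}+w}\rangle\), and both convergence statements are trivial.

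I expect the main care to be needed in the translation identity. One must check that the finite-sum expansion of \(\exp(\sum_\rho v_\rho P_\rho^*)u\) is legitimate, which is where (C2) enters. One must also check that differentiating the pairing against an element of \(\overline H\) is compatible with the analyticity axiom, since only finitely many graded components are involved. The remaining steps are bookkeeping with finite sums of polynomially weighted, uniformly convergent series.
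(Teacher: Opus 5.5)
Your proof is correct and is essentially the paper's argument: you translate the gluing point to $0$ by the identity of Lemma~\ref{lem_translation}, expand $\exp(\sum_\rho w_\rho P_\rho^*)u$ as a finite sum via (C2), and apply the $w=0$ hypothesis termwise with coefficients polynomial in $w$; your observation that $(x_{[n]},w,y_{[m]})\mapsto(x_{[n]}-w,y_{[m]})$ sends compacta of $U_{n,m}$ to compacta of $U^0_{n,m}$ is a cleaner substitute for the paper's neighbourhood construction. Your separate treatment of $n=0$, which the paper's proof passes over, rightly invokes the unit axiom $\m_{(0)}(a)=a$: this axiom is not among the hypotheses listed in the statement, and some such input is genuinely needed in that case, but it is available in the intended application (Lemma~\ref{lem_wick_covariance}).
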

\begin{proof}
Let $(p_1,\dots,p_{n+m},q) \in U_{n,m}$ and set
\begin{align*}
v_\Delta = \pr_\D \m_{(p_{n+1},\dots,p_{n+m})}(a_{n+1},\dots,a_{n+m})\in H.
\end{align*}

By Lemma \ref{lem_translation}, for each $\D\in\R$, we have
\begin{align}
\begin{split}
e^{-\sum_\mu P_\mu q_\mu}\m_{(p_1,\dots,p_n,q)}(a_1,\dots,a_n, v_\D)\rangle
&=\m_{(p_1-q,\dots,p_n-q,0)}(a_1,\dots,a_n, v_\D)\rangle.
\end{split}
\label{eq_trans_cluster}
\end{align}
Set $p'_i=p_i-q$ for $i\in [n]$ and $p'_{n+j}=p_{n+j}$ for $j \in [m]$, which satisfies 
\begin{align*}
(p'_1,\dots,p'_{n+m}) \in U_{n,m}^0.
\end{align*}
Hence, by the assumption, for any $u\in H^\vee$,
\begin{align*}
\sum_{\D} \langle u, (\Pi_{\mu=1}^d P_\mu^{k_\mu})\m_{(p_1-q,\dots,p_n-q,0)}(a_1,\dots,a_n, v_\D)\rangle
\end{align*}
is absolutely convergent for any $k_1,\dots,k_d \geq 0$. Since $P_\mu^*$ is locally nilpotent, \eqref{eq_cluster} is absolutely convergent and
\begin{align*}
\sum_{\D}&\langle u,\m_{(p_1,\dots,p_n,q)}(a_1,\dots,a_n, v_\D)\rangle\\
&=\sum_{\D} \langle u, e^{\sum P_\mu q_\mu}\m_{(p_1-q,\dots,p_n-q,0)}(a_1,\dots,a_n, \pr_\D \m_{(p_{n+1},\dots,p_{n+m})}(a_{n+1},\dots,a_{n+m}))\rangle\\
&=\langle u, e^{\sum P_\mu q_\mu}\m_{(p_1-q,\dots,p_n-q,p_{n+1},\dots,p_{n+m})}
(a_1,\dots,a_{n+m}))\rangle\\
&=\langle u, \m_{(p_1,\dots,p_n,p_{n+1}+q,\dots,p_{n+m}+q)}
(a_1,\dots,a_{n+m}))\rangle,
\end{align*}
where we used Lemma \ref{lem_translation} in the last line.
To prove the uniform convergence, it suffices to show that there is a compact neighborhood $(p_1,\dots,p_{n+m},q) \in \tilde{K}$ in $U_{n,m}$ such that \eqref{eq_cluster} is uniformly convergent in $\tilde{K}$.
By the assumption, there is a compact neighborhood $K' \subset U_{n,m}^0$
with $(p_1',\dots,p_{n+m}') \in K'$ such that 
\begin{align*}
\lim_{M \to \infty}\sup_{x'_{[n+m]} \in K'} 
|\langle u,\m_{x_{[n+m]}'(a_{[n+m]})}\rangle -
\sum_{\D <M}
\langle u, \m_{(x_1',\dots,x_n',0)}(a_1,\dots,a_n, \pr_\D \m_{(x_{n+1}',\dots,x_{n+m}')}(a_{n+1},\dots,a_{n+m})\rangle|=0.
%S_{n+m}(z'_1,\dots,z'_{n+m}) - \sum_{h,\h <H}\sum_{i \in I_h}S_n(\cdots,e_i;z'_1,\dots,z'_n,0)S_m(\cdots,e^i,z'_{n+1},\dots,z'_{n+m})|=0.
\end{align*}
For $r \in \R^d$ and $(s_1,\dots,s_{n+m}) \in (\R^d)^{n+m}$, set
\begin{align*}
T_r(s_1,\dots,s_{n+m})=(s_1-r,\dots,s_n-r,s_{n+1},\dots,s_{n+m}) \in (\R^d)^{n+m}.
\end{align*}
Take a compact neighborhood $L$ of $(p_1,\dots,p_{n+m})$ and an open subset $V$ of $K'$ such that $T_q(L) \subset V \subset \overline{V} \subset K'$.
Since $L$ is compact and $V$ is open, there is $\ep >0$ such that 
\begin{align}
T_r(L) \subset V
\label{eq_LVK}
\end{align}
for any $r \in B_\ep(q) = \{x\in\R^d \mid |x-q|\leq  \ep \}$.
Set $\tilde{K}= L \times B_{\ep}(q)$, which is a compact neighborhood of $(p_1,\dots,p_{n+m},q)$ in $\Conf_{n+m}(\R^d) \times \R^d$.
By \eqref{eq_LVK}, 
for each $k_\mu \geq 0$, $\sum_{\D\in\R} 
\langle \Pi_\mu(P_\mu^*)^{k_\mu} u, \m_{(x_1-w,\dots,x_n-w,0)}(a_1,\dots,a_n, \pr_\D \m_{(x_{n+1},\dots,x_{n+m})}(a_{n+1},\dots,a_{n+m})\rangle
$ uniformly convergent to $
\langle \Pi_\mu(P_\mu^*)^{k_\mu} u, \m_{(x_1-w,\dots,x_n-w,x_{n+1},\dots,x_{n+m})}(a_{[n+m]})\rangle$ in $(x_1,\dots,x_{n+m},w)\in\tilde{K}$.
%Hence, \eqref{eq_cluster} convergent uniformly to $\langle u,\m_{x_{[n+m]}}(a_{[n+m]})\ranlge$ in $\tilde{K}$ since
%\begin{align*}
Hence, the assertion follows from
\begin{align*}
\langle u, \m_{x_1,\dots,x_n,x_{n+1}+w,\dots,x_{n+m}+w}(a_{[n+m]})\rangle
&=\sum_{k_\mu \geq 0} \left(\Pi_\mu\frac{(w^\mu)^{k_\mu}}{k_\mu!}\right)
\langle \Pi_\mu(P_\mu*)^{k_\mu} u, \m_{x_1-w,\dots,x_n-w,x_{n+1},\dots,x_{n+m}}(a_{[n+m]})\rangle.
\end{align*}

\end{proof}

Let $(H,\m,\va)$ be a $d$-CC system.
We give some immediate consequences of conformal covariance. Then, we have:
\begin{prop}\label{prop_covariance_explicit}
For any $\ar \in H^{\otimes n}$, the following properties hold:
\begin{align*}
P_\mu \m_{\xr}(\ar) &= \left(\sum_{q =1}^n \frac{d}{dx_\mu^{(q)}} \right)\m_{\xr}(\ar)\\
J_{\mu,\nu} \m_{\xr}(\ar) 
&=\sum_{q=1}^n \left(\m_{\xr}((J_{\mu,\nu})\cdot_q\ar)+  \left(x_\nu^{(q)} \frac{d}{dx_\mu^{(q)}}-x_\mu^{(q)} \frac{d}{dx_\nu^{(q)}}\right) \m_{\xr}(\ar)\right)\\
D \m_{\xr}(\ar) &=\sum_{q =1}^n \left(\m_{\xr}(D\cdot_q \ar)+E(x^{(q)}) \m_{\xr}(\ar)\right)\\
K_\mu \m_{\xr}(\ar)
 &=\sum_{q =1}^n 
 \m_{\xr}((K_\mu+2x_\mu^{(q)} D+2 \sum_{\rho}x_\rho^{(q)} J_{\rho,\mu})\cdot_q \ar)\\
&+ 
\sum_{q =1}^n\left(2x_\mu^{(q)} E(x^{(q)})-||x||^2 \frac{d}{dx_q^\mu}\right)\m_{\xr}(\ar),
\end{align*}
where $E(x_q) = \sum_{\rho} x_\rho^{(q)} \frac{d}{dx_\rho^{(q)}}$ is the Euler operator on the $q$-th point.
%In particular,
%\begin{align*}
%R^D\m_{\xr}(\ar) &=\m_{(Rx_1,\dots,Rx_r)}(R^D a_1,\dots,R^D a_r)
%\end{align*}
%for any $R >0$, where $R^D a =R^\D a$ for $a\in H_\D$.
\end{prop}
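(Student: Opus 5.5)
The plan is to read off each identity by specializing the conformal covariance axiom to $X\in\{P_\mu,J_{\mu,\nu},D,K_\mu\}$. We then use Lemma \ref{lem_covariance_Lie} to expand the twisted elements $\exp(-\sum_\rho x^{(q)}_\rho \ad P_\rho)X$. Finally, translation invariance converts every resulting insertion of $P_\rho$ in the $q$-th slot into the derivative $\frac{d}{dx^{(q)}_\rho}$. No analytic input is needed beyond the axioms; the statement is purely algebraic, and derivatives of $\m_{\xr}(\ar)$ are taken componentwise in $\F$, which is legitimate by the analyticity axiom.

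In order, the steps are as follows.

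\emph{(i)} For $X=P_\mu$, Lemma \ref{lem_covariance_Lie} gives that the twisted element is $P_\mu$ itself. Conformal covariance then yields $P_\mu\m_{\xr}(\ar)=\sum_q \m_{\xr}((P_\mu)_q\ar)$, and translation invariance turns each summand into $\frac{d}{dx^{(q)}_\mu}\m_{\xr}(\ar)$.

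\emph{(ii)} For $X=J_{\mu,\nu}$, the twisted element is $J_{\mu,\nu}+x^{(q)}_\nu P_\mu-x^{(q)}_\mu P_\nu$. The $P$-terms become $\bigl(x^{(q)}_\nu\frac{d}{dx^{(q)}_\mu}-x^{(q)}_\mu\frac{d}{dx^{(q)}_\nu}\bigr)\m_{\xr}(\ar)$, using linearity of $\m_{\xr}$ in the $q$-th argument to pull out the scalar coefficients.

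\emph{(iii)} For $X=D$, the twisted element is $D+\sum_\rho x^{(q)}_\rho P_\rho$, and the second part becomes $E(x^{(q)})\m_{\xr}(\ar)$.

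\emph{(iv)} For $X=K_\mu$, the twisted element is
\[
K_\mu+2x^{(q)}_\mu D+2\sum_\rho x^{(q)}_\rho J_{\rho,\mu}+2x^{(q)}_\mu\sum_\rho x^{(q)}_\rho P_\rho-|x^{(q)}|^2P_\mu .
\]
The first three terms stay inside $\m_{\xr}$. The last two become $\bigl(2x^{(q)}_\mu E(x^{(q)})-|x^{(q)}|^2\frac{d}{dx^{(q)}_\mu}\bigr)\m_{\xr}(\ar)$. The $\|x\|^2$ in the statement should be read as $|x^{(q)}|^2$.

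The only point requiring a little care is in step (iv). Here one must check that the sign conventions in Lemma \ref{lem_covariance_Lie} (notably $J_{\rho,\mu}$ versus $J_{\mu,\rho}$, and the sign of the $|x|^2P_\mu$ term) match the formula as stated. One must also check that the coefficient functions, being polynomials in $x^{(q)}$, commute past the derivative correctly; they do, because the derivatives act only on $\m_{\xr}(\ar)$ and the coefficients multiply afterwards. Otherwise the proof is a direct bookkeeping exercise, parallel to the explicit form \eqref{eq_cov_def} of Definition \ref{def_cov}.
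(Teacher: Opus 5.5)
Your proposal is correct and is essentially the paper's argument. The paper gives no written proof and presents the proposition as an immediate consequence of conformal covariance: specialize $X$ to $P_\mu, J_{\mu,\nu}, D, K_\mu$, expand the twisted elements by Lemma~\ref{lem_covariance_Lie}, and turn each $P_\rho$-insertion into $\frac{d}{dx^{(q)}_\rho}$ by translation invariance, exactly as you do. You are also right to read the $\|x\|^2$ in the $K_\mu$ formula as $|x^{(q)}|^2$.
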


By Remark \ref{rem_chiral_cov}, we have:
%We end this section by explicitly giving the conformal covariance in the case $d=2$.
\begin{cor}\label{cor_cov_2d}
In the case of $d=2$, under the assumption of translation invariance, the conformal invariance is equivalent to
\begin{align*}
L(-1) \m_{\zr}(\ar) &= \left(\sum_{q =1}^n \frac{d}{dz_q} \right)\m_{\zr}(\ar)\\
\Ld(-1) \m_{\zr}(\ar) &= \left(\sum_{q =1}^n \frac{d}{d\z_q} \right)\m_{\zr}(\ar)\\
L(0) \m_{\zr}(\ar) &= \sum_{q =1}^n \left( z_q\frac{d}{dz_q}\m_{\zr}(\ar)+\m_{\zr}(L(0)\cdot_q\ar) \right)\\
\Ld(0) \m_{\zr}(\ar) &= \sum_{q =1}^n \left( \z_q\frac{d}{d\z_q}\m_{\zr}(\ar)+\m_{\zr}(\Ld(0)\cdot_q\ar) \right)\\
L(1) \m_{\zr}(\ar) &= \sum_{q =1}^n \left( z_q^2\frac{d}{dz_q}\m_{\zr}(\ar)+2z_q\m_{\zr}(L(0)\cdot_q\ar)+\m_{\zr}(L(1)\cdot_q\ar) \right)\\
\Ld(1) \m_{\zr}(\ar) &= \sum_{q =1}^n \left( \z_q^2\frac{d}{d\z_q}\m_{\zr}(\ar)+2\z_q\m_{\zr}(\Ld(0)\cdot_q\ar)+\m_{\zr}(\Ld(1)\cdot_q\ar) \right)
\end{align*}
for any $\zr=(z_1,\dots,z_n) \in \Conf_n(\C)$.
\end{cor}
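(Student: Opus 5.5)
The plan is to read Corollary \ref{cor_cov_2d} directly off Proposition \ref{prop_covariance_explicit}, complexified, by passing to the basis of Remark \ref{rem_sod_sl2}. Since the elements $L(n),\Ld(n)$ for $n=-1,0,1$ form a basis of $\so(3,1)_\C$, conformal covariance is equivalent to its six instances $X=L(n),\Ld(n)$. The covariance condition is linear in $X$, so it extends $\C$-linearly once the $\Hom_\C$-valued maps are used. Throughout we write $z_q=x_1^{(q)}+ix_2^{(q)}$ and use $\frac{d}{dz_q}=\ft(\pa_{x_1^{(q)}}-i\pa_{x_2^{(q)}})$ and $\frac{d}{d\z_q}=\ft(\pa_{x_1^{(q)}}+i\pa_{x_2^{(q)}})$.

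First the $L(-1)$ and $\Ld(-1)$ equations. By Lemma \ref{lem_covariance_Lie}, $\exp(-\sum_\rho x_\rho\ad P_\rho)P_\mu=P_\mu$. Combining the covariance condition with translation invariance gives
\[
P_\mu\m_{\zr}(\ar)=\sum_q\frac{d}{dx^{(q)}_\mu}\m_{\zr}(\ar),
\]
which is the first line of Proposition \ref{prop_covariance_explicit}. Taking the combinations $\ft(P_1\mp iP_2)$ yields the two stated equations.

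Next the $L(0)$ and $\Ld(0)$ equations. We combine the $D$ and $J_{12}$ lines of Proposition \ref{prop_covariance_explicit} as $\ft(D\pm iJ_{12})$. On the right-hand side, the operator parts combine into $\m_{\zr}(L(0)\cdot_q\ar)$. For the differential parts one checks that
\[
\ft\Bigl(E(x)+i(x_2\pa_1-x_1\pa_2)\Bigr)=z\pa_z .
\]
This is exactly the computation already done in Remark \ref{rem_chiral_d} up to sign: the sign flips because the action on a function is $-d(X)$, so $d(L(0))=-z\pa_z$ corresponds to $+z\pa_z$ here.

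Finally the $L(1)$ and $\Ld(1)$ equations. Remark \ref{rem_chiral_cov} gives
\[
\exp(-x_1\ad P_1-x_2\ad P_2)L(1)=L(1)+2zL(0)+z^2L(-1).
\]
Plugging this into the covariance condition, the $z_q^2L(-1)$ term becomes $z_q^2\frac{d}{dz_q}\m_{\zr}(\ar)$ by translation invariance, since $L(-1)$ acting in slot $q$ equals $\frac{d}{dz_q}$ by the first step. The $\Ld(1)$ case is the complex conjugate computation, which produces $\z$ everywhere.

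Conversely, if the six identities hold, then covariance holds for the basis $\{L(n),\Ld(n)\}$. By $\C$-linearity it holds for all of $\so(3,1)$, so the two conditions are equivalent. The only delicate point is bookkeeping of signs and the factor $\ft$ in the $L(0)$ and $L(1)$ cases. This reduces to the identities of Remarks \ref{rem_chiral_d} and \ref{rem_chiral_cov}, so no real obstacle is expected.
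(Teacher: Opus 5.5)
Your proposal is correct and is essentially the paper's argument: the paper likewise obtains the corollary by rewriting the covariance condition in the complexified basis $L(n),\Ld(n)$ of Remark \ref{rem_sod_sl2}, using $\exp(-x_1\ad P_1-x_2\ad P_2)L(1)=L(1)+2zL(0)+z^2L(-1)$ from Remark \ref{rem_chiral_cov} together with translation invariance. The converse is immediate, as you say, since each of the six identities is exactly covariance for one basis element. One small slip: $\m_{\zr}(L(-1)\cdot_q\ar)=\frac{d}{dz_q}\m_{\zr}(\ar)$ follows directly from translation invariance in the $q$-th slot, not from your first step, which concerns the left action of $L(-1)$ on $\m_{\zr}(\ar)$.
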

This covariance is well known in two-dimensional chiral and full conformal
field theories; see, for example, \cite{FHL,M8}.

\subsection{From CC system to vertex operator}\label{sec_from_d}

Let $(H,\m,\va)$ be a $d$-CC system.

For $x \in \R^d \setminus \{0\}$, define a linear map $\tilde{Y}(-,x)-: H\otimes H \rightarrow \F$ by
\begin{align*}
\tilde{Y}(a,x)b = \m_{(x,0)}(a,b) \in \F,
\end{align*}
which is real analytic on $x = (x,0) \in \R^d \setminus \{0\}$.
There is essentially no loss of information in putting $x^{(2)}=0$ from the following lemma.
\begin{lem}\label{lem_trans_2}
As real analytic functions, 
%for any $\mu =1,\dots,d$,
%\begin{align*}
%\left(\frac{d}{dx_\mu^{(1)}}+\frac{d}{dx_\mu^{(2)}}\right)\left(
%\exp(-\sum_\rho x_\rho^{(2)} P_\rho) \m_{(x^{(1)},x^{(2)})}(a,b)\right)=0
%\end{align*}
%and 
\begin{align}
\m_{(x^{(1)},x^{(2)})}(a,b) = \exp(\sum_\rho x_\rho^{(2)} P_\rho) \tilde{Y}(a,x^{(1)}-x^{(2)})b.\label{eq_M_Y_formula}
\end{align}
\end{lem}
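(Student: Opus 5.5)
The identity \eqref{eq_M_Y_formula} should follow directly from Lemma \ref{lem_translation}, applied with $n=2$ and paired against $u\in H^\vee$. Since $H^\vee$ separates points of $\F$, it suffices to prove that for every $u\in H^\vee$ and $a,b\in H$,
\begin{align*}
\langle u, \m_{(x^{(1)},x^{(2)})}(a,b)\rangle
=\langle u, \exp(\textstyle\sum_\rho x_\rho^{(2)} P_\rho)\m_{(x^{(1)}-x^{(2)},0)}(a,b)\rangle .
\end{align*}
The right-hand side is meaningful as a finite sum. Writing $\langle u,\exp(\sum_\rho x^{(2)}_\rho P_\rho)v\rangle=\langle \exp(\sum_\rho x^{(2)}_\rho P^*_\rho)u, v\rangle$, the operators $P_\rho^*$ lower the $D$-degree on $H^\vee$ by $1$. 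Together with (C2), this means $\exp(\sum_\rho x^{(2)}_\rho P^*_\rho)u$ is a finite sum lying in $H^\vee$, and it depends polynomially on $x^{(2)}$.

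The key step is to introduce the auxiliary function
\[
G(x^{(1)},x^{(2)};y)=\langle \exp(-\textstyle\sum_\rho y_\rho P^*_\rho)u',\ \m_{(x^{(1)},x^{(2)})}(a,b)\rangle
\]
for a fixed $u'\in H^\vee$. Lemma \ref{lem_translation} shows that the specialization $F(x^{(1)},x^{(2)})=G(x^{(1)},x^{(2)};x^{(2)})$ is translation invariant. Translating by $v=-x^{(2)}$ then gives
\[
\langle \exp(-\textstyle\sum_\rho x^{(2)}_\rho P^*_\rho)u', \m_{(x^{(1)},x^{(2)})}(a,b)\rangle=\langle u', \m_{(x^{(1)}-x^{(2)},0)}(a,b)\rangle .
\]

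Next we substitute $u'=\exp(\sum_\rho x^{(2)}_\rho P^*_\rho)u$. This is allowed because the $P^*_\rho$ commute with one another, which gives $\exp(-\sum x^{(2)}_\rho P^*_\rho)\exp(\sum x^{(2)}_\rho P^*_\rho)=\id$ on $H^\vee$. All exponentials involved are finite sums, so this identity holds exactly.

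There is one subtlety: Lemma \ref{lem_translation} is stated for a fixed $u$, whereas here $u'$ depends on $x^{(2)}$. I would handle this by applying the lemma pointwise. For each fixed value $c=x^{(2)}$, set $u'=\exp(\sum c_\rho P^*_\rho)u$, which is a fixed element of $H^\vee$. The identity above then holds at the point $(x^{(1)},c)$ for all $x^{(1)}\neq c$.

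Substituting $u'$ yields
\[
\langle u,\m_{(x^{(1)},x^{(2)})}(a,b)\rangle=\langle u,\exp(\textstyle\sum_\rho x^{(2)}_\rho P_\rho)\tilde Y(a,x^{(1)}-x^{(2)})b\rangle .
\]
Since $u$ is arbitrary, this proves \eqref{eq_M_Y_formula}. Both sides are real analytic on $\Conf_2(\R^d)$: the left side by analyticity, and the right side because it is a finite polynomial combination in $x^{(2)}$ of the analytic functions $\tilde Y$. The identity therefore holds as an equality of real analytic functions.

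I expect no serious obstacle. The only care needed is the finiteness and commutation justification for moving the exponential onto $u$ and inverting it, together with the pointwise application of the lemma described above.
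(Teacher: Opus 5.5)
Your proof is correct and follows essentially the same route as the paper: apply Lemma \ref{lem_translation} with $n=2$, translate by $-x^{(2)}$ to land at $\m_{(x^{(1)}-x^{(2)},0)}(a,b)=\tilde{Y}(a,x^{(1)}-x^{(2)})b$, and then undo the exponential, which is degreewise finite. Your pointwise choice $u'=\exp(\sum_\rho c_\rho P_\rho^*)u$ simply spells out the inversion step that the paper leaves implicit.
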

\begin{proof}
%We first note that $\exp(-\sum_\rho x_2^\rho P_\rho) \m_{(x_1,x_2)}(a,b)$ is a well-defined real analytic function on $\Conf_2(\R^d)$ since $P_\rho$ is an operator of degree $1$ with respect to $D$, from (C2) for any $u \in H^\vee$ the following sum for $k \geq 0$ is finite:
%\begin{align*}
%&\langle u, \exp(-\sum_\rho x_2^\rho P_\rho) \m_{(x_1,x_2)}(a,b)\rangle=\sum_{k \geq 0}\langle u, \frac{1}{k!}\left(-\sum_\rho x_2^\rho P_\rho\right) \m_{(x_1,x_2)}(a,b)\rangle.
%\end{align*}
%%$P_\rho$は次数1の作用素であるから(C2)よりfor any $u \in F^\vee$式1における$k$についての和は有限である。
%By the translation invariance and the conformal invariance, we have
%\begin{align*}
%&\left(\frac{d}{dx_\mu^{(1)}}+\frac{d}{dx_\mu^{(2)}}\right)
%\exp\left(-\sum_\rho x_\rho^{(2)} P_\rho\right) \m_{(x^{(1)},x^{(2)})}(a,b)\\
%&=- \exp\left(-\sum_\rho x_\rho^{(2)} P_\rho\right) P_\mu \m_{(x^{(1)},x^{(2)})}(a,b)+
%\exp\left(-\sum_\rho x_\rho^{(2)} P_\rho\right) \m_{(x^{(1)},x^{(2)})}(P_\mu a,b)\\
%&+ \exp\left(-\sum_\rho x_\rho^{(2)} P_\rho\right) \m_{(x^{(1)},x^{(2)})}(a,P_\mu b)=0,
%\end{align*}
By Lemma \ref{lem_translation}, $\exp(-\sum_\rho x_\rho^{(2)} P_\rho) \m_{(x^{(1)},x^{(2)})}(a,b)$ is translation invariant. Hence, it coincides with $\tilde{Y}(a,x^{(1)}-x^{(2)})b$ as real analytic functions.
\end{proof}

By the conformal invariance, we have
\begin{align}
\begin{split}
P_\mu \tilde{Y}(a,x)b &= \tilde{Y}(P_\mu a,x)b +\tilde{Y}(a,x)P_\mu b\\
J_{\mu,\nu} \tilde{Y}(a,x)b
&=\tilde{Y}(J_{\mu,\nu}a,x)b+ \tilde{Y}(a,x)J_{\mu,\nu} b+
\left(x_\nu \frac{d}{dx_\mu}-x_\mu \frac{d}{dx_\nu}\right)\tilde{Y}(a,x)b\\
D \tilde{Y}(a,x)b &=
\tilde{Y}(D a,x)b+\tilde{Y}(a,x) Db+E(x)\tilde{Y}(a,x)b\\
K_\mu \tilde{Y}(a,x)b
 &= \tilde{Y}(K_\mu + 2x_\mu D+2 \sum_\rho x^\rho J_{\rho,\mu}a,x)b + \tilde{Y}(a,x)K_\mu b+\left(2x_\mu E(x)-||x||^2 \frac{d}{dx_\mu}\right)\tilde{Y}(a,x)b
\end{split}
\label{eq_conf_inv0}
\end{align}
and by the translation invariance
\begin{align}
\frac{d}{dx_\mu}\tilde{Y}(a,x)b = \tilde{Y}(P_\mu a,x)b.
\label{eq_trans_inv0}
\end{align}

Let $\D_0,\D_1,\D_2\in \R$ and $u_0 \in H_{\D_0}^*$, $a \in H_{\D_1}$ and $b\in H_{\D_2}$.
By \eqref{eq_conf_inv0},
\begin{align}
E(x) \langle u_0, \tilde{Y}(a,x)b \rangle = (\Delta_0 -\D_1-\D_2) \langle u_0, \tilde{Y}(a,x)b \rangle.
\label{eq_vertex_E}
\end{align}
Hence, $\langle u_0, \tilde{Y}(a,x)b \rangle ||x||^{\D_1+\D_2-\D_0}$ is a scale invariant real analytic function on $\R^d\setminus\{0\}$.
We may regard it as a linear map
\begin{align}
C:H_{\D_0}^*\otimes H_{\D_1}\otimes H_{\D_2}\rightarrow C^\infty(S^{d-1},\C),\quad\quad (u_0,a,b)\mapsto \langle u_0,\tilde{Y}(a,x)b \rangle.\label{eq_sod_HHS}
\end{align}
%where $C^\infty(S^{d-1},\C)$ is the vector space of smooth functions on $(d-1)$-dimensional sphere $S^{d-1}$.
%Note that $C^\infty(S^{d-1},\C)$ is naturally a $\mathfrak{so}(d)$-module (see Proposition \ref{prop_rep_func_space})
%and the left-hand-side of \eqref{eq_sod_HHS} is also a (tensor product) representation of $\mathfrak{so}(d)$.
Since $H_\D$ is finite dimensional by (C2)
and \eqref{eq_sod_HHS} is a $\mathfrak{so}(d)$-module homomorphism by \eqref{eq_conf_inv0}, by Proposition \ref{prop_analytic_Laurent},
we can naturally regard $\pr_{\Delta_0} \tilde{Y}(a,x)b$ as an element in
\begin{align*}
H[x_1,\dots x_d,|x|^\R]_{\Delta_0-\Delta_1-\Delta_2}
\end{align*}
and $\tilde{Y}(a,x)b$ as
\begin{align*}
H[[x_1,\dots x_d,|x|^\R]].
\end{align*}
Denote this linear map $H \otimes H \rightarrow H[[x_1,\dots d_x,|x|^\R]]$ by $Y(\bullet,x)\bullet$.
Moreover, by (C2), we have:
\begin{lem}\label{lem_vertex_bound}
For any $a,b \in H$, $Y(a,x)b \in H((x_1,\dots x_d,|x|^\R))$.
\end{lem}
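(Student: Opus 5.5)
By construction, $Y(a,x)b=(f_\lambda)_{\lambda\in\R}$ is built from its components $\pr_{\D_0}\tilde Y(a,x)b$. By linearity we may assume $a\in H_{\D_1}$ and $b\in H_{\D_2}$ are homogeneous. Then \eqref{eq_vertex_E} and Proposition \ref{prop_analytic_Laurent} show that the part of $Y(a,x)b$ lying in $H_{\D_0}$ is an element of
\begin{align*}
H_{\D_0}\otimes \C[x_1,\dots,x_d,|x|^\R]_{\D_0-\D_1-\D_2}.
\end{align*}
This element is well defined, because $H_{\D_0}$ is finite dimensional by (C2). We can therefore pair against a basis of $H_{\D_0}^*$ and assemble an $H_{\D_0}$-valued element.

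Hence the homogeneous component of degree $\lambda$ is
\begin{align*}
f_\lambda=\sum_{\D_0=\lambda+\D_1+\D_2} \pr_{\D_0}\tilde Y(a,x)b .
\end{align*}
Only the single value $\D_0=\lambda+\D_1+\D_2$ contributes, so $f_\lambda$ lies in $H[x_1,\dots,x_d,|x|^\R]_\lambda$.

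To show $Y(a,x)b\in H((x_1,\dots,x_d,|x|^\R))$, fix $M\in\R$. If $f_\lambda\neq0$ with $\lambda<M$, then $H_{\lambda+\D_1+\D_2}\neq 0$ and $\lambda+\D_1+\D_2<M+\D_1+\D_2$. By (C2), $\sum_{\D\le M+\D_1+\D_2}\dim H_\D<\infty$, so only finitely many $\D$ below $M+\D_1+\D_2$ have $H_\D\neq0$. The map $\lambda\mapsto\lambda+\D_1+\D_2$ is injective, so $\{\lambda<M\mid f_\lambda\ne0\}$ is finite.

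For general $a,b$, we write each as a finite sum of homogeneous vectors. A finite sum of lower-truncated series is again lower-truncated, since a finite union of finite sets is finite.

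There is no real obstacle here. The only point requiring care is that the identification of $\pr_{\D_0}\tilde Y(a,x)b$ with a polynomial-type element of fixed degree $\D_0-\D_1-\D_2$ is exactly what was established just before the lemma; the argument uses that identification together with the finiteness condition (C2).
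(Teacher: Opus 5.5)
Your proof is correct and takes the same route as the paper, which simply states that the lemma follows from (C2). You spell that step out: for homogeneous $a\in H_{\D_1}$, $b\in H_{\D_2}$, the degree-$\lambda$ component of $Y(a,x)b$ is $\pr_{\lambda+\D_1+\D_2}\tilde{Y}(a,x)b$, so lower truncation reduces to the finiteness of $\{\D\le M \mid H_\D\neq 0\}$, and finite sums handle the non-homogeneous case.
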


To summarize, by Lemma \ref{lem_trans_2}, \eqref{eq_conf_inv0}, \eqref{eq_trans_inv0}, Lemma \ref{lem_vertex_bound}, we have:
\begin{prop}
\label{prop_vertex}
Let $(H,\m,\va)$ be a $d$-CC system. 
There exists a unique linear map
\begin{align*}
Y(-,x):H \otimes H \rightarrow H((x_1,\dots,x_d,|x|^\R))
\end{align*}
such that for any $u_0^* \in H_{\D_0}^*$, $a \in H_{\D_1}$ and $b\in H_{\D_2}$,
\begin{align}
\langle u_0^*,Y(a,x)b\rangle \in |x|^{\Delta_0-\Delta_1-\D_2}\C\left[\frac{x_1}{|x|},\dots,\frac{x_d}{|x|}\right]
\label{eq_just_polynomial}
\end{align}
and
\begin{align*}
Y(a,x)b|_{x=x_1} = \m_{(x_1,0)}(a,b)
\end{align*}
holds as real analytic functions on $\R^d \setminus \{0\}$, where we regard the left-hand-side as a real analytic function by \eqref{eq_polar_regard}. Furthermore, $Y(\bullet,x)$ is a conformally covariant vertex operator 
 and it satisfies
%\begin{align*}
%\frac{d}{dx^\mu} Y(a,x) = Y(P_\mu a,x)
%\end{align*}
%for any $\mu=1,\dots,d$ and
\begin{align*}
\m_{(x^{(1)},x^{(2)})}(a,b)
=
\exp\left(\sum_\rho x_\rho^{(2)}P_\rho\right)
Y(a,x)b|_{x=x^{(1)}-x^{(2)}}.
%\m_{(x_1,x_2)}(a,b) = \exp(\sum_\rho x_\rho^{(2)} P_\rho) Y(a,x_1-x_2)b.
\end{align*}
\end{prop}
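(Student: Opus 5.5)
The proof assembles the pieces already established in this subsection. We construct $Y$ component by component and then check the listed properties in order.

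\emph{Existence and uniqueness.} Fix $\D_0,\D_1,\D_2$ and consider the trilinear map $C$ of \eqref{eq_sod_HHS}, $(u_0,a,b)\mapsto \langle u_0,\tilde Y(a,x)b\rangle$. By \eqref{eq_vertex_E} each such function is homogeneous of degree $\lambda=\D_0-\D_1-\D_2$ under $E(x)$. By the $J_{\mu,\nu}$-line of \eqref{eq_conf_inv0}, the $\so(d)$-action on functions is intertwined with the action on the finite-dimensional space $H_{\D_0}^*\otimes H_{\D_1}\otimes H_{\D_2}$. Condition (C3) lets this infinitesimal statement integrate to the group $\SO(d)$. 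Hence the $\SO(d)$-orbit of each function spans a finite-dimensional space. Proposition \ref{prop_analytic_Laurent} then gives a unique element of $\C[x_1,\dots,x_d,|x|^\R]_\lambda=|x|^\lambda\C[x_1/|x|,\dots,x_d/|x|]$ representing it. Injectivity in that proposition gives uniqueness, which is \eqref{eq_just_polynomial}.

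Since $H_{\D_0}$ is finite dimensional, dualizing assembles $\pr_{\D_0}Y(a,x)b\in H_{\D_0}\otimes\C[x,|x|^\R]_\lambda$. We then set $Y(a,x)b=\prod_{\D_0}\pr_{\D_0}Y(a,x)b$. The real structure (coefficients in $H$ rather than $H\otimes\C$) is harmless, or can be read with the complexified $H$. Lemma \ref{lem_vertex_bound} puts $Y(a,x)b$ in $H((x_1,\dots,x_d,|x|^\R))$: the degree is $\D_0-\D_1-\D_2$, and by (C2) only finitely many $\D_0$ with $H_{\D_0}\ne0$ lie below any bound. The identity $Y(a,x)b|_{x}=\m_{(x,0)}(a,b)$ holds componentwise by construction.

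\emph{Covariance.} Each identity in \eqref{eq_conf_inv0} and \eqref{eq_trans_inv0} is an equality of real analytic functions after pairing with $u_0\in H^*_{\D_0}$. Every term lies in a single graded piece $\C[x,|x|^\R]_\lambda$. For example, $x_\mu E(x)$ and $\|x\|^2\partial_\mu$ raise the degree by $1$, matching the operator $K_\mu$ and the shift of $\D_1$ by $-1$. We also need that the map $\C[x,|x|^\R]\to C^\omega(\R^d\setminus\{0\})$ of Proposition \ref{prop_analytic_Laurent} intertwines the formal $\partial_\mu$ with differentiation, which is immediate from $\partial_\mu|x|^r=r x_\mu|x|^{r-2}$. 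Injectivity then transports the analytic identities to the formal identities \eqref{eq_cov_def} and $Y(P_\mu a,x)=\frac{d}{dx_\mu}Y(a,x)$. Here we use the finiteness of each pairing: $\langle u_0, Y(K_\mu a,x)b\rangle$ etc.\ involve only finitely many components. This shows $Y$ is conformally covariant in the sense of Definition \ref{def_cov}.

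\emph{Two-point formula.} Lemma \ref{lem_trans_2} gives $\m_{(x^{(1)},x^{(2)})}(a,b)=\exp(\sum_\rho x^{(2)}_\rho P_\rho)\tilde Y(a,x^{(1)}-x^{(2)})b$. Pairing with $u$ turns the exponential into the finite sum $\langle \exp(\sum x_\rho^{(2)}P_\rho^*)u,\cdot\rangle$, by local nilpotence of $P^*_\rho$ on $H^\vee$. Substituting $\tilde Y=Y|_{x=x^{(1)}-x^{(2)}}$ in each term gives the claim.

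The only delicate point is the first step: that the orbit-finiteness hypothesis of Proposition \ref{prop_analytic_Laurent} holds. This requires passing from the infinitesimal $\so(d)$-equivariance in \eqref{eq_conf_inv0} to genuine $\SO(d)$-equivariance of the function-valued map. I would handle it by noting that the image of $C$ is a finite-dimensional space of real analytic functions stable under the vector fields $x_\nu\partial_\mu-x_\mu\partial_\nu$. A finite-dimensional space of analytic functions invariant under a Lie algebra of complete vector fields is invariant under the generated connected group $\SO(d)$: the flow solves a linear ODE inside that finite-dimensional space. Hence the orbit-finiteness holds.
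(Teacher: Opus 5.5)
Your proposal is correct and follows essentially the same route as the paper, which obtains the proposition by combining Lemma~\ref{lem_trans_2}, \eqref{eq_conf_inv0}, \eqref{eq_trans_inv0}, the homogeneity \eqref{eq_vertex_E}, Proposition~\ref{prop_analytic_Laurent} applied to the finite-dimensional $\mathfrak{so}(d)$-equivariant map $C$, and Lemma~\ref{lem_vertex_bound}. Your linear-ODE argument, which passes from stability of the finite-dimensional image of $C$ under the rotation vector fields to finiteness of $\SO(d)$-orbits, makes explicit a step the paper leaves implicit, and that argument does not actually need (C3).
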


We call the linear map $Y(-,x)$ a \textbf{($d$-dimensional) vertex operator} associated with the $d$-CC system $(H,\m,\va)$.

\subsection{Definition of conformal $d$-vertex algebra}
\label{sec_def_vertex}
In this section, we examine the relationship between $Y(-,x)$ and correlation functions.
Applying the cluster decomposition for $(n,m)=(1,2)$, $w=0$, $x^{(3)}=0$ and $a_i \in H_{\D_i}$ ($i=1,2,3$), we have:
\begin{align}
\m_{(x^{(1)},x^{(2)},0)}(a_1,a_2,a_3) 
&= \sum_{\D \in \R}\m_{(x^{(1)},0)} \left(a_1,\pr_\D \m_{(x^{(2)},0)}(a_2,a_3) \right),
\label{eq_from_B1}
\end{align}
where the right-hand side is absolutely convergent in
\begin{align}
\{|x^{(1)}|>|x^{(2)}|>0\} = \{(x^{(1)},x^{(2)}) \in (\R^d)^2 \mid |x^{(1)}|>|x^{(2)}|>0\}.
\label{eq_domain_add_1}
\end{align}
Note that by \eqref{eq_just_polynomial} in  Proposition \ref{prop_vertex} for any $u_0 \in H_{\D_0}^*$,
$\langle u_0,\m_{(x^{(1)},0)} \left(a_1,\pr_\D \m_{(x^{(2)},0)}(a_2,a_3) \right)\rangle$ is in
\begin{align*}
\left( \frac{|x^{(2)}|}{|x^{(1)}|} \right)^\Delta
 |x^{(1)}|^{\Delta_0-\Delta_1}|x^{(2)}|^{-\Delta_2-\D_3}\C\left[\frac{x_1^{(1)}}{|x^{(1)}|},\dots,\frac{x_d^{(1)}}{|x^{(1)}|},
\frac{x_1^{(2)}}{|x^{(2)}|},\dots,\frac{x_d^{(2)}}{|x^{(2)}|}
\right],
\end{align*}
and thus, is a real analytic function on \eqref{eq_domain_add_1}.
Applying the cluster decomposition for $(n,m)=(1,2)$, we also have:
\begin{align*}
\m_{(0,x^{(1)},x^{(2)})}(a_3,a_1,a_2) 
&= \sum_{\D \in \R}\m_{(0,x^{(2)})} \left(a_3,\pr_\D \m_{(x^{(1)}-x^{(2)},0)}(a_1,a_2) \right),
\end{align*}
where the right-hand side is absolutely convergent in
\begin{align*}
\{|x^{(2)}|>|x^{(1)}-x^{(2)}|>0\}= \{(x^{(1)},x^{(2)}) \in (\R^d)^2 \mid |x^{(2)}|>|x^{(1)}-x^{(2)}|>0\}.
\end{align*}
%Hence, by setting $w=x_2$, we have:
%\begin{align*}
%\m_{(0,x_1,x_2)}(a_3,a_1,a_2) 
%&= \sum_{\D \in \R}\m_{(0,x_2)} \left(a_3,\pr_\D \m_{(x_1-x_2,0)}(a_1,a_2) \right),
%\end{align*}
%in $\{(x_1,x_2) \in (\R^d)^2 \mid |x_2|>|x_1-x_2|>0\}$.
By the permutation invariance, we have:
\begin{align}
\m_{(x^{(1)},x^{(2)},0)}(a_1,a_2,a_3)
&= \sum_{\D \in \R}\m_{(x^{(2)},0)} \left(\pr_\D \m_{(x^{(1)}-x^{(2)},0)}(a_1,a_2),a_3 \right).
\label{eq_from_B2}
\end{align}

Let $(x_\mu^{(0)})_{\mu =1,\dots,d}$ be formal variables.
Note that by \eqref{eq_just_polynomial} in 
Proposition \ref{prop_vertex}, the composition of vertex operators
\begin{align*}
Y\left(Y(a_1,x^{(0)})a_2,x^{(2)}\right)a_3
\end{align*}
is formally well-defined since 
$\langle u_0, Y\left(\pr_\D Y(a_1,x^{(0)})a_2,x^{(2)}\right)a_3\rangle $ for $u_0 \in H_{\D_0}^*$ and $\D\in \R$ is in 
\begin{align}
\left( \frac{|x^{(0)}|}{|x^{(2)}|} \right)^\Delta
 |x^{(0)}|^{-\Delta_1-\Delta_2}|x^{(2)}|^{\Delta_0-\D_3}\C\left[\frac{x_1^{(2)}}{|x^{(2)}|},\dots,\frac{x_d^{(2)}}{|x^{(2)}|},\frac{x_1^{(0)}}{|x^{(0)}|},\dots,\frac{x_d^{(0)}}{|x^{(0)}|}
\right]
\label{eq_D_vertex_comp}
\end{align}
(The degree of $|x^{(0)}|$ is $\Delta - (\Delta_1+\Delta_2)$).
Moreover, by \eqref{eq_D_vertex_comp}, $\langle u_0, Y\left(\pr_\D Y(a_1,x^{(0)})a_2,x^{(2)}\right)a_3\rangle$ is a real analytic function on
\begin{align*}
\{(x^{(1)},x^{(2)}) \in (\R^d)^2 \mid x^{(1)}-x^{(2)} \neq 0 \text{ and } x^{(2)}\neq 0\}
\end{align*}
by $x^{(0)}=x^{(1)}-x^{(2)}$, which coincides with $\m_{(x^{(2)},0)} \left(\pr_\D \m_{(x^{(1)}-x^{(2)},0)}(a_1,a_2),a_3 \right)$ as real analytic functions.
Set
\begin{align*}
Y_2(\R^d)=\{(x^{(1)},x^{(2)}) \in (\R^d)^2 \mid x^{(1)} \neq 0, x^{(2)}\neq 0, x^{(1)}-x^{(2)} \neq 0\}.
\end{align*}
Then, we have:
\begin{prop}\label{prop_borcherds}
Let $(H,\m,\va)$ be a $d$-CC system and $Y(-,x):H \otimes H \rightarrow H((x_1,\dots,x_d,|x|^\R))$ the associated vertex operator. Then, for any $u_0 \in H^\vee$ and $a_1,a_2,a_3 \in H$, the sums $\sum_{\Delta\in \R} \langle u_0,Y(a_1,x^{(1)}) \pr_\D Y(a_2,x^{(2)})a_3 \rangle$
and $\sum_{\Delta\in \R} \langle u_0,Y(\pr_\D Y(a_1,x^{(0)})a_2,x^{(2)})a_3 \rangle$ are 
absolutely convergent if $|x^{(1)}|>|x^{(2)}|>0$ and $|x^{(2)}|>|x^{(0)}|>0$, respectively.
Moreover, there exists a real analytic function $\phi(x^{(1)},x^{(2)}):Y_2(\R^d)\rightarrow \C$
such that 
\begin{enumerate}
\item The sums
$\sum_{\Delta\in \R} \langle u_0,Y(a_1,x^{(1)}) \pr_\D Y(a_2,x^{(2)})a_3 \rangle$ and $\sum_{\Delta\in \R} \langle u_0,Y(a_2,x^{(2)}) \pr_\D Y(a_1,x^{(1)})a_3 \rangle$ are locally uniformly convergent to $\phi(x^{(1)},x^{(2)})$ in $|x^{(1)}|>|x^{(2)}|>0$ and $|x^{(2}|>|x^{(1)}|>0$, respectively.
\item The sum $
\sum_{\Delta\in \R} \langle u_0,Y(\pr_\D Y(a_1,x^{(0)})a_2,x^{(2)})a_3 \rangle$ is locally uniformly convergent to $\phi(x^{(1)},x^{(2)})$ in $|x^{(2)}|>|x^{(1)}-x^{(2)}|>0$ with $x^{(0)}=x^{(1)}-x^{(2)}$.
\end{enumerate}
\end{prop}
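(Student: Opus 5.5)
The candidate is $\phi(x^{(1)},x^{(2)})=\langle u_0,\m_{(x^{(1)},x^{(2)},0)}(a_1,a_2,a_3)\rangle$. By the analyticity axiom it is real analytic on $\Conf_3(\R^d)$, and the slice with third point $0$ is exactly $Y_2(\R^d)$. By linearity we may assume $u_0\in H_{\D_0}^*$ and $a_i\in H_{\D_i}$ are homogeneous. The whole argument then rests on three facts already in hand. First, the strong cluster decomposition gives the two expansions \eqref{eq_from_B1} and \eqref{eq_from_B2}. Second, permutation invariance lets us swap the roles of $x^{(1)}$ and $x^{(2)}$. Third, Proposition \ref{prop_vertex} identifies each term of these expansions with the corresponding term of the formal composition of vertex operators.

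\emph{Case (1).} Apply the strong cluster decomposition with $(n,m)=(1,2)$, $w=0$, and the configuration $((x^{(1)},0),(x^{(2)},0))$. This lies in $U_{1,2}$ precisely when $|x^{(1)}|>|x^{(2)}|>0$. We obtain absolute and locally uniform convergence of $\sum_\D\langle u_0,\m_{(x^{(1)},0)}(a_1,\pr_\D\m_{(x^{(2)},0)}(a_2,a_3))\rangle$ to $\phi$. By Proposition \ref{prop_vertex}, $\pr_\D\m_{(x^{(2)},0)}(a_2,a_3)$ is the real analytic function given by $\pr_\D Y(a_2,x)a_3$ evaluated at $x=x^{(2)}$. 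This vector lies in the finite-dimensional space $H_\D$. Expanding it in a basis of $H_\D$ and using linearity of $\m_{(x^{(1)},0)}(a_1,-)=Y(a_1,x^{(1)})(-)$, each summand equals $\langle u_0,Y(a_1,x^{(1)})\pr_\D Y(a_2,x^{(2)})a_3\rangle$ as a real analytic function. This gives the first sum.

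For the second sum in (1), permutation invariance gives $\phi=\langle u_0,\m_{(x^{(2)},x^{(1)},0)}(a_2,a_1,a_3)\rangle$. The same argument on the region $|x^{(2)}|>|x^{(1)}|>0$ then yields convergence of $\sum_\D\langle u_0,Y(a_2,x^{(2)})\pr_\D Y(a_1,x^{(1)})a_3\rangle$ to the same $\phi$.

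\emph{Case (2).} Start from $\phi=\langle u_0,\m_{(0,x^{(1)},x^{(2)})}(a_3,a_1,a_2)\rangle$. Apply the cluster decomposition with $n=1$, $m=2$, $w=x^{(2)}$, and inner configuration $(x^{(1)}-x^{(2)},0)$. This is in $U_{1,2}$ exactly when $|x^{(2)}|>|x^{(1)}-x^{(2)}|>0$, and it gives \eqref{eq_from_B2}, including locally uniform convergence, after one use of permutation invariance. It remains to identify each term $\m_{(x^{(2)},0)}(\pr_\D\m_{(x^{(1)}-x^{(2)},0)}(a_1,a_2),a_3)$ with $Y(\pr_\D Y(a_1,x^{(0)})a_2,x^{(2)})a_3$ at $x^{(0)}=x^{(1)}-x^{(2)}$. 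Again we expand $\pr_\D Y(a_1,x^{(0)})a_2=\sum_i f_i(x^{(0)})e_i$ in a finite basis $\{e_i\}$ of $H_\D$, with $f_i$ as in \eqref{eq_just_polynomial}, and then apply Proposition \ref{prop_vertex} to each $Y(e_i,x^{(2)})a_3$. This is the content of \eqref{eq_D_vertex_comp}.

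\emph{Main obstacle.} The delicate point is making sure the absolute and locally uniform convergence asserted by the axiom, which is phrased for the $\m$-compositions, transfers verbatim to the formal vertex-operator sums. The identification is termwise equality of real analytic functions. So the partial sums over $\D<M$ coincide exactly as functions, and the uniform estimates carry over with no extra work. Two things need checking here. One is that, by (C2), only finitely many $\D<M$ contribute, so each partial sum is a finite sum. The other is that compact subsets of the stated regions correspond to compact subsets of $U_{1,2}$ under the maps $(x^{(1)},x^{(2)})\mapsto((x^{(1)},0),(x^{(2)},0))$ and $(x^{(1)},x^{(2)})\mapsto((0,x^{(2)}),(x^{(1)}-x^{(2)},0))$. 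Both maps are continuous, so this holds.
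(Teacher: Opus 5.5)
Your proposal is correct and follows the paper's own argument. You take $\phi(x^{(1)},x^{(2)})=\langle u_0,\m_{(x^{(1)},x^{(2)},0)}(a_1,a_2,a_3)\rangle$, obtain \eqref{eq_from_B1} and \eqref{eq_from_B2} from the strong cluster decomposition with $(n,m)=(1,2)$ together with permutation invariance, and identify each $\Delta$-term with the corresponding formal vertex-operator term via \eqref{eq_just_polynomial} and \eqref{eq_D_vertex_comp}. Your explicit checks fill in details the paper leaves implicit: that the configurations lie in $U_{1,2}$ exactly on the stated regions, and that the termwise identification carries absolute and locally uniform convergence over unchanged.
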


We write (1) and (2) in Proposition \ref{prop_borcherds} simply as 
\begin{align*}
\langle u_0,Y(a_1,x^{(1)}) Y(a_2,x^{(2)})a_3 \rangle&= \phi(x^{(1)},x^{(2)})|_{|x^{(1)}|>|x^{(2)}|} \\
\langle u_0,Y(a_2,x^{(2)}) Y(a_1,x^{(1)})a_3 \rangle &= \phi(x^{(1)},x^{(2)})|_{|x^{(2)}|>|x^{(1)}|}\\
\langle u_0,Y(Y(a_1,x^{(0)})a_2,x^{(2)})a_3 \rangle &= \phi(x^{(0)}+x^{(2)},x^{(2)})|_{|x^{(2)}|>|x^{(0)}|}.
\end{align*}

\begin{prop}
\label{prop_vacuum}
Let $(H,\m,\va)$ be a $d$-CC system and $Y(-,x):H \otimes H \rightarrow H((x_1,\dots,x_d,|x|^\R))$ the associated vertex operator. Then, the following properties hold:
\begin{enumerate}
\item
$Y(\va,x) = \id_H$;
\item
For any $a,b \in H$, $Y(a,x)b = \exp(\sum_\rho P_\rho x_\rho)Y(b,-x)a$;
\item
For any $a\in H$, $Y(a,x)\va = \exp(\sum_\rho P_\rho x_\rho)a$ and in particular $\lim_{x \to 0}Y(a,x)\va =a$.
\end{enumerate}
\end{prop}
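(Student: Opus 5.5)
All three statements will be proved by passing to real analytic functions, via the uniqueness clause of Proposition \ref{prop_vertex}. That proposition shows $Y(a,x)b$ is determined by the analytic function $\m_{(x,0)}(a,b)$ on $\R^d\setminus\{0\}$, since the map of Proposition \ref{prop_analytic_Laurent} is injective. It therefore suffices to prove each identity as an identity of real analytic functions, paired with an arbitrary $u_0\in H_{\D_0}^*$, and then check that both sides are the analytic realizations of the corresponding formal series.

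For (1), fix $b\in H$. By permutation invariance and the vacuum property,
\[
\m_{(x,0)}(\va,b)=\m_{(0,x)}(b,\va)=\m_{(0)}(b)=b,
\]
using the unit axiom at the end. Hence $\tilde Y(\va,x)b=b$ is constant. The constant element $b\in H[x_1,\dots,x_d,|x|^\R]_0$ has this analytic realization, so $Y(\va,x)b=b$ by uniqueness.

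For (3), apply the vacuum property to get $\m_{(x,0)}(a,\va)=\m_{(x)}(a)$. Lemma \ref{lem_translation} with $n=1$ gives $\langle u,\m_{(x)}(a)\rangle=\langle \exp(\sum_\rho x_\rho P_\rho^*)u,\m_{(0)}(a)\rangle$, and the unit axiom turns this into $\langle u,\exp(\sum_\rho x_\rho P_\rho)a\rangle$. The right-hand side is a finite sum, because $P_\rho^*$ lowers degree and $H$ satisfies (C2). Now $\exp(\sum P_\rho x_\rho)a$ is a formal series whose degree-$\D_0$ components are polynomials in $x$. These lie in $H[x_1,\dots,x_d,|x|^\R]$, and their analytic realization agrees with $\tilde Y(a,x)\va$, so uniqueness gives $Y(a,x)\va=\exp(\sum_\rho P_\rho x_\rho)a$. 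The limit $x\to0$ then follows immediately, since only the $k=0$ term survives.

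For (2), permutation invariance gives $\m_{(x,0)}(a,b)=\m_{(0,x)}(b,a)$. Lemma \ref{lem_trans_2} with $x^{(1)}=0$ and $x^{(2)}=x$ turns the right-hand side into $\exp(\sum_\rho x_\rho P_\rho)\tilde Y(b,-x)a$, paired against $u_0$. This is a finite sum of the analytic functions $\langle (P^k)^*u_0,\tilde Y(b,-x)a\rangle\, x^k/k!$.

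The main obstacle is interpreting $\exp(\sum P_\rho x_\rho)Y(b,-x)a$ as a well-defined formal series and showing its analytic realization matches. Here $Y(b,-x)$ means substituting $x\mapsto -x$, with $|{-x}|=|x|$. For fixed $u_0$, only finitely many $k$ contribute, because $(P^k)^*u_0$ vanishes once its degree drops below the lower bound of $H$ given by (C2). So each $u_0$-component is a finite sum of elements of $\C[x,|x|^\R]$, and realization is compatible with multiplication by polynomials. Comparing analytic functions and invoking injectivity of the realization map then yields equality of formal series, and (2) follows.
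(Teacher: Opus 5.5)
Your proof is correct and, for (1) and (2), follows the paper's route. You establish the identity for $\tilde{Y}$ from the unit, vacuum and permutation axioms together with Lemma~\ref{lem_trans_2}, and then lift it to formal series using \eqref{eq_just_polynomial} and injectivity of the analytic realization. The only deviation is in (3): the paper deduces it from (1) and (2) via $Y(a,x)\va=\exp(\sum_\rho P_\rho x_\rho)Y(\va,-x)a$, whereas you derive it directly from the vacuum property and Lemma~\ref{lem_translation} with $n=1$; both arguments are valid.
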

\begin{proof}
By the unit and the vacuum property,
\begin{align*}
Y(\va,x)a =\m_{(x,0)}(\va,a) = \m_{(0)}(a) =a.
\end{align*}
By setting $x^{(1)}=0$ in \eqref{eq_M_Y_formula}, we have:
\begin{align}
\exp(\sum_{\rho} x_\rho^{(2)} P_\rho)\tilde{Y}(a,-x^{(2)})b =  \m_{(0,x^{(2)})}(a,b)=\m_{(x^{(2)},0)}(b,a) = \tilde{Y}(b,x^{(2)})a,
\label{eq_vacuum_coin_tilde}
\end{align}
as real analytic function on $x^{(2)} \in \R^d\setminus \{0\}$.
By \eqref{eq_just_polynomial}, \eqref{eq_vacuum_coin_tilde} holds formally. Hence, (2) follows.
(3) follows from (1) and (2).
\end{proof}

\begin{dfn}\label{def_vertex_prealgebra}
A conformal $d$-vertex algebra is a compact $\sod$-module $H$ equipped with a conformally covariant linear map
\begin{align*}
Y(-,x): H \otimes H \rightarrow H((x_1,\dots,x_d,|x|^\R)),\quad\quad (a,b)\mapsto Y(a,x)b
\end{align*}
and $\va \in H_0$ such that:
\begin{enumerate}
\item[V1)]
%For any $a\in H$, $\frac{d}{dx} Y(a,x) = Y(P_\mu a,x)$;
$\C\va$ is a trivial representation of $\sod$, $Y(\va,x) = \id_H$ and for any $a\in H$, $Y(a,x)\va \in H[[x_1,\dots,x_d]]$ and $\lim_{x \to 0}Y(a,x)\va =a$;
\item[V2)]
For any $u_0 \in H^\vee$ and $a_1,a_2,a_3 \in H$, the sum $\sum_{\Delta\in \R} \langle u_0,Y(a_1,x^{(1)}) \pr_\D Y(a_2,x^{(2)})a_3 \rangle$
and $\sum_{\Delta\in \R} \langle u_0,Y(\pr_\D Y(a_1,x^{(0)})a_2,x^{(2)})a_3 \rangle$
are absolutely convergent if $|x^{(1)}|>|x^{(2)}|>0$
and $|x^{(2)}|>|x^{(0)}|>0$, respectively.
Moreover, there exists a real analytic function $\phi(x^{(1)},x^{(2)}):Y_2(\R^d)\rightarrow \C$
such that 
\begin{align}
\begin{split}
\langle u_0,Y(a_1,x^{(1)}) Y(a_2,x^{(2)})a_3 \rangle&= \phi(x^{(1)},x^{(2)})|_{|x^{(1)}|>|x^{(2)}|} \\
\langle u_0,Y(a_2,x^{(2)}) Y(a_1,x^{(1)})a_3 \rangle &= \phi(x^{(1)},x^{(2)})|_{|x^{(2)}|>|x^{(1)}|},\\
\langle u_0,Y(Y(a_1,x^{(0)})a_2,x^{(2)})a_3 \rangle &= \phi(x^{(0)}+x^{(2)},x^{(2)})|_{|x^{(2)}|>|x^{(0)}|},
\end{split}
\label{eq_def_V3}
\end{align}
in the sense of Proposition \ref{prop_borcherds}.
\end{enumerate}
\end{dfn}

By Proposition \ref{prop_vertex}, Proposition \ref{prop_borcherds} and Proposition \ref{prop_vacuum}, we have:
\begin{thm}\label{thm_from}
Let $(H,\m,\va)$ be a $d$-CC system  and $Y(-,x):H \otimes H \rightarrow H((x_1,\dots,x_d,|x|^\R))$ the associated vertex operator.
Then, $(H,Y(-,x),\va)$ is a conformal $d$-vertex algebra.
\end{thm}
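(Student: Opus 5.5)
The plan is to verify the axioms of Definition \ref{def_vertex_prealgebra} one at a time, each from a result already established for the vertex operator of a $d$-CC system.

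First, the structural requirements. Proposition \ref{prop_vertex} gives a unique map $Y(-,x):H\otimes H\to H((x_1,\dots,x_d,|x|^\R))$, and states that it is conformally covariant in the sense of Definition \ref{def_cov}. Here the identities \eqref{eq_conf_inv0} and \eqref{eq_trans_inv0} pass from the analytic operator $\tilde Y$ to the formal one $Y$. The reason is the injectivity in Proposition \ref{prop_analytic_Laurent}, applied degree by degree: each $\langle u_0,Y(a,x)b\rangle$ with homogeneous $u_0,a,b$ lies in a single graded piece, and the operators $x_\mu$, $\partial_\mu$, $E(x)$ act compatibly on formal series and on real analytic functions. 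The vacuum vector lies in $H_0$ and spans a trivial representation, by the invariance of vacuum axiom.

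Second, axiom (V1). By Proposition \ref{prop_vacuum}(1) we have $Y(\va,x)=\id_H$. By Proposition \ref{prop_vacuum}(3) we have $Y(a,x)\va=\exp(\sum_\rho P_\rho x_\rho)a$. The right-hand side lies in $H[[x_1,\dots,x_d]]$, since it involves only nonnegative integer powers of the $x_\rho$ and no $|x|^r$. Each degree-$\lambda$ component is the finite sum $\sum_{|k|=\lambda}\frac{x^k}{k!}P^k a$, and the degree-$0$ term is $a$. Hence $\lim_{x\to0}Y(a,x)\va=a$.

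Third, axiom (V2), which is exactly Proposition \ref{prop_borcherds}. Its proof runs through \eqref{eq_from_B1} and \eqref{eq_from_B2}, and the function there is
\[
\phi(x^{(1)},x^{(2)})=\langle u_0,\m_{(x^{(1)},x^{(2)},0)}(a_1,a_2,a_3)\rangle ,
\]
which is real analytic on $Y_2(\R^d)$ by the analyticity axiom. The first identity in \eqref{eq_def_V3} uses the strong cluster decomposition with $(n,m)=(1,2)$ and $w=0$. The second uses the same decomposition after swapping $a_1$ and $a_2$ by permutation invariance. The third is the iterate expansion \eqref{eq_from_B2}, rewritten through $\m_{(x,0)}=Y(\cdot,x)$ on each $D$-homogeneous component via \eqref{eq_D_vertex_comp}.

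The main obstacle I expect is a step hidden inside Proposition \ref{prop_borcherds}: identifying each summand $\langle u_0,Y(\pr_\D Y(a_1,x^{(0)})a_2,x^{(2)})a_3\rangle$, a formal object, with the analytic function $\langle u_0,\m_{(x^{(2)},0)}(\pr_\D\m_{(x^{(0)},0)}(a_1,a_2),a_3)\rangle$. Absolute and locally uniform convergence then come directly from the strong cluster decomposition. The identification itself reduces to the following observation. The inner $\pr_\D$ lands in the finite-dimensional space $H_\D$, so $\pr_\D Y(a_1,x^{(0)})a_2$ is a finite sum $\sum_i f_i(x^{(0)})e_i$, with $e_i$ a basis of $H_\D$ and $f_i$ in the graded piece of degree $\D-\D_1-\D_2$. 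Applying $Y(\cdot,x^{(2)})$ therefore gives a finite sum of products of analytic functions in $x^{(0)}$ and $x^{(2)}$, and this sum agrees with the analytic composite. With these three checks the theorem follows.
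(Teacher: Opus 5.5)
Your proposal is correct and follows the paper's route: the paper proves the theorem by combining Proposition \ref{prop_vertex} (existence and conformal covariance of $Y(-,x)$), Proposition \ref{prop_vacuum} for (V1), and Proposition \ref{prop_borcherds} for (V2), exactly as you do. The step you flag as the main obstacle, identifying each formal summand with the corresponding analytic composite through finite-dimensionality of $H_\D$, is the argument the paper gives just before Proposition \ref{prop_borcherds} using \eqref{eq_D_vertex_comp}.
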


%\subsection{Derivation of OPEs}\label{sec_OPE}
Hereafter, we will look at consequences from the axioms of conformal $d$-vertex algebra. All of the results here are generalizations of results known for chiral vertex algebras (see, for example, \cite[Section 3]{LL}).
The proof in the case of full vertex algebras (conformal $2$-vertex algebras) can be found in our previous paper \cite{M1}.
%この章では vertex $d$-algebra の公理からの様々な帰結を見る。この章の結果は全てchiral 頂点代数に対して知られて結果の拡張である(たとえば\cite[Section 3]{LL}を参照)。カイラル頂点代数の場合の証明の類似が vertex $2$-algebra に対しても適用できることは我々の以前の論文\cite{M1}にある。
%次の命題で使われるテクニックはカイラル頂点代数における結果の類似である (see \cite[Section 3]{LL} for chiral vertex algebra and \cite[Proposition 3.7]{M1} for the full case):
\begin{prop}\label{prop_vacuum_ind}
Let $(H,Y(-,x),\va)$ be a conformal $d$-vertex algebra. Then, for any $a\in H$, 
\begin{align*}
Y(a,x)\va = \exp\left(\sum_\rho P_\rho x^\rho\right)a
\end{align*}
holds.
\end{prop}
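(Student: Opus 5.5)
The plan is the $d$-dimensional analogue of the chiral argument $Y(a,z)\va=e^{zL(-1)}a$: show that $f(x):=Y(a,x)\va$ satisfies a first-order system of differential equations that forces it to be a translate of $a$. By (V1), $f(x)\in H[[x_1,\dots,x_d]]$, so it has no negative or non-integral powers of $|x|$, and its constant term is $\lim_{x\to0}Y(a,x)\va=a$. We may assume $a\in H_{\D_1}$.

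The key identity I would aim for is
\begin{align*}
\frac{d}{dx_\mu}f(x)=P_\mu f(x)\qquad(\mu=1,\dots,d).
\end{align*}
To get it, apply the first relation of conformal covariance \eqref{eq_cov_def} with $b=\va$. Since $P_\mu\va=0$ by (V1), this gives $P_\mu Y(a,x)\va=Y(P_\mu a,x)\va$. By condition (1) of Definition \ref{def_cov}, $Y(P_\mu a,x)=\frac{d}{dx_\mu}Y(a,x)$, which yields the identity.

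Next, expand $f(x)=\sum_{k\in\Z_{\geq0}^d} f_k x^k$ with $f_k\in H$. This is legitimate because $f$ lies in $H[[x_1,\dots,x_d]]$, and each homogeneous component of degree $n$ is a finite polynomial in the $x_\mu$ with coefficients in $H$. The identity $\partial_\mu f=P_\mu f$ becomes the recursion
\begin{align*}
(k_\mu+1)f_{k+e_\mu}=P_\mu f_k .
\end{align*}
Together with $f_0=a$, induction on $|k|$ gives $f_k=\prod_\mu \frac{P_\mu^{k_\mu}}{k_\mu!}\,a$. The $P_\mu$ commute with each other, so the order of the products does not matter, and this expression is exactly the coefficient of $x^k$ in $\exp(\sum_\rho P_\rho x_\rho)a$.

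A consistency check with the grading is also available. The $D$-relation of \eqref{eq_cov_def} gives $Df=\D_1 f+E(x)f$, so the degree-$n$ part of $f$ lies in $H_{\D_1+n}$, matching $P_\mu H_\D\subset H_{\D+1}$.

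The main point needing care is that all steps are purely formal. The equality $P_\mu Y(a,x)\va=Y(P_\mu a,x)\va$ is an identity in $H((x_1,\dots,x_d,|x|^\R))$, and the derivative $\frac{d}{dx_\mu}$ acts as defined in Section \ref{sec_Laurent}. Once (V1) places $f$ in the polynomial-type subspace, no $|x|^r$ terms arise and the coefficient comparison is unambiguous, since $x^k$ are linearly independent there by Proposition \ref{prop_analytic_Laurent}. I expect no real obstacle beyond this bookkeeping. The only potential subtlety is ensuring that the constant term is exactly $a$. This follows because the degree-$0$ component of $f$ is constant in $x$, so the limit in (V1) equals it.
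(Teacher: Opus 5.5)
Your proposal is correct and follows the paper's proof. Both derive $\partial_\mu Y(a,x)\va = Y(P_\mu a,x)\va = P_\mu Y(a,x)\va$ from translation covariance, the $P_\mu$-covariance relation and $P_\mu\va=0$. Both then use (V1), namely $Y(a,x)\va\in H[[x_1,\dots,x_d]]$ with constant term $a$, to solve this system recursively. Your coefficient recursion just spells out the paper's ``solving inductively'' step.
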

\begin{proof}
By conformal covariance and (V1),
\begin{align}
\frac{d}{dx^\mu} Y(a,x)\va = Y(P_\mu a,x)\va =[P_\mu,Y(a,x)]\va =P_\mu Y(a,x)\va.
\label{eq_skew_diff}
\end{align}
Since $Y(a,x)\va \in H[[x_1,\dots,x_d]]$ by (V1), by solving \eqref{eq_skew_diff} inductively with the initial condition
$\lim_{x\to 0}Y(a,x)\va =a$, we have the equality.
\end{proof}

\begin{prop}\label{prop_skew_symmetry}
Let $(H,Y(-,x),\va)$ be a conformal $d$-vertex algebra. Then, for any $a,b \in H$, 
\begin{align*}
Y(a,x)b = \exp\left(\sum_\rho P_\rho x^\rho\right)Y(b,-x)a
\end{align*}
formally holds.
\end{prop}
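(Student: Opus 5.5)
The plan is to follow the chiral argument of \cite[Section 3]{LL}: put $a_3=\va$ in the commutativity and associativity identities of (V2), and use Proposition \ref{prop_vacuum_ind} to turn the resulting vacuum expansions into translation operators.

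First step. Take $a_1=a$, $a_2=b$, $a_3=\va$ and fix $u_0\in H^\vee$. By Proposition \ref{prop_vacuum_ind}, $Y(b,x^{(2)})\va=\exp(\sum_\rho P_\rho x^{(2)}_\rho)b$. Hence the first line of \eqref{eq_def_V3} says that the function $\phi$ is the expansion of
\[
\langle u_0, Y(a,x^{(1)})\exp(\textstyle\sum_\rho P_\rho x^{(2)}_\rho)b\rangle
\]
in the region $|x^{(1)}|>|x^{(2)}|$. Similarly, the second line gives $\phi$ as
\[
\langle u_0, Y(b,x^{(2)})\exp(\textstyle\sum_\rho P_\rho x^{(1)}_\rho)a\rangle
\]
in the region $|x^{(2)}|>|x^{(1)}|$.

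Second step. Simplify the first expression. Conformal covariance (Definition \ref{def_cov}) gives $[P_\mu,Y(a,x)]=\frac{d}{dx_\mu}Y(a,x)$. Since $P_\mu^*$ is locally nilpotent on $H^\vee$, all the manipulations below are finite sums. Conjugating, we get
\[
\langle u_0, Y(a,x^{(1)})e^{\sum P_\rho x^{(2)}_\rho}b\rangle=\langle e^{-\sum P^*_\rho x^{(2)}_\rho}u_0, Y(a,x^{(1)}-x^{(2)})b\rangle,
\]
understood via the Taylor expansion in $x^{(2)}$. This is the same translation trick as in Lemma \ref{lem_translation}.

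Each coefficient $\langle v,Y(a,x)b\rangle$ is a finite sum of terms in $|x|^{\lambda}\C[x/|x|]$, so it is a genuine real analytic function on $\R^d\setminus\{0\}$. The Taylor series in $x^{(2)}$ therefore converges to the shifted function when $|x^{(2)}|<|x^{(1)}|$. We conclude that
\[
\phi(x^{(1)},x^{(2)})=\langle e^{-\sum P^*_\rho x^{(2)}_\rho}u_0, Y(a,x^{(1)}-x^{(2)})b\rangle,
\]
a finite sum of analytic functions, valid on $|x^{(1)}|>|x^{(2)}|$.

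In the same way, on $|x^{(2)}|>|x^{(1)}|$,
\[
\phi(x^{(1)},x^{(2)})=\langle e^{-\sum P^*_\rho x^{(1)}_\rho}u_0, Y(b,x^{(2)}-x^{(1)})a\rangle .
\]
Both right-hand sides are real analytic on the connected set $Y_2(\R^d)$ (here $d\ge2$), so by uniqueness of analytic continuation the two formulas agree on all of $Y_2(\R^d)$.

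Third step. Specialize to $x^{(1)}=x$, $x^{(2)}=0$. This gives
\[
\langle u_0,Y(a,x)b\rangle=\langle e^{-\sum P^*_\rho x_\rho}u_0,Y(b,-x)a\rangle=\langle u_0, e^{\sum P_\rho x_\rho}Y(b,-x)a\rangle
\]
as analytic functions on $\R^d\setminus\{0\}$. To obtain the formal identity, restrict $u_0\in H_{\D_0}^*$ and compare homogeneous components. Both sides decompose into finitely many terms of the form $|x|^{\lambda}\C[x/|x|]$, and the map $\C[x_1,\dots,x_d,|x|^\R]\to C^\omega(\R^d\setminus\{0\})$ is injective by Proposition \ref{prop_analytic_Laurent}. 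Hence the identity holds in $H((x_1,\dots,x_d,|x|^\R))$ coefficientwise.

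The main obstacle is the second step. One has to justify that the formal series $Y(a,x^{(1)})e^{\sum P x^{(2)}}b$, which appears in (V2) with intermediate projections $\pr_\D$, really converges to the translated analytic function in $|x^{(1)}|>|x^{(2)}|$. The plan is to handle this by noting that each $\pr_\D$-piece is a finite Taylor term. Then only the convergence of the Taylor expansion of a function homogeneous of fixed degree times a spherical polynomial is needed, which is classical on the ball $|x^{(2)}|<|x^{(1)}|$.
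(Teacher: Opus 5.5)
Your argument is correct, but it takes a different route from the paper.

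\textbf{The paper's route.} The paper sets $a_3=\va$ in the \emph{associativity} line of (V2). The iterate
\[
\langle u,Y(Y(a,x^{(0)})b,x^{(2)})\va\rangle=\langle u,e^{\sum_\rho P_\rho x^{(2)}_\rho}Y(a,x^{(0)})b\rangle
\]
is a finite sum, by local nilpotence of $P_\mu^*$, so no convergence question arises on that side. Via the second line of (V2), the paper identifies it with
\[
\langle u,Y(b,x^{(2)})Y(a,x^{(1)})\va\rangle=\langle u,e^{\sum_\rho P_\rho(x^{(2)}_\rho+x^{(0)}_\rho)}Y(b,-x^{(0)})a\rangle .
\]
It then compares both sides as elements of $\C[x^{(2)},x^{(0)},|x^{(0)}|^\R]$. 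There $x^{(2)}$ enters polynomially and can be set to $0$ formally.

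\textbf{Your route.} You use only the two commutativity lines of (V2), i.e.\ the direct analogue of the chiral proof from locality and creation. This buys two things:
\begin{itemize}
\item It shows that skew-symmetry needs only commutativity and (V1), not the iterate axiom.
\item Your Step 2 makes explicit the Taylor-series convergence in $|x^{(2)}|<|x^{(1)}|$, grouped by total degree and controlled by the roots of $|x^{(1)}-tx^{(2)}|^2$ as in Lemma \ref{lem_conv_prim}. The paper uses this step tacitly for the reversed product.
\end{itemize}
The price is that you need the translation step twice.

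\textbf{Two points to fix.}
\begin{itemize}
\item You evaluate at $x^{(2)}=0$, which is not in $Y_2(\R^d)$. This is harmless: both of your closed-form expressions are real analytic on $\{x^{(1)}\neq x^{(2)}\}$, and $Y_2(\R^d)$ is dense there, so their equality extends by continuity. You should say so explicitly.
\item With the paper's convention $f^*u=u\circ f$, the correct factor in Step 2 is $e^{+\sum_\rho P_\rho^*x^{(2)}_\rho}$. Your signs are consistent only if $P^*$ denotes the contragredient action, which is what your final line implicitly uses.
\end{itemize}
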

\begin{proof}
Let $a,b\in H$ and $u\in H^\vee$. By Proposition \ref{prop_vacuum_ind}, we have
\begin{align}
\langle u, Y(a,x^{(1)})Y(b,x^{(2)}) \va \rangle &= \langle u, Y(a,x^{(1)})\exp(\sum_\rho P_\rho x_\rho^{(2)})b\rangle.
\label{eq_above_exp_add}
\end{align}
By conformal covariance,
as formal power series, the right-hand side is formally equal to $\langle u, \exp\left(\sum_\rho P_\rho x_\rho^{(2)}\right)Y(a,x^{(1)}-x^{(2)})|_{|x^{(1)}|>|x^{(2)}|}b\rangle$.
Let \(x^{(0)}\) be formal variables and consider 
\begin{align}
\langle u, \exp\left(\sum_\rho P_\rho x_\rho^{(2)}\right)Y(a,x^{(0)})b\rangle
=\langle u, Y(Y(a,x^{(0)})b,x^{(2)})\va \rangle
 \label{eq_skew_x0}
\end{align}
Then \eqref{eq_above_exp_add} is obtained by substituting $x^{(0)}=(x^{(1)}-x^{(2)}) |_{|x^{(1)}|>|x^{(2)}|}$ into 
 \eqref{eq_skew_x0}.
By (V2), \eqref{eq_skew_x0} is equal to
\begin{align}
\begin{split}
\langle u, Y(b,x^{(2)})Y(a,x^{(1)}) \va \rangle &=
\langle u, \exp\left(\sum_\rho P_\rho x_\rho^{(1)}\right)Y(b,-x^{(0)})a\rangle,\\
 &=
\langle u, \exp\left(\sum_\rho P_\rho (x_\rho^{(2)}+x_\rho^{(0)})\right)Y(b,-x^{(0)})a\rangle
\end{split}
\label{eq_skew_x1}
\end{align}
as analytic functions.
By (C2), both \eqref{eq_skew_x0} and \eqref{eq_skew_x1} are in 
\begin{align*}
\C[x_1^{(2)},\dots,x_d^{(2)},x_1^{(0)},\dots,x_d^{(0)},|x^{(0)}|^\R].
\end{align*}
Thus, both sides are formally equal. Hence, the assertion holds.
\end{proof}

\subsection{Relation with vertex algebras}
\label{sec_rel_vertex}
In this section, we explain how vertex algebras arise from conformal \(2\)-vertex algebras when \(d=2\). The material in this section is
essentially due to \cite{M1}, but we include the details for the reader's convenience.
%この章では$d=2$において、$2$-vertex algebra からどのように vertex algebra が生じるかを説明する。この章の内容は本質的に\cite{}による。
Let $H$ be a conformal 2-vertex algebra.
Then, for each $\Delta \in \R$, $H_\Delta$ is a representation of $\mathrm{SO}(2)$. Set
\begin{align*}
H_{\Delta,s}= \{v \in H_\Delta\mid \rho(e^{i \theta})v =e^{is\theta}\}
\end{align*}
for $s\in\Z$. Then, we have:
$H_\Delta = \bigoplus_{s \in \Z} H_{\Delta,s}$.
Set
\begin{align*}
H_{h,\h} = H_{\Delta=h+\h, s=h-\h}.
\end{align*}
Then, $H=\bigoplus_{h,\h \in \R} H_{h,\h}$, which is precisely the simultaneous eigenspace decomposition with respect
to \(L(0)\) and \(\Ld(0)\).
For formal variables $x_1,x_2$, we set
\begin{align*}
z = x_1+ i x_2,\qquad &\z= x_1- i x_2\\
\frac{d}{dz}= \ft\left(\frac{d}{dx_1}-i \frac{d}{dx_2}\right)\qquad
&\frac{d}{d\z}= \ft\left(\frac{d}{dx_1}+i \frac{d}{dx_2}\right).
\end{align*}
Let $u\in H_{h_0,\h_0}^\vee$, $a \in H_{h,\h}$ and $b\in H_{h',\h'}$. Then,
$h_0\langle u, Y(a,x)b \rangle = \langle  u,L_0 Y(a,x)b \rangle = \langle  u,[L_0, Y(a,x)]b \rangle 
+\langle  u,Y(a,x)L_0 b \rangle$.
Hence, by the conformal covariance, we have
\begin{align*}
z\frac{d}{dz} \langle u, Y(a,x)b\rangle = (h_0-h_1-h_2) \langle u, Y(a,x)b\rangle,
\end{align*}
and similarly,
\begin{align*}
\z\frac{d}{d\z} \langle u, Y(a,x)b\rangle = (\h_0-\h_1-\h_2) \langle u, Y(a,x)b\rangle.
\end{align*}
This implies 
\begin{align*}
\langle u, Y(a,x)b\rangle \in \C z^{h_0-h_1-h_2}\z^{\h_0-\h_1-\h_2}.
\end{align*}
Hence, we have:
\begin{lem}\label{lem_2d_vertex}
For any $a\in H$, there are uniquely determined operators $a(r,s)\in \End H$ ($r,s\in\R$) such that
\begin{align*}
Y(a,x) = \sum_{r,s\in\R}a(r,s)z^{-r-1}\z^{-s-1}.
\end{align*}
Moreover, if $a\in H_{h,\h}$, then it satisfies
\begin{align}
a(r,s) H_{h',\h'} \subset H_{h+h'-r-1,\h+\h'-s-1}
\label{eq_deg_cov_lem}
\end{align}
for any $r,s\in\R$ and $h',\h' \in\R$.
\end{lem}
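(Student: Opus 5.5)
Because $Y(a,x)b\in H((x_1,x_2,|x|^\R))$, the plan is to work one triple of $L(0),\Ld(0)$-eigenvectors at a time and then reassemble. Since $H_\Delta$ is finite-dimensional with a genuine $\mathrm{SO}(2)$-action (C3), $J_{12}$ is diagonalizable with integer weights. Hence $L(0)=\ft(D+iJ_{12})$ and $\Ld(0)=\ft(D-iJ_{12})$ act semisimply, and $H=\bigoplus H_{h,\h}$ with $h-\h\in\Z$. The same holds on $H^\vee$. So it suffices to fix $u\in H_{h_0,\h_0}^*$, $a\in H_{h,\h}$, $b\in H_{h',\h'}$ and describe the scalar series $f=\langle u,Y(a,x)b\rangle$.

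\textbf{Step 1: $f$ is a single monomial.} By \eqref{eq_just_polynomial}, $f$ lies in $\C[x_1,x_2,|x|^\R]_{\lambda}$ with $\lambda=(h_0+\h_0)-(h+\h)-(h'+\h')$. Under the isomorphism $\C((x_1,x_2,|x|^\R))\cong\C((z,\z,|z|^\R))$, this space is spanned by monomials $z^{r}\z^{s}$ with $r+s=\lambda$ and $r-s\in\Z$. The covariance identities \eqref{eq_cov_def} for $L(0)$ and $\Ld(0)$, rewritten via Remark \ref{rem_chiral_d} as $z\pa_z$ and $\z\pa_\z$, give the two eigen-equations stated just before the lemma. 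Each operator acts diagonally on monomials, with $z\pa_z\, z^r\z^s=r z^r\z^s$ and $\z\pa_\z\, z^r\z^s=s z^r\z^s$. Therefore $f\in\C z^{h_0-h-h'}\z^{\h_0-\h-\h'}$. I will check that the derivation of these eigen-equations from \eqref{eq_cov_def} is legitimate formally: the rotation term $x_2\pa_1-x_1\pa_2$ combines with $E(x)$ to give $z\pa_z$ and $\z\pa_\z$. This is the same computation as in Remark \ref{rem_chiral_d}.

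\textbf{Step 2: definition of $a(r,s)$ and the grading property.} For $b\in H_{h',\h'}$, set $a(r,s)b$ to be the component of $Y(a,x)b$ in $H_{h+h'-r-1,\h+\h'-s-1}$, taken as the coefficient of $z^{-r-1}\z^{-s-1}$. Step 1 shows that pairing with $u\in H_{h_0,\h_0}^*$ picks out exactly the exponent $(h_0-h-h',\h_0-\h-\h')$. Hence $Y(a,x)b=\sum_{r,s}a(r,s)b\,z^{-r-1}\z^{-s-1}$, and \eqref{eq_deg_cov_lem} holds. I then extend to arbitrary $a,b$ by linearity over the finite decomposition of $a$ and $b$ into $(h,\h)$-components. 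For each fixed $b$ only finitely many terms with $r+s$ bounded above are nonzero, by (C2); this matches membership in $H((z,\z,|z|^\R))$.

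\textbf{Step 3: uniqueness.} Uniqueness follows because the monomials $z^r\z^s$ are linearly independent in $\C[[z,\z,|z|^\R]]$, and the map $x\mapsto(z,\z)$ is an isomorphism.

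I expect the main obstacle to be a bookkeeping point rather than a conceptual one. One must make sure that in $\C[x_1,x_2,|x|^\R]$ the relation $|x|^2=z\z$ makes the monomial basis $\{z^r\z^s\}$ well-defined, since $|x|^r=(z\z)^{r/2}$. The simultaneous eigenspaces of $z\pa_z$ and $\z\pa_\z$ on the degree-$\lambda$ piece are then one-dimensional. I will handle this by working entirely in $\C((z,\z,|z|^\R))$ through the stated isomorphism.
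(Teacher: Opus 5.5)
Your proposal is correct and follows essentially the same route as the paper. The $L(0)$- and $\Ld(0)$-covariance give $z\frac{d}{dz}$ and $\z\frac{d}{d\z}$ eigen-equations for $\langle u,Y(a,x)b\rangle$ with homogeneous $u,a,b$. These force it into $\C z^{h_0-h-h'}\z^{\h_0-\h-\h'}$, and the operators $a(r,s)$ are then read off as coefficients. One small point: here $H$ is an abstract conformal $2$-vertex algebra, so \eqref{eq_just_polynomial}, which is stated for the vertex operator of a $d$-CC system, does not apply directly. You do not need it, though. $D$-covariance gives the same degree restriction, and in any case the two eigen-equations alone already isolate a single monomial in $\C((z,\z,|z|^\R))$.
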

In view of this, in the two-dimensional case we shall write vertex
operators as
\begin{align*}
Y(a,z,\z) \in \End H[[z,\z,|z|^\R]].
\end{align*}
We note that, when considering compositions of vertex operators such as
\(Y(a,z_1,\z_1)Y(b,z_2,\z_2)c\), we assume convergence only for the sum in
the direction of \(D=L_0+\Ld_0\).
%このことから二次元では以下、我々は vertex operator を
%\begin{align*}
%Y(a,z,\z) \in \End H[[z,\z,|z|^\R]]
%\end{align*}
%とかくことにする。
%ここで $Y(a,z_1,\z_1)Y(b,z_2,\z_2)c$のような vertex operator の合成を考えるとき、我々は$D=L_0+\Ld_0$の方向に関する和の収束性しか仮定していないことに注意をしておく。

\begin{lem}
\label{lem_D_add}
Let $u \in H_{h_0,\h_0}^\vee$ and  $a_i \in H_{h_i,\h_i}$ ($i=1,2,3$) and $\phi(z_1,z_2)$ be the real analytic function on $Y_2(\R^2)$ in \eqref{eq_def_V3}.
Then, there is a real analytic function
$F: \C\setminus \{0,1\} \rightarrow \C$
such that
\begin{align*}
\phi(z_1,z_2) = z_1^{h_0-h_1-h_2-h_3}\z_1^{\h_0-\h_1-\h_2-\h_3}F\left(\frac{z_1}{z_2}\right)
\end{align*}
\end{lem}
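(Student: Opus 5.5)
The plan is to show that $\phi$ satisfies the two Euler-type differential equations coming from $L(0)$ and $\Ld(0)$, in their holomorphic and antiholomorphic forms. Scale and rotation covariance should then force the stated form.

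\textbf{Step 1: Euler equations on the first expansion region.} On $|z_1|>|z_2|>0$ we have
\[
\phi=\sum_{\Delta}\langle u,Y(a_1,z_1)\pr_\Delta Y(a_2,z_2)a_3\rangle,
\]
and each summand is homogeneous. By Lemma \ref{lem_2d_vertex}, after further decomposing $\pr_\Delta$ into the finitely many $H_{h,\h}$ with $h+\h=\Delta$, each term lies in
\[
\C\, z_1^{h_0-h_1-h}\z_1^{\h_0-\h_1-\h}\, z_2^{h-h_2-h_3}\z_2^{\h-\h_2-\h_3}.
\]
So every term is annihilated by
\[
z_1\pa_{z_1}+z_2\pa_{z_2}-(h_0-h_1-h_2-h_3)
\]
and by the conjugate operator with $\h$'s. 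Since the convergence is locally uniform (Proposition \ref{prop_borcherds}, axiom V2), and these are uniform limits of real analytic functions satisfying elliptic-type first-order equations, I would justify term-by-term differentiation as follows. Either apply the first-order operators to the uniformly convergent series via the weak (distributional) formulation, or observe that $\phi(\lambda z_1,\lambda z_2)$ for $\lambda\in\C^\times$ near $1$ equals $\lambda^{A}\bar\lambda^{\bar A}\phi(z_1,z_2)$ termwise. Here $A=h_0-h_1-h_2-h_3$ and $\bar A=\h_0-\h_1-\h_2-\h_3$. Note that $A-\bar A\in\Z$, because the spins $h-\h$ of all states are integers by (C3). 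This makes $\lambda^A\bar\lambda^{\bar A}=|\lambda|^{2\bar A}\lambda^{A-\bar A}$ single valued.

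The scaling-identity route is cleaner. I would state
\[
\phi(\lambda z_1,\lambda z_2)=\lambda^{A}\bar\lambda^{\bar A}\phi(z_1,z_2)
\]
for all $\lambda\in\C^\times$ such that both points remain in the region. The region $|z_1|>|z_2|>0$ is invariant under $(z_1,z_2)\mapsto(\lambda z_1,\lambda z_2)$ for every $\lambda\in\C^\times$, so the identity holds for all $\lambda\in\C^\times$ there.

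\textbf{Step 2: extend to all of $Y_2(\R^2)$.} The function $\psi(\lambda,z_1,z_2)=\phi(\lambda z_1,\lambda z_2)-\lambda^A\bar\lambda^{\bar A}\phi(z_1,z_2)$ is real analytic on the connected manifold $\C^\times\times Y_2(\R^2)$. It vanishes on a nonempty open subset, so it vanishes identically. Connectedness of $Y_2(\R^2)=\Conf$-type complement of complex hyperplanes holds since the removed sets have real codimension two. Alternatively, I could run the same argument on each of the three expansion regions, since each is dilation- and rotation-invariant, but identity-theorem extension suffices.

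\textbf{Step 3: define $F$.} I would set $F(\zeta)=\phi(\zeta,1)$ for $\zeta\in\C\setminus\{0,1\}$. The point $(\zeta,1)$ lies in $Y_2(\R^2)$, so $F$ is real analytic. Taking $\lambda=z_2$ (note $z_2\neq0$) gives
\[
\phi(z_1,z_2)=z_2^{A}\z_2^{\bar A}F(z_1/z_2).
\]
Rewriting $z_2=z_1\cdot(z_1/z_2)^{-1}$, and using single-valuedness of $\lambda^A\bar\lambda^{\bar A}$ (by $A-\bar A\in\Z$), this becomes
\[
z_1^{A}\z_1^{\bar A}\,\tilde F(z_1/z_2),\qquad \tilde F(\zeta)=\zeta^{-A}\bar\zeta^{-\bar A}F(\zeta).
\]
Here $\tilde F$ is real analytic on $\C\setminus\{0,1\}$, which is the claimed form. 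Equivalently, I could use $\lambda=z_1$ directly with $F(\zeta)=\phi(1,\zeta^{-1})$.

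\textbf{Main obstacle.} The main obstacle is the single-valuedness of the factor $z^A\z^{\bar A}$. Here $z^A\z^{\bar A}$ is to be read as $|z|^{2\bar A}z^{A-\bar A}$, and this needs $A-\bar A\in\Z$, which comes from $h-\h\in\Z$ via (C3). The other point needing care is that the termwise homogeneity passes to the limit. This is immediate pointwise, because the scaling identity holds for each partial sum over the finite set of degrees $\Delta<M$, and the series converges pointwise.
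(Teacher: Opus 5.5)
Your argument is correct and is essentially the paper's. Both rest on covariance of $\phi$ under the $\C^\times$-action $(z_1,z_2)\mapsto(\lambda z_1,\lambda z_2)$, which comes from the $(L(0),\Ld(0))$-grading, with single-valuedness of $z^A\z^{\bar A}$ guaranteed by $h-\h\in\Z$. The paper states this infinitesimally through the Euler equations \eqref{eq_cov_D_add} and \eqref{eq_cov_D_add2}, while you use the integrated scaling identity term by term; this makes explicit two steps the paper leaves implicit, namely passing to the limit of the series and extending from $|z_1|>|z_2|>0$ to all of $Y_2(\R^2)$.
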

\begin{proof}
Since
\begin{align*}
&h_0 \langle u, Y(a_1,z_1)Y(a_2,z_2)a_3 \rangle\\
&=
\langle L_0^* u, Y(a_1,z_1)Y(a_2,z_2)a_3 \rangle\\
&= \langle u, L_0 Y(a_1,z_1)Y(a_2,z_2)a_3 \rangle\\
&= \langle u, [L_0,Y(a_1,z_1)]Y(a_2,z_2)a_3 \rangle +
\langle u, Y(a_1,z_1)[L_0,Y(a_2,z_2)]a_3 \rangle
+ \langle u, Y(a_1,z_1)Y(a_2,z_2)L_0a_3 \rangle\\
&=(h_1+h_2+h_3) \langle u, Y(a_1,z_1)Y(a_2,z_2)a_3 \rangle +
\left( z_1\frac{d}{dz_1} + z_2\frac{d}{dz_2}\right)
\langle u, Y(a_1,z_1)Y(a_2,z_2)a_3 \rangle,
\end{align*}
we have:
\begin{align}
\left( z_1\frac{d}{dz_1} + z_2\frac{d}{dz_2}\right)\phi(z_1,z_2) = (h_0-h_1-h_2-h_3)\phi(z_1,z_2)
\label{eq_cov_D_add}
\end{align}
and similarly,
\begin{align}
\left( \z_1\frac{d}{d\z_1} + \z_2\frac{d}{d\z_2}\right)\phi(z_1,z_2) = (\h_0-\h_1-\h_2-\h_3)\phi(z_1,z_2).
\label{eq_cov_D_add2}
\end{align}
Hence,
\begin{align*}
z_1^{h_1+h_2+h_3-h_0}\z_1^{\h_1+\h_2+\h_3-\h_0}
\phi(z_1,z_2)
\end{align*}
is a real analytic function on \(Y_2(\R^2)\) which is invariant under
rotations and dilations.
Here we have used the fact that
\begin{align*}
z_1^{h_1+h_2+h_3-h_0}\z_1^{\h_1+\h_2+\h_3-\h_0} = |z_1|^{2(h_1+h_2+h_3-h_0)}\z_1^{(\h_1-h_1)+(\h_2-h_2)+(\h_3-h_3)-(\h_0-h_0)}
\end{align*}
is single-valued since $H_{h,\h}=0$ if $h-\h \notin \Z$. The desired assertion follows.
%
%Hence,
%\begin{align*}
%z_1^{h_1+h_2+h_3-h_0}\z_1^{\h_1+\h_2+\h_3-\h_0}
%\phi(z_1,z_2)
%\end{align*}
%は回転とdilation に対して不変な$Y_2(\R^2)$上の実解析的関数である。
%ここで
%\begin{align*}
%z_1^{h_1+h_2+h_3-h_0}\z_1^{\h_1+\h_2+\h_3-\h_0} = (x_1^{(1)}+x_2^{(1)})^{h_1+h_2+h_3-h_0}\z_1^{(\h_1-h_1)+(\h_2-h_2)+(\h_3-h_3)-(\h_0-h_0)}
%\end{align*}
%が一価であることを用いた。よって題意が従う。
\end{proof}

A vector $v \in H$ is called a \textbf{chiral vector} (resp. \textbf{anti-chiral vector}) if $\Ld(-1)v=0$ (resp. $L(-1)v=0$) (see \cite{M1}).

Assume that $a \in H$ is a chiral vector.
Then, since $0= Y(\Ld(-1)a,z,\z) = \frac{d}{d\z} Y(a,z,\z)$, 
we have $Y(a,z,\z)b \in H((z))$ for any $b \in H$.
Hence, there are uniquely determined operators $a(n) \in \End H$ such that
\begin{align*}
Y(a,z,\z) = \sum_{n\in\Z}a(n)z^{-n-1},
\end{align*}
which we denote by $Y(a,z)$.
Similarly, if $b$ is an anti-chiral vector, then
\begin{align*}
Y(b,z,\z) = \sum_{n\in\Z}b(n)\z^{-n-1}.
\end{align*}
Moreover, by \eqref{eq_deg_cov_lem}, if $a \in H_{h,\h}$, then
\begin{align}
a(n)H_{h',\h'} \subset H_{h+h'-n-1,\h+\h'}.
\label{eq_anb_L}
\end{align}

\begin{prop}\cite[Lemma 2.10]{M1}
\label{lem_chiral_der}
Let $H$ be a conformal 2-vertex algebra
and $u \in H^\vee$, $a_1,a_2,a_3 \in H$.
Assume that $a_1$ is a chiral vector. Then,
$\phi(z_1,z_2)$ in \eqref{eq_def_V3} is a function in $\C[z_1^\pm,(z_1-z_2)^\pm, z_2,\z_2,|z_2|^\R]$. 
\end{prop}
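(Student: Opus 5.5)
Since $a_1$ is chiral, $Y(a_1,z_1,\z_1)=Y(a_1,z_1)=\sum_{n\in\Z}a_1(n)z_1^{-n-1}$ involves only integral powers of $z_1$. In this form the statement is the full-vertex-algebra analogue of the rationality of chiral correlators, and I would follow the argument of \cite[Lemma 2.10]{M1}. The goal is to show that $\phi$ has only a pole of bounded order along $z_1=z_2$ and at $z_1=0$, with no monodromy in $z_1$. The key tool is the $\Ld(-1)$-translation property: $\Ld(-1)a_1=0$ gives $\frac{d}{d\z_1}$ of each expansion equal to zero.

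\textbf{Step 1: $\phi$ is holomorphic in $z_1$.} In the region $|z_1|>|z_2|$ the series $\sum_\D\langle u,Y(a_1,z_1)\pr_\D Y(a_2,z_2,\z_2)a_3\rangle$ converges locally uniformly. Each term is, for fixed $(z_2,\z_2)$, a Laurent polynomial in $z_1$ with no $\z_1$, since $\langle u,a_1(n)\cdot\rangle$ vanishes on $H_\D$ for all but one $n$ by \eqref{eq_anb_L}. By Weierstrass, $\phi$ is holomorphic in $z_1$ there. The same argument in $|z_2|>|z_1|$, using the reversed product, gives holomorphy there as well. Since $\pa_{\z_1}\phi$ is real analytic on the connected set $Y_2(\R^2)$ and vanishes on an open subset, $\phi(\cdot,z_2)$ is holomorphic on $\C\setminus\{0,z_2\}$.

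\textbf{Step 2: the singularities at $z_1=0$ and $z_1=z_2$ are poles.} Fix $z_2$ and consider the annulus $|z_1|<|z_2|$. There $\phi$ is given by $\sum_\D\langle u,Y(a_2,z_2,\z_2)\pr_\D Y(a_1,z_1)a_3\rangle$. Because $Y(a_1,z_1)a_3\in H((z_1))$, only finitely many negative powers of $z_1$ occur, so $z_1=0$ is at most a pole.

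For $z_1=z_2$ I would use the associativity expansion in (V2). In $|z_2|>|z_1-z_2|$, $\phi$ equals $\sum_\D\langle u,Y(\pr_\D Y(a_1,z_0)a_2,z_2,\z_2)a_3\rangle$ with $z_0=z_1-z_2$. Since $Y(a_1,z_0)a_2=\sum_n a_1(n)a_2z_0^{-n-1}$ has only finitely many negative powers, the singularity at $z_0=0$ is a pole of order at most $N$, where $a_1(n)a_2=0$ for $n\ge N$.

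\textbf{Step 3: behaviour at $z_1=\infty$.} In $|z_1|>|z_2|$ the expansion $\sum_n\langle u,a_1(n)\cdots\rangle z_1^{-n-1}$ has only finitely many nonzero $n$, because $u$ has fixed degree and \eqref{eq_anb_L} holds. Hence $\phi$ has a pole at $\infty$. The function $z_1^{k}(z_1-z_2)^{N}\phi$ is then entire in $z_1$ with polynomial growth, so it is a polynomial in $z_1$. This gives $\phi\in \C[z_1^{\pm},(z_1-z_2)^{\pm}]$ with coefficients depending on $(z_2,\z_2)$.

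\textbf{Step 4: dependence on $z_2$.} Expand $\phi$ in partial fractions in $z_1$. Each coefficient can be read off from the $|z_1|<|z_2|$ expansion, or equivalently from a contour integral in $z_1$ of the convergent series. It is therefore a finite combination of terms $\langle u,Y(a_2,z_2,\z_2)\pr_\D a_1(n)a_3\rangle$ and $\langle u,Y(a_1(n)a_2,z_2,\z_2)a_3\rangle$. Each such term lies in $\C z_2^{r}\z_2^{s}$ with $r-s\in\Z$, hence in $\C[z_2,\z_2,|z_2|^\R]$, and only finitely many appear.

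\textbf{Main obstacle.} I expect the delicate part to be the finiteness and the mixing in Step 4. The contour-integral coefficients must be finite sums of homogeneous monomials in $z_2,\z_2$, and this is where the grading argument of Lemma \ref{lem_D_add} is needed. That lemma gives $\phi=z_1^{\alpha}\z_1^{\beta}F(z_1/z_2)$, and with holomorphy in $z_1$ and total homogeneity it forces each coefficient to be a single monomial of the right degree.
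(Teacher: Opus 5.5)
Your proposal is correct and follows essentially the same argument as the paper. In both, holomorphy in $z_1$ comes from $\Ld(-1)a_1=0$, and the three lower-truncated expansions in (V2) show there are at most poles at $z_1=0$, $z_1=z_2$ and $z_1=\infty$, so the function is rational. The only difference is the order of the steps: the paper first applies Lemma \ref{lem_D_add} to write $\phi=z_2^{h_0-h_1-h_2-h_3}\z_2^{\h_0-\h_1-\h_2-\h_3}G(z_1/z_2)$ and runs the pole argument for the one-variable function $G$ on $\C\setminus\{0,1\}$. This makes your Step 4, which identifies the $z_2$-dependence of the partial-fraction coefficients as homogeneous monomials, automatic.
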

\begin{proof}
We may assume that $u \in H_{h_0,\h_0}^\vee$ and $a_i \in H_{h_i,\h_i}$. Set
\begin{align*}
h=h_1+h_2+h_3-h_0.
\end{align*}
By Lemma \ref{lem_D_add}, there is a real analytic function $G:\C \setminus \{0,1\} \rightarrow \C$ such that
\begin{align*}
\phi(z_1,z_2) =z_2^{h_0-h_1-h_2-h_3}
\z_2^{\h_0-\h_1-\h_2-\h_3} G\left(\frac{z_1}{z_2}\right).
\end{align*}
Since $a_1$ is a chiral vector,
$
\frac{d}{d\z_1}G\left(\frac{z_1}{z_2}\right)=0$,
and thus, $G$ is a holomorphic function on $\C \setminus \{0,1\}$, which satisfies
\begin{align}
\begin{split}
z_2^{h}\z_2^{\h} \langle u, Y(a_1,z_1) Y(a_2,z_2,\z_2) a_3 \rangle &= G\left(\frac{z_1}{z_2}\right)\Bigl|_{|z_1|>|z_2|},\\
z_2^{h}\z_2^{\h} \langle u, Y(a_2,z_2,\z_2)Y(a_1,z_1)a_3 \rangle &= G\left(\frac{z_1}{z_2}\right)\Bigl|_{|z_2|>|z_1|},\\
z_2^{h}\z_2^{\h} \langle u, Y(Y(a_1,z_0)a_2,z_2,\z_2)a_3 \rangle &= G\left(\frac{z_0+z_2}{z_2}\right)\Bigl|_{|z_2|>|z_0|}.
\end{split}
\label{eq_pre_bor}
\end{align}
By definition, the sum
\[
\sum_{\Delta}
z_2^{h}\z_2^{\h} \langle u, Y(a_2,z_2,\z_2)\pr_\Delta Y(a_1,z_1)a_3 \rangle
\]
is absolutely convergent.
By \eqref{eq_deg_cov_lem}, this sum is equal to
\begin{align*}
\sum_{n\in \Z}
z_2^{h}\z_2^{\h} \langle u, Y(a_2,z_2,\z_2)a_1(n)a_3 \rangle&=
z_2^{h}\z_2^{\h}\sum_{n \in \Z} \langle u, a_2(h-n-2,\h-1)a_1(n)a_3 \rangle z_1^{-n-1}z_2^{-h+n-1}\z_2^{-\h}\\
&=
\sum_{n \in \Z} \langle u, a_2(h-n-2,\h-1)a_1(n)a_3 \rangle \left(\frac{z_1}{z_2}\right)^{-n-1}
\end{align*}
%Again by Lemma \ref{lem_deg_L}, we have
%\begin{align*}
%z_2^h\z_2^\h\sum_{n\in\Z} \langle u, Y(a_2,z_2,\z_2)a_1(n)a_3 \rangle &=
%z_2^h\z_2^\h\sum_{n\in\Z}
%\langle u, a_2(h-n-2,\h-1)a_1(n)a_3 \rangle z_1^{-n-1}z_2^{-h+n+1}\z_2^{-\h}\\
%&=\sum_{n\in\Z}
%\langle u, a_2(h-n-2,\h-1)a_1(n)a_3 \rangle
%\left(\frac{z_1}{z_2}\right)^{-n-1}.
%\end{align*}
By (C2), \(a(n)a_3=0\) for sufficiently large \(n\). Hence this series,
with \(z=\frac{z_1}{z_2}\), belongs to \(\C((z))\). Moreover, by assumption, after setting \(z_2=1\),
this sum is absolutely convergent in the region \(0<|z_1|<1\).
It follows that \(G(z)\) has at most a pole at \(z=0\).
Similarly, using \eqref{eq_pre_bor}, we see that \(G(z)\) has at most poles
also at \(z=1\) and \(z=\infty\). Therefore \(G(z)\) defines a meromorphic
function on \(\CP\) with possible poles at $\{0,1,\infty\}$.
Hence \(G(z) \in \C[z^\pm,(1-z)^\pm]\).
%ここで定義より和$\sum_{\Delta \geq 0}
%z_2^{h_1+h_2+h_3-h_0}\z_2^{\h_1+\h_2+\h_3-\h_0} \langle u, Y(a_2,z_2,\z_2)\pr_\Delta Y(a_1,z_1)a_3 \rangle$は絶対収束する。
%ここで Lemma \ref{lem_deg_L} からこの和は
%\begin{align*}
%\sum_{n \in \Z} \langle u, Y(a_2,z_2,\z_2)a_1(n)a_3 \rangle z_1^{-n-1}
%\end{align*}
%に等しく、再びLemma \ref{lem_deg_L}より
%\begin{align*}
%z_2^h\z_2^\h\sum_{n\in\Z} \langle u, Y(a_2,z_2,\z_2)a_1(n)a_3 \rangle &=
%z_2^h\z_2^\h\sum_{n\in\Z}
%\langle u, a_2(h-n-2,\h-1)a_1(n)a_3 \rangle z_1^{-n-1}z_2^{-h+n+1}\z_2^{-\h}\\
%&=\sum_{n\in\Z}
%\langle u, a_2(h-n-2,\h-1)a_1(n)a_3 \rangle
%\left(\frac{z_1}{z_2}\right)^{-n-1}.
%\end{align*}
%ここで (C2) より、$a(n)a_3=0$が$n$が十分に大きいときに成り立つため、これは下に有界な Lorentz series の空間$\C((z))$に$z=\frac{z_1}{z_2}$として含まれている。さらに仮定からこの和は、$z_2=1$とおいたとき、$0<|z_1|<1$において絶対収束している。
%よって、$G(z)$は$z=0$で高々極を持つことが分かる。
%同様にして、\eqref{eq_pre_bor}から$G(z)$は$z=1,\infty$でも高々極を持つことが分かるため、$\CP$上の有理形関数を定める。よって$G(z) \in \C[z^\pm,(1-z)^\pm]$が分かる。
\end{proof}

The following corollary follows from
the Cauchy integral formula and the above proposition (see \cite[Lemma 3.11]{M1}):
% the residue theorem, a standard method in the theory of vertex algebras .
\begin{cor}
Let $a,b \in H$ and assume that $a$ is a chiral vector. Then, for any $n \in \Z$
\begin{align*}
[a(n),Y(b,z,\z)] &= \sum_{k \geq 0}\binom{n}{k} Y(a(k)b,z,\z)z^{n-k}\\
Y(a(n)b,z,\z) &= \sum_{k \geq 0}(-1)^k\binom{n}{k}a(n-k)Y(b,z,\z)z^{k} -  \sum_{k \geq 0}\binom{n}{k}(-1)^{n+k}
Y(b,z,\z) a(k) z^{n-k}
\end{align*}
\end{cor}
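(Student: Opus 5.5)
The plan is to follow the standard residue argument for chiral vertex algebras, using Proposition \ref{lem_chiral_der} as the only analytic input. Fix $u\in H^\vee$ and $b,c\in H$. Since $a$ is chiral, Proposition \ref{lem_chiral_der} (with $a_1=a$, $a_2=b$, $a_3=c$) gives a single function
\[
\phi(z_1,z_2)\in\C[z_1^\pm,(z_1-z_2)^\pm,z_2,\z_2,|z_2|^\R].
\]
Its three expansions are $\langle u,Y(a,z_1)Y(b,z_2,\z_2)c\rangle$ in $|z_1|>|z_2|$, $\langle u,Y(b,z_2,\z_2)Y(a,z_1)c\rangle$ in $|z_2|>|z_1|$, and $\langle u,Y(Y(a,z_0)b,z_2,\z_2)c\rangle$ in $|z_2|>|z_0|$ with $z_1=z_0+z_2$.

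Because the dependence on $z_1$ is rational, with poles only at $z_1=0$ and $z_1=z_2$, we can fix $z_2$ and treat everything else as a coefficient. By Lemma \ref{lem_D_add} we may work with homogeneous components and set $z_2=1$; alternatively we keep $z_2$ fixed and note that only finitely many $\Delta$-pieces contribute to each coefficient $a(n)$.

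For the first identity, multiply $\phi$ by $z_1^n$ and compute $\oint z_1^n\phi\,dz_1/(2\pi i)$ over a circle $|z_1|=R>|z_2|$ and over a circle $|z_1|=r<|z_2|$. On the large circle the absolutely convergent expansion of Proposition \ref{prop_borcherds} can be integrated term by term, because the convergence is locally uniform; this yields $\langle u,a(n)Y(b)c\rangle$. On the small circle it yields $\langle u,Y(b)a(n)c\rangle$. The difference of the two integrals is the integral over a small circle around $z_1=z_2$. There we use the third expansion, substituting $z_1^n=(z_0+z_2)^n=\sum_k\binom{n}{k}z_0^k z_2^{n-k}$, which is valid for $|z_0|<|z_2|$, and take the residue in $z_0$. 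This gives $\sum_k\binom nk\langle u,Y(a(k)b,z_2,\z_2)c\rangle z_2^{n-k}$.

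For the second identity, integrate $(z_1-z_2)^n\phi$ around $z_1=z_2$ and pick out the coefficient $a(n)b$ from the expansion in $z_0$. Deform the contour into the large circle minus the small circle around $0$, and expand $(z_1-z_2)^n$ binomially in each region:
\begin{itemize}
\item for $|z_1|>|z_2|$, in powers of $z_2/z_1$, giving $\sum_k(-1)^k\binom nk a(n-k)Y(b)z_2^k$;
\item for $|z_1|<|z_2|$, in powers of $z_1/z_2$, giving $\sum_k\binom nk(-1)^{n+k}Y(b)a(k)z_2^{n-k}$.
\end{itemize}
The difference of these two contributions is the claimed formula.

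The main obstacle is justifying that contour integrals of $\phi$ equal the corresponding formal coefficient extractions, that is, that we may integrate the series term by term. Proposition \ref{prop_borcherds} only provides convergence summed over $\Delta$, not a decomposition into the individual modes $a(n)$. The plan is to use \eqref{eq_anb_L} to identify each $\Delta$-summand with a single $z_1$-monomial, as in the proof of Proposition \ref{lem_chiral_der}. Local uniform convergence on the compact circles then lets us exchange sum and integral. In the $z_0$-expansion, the infinite binomial sums need similar care; after pairing with $u$ they reduce to finite sums by (C2), since $a(k)b=0$ for $k\gg0$.

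Finally, since both sides of each identity are elements of $\C[[z_2,\z_2,|z_2|^\R]]$ whose coefficients are determined by their values as analytic functions, via the uniqueness part of Proposition \ref{prop_analytic_Laurent}, the analytic identities imply the formal ones.
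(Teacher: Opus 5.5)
Your proposal is correct and is essentially the argument the paper intends: the paper derives this corollary from Proposition~\ref{lem_chiral_der} by the Cauchy integral formula, as in \cite[Lemma 3.11]{M1}, using the same contour deformations around $0$, $z_2$ and $\infty$ that you describe. You also fill in details the paper leaves implicit, namely that each $\Delta$-summand is a single $z_1$-monomial by \eqref{eq_anb_L}, which justifies term-by-term integration, and that the binomial sums become finite after pairing with $u$ by (C2).
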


\begin{prop}\cite[Proposition 3.12]{M1}
\label{prop_vertex_algebra}
The spaces \(\ker \Ld(-1)\) and \(\ker L(-1)\) carry vertex algebra
structures, and \(H\) is a module over these vertex algebras.
\end{prop}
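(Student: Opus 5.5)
The plan is to verify the vertex algebra axioms for $V=\ker \Ld(-1)$ with the restriction of $Y$, and the module axioms for $H$, using the Jacobi-type identities in the preceding corollary. The argument for $\ker L(-1)$ is the same after swapping $z$ and $\z$.

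\textbf{Step 1: $V$ is closed under $Y$.} For $a,b\in V$ and $n\in\Z$ we show $a(n)b\in V$. By conformal covariance, $[\Ld(-1),Y(a,z,\z)]=\frac{d}{d\z}Y(a,z,\z)$. The right-hand side vanishes because $a$ is chiral, so $Y(a,z)=\sum_n a(n)z^{-n-1}$ with no $\z$ dependence. Hence $\Ld(-1)$ commutes with every $a(n)$, and $\Ld(-1)a(n)b=a(n)\Ld(-1)b=0$. The vacuum lies in $V$ because $\C\va$ is a trivial $\sod$-module.

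\textbf{Step 2: truncation, vacuum, and translation.} Truncation $a(n)b=0$ for $n\gg0$ follows from \eqref{eq_anb_L} together with (C2): the $D$-degree of $a(n)b$ tends to $-\infty$, so it must eventually vanish. $Y(\va,z)=\id$ is (V1). Proposition \ref{prop_vacuum_ind} gives $Y(a,z)\va=\exp(zL(-1)+\z\Ld(-1))a=e^{zL(-1)}a$ for $a\in V$, so $a(-1)\va=a$ and $a(n)\va=0$ for $n\ge0$. Take $T=L(-1)$. It preserves $V$ since $[L(-1),\Ld(-1)]=0$, and conformal covariance gives $Y(L(-1)a,z)=\frac{d}{dz}Y(a,z)$.

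\textbf{Step 3: Borcherds identity.} By \cite[Lemma 3.11]{M1} and the preceding corollary, for chiral $a$ and arbitrary $b\in H$ we have the commutator formula and the associator formula with $Y(b,z,\z)$. Take the coefficient of $z^{-m-1}\z^{-s-1}$ in these formulas, with $b$ chiral or $b\in H$. This gives
\[[a(n),b(m)]=\sum_{k\ge0}\binom{n}{k}(a(k)b)(n+m-k)\]
and the corresponding iterate formula. Together these are equivalent to the Borcherds (Jacobi) identity; see \cite{LL}. Thus $V$ is a vertex algebra.

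\textbf{Step 4: module structure on $H$.} For $a\in V$, let $Y_H(a,z)$ be $Y(a,z,\z)$ acting on $H$. The same two formulas, now applied with $c\in H$ as the third argument, give the Jacobi identity for the module, and truncation on $H$ follows from (C2) as before.

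The main obstacle is Step 3. The corollary is only stated for a single chiral vector, so we must check that its derivation by the Cauchy integral formula applies when both $a$ and $b$ are chiral, and that the resulting commutator and associator formulas indeed combine to the full Jacobi identity. The first point holds: Proposition \ref{lem_chiral_der} applies because only $a_1$ needs to be chiral. The second point is a standard equivalence (\cite[Section 3]{LL}), which I would cite rather than reprove.
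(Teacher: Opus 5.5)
Your proposal is correct and follows the route the paper indicates. Proposition \ref{lem_chiral_der} yields the commutator and iterate formulas of the preceding corollary. Those formulas, together with closure of $\ker\Ld(-1)$ under the modes $a(n)$, truncation from (C2), and the vacuum/creation properties from (V1) and Proposition \ref{prop_vacuum_ind}, give the vertex algebra and module structures as in \cite[Proposition 3.12]{M1}. The worry in your last paragraph is moot: the corollary already allows arbitrary $b\in H$, and the commutator formula for all $n$ together with the iterate formula for all $n$ imply the full Borcherds identity through the Pascal-type recursion among its components.
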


\section{Construction of examples}
In this section, we construct a family of examples of $d$-CC systems.
The underlying vector space of a $d$-CC system is a compact $\sod$-module.
In Section \ref{sec_Verma}, using parabolic induction, we study the highest and lowest weight $\sod$-modules $V(\al)$ and $V(\al)^\da$, depending on a
parameter $\al\in\R$, together with their simple quotients
$L(\al)$ and $L(\al)^\da$, and their realizations in spaces of polynomials.
In Section \ref{sec_construction}, using the canonical pairing between
$L(\al)$ and $L(\al)^\da$, we introduce an infinite-dimensional Heisenberg
Lie algebra which may be regarded as a higher-dimensional analogue of the
affine Heisenberg Lie algebra. Using this Lie algebra together with the
polynomial realizations above, we construct a scalar field and study its
conformal covariance. In Section \ref{sec_construction3}, we use these
constructions to obtain $d$-CC systems.

In Appendix~\ref{sec_singular}, we derive an explicit determinant formula for the invariant bilinear form on these parabolic Verma modules and use it to establish the irreducibility.
%The irreducibility of the parabolic Verma modules $V(\al)$ and
%$V(\al)^\da$, as well as the explicit form of their singular vectors, is
%studied in Appendix \ref{sec_singular}.
By this formula, the irreducible highest and lowest weight modules $L(\al)$ and $L(\al)^\da$ admit positive-definite invariant inner products precisely for
$\al\geq \frac{d-2}{2}$ or $\al=0$,
and, for $d>2$, the distinguished value
$\al=\frac{d-2}{2}$
corresponds to the conformal field theory of the massless free scalar field.
In Appendix \ref{sec_massless}, we show that the Hilbert-space completion of $L\left(\frac{d-2}{2}\right)^\da$
carries a projective unitary representation of $\SO_e(d,2)$
whose $(\so(d,2)_\C,K)$-module agrees with the
$\sod\cong\so(d,2)_\C$-module structure on
$L\left(\frac{d-2}{2}\right)^\da$.
Finally, in Appendix \ref{sec_polynomial}, we derive an explicit formula
expressing the invariant inner product on $L\left(\frac{d-2}{2}\right)^\da$ in terms of the $L^2$-inner product on the standard
sphere $S^{d-1}$.

%この章では、$d$-CC system の例の族を構成する。
%$d$-CC system の underlying vector space は compact $\sod$-module であり、section \ref{sec_Verma}では、parabolic induction を用いて、定義される $\sod$ の parameter $\al \in\R$を持った highest および lowest weight modules $V(\al),V(\al)^\da$、およびそれらの単純商$L(\al), L(\al)^\da$
%の多項式への実現を調べる。
%Section \ref{sec_construction}では、$L(\al)$と$L(\al)^\da$の canonical pairing を用いて、affine Heisenberg Lie algebra の高次元への拡張となる 無限次元の Heisenberg Lie algebra を導入する。またこのリー代数と多項式への実現を用いて、scalar 場を構成し、その共形共変性を調べる。Section \ref{sec_construction2}では、これらを用いて$d$-CC system を構成する。
%
%parabolic Verma modules $V(\al),V(\al)^\da$の既約性や singular vector の具体的な形などは Appendix \ref{sec_singular} において調べられる。
%既約な highest and lowest weight modules $L(\al),L(\al)^\da$ は特に$\al \geq \frac{d-2}{2}$のときに限り、正定値な不変内積を持ち、
%$\al = \frac{d-2}{2}$のパラメータは$d>2$において、massless free scalar 場のなす共形場理論に対応する。
%Appendix \ref{sec_massless} では$L(\frac{d-2}{2})$のヒルベルト空間としての完備化が$\SO_e(d,2)$の射影的ユニタリ表現の構造を持ち、その $(\so(d,2)_\C,K)$加群が$L(\frac{d-2}{2})$の$\sod \cong \so(d,2)_\C$加群の構造と一致することが示される。
%また$\al=\frac{d-2}{2}$の場合の内積を標準球面$S^{d-1}$上の$L^2$内積を用いて計算する公式は Appendix \ref{sec_polynomial} で示す。
 
\subsection{Harmonic polynomials and representation}\label{sec_Verma}
In this section, we introduce and study a family of representations $V(\al),L(\al),V(\al)^\da,L(\al)^\da$ of
$\mathfrak{so}(d+1,1)$.

For any $\al \in \R$,
let $d_\al:\mathfrak{so}(d+1,1) \rightarrow \R[x_1,\partial_1,\dots,
x_d,\partial_d]$ be a linear map defined by
\begin{align*}
d_\al(D)&= -E_x - \al \\
d_\al(K_\mu) &= ||x||^2\pa_\mu -2 x_\mu (E_x +\al)\\
d_\al(P_\mu) &= -\pa_\mu \\
d_\al(J_{\mu,\nu}) &= x_\mu \pa_\nu-x_\nu \pa_\mu
\end{align*}
for $\mu,\nu \in \{1,2,\dots,d \}$.
It is easy to show that the linear map $d_\al:\mathfrak{so}(d+1,1) \rightarrow \R[x_1,\partial_1,\dots,x_d,\partial_d]$ gives a Lie algebra homomorphism.
Thus, $\R((x_1,\dots,x_d,|x|^\R))$ is a $\sod$-module via $d_\al$.
Since for any $\be \in \R$
\begin{align*}
d_\al(K_\mu) ||x||^{2\be} =  \left(||x||^2\pa_\mu -2 x_\mu (E_x +\al) \right)||x||^{2\be}= 2x_\mu (\be -2\be-\al) ||x||^{2\be},
\end{align*}
we have:
\begin{lem}\label{lem_highest}
$1 \in \R[x^1,\dots,x^d]$
satisfies
\begin{align*}
d_\al(J_{\mu,\nu})1&= 0 \\
d_\al(P_\mu)1 &= 0 \\
d_\al(D) 1 &= - \al 1
\end{align*}
and
$\frac{1}{||x||^{2\al}} \in \R[x_1,\dots,x_d,|x|^\R]$
satisfies
\begin{align*}
d_\al(J_{\mu,\nu})||x||^{-2\al}&= 0 \\
d_\al(K_\mu)||x||^{-2\al}&= 0 \\
d_\al(D)||x||^{-2\al} &= \al ||x||^{-2\al}.
\end{align*}
\end{lem}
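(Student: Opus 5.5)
This is a direct computation with the explicit differential operators defining $d_\al$. We use only two facts about the generators: derivatives annihilate constants, and the Euler operator acts diagonally on radial powers. Everything reduces to the identities
\[
\pa_\mu \|x\|^{2\be}=2\be x_\mu\|x\|^{2\be-2},\qquad E_x\|x\|^{2\be}=2\be\|x\|^{2\be}.
\]
These follow from the rule defining $\pa_\mu$ on $\R[x_1,\dots,x_d,|x|^\R]$ with $r=2\be$.

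For the constant $1$ we argue as follows.
\begin{itemize}
\item Since $\pa_\mu 1=0$, we get $d_\al(P_\mu)1=-\pa_\mu 1=0$.
\item Likewise $d_\al(J_{\mu,\nu})1=(x_\mu\pa_\nu-x_\nu\pa_\mu)1=0$.
\item Since $E_x1=0$, we get $d_\al(D)1=(-E_x-\al)1=-\al$.
\end{itemize}

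For $\|x\|^{-2\al}$ we take $\be=-\al$.
\begin{itemize}
\item For the rotations,
\[
d_\al(J_{\mu,\nu})\|x\|^{2\be}=2\be\,(x_\mu x_\nu-x_\nu x_\mu)\|x\|^{2\be-2}=0.
\]
\item For the dilation, $d_\al(D)\|x\|^{-2\al}=(-(-2\al)-\al)\|x\|^{-2\al}=\al\|x\|^{-2\al}$.
\item For the special conformal generators, use the computation displayed just before the lemma. We have
\[
\|x\|^2\pa_\mu\|x\|^{2\be}=2\be x_\mu\|x\|^{2\be}
\quad\text{and}\quad
2x_\mu(E_x+\al)\|x\|^{2\be}=2x_\mu(2\be+\al)\|x\|^{2\be}.
\]
Hence
\[
d_\al(K_\mu)\|x\|^{2\be}=2x_\mu(\be-2\be-\al)\|x\|^{2\be}=-2x_\mu(\be+\al)\|x\|^{2\be},
\]
which vanishes exactly when $\be=-\al$.
\end{itemize}

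There is no real obstacle. The only point needing care is that the identities for $\pa_\mu$ and $E_x$ hold in the formal space $\R[x_1,\dots,x_d,|x|^\R]$. This is immediate from the definition of $\pa_\mu$ there, using $E_x=\sum_\mu x_\mu\pa_\mu$ and $\sum_\mu x_\mu^2=\|x\|^2$.
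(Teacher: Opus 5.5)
Your proposal is correct and is essentially the paper's argument: you evaluate the operators $d_\al(X)$ directly on $1$ and on $\|x\|^{-2\al}$, and the only nontrivial case, $K_\mu$, reduces to exactly the identity $d_\al(K_\mu)\|x\|^{2\be}=2x_\mu(\be-2\be-\al)\|x\|^{2\be}$ that the paper displays just before the lemma and then specializes to $\be=-\al$. The remaining checks you write out (for $P_\mu$, $J_{\mu,\nu}$ and $D$, using $\pa_\mu\|x\|^{2\be}=2\be x_\mu\|x\|^{2\be-2}$ and $E_x\|x\|^{2\be}=2\be\|x\|^{2\be}$ in $\R[x_1,\dots,x_d,|x|^\R]$) are the routine verifications the paper leaves implicit.
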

By Lemma \ref{lem_highest}, $1$ and $||x||^{-2\al}$ are highest and lowest weight vectors of $\sod$, respectively.
Let $\sod^+$ (resp. $\sod^-$) be a subalgebra of $\sod$ spanned by
$\{P_\mu, D, J_{\mu,\nu}\}_{\mu,\nu \in \{1,\dots,d\}}$ (resp. $\{K_\mu, D, J_{\mu,\nu}\}_{\mu,\nu \in \{1,\dots,d\}}$).
%(resp. $\{P_\mu, D, J_{\mu,\nu}\}_{\mu,\nu \in \{1,\dots,d\}}$).
Let us consider the one-dimensional representation $\R v_\al$ (resp. $v_\al^\da$) of $\sod^+$ (resp. $\sod^-$) defined by
\begin{align}
\begin{split}
P_\mu v_\al &=0,\\
D v_\al &= -\al v_\al,\\
J_{\mu,\nu} v_\al &= 0
\end{split}
\label{eq_univ_verma}
\end{align}
and
\begin{align}
\begin{split}
K_\mu v_\al^\da &=0,\\
D v_\al^\da &= \al v_\al^\da,\\
J_{\mu,\nu} v_\al^\da &= 0
\end{split}
\label{eq_dual_Verma}
\end{align}
for $\mu,\nu\in \{1,\dots,d\}$.
Let $V(\al) = \mathrm{Ind}^{\sod} \R v_\al$ (resp.  $V(\al)^\da = \mathrm{Ind}^{\sod} \R v_\al^\da$)
 be the induced representation of $\sod$ from $\R v_\al$ (resp. $\R v_\al^\da$), which is a parabolic Verma module.

Let $\theta:\mathfrak{so}(d+1,1) \rightarrow \mathfrak{so}(d+1,1)$
be an involution of $\mathfrak{so}(d+1,1)$ defined by
\begin{align}
\begin{split}
\theta(P_\mu)&= -K_\mu \\
\theta(K_\mu)&= -P_\mu \\
\theta(D)&=-D \\
\theta(J_{\mu,\nu})&=J_{\mu,\nu},
\end{split}
\label{def_theta_add}
\end{align}
for $\mu,\nu \in \{1,2,\dots,d\}$.
It is easy to confirm that the linear map $\theta$ is an automorphism of the Lie algebra
$\mathfrak{so}(d+1,1)$.
For any $\sod$-module $M$,
we can define a new action by 
$A\cdot_\theta m = \theta(A)\cdot m$
for $A \in \sod$ and $m\in M$.
%$h \circ \theta: \sod \rightarrow \End \;M$. 
We denote this module by $\theta^* M$. Then, it is clear that $\theta^* V(\al) \cong V(\al)^\da$ as $\sod$-modules. Denote this isomorphism by
\begin{align}
\theta_\al:\theta^* V(\al) \cong V(\al)^\da,\quad v_\al \mapsto v_\al^\da \label{eq_theta_al}
\end{align}

%Denote $v_\al \in V(\al)^\dagger$ by $v_\al^\dagger$. Then, it satisfies
%\begin{align}
%\begin{split}
%P_\mu v_\al^\dagger &=0\\
%D v_\al^\dagger &= -\al v_\al^\dagger\\
%J_{\mu,\nu} v_\al^\dagger &= 0,
%\end{split}\label{eq_dual_Verma}
%\end{align}
%which characterize the $\sod$-module $V(\al)^\dagger$ by the universal property as an induced module.

For $n \in \Z_{\geq 0}$, set
\begin{align*}
V(\al)_n^\da = \{v \in V(\al)^\da \mid D(v) = (\al+n)v \},\\
V(\al)_n = \{v \in V(\al) \mid D(v) = (-\al-n)v \},
\end{align*}
which are spanned by
\begin{align}
\begin{split}
\{P_{\mu_1}P_{\mu_2}\cdots P_{\mu_n}v_\al^\da\}\quad \text{ and }\quad
\{K_{\mu_1}K_{\mu_2}\cdots K_{\mu_n}v_\al\}
\end{split}
\label{eq_basis_Verma}
\end{align}
with $1 \leq \mu_1 \leq \mu_2 \leq \dots \mu_n \leq d$, respectively.
Then, $V(\al) = \bigoplus_{n\geq 0}V(\al)_n$. Set
\begin{align*}
V(\al)^\vee= \bigoplus_{n\geq 0}V(\al)_n^*,
\end{align*}
the restricted dual of $V(\al)$, which is a $\sod$-module by $(A.f)(\bullet)=-f(A\bullet)$ for $f(\bullet) \in V(\al)^\vee$  and $A \in \sod$. Let $u_\al \in V(\al)_0^*$ satisfy $u_\al(v_\al)=1$.
Since $u_\al$ satisfies \eqref{eq_dual_Verma}, there is a unique $\sod$-module homomorphism
\begin{align*}
f_\al: V(\al)^\da \rightarrow V(\al)^\vee
\end{align*}
such that $f_\al(v_\al^\da) = u_\al$.
Define a bilinear form
\begin{align*}
(-,-):V(\al)^\da \otimes V(\al)^\da \rightarrow \R
\end{align*}
by
\begin{align*}
V(\al)^\da \otimes \theta^* V(\al)^\da \overset{f_\al \otimes \theta_\al}{\rightarrow} V(\al)^\vee \otimes V(\al) \overset{\langle-,-\rangle}{\rightarrow}\R,
\end{align*}
where the last map is the canonical pairing.
Then, we have:
\begin{prop}\label{prop_bilinear_Verma}
The bilinear form $(-,-):V(\al)^\da \otimes V(\al)^\da \rightarrow \R$ satisfies
\begin{enumerate}
\item
$(v_\al^\da,v_\al^\da)=1$;
\item
$(Au,v) = -(u,\theta(A)v)$ for any $u,v\in V(\al)^\da$ and $A\in \sod$;
\item
$(u,v)=(v,u)$  for any $u,v\in V(\al)^\da$.
\end{enumerate}
Moreover, $(-,-)$ is the unique bilinear form which satisfies (1) and (2).
\end{prop}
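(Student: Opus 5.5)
The plan is to unwind the definition of $(u,v)=\langle f_\al(u),\theta_\al^{-1}(v)\rangle$ and use the universal property of the parabolic Verma module $V(\al)^\da$ throughout. Here $\theta_\al^{-1}(v)$ is viewed as an element of $V(\al)$.

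\textbf{Property (1).} This is immediate: $f_\al(v_\al^\da)=u_\al$, $\theta_\al^{-1}(v_\al^\da)=v_\al$, and $u_\al(v_\al)=1$.

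\textbf{Property (2).} Since $f_\al$ is a homomorphism, $f_\al(Au)=A.f_\al(u)=-f_\al(u)(A\,\cdot)$. Hence
\[
(Au,v)=-\langle f_\al(u),A\,\theta_\al^{-1}(v)\rangle .
\]
Because $\theta_\al:\theta^*V(\al)\to V(\al)^\da$ intertwines, we have $\theta_\al(\theta(B)w)=B\,\theta_\al(w)$ for $w\in V(\al)$. Taking $B=\theta(A)$ and using $\theta^2=\id$ gives $A\,\theta_\al^{-1}(v)=\theta_\al^{-1}(\theta(A)v)$. This yields $(Au,v)=-(u,\theta(A)v)$.

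\textbf{Uniqueness.} Let $b$ be any bilinear form satisfying (1) and (2). Every element of $V(\al)^\da$ is a combination of vectors $P_{\mu_1}\cdots P_{\mu_n}v_\al^\da$. Applying (2) repeatedly moves the $P$'s to the right as $K$'s:
\[
b(P_{\mu_1}\cdots P_{\mu_n}v_\al^\da,v)=(-1)^n\, b(v_\al^\da,\theta(P_{\mu_n})\cdots\theta(P_{\mu_1})v)=b(v_\al^\da,K_{\mu_n}\cdots K_{\mu_1}v),
\]
so it suffices to know $b(v_\al^\da,w)$ for all $w$.

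Decompose $w$ by $D$-eigenvalue. For $w\in V(\al)^\da_n$ with $n>0$, use $D$-invariance: (2) gives $b(Dv_\al^\da,w)=b(v_\al^\da,Dw)$, i.e. $\al\, b(v_\al^\da,w)=(\al+n)\,b(v_\al^\da,w)$. Hence $b(v_\al^\da,w)=0$. For $n=0$, the space $V(\al)^\da_0$ is spanned by $v_\al^\da$, and (1) fixes the value. So $b$ is determined, and $b=(-,-)$.

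\textbf{Property (3).} The form $b'(u,v):=(v,u)$ satisfies (1). It also satisfies (2): $b'(Au,v)=(v,Au)$, and we need this to equal $-(\theta(A)v,u)$. Applying (2) to $(v,Au)$ with $A$ replaced by $\theta(A)$ gives $(\theta(A)v,u)=-(v,\theta^2(A)u)=-(v,Au)$, which is exactly what is required. By uniqueness $b'=(-,-)$, proving symmetry.

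The only delicate point is the identity $\theta_\al^{-1}(\theta(A)v)=A\,\theta_\al^{-1}(v)$, that is, getting the direction of the $\theta$-twist right. The rest is formal.
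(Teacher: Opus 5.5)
Your proposal is correct and follows the same route as the paper: (1) and (2) come from unwinding the definition through $f_\al$ and $\theta_\al$, uniqueness comes from the spanning set $P_{\mu_1}\cdots P_{\mu_n}v_\al^\da$ together with (1) and (2), and symmetry follows by checking that $(u,v)\mapsto (v,u)$ again satisfies (1) and (2) because $\theta^2=\id$. You supply the details the paper leaves implicit, namely the intertwining identity $A\,\theta_\al^{-1}(v)=\theta_\al^{-1}(\theta(A)v)$ and the $D$-eigenvalue argument showing $b(v_\al^\da,w)=0$ for $w$ of positive degree.
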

\begin{proof}
(1) and (2) follow from the definition. By \eqref{eq_basis_Verma}, (1) and (2) uniquely characterize the bilinear form.
Let $(-,-)'$ be the bilinear form defined by $(u,v)' = (v,u)$ for $u,v \in V(\al)^\da$. Since $\theta^2=\id$, $(-,-)'$ also satisfies (1) and (2). Hence, the bilinear form is symmetric.
\end{proof}
The following lemma is standard \cite{Hum}:
\begin{lem}\label{lem_irreducible_non_deg}
The bilinear form $(-,-)$ on $V(\al)^\da$ is non-degenerate if and only if $V(\al)^\da$ is an irreducible $\sod$-module.
\end{lem}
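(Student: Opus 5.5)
We have to prove that the form $(-,-)$ on $V(\al)^\da$ is non-degenerate if and only if $V(\al)^\da$ is irreducible. The standard route is through the radical. Set
\[
R=\{u\in V(\al)^\da \mid (u,v)=0 \text{ for all } v\in V(\al)^\da\}.
\]
By Proposition \ref{prop_bilinear_Verma}(2), for $A\in\sod$ and $u\in R$ we have $(Au,v)=-(u,\theta(A)v)=0$. Hence $R$ is an $\sod$-submodule. By (1), $(v_\al^\da,v_\al^\da)=1$, so $v_\al^\da\notin R$ and $R$ is a proper submodule. This gives the first direction: if $V(\al)^\da$ is irreducible, then $R=0$, i.e. the form is non-degenerate.

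For the converse, I show that every proper submodule $N\subset V(\al)^\da$ lies in $R$.

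\textbf{Step 1: $N$ is graded.} $D$ acts semisimply on $V(\al)^\da$ with eigenspaces $V(\al)^\da_n$, each finite dimensional. A $D$-stable subspace of such a module is the direct sum of its intersections with the eigenspaces (use a Vandermonde argument on finitely many eigenvalues). So $N=\bigoplus_n N\cap V(\al)^\da_n$.

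\textbf{Step 2: $N$ has no degree-$0$ part.} Since $V(\al)^\da_0=\R v_\al^\da$ and $v_\al^\da$ generates the whole module by \eqref{eq_basis_Verma}, properness forces $N\cap V(\al)^\da_0=0$.

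\textbf{Step 3: orthogonality of distinct degrees.} Using (2) with $A=D$ and $\theta(D)=-D$ gives $(Du,v)=(u,Dv)$. Therefore $V(\al)^\da_n\perp V(\al)^\da_m$ for $n\neq m$.

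\textbf{Step 4: $N\subset R$.} Take a homogeneous $u\in N$ of degree $n\ge1$ and a spanning vector $v=P_{\mu_1}\cdots P_{\mu_n}v_\al^\da$ of $V(\al)^\da_n$. Moving the $P$'s across with (2) and $\theta(P_\mu)=-K_\mu$ gives
\[
(u,v)=\pm(K_{\mu_n}\cdots K_{\mu_1}u,\,v_\al^\da).
\]
The vector $K_{\mu_n}\cdots K_{\mu_1}u$ lies in $N\cap V(\al)^\da_0=0$, so $(u,v)=0$. Combined with Step 3, $u$ is orthogonal to everything, so $u\in R$. Since $N$ is graded, $N\subset R$.

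Hence non-degeneracy ($R=0$) forces every proper submodule to be zero, i.e. $V(\al)^\da$ is irreducible. Identifying $R$ with the maximal proper submodule is the standard argument of \cite{Hum}.

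The only point needing care is Step 1, that submodules are graded; it holds because the $D$-eigenspaces are finite dimensional and every vector is a finite sum of eigenvectors.
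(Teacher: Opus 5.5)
Your proof is correct, and it is the standard argument the paper relies on: the paper gives no proof of its own, only calling the lemma standard with a reference to \cite{Hum}, and your two steps (the radical is a proper submodule; every proper submodule is graded, has no degree-$0$ part, and so lies in the radical) are that argument. One small remark: the sign in Step 4 is in fact $+$, because $(u,P_\mu w)=-(\theta(P_\mu)u,w)=(K_\mu u,w)$ by (2) together with $\theta^2=\id$; this step does not need the symmetry (3).
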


Denote by $N(\al)^\da$ the (unique) maximal submodule of $V(\al)^\da$ which does not contain $v_\al^\da$. Then, 
\begin{align*}
L(\al)^\da =V(\al)^\da / N(\al)^\da
\end{align*}
is an irreducible $\sod$-module. Denote also by $L(\al)$ the irreducible quotient of $V(\al)$. Then, we have:
%Denote by $L(\al)^\da$ the proper maximal quotient of $V(\al)^\da$, which is irreducible.
%Verma module についての一般的性質から次の命題が従う:
\begin{prop}\label{prop_bilinear_L}
The $\sod$-module homomorphism $f_\al:V(\al)^\dagger\rightarrow V(\al)^\vee$ induces an isomorphism of $\sod$-modules:
\begin{align}
f_\al:L(\al)^\dagger \rightarrow L(\al)^\vee,
\label{eq_Lf_al}
\end{align}
which defines a non-degenerate bilinear form $(-,-):L(\al)^\da\otimes L(\al)^\da \rightarrow \R$ 
satisfying (1), (2) and (3) in Proposition \ref{prop_bilinear_Verma}.
Moreover, \eqref{eq_Lf_al} gives a non-degenerate pairing $L(\al)^\da \otimes L(\al) \overset{f_\al\otimes \id}{\rightarrow} L(\al)^\vee \otimes L(\al) \overset{\langle-,-\rangle}{\rightarrow} \R$, denoted by $\langle -,-\rangle$ which satisfies: $\langle v_\al^\da,v_\al \rangle =1$ and
\begin{align*}
\langle A u^\da,v \rangle = -\langle u^\da, A v \rangle
\end{align*}
for any $A\in\sod$, $u^\da\in L(\al)^\da$ and $v \in L(\al)$.
%,
%where $L(\al)^\dagger$ is the maximal quotient of the Verma module $V(\al)^\dagger$.
\end{prop}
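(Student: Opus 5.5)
The plan is to identify the kernel of $f_\al$ with $N(\al)^\da$ and its image with $L(\al)^\vee\subset V(\al)^\vee$. The latter is the annihilator of the maximal proper submodule $N(\al)$ of $V(\al)$. A degreewise dimension count will then show the induced map is an isomorphism.

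First, since $\theta_\al$ is a linear isomorphism, $f_\al(u)=0$ holds if and only if $(u,v)=0$ for all $v\in V(\al)^\da$. So $\ker f_\al$ is the radical $R$ of the form $(-,-)$. Property (2) of Proposition \ref{prop_bilinear_Verma} shows $R$ is a submodule, and property (1) shows $v_\al^\da\notin R$; hence $R\subset N(\al)^\da$.

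Conversely, I show $N(\al)^\da\subset R$. Being a submodule, $N(\al)^\da$ is graded by $D$, and its degree-$0$ part is zero because $V(\al)^\da_0=\R v_\al^\da$. By (2), $(V(\al)^\da_n,V(\al)^\da_m)=0$ for $n\neq m$, since $\theta(D)=-D$. Now take $w\in N(\al)^\da_n$. By \eqref{eq_basis_Verma}, $V(\al)^\da_n$ is spanned by vectors $P_{\mu_1}\cdots P_{\mu_n}v_\al^\da$, and (2) with $\theta(P_\mu)=-K_\mu$ gives
\[
(w,P_{\mu_1}\cdots P_{\mu_n}v_\al^\da)=\pm(K_{\mu_n}\cdots K_{\mu_1}w,v_\al^\da).
\]
The vector $K_{\mu_n}\cdots K_{\mu_1}w$ lies in $N(\al)^\da_0=0$, so the pairing vanishes. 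Therefore $\ker f_\al=N(\al)^\da$, and $f_\al$ induces an injection $L(\al)^\da\hookrightarrow V(\al)^\vee$ of $\sod$-modules.

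Next, for the image, I check that $f_\al(V(\al)^\da)$ annihilates $N(\al)$. The isomorphism $\theta_\al$ carries submodules of $V(\al)$ not containing $v_\al$ to submodules of $V(\al)^\da$ not containing $v_\al^\da$, so $\theta_\al(N(\al))=N(\al)^\da$. Then for $n\in N(\al)$ we have $\langle f_\al(u),n\rangle=(u,\theta_\al n)=0$ by the previous step and symmetry (3). Hence the image lies in $\bigoplus_n L(\al)_n^*=L(\al)^\vee$.

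The same isomorphism $\theta_\al$ gives $\dim L(\al)^\da_n=\dim L(\al)_n<\infty$ for every $n$. Since $f_\al$ preserves degrees up to sign and is injective on each $L(\al)^\da_n$, it maps $L(\al)^\da_n$ isomorphically onto $L(\al)_n^*$. So \eqref{eq_Lf_al} is an isomorphism.

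Finally, the induced form on $L(\al)^\da$ is nondegenerate, because its radical is $\ker f_\al/N(\al)^\da=0$, and it inherits (1)–(3) from Proposition \ref{prop_bilinear_Verma}. The pairing $\langle u^\da,v\rangle:=\langle f_\al(u^\da),v\rangle$ is nondegenerate because $f_\al$ is an isomorphism onto the restricted dual, and it satisfies $\langle v_\al^\da,v_\al\rangle=u_\al(v_\al)=1$. Invariance follows because $f_\al$ is a module map and the dual action is $(A.f)(\bullet)=-f(A\bullet)$:
\[
\langle Au^\da,v\rangle=\langle A.f_\al(u^\da),v\rangle=-\langle u^\da,Av\rangle .
\]

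The only real obstacle is the step $N(\al)^\da\subset R$, which relies on the grading and the PBW spanning set \eqref{eq_basis_Verma}. The remaining steps are bookkeeping with $\theta_\al$.
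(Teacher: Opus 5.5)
Your argument is correct. Identifying $\ker f_\al$ with the radical of $(-,-)$ and showing this radical equals $N(\al)^\da$ via the $D$-grading and the spanning set \eqref{eq_basis_Verma} gives the injection. Then using $\theta_\al$ to match $N(\al)^\da$ with $N(\al)$ and comparing dimensions degree by degree completes the proof. The paper states the proposition without a written proof, deferring to the standard theory of (parabolic) Verma modules behind Lemma \ref{lem_irreducible_non_deg}, and your proof is exactly that standard argument made explicit; the sign in your displayed identity is in fact $+$.
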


It is important to determine $N(\al)^\da$, which is given in Appendix A.
Here we briefly examine the structure of $N(\al)_2^\da =N(\al)^\da\cap V(\al)_2^\da$.
% ここでは手短に$N(\al)_2^\da =N(\al)^\da\cap V(\al)_2^\da$の構造を調べる。
%well-known that any submodule of $V(\al)$ is generated by the \textbf{singular vectors} (of weight $k \in \Z_{\geq 0}$) which is a vector $v\in V(\al)_k$ satisfying $K_\mu v= J_{\mu,\nu}v =0$.
%$V(\al)$は既約か？またそうでない場合は singular vector はどんなものがあるか？を知ることは本論文において重要である。
By an easy computation, we have
\begin{align}
(P_\mu v_\al^\da, P_\nu v_\al^\da) = 2 \al \delta_{\mu,\nu} \label{eq_form_deg1}
\end{align}
and, for $\mu_1 \leq \mu_2$ and $\nu_1 \leq \nu_2$,
\begin{align}
\begin{split}
(P_{\mu_1}P_{\mu_2} v_\al^\da, P_{\nu_1}P_{\nu_2} v_\al^\da) =
\begin{cases}
4\al (\al+1) \delta_{\mu_1,\nu_1}\delta_{\mu_2,\nu_2} & \text{ if } \mu_1 < \mu_2,\\
\delta_{\nu_1,\nu_2}\left(8\al (\al+1) \delta_{\mu_1,\nu_1}-4\al\right) & \text{ if } \mu_1 = \mu_2.
\end{cases}
\end{split}\label{eq_form_deg2}
\end{align}
By $\dim V(\al)_2^\da = \binom{d+1}{2}$, the eigenvalues of the matrix representation of $(-,-)$ on $V(\al)_2^\da$ for the basis \eqref{eq_form_deg2} consist of $ 4\al (\al+1)$ and $8\al(\al+1)$ (multiplicity $\binom{d}{2}$ and $d-1$) and $4\al (2\al - d+2)$ (multiplicity one).
%$\dim V(\al)_2^\da = \binom{d+1}{2}$に注意すると、$(-,-)$の$V(\al)_2^\da$への制限を\eqref{eq_form_deg2}の基底について行列表示したとき、その固有値は$\binom{d+1}{2}-1$個の$4\al (\al+1)$と1個の $4\al (2\al - d+2)$からなる。
Hence, $V(\al)^\da$ is reducible if
\begin{align*}
\al = 0,-1, \frac{d-2}{2}.
\end{align*}
This can be seen directly through the representation to polynomial rings as follows:
By Lemma \ref{lem_highest}, the universality \eqref{eq_dual_Verma} and \eqref{eq_univ_verma}, there exists a unique $\sod$-module homomorphism 
\begin{align}
\Psi_\al^\da: V(\al)^\da \rightarrow \R[x^1,\dots,x^d,|x|^\R] \label{eq_psi_dagger}
\end{align}
and
\begin{align}
\Psi_\al: V(\al) \rightarrow \R[x^1,\dots,x^d]\label{eq_psi_al}
\end{align}
such that $\Psi_\al^\da(v_\al^\da) = ||x||^{-2\al}$ and $\Psi_\al(v_\al) = 1$.

It is important to note that for any $\be \in \R$,
\begin{align}
\left(\sum_{\rho=1}^d \pa_\rho^2\right) ||x||^{-2\be} = 4\be \left(\be - \frac{d-2}{2} \right)||x||^{-2\be-2}
\label{eq_laplace_beta}
\end{align}
and 
\begin{align}
\begin{split}
\left(\sum_{\rho=1}^d \left(||x||^2\pa_\rho -2x_\rho(E_x+\al)\right)^2\right) 1 &=
-2\al \sum_{\rho=1}^d \left(||x||^2\pa_\rho -2x_\rho(E_x+\al)  \right)x_\rho\\
&=-2\al(d-2(\al+1))||x||^2.
\label{eq_laplace_beta2}
\end{split}
\end{align}
Thus, if $\al = \frac{d-2}{2}$, then $\Psi_{\frac{d-2}{2}}^\da:V\left(\frac{d-2}{2}\right)^\da \rightarrow \R[x^1,\dots,x^d,|x|^\R]$ has a non-zero kernel by $\Psi_{\frac{d-2}{2}}\left( \left(\sum_{\rho=1}^d P_\rho^2\right)v_{\frac{d-2}{2}}^\da \right) =
\left(\sum_{\rho=1}^d \pa_\rho^2\right) ||x||^{-d+2} =0$.
Hence, $\left(\sum_{\rho=1}^d P_\rho^2\right)v_{\frac{d-2}{2}}^\da$ is a non-zero vector in $N(\dn)^\da$, which corresponds to the eigenvalue in \eqref{eq_form_deg2} with multiplicity one.
Note that from $V(\al)^\da$ we also know $V(\al)$ by \eqref{eq_theta_al}.
%同様に$(\sum_\rho K_\rho^2) v_\dn$は$$
% singular vector in $V(\dn)^\dagger$である。
Appendix A gives the following theorem:
\begin{thm}\label{prop_singular_vector}
Let $d \geq 2$.
The following properties hold for $V(\al)^\da$:
\begin{enumerate} 
\item 
$V(\al)^\da$ is irreducible if and only if
\begin{align*} 
\al \in \R \setminus\left\{ \{0,-1,-2 ,\dots \} \cup \{\frac{d-2}{2}, \frac{d-2}{2}-1,\frac{d-2}{2}-2,\dots \}\right\}.
\end{align*}
In this case $\Psi_\al:V(\al) \rightarrow \R[x^1,\dots,x^d]$ is an isomorphism of $\sod$-modules.
Also if $\al > \frac{d-2}{2}$, then the bilinear form $(-,-)$ on $V(\al)^\da$ is positive-definite.
\item
%$d \geq 3$の奇数ならば任意の$N=0,1,2,\dots$に対して$V\left(\frac{d-2}{2}-N\right)^\da$ を $\left(\sum_\rho P_\rho^2\right)^{N+1}v_{\frac{d-2}{2}-N}^\da$で生成される部分加群で割った加群は既約である。さらに既約加群
%\begin{align*}
%L\left(\dn\right)^\da = V\left(\frac{d-2}{2}\right)^\da / \left(\sum_\rho P_\rho^2\right) v_{\frac{d-2}{2}}^\da
%\end{align*}
%上に誘導される双線形形式$(-,-)$は正定値である。
If $d \geq 3$ is an odd integer, for any $N=0,1,2,\dots$, the quotient module of $V\left(\frac{d-2}{2}-N\right)^\da$ 
by the submodule generated by $\left(\sum_\rho P_\rho^2\right)^{N+1}v_{\frac{d-2}{2}-N}^\da$
is irreducible. Moreover, the induced bilinear form $(-,-)$ on the irreducible module
\begin{align*}
L\left(\dn\right)^\da = V\left(\frac{d-2}{2}\right)^\da / \left(\sum_\rho P_\rho^2\right) v_{\frac{d-2}{2}}^\da
\end{align*}
is positive-definite.
\item 
If $d \geq 4$ is an even integer, for any 
$K=0,1,2,\dots, \frac{d-2}{2}-1$, 
the quotient module of $V\left(\dn - K \right)^\da$ by the submodule generated by 
$\left(\sum_\rho P_\rho^2\right)^{K+1}v_{\frac{d-2}{2}-K}^\da$ is irreducible. Moreover, the induced bilinear form $(-,-)$ on the irreducible module
\begin{align*}
L\left(\dn\right)^\da =V\left(\frac{d-2}{2}\right)^\da / \left(\sum_\rho P_\rho^2\right) v_{\frac{d-2}{2}}^\da
\end{align*}
is positive-definite.
%
%
%
%
%$d \geq 4$ for any $K=0,1,2,\dots, \frac{d-2}{2}-1$ if even, $V\left(\dn - K \right)^\da$ for $\left(\sum_\rho P_\rho^2\right)^{K+1}v_{\frac{ d-2}{2}{2}-K}^\da$ is irreducible. Furthermore, the irreducible additive group 
%\begin{align*} 
%L\left(\dn\right)^\da =V\left(\frac{d-2}{2}\right)^\da / \left(\sum_\rho P_\rho^2\right) v_{\frac{d-2}{2}}^\da 
%\end {align*} 
%The bilinear form $(-,-)$ induced above is positive definite.
%\item
%$V(\al)^\da$が既約であることと$\al$が次の値を取ることは同値である:
%\begin{align*}
%\al \in \R \setminus \{0,-1,-2,\dots \} \cup \{\frac{d-2}{2}, \frac{d-2}{2}-1,\frac{d-2}{2}-2,\dots \}.
%\end{align*}
%このとき、$\Psi_\al:V(\al) \rightarrow \R[x^1,\dots,x^d]$は$\sod$-module の同型である。
%また $\al > \frac{d-2}{2}$ならば $V(\al)^\da$上の bilinear form $(-,-)$は positive-definite である。
%\item
\end{enumerate}
Furthermore, the non-degenerate bilinear form on $L(\al)^\da$ is positive-definite if and only if $\al \geq \dn$ or $\al=0$.
\end{thm}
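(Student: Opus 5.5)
The natural route is an explicit Gram determinant (Shapovalov-type) formula for $(-,-)$ on each graded piece $V(\al)^\da_n$. I would first decompose $V(\al)^\da_n$ as an $\SO(d)$-module. Using the PBW basis \eqref{eq_basis_Verma}, $V(\al)^\da_n\cong \Sym^n(\R^d)$, and by the classical harmonic decomposition this splits as
\[
V(\al)^\da_n \cong \bigoplus_{2k+l=n} \left(\sum_\rho P_\rho^2\right)^k \Harm_l(P),
\]
where $\Harm_l(P)$ is the space of degree $l$ harmonic polynomials in $P_1,\dots,P_d$ applied to $v_\al^\da$. These summands are pairwise non-isomorphic irreducible $\SO(d)$-modules within a fixed $n$. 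Since $(-,-)$ is $\mathfrak{so}(d)$-invariant by Proposition \ref{prop_bilinear_Verma}(2), because $\theta(J_{\mu,\nu})=J_{\mu,\nu}$, Schur's lemma makes the summands mutually orthogonal. The form on each summand is then a scalar multiple $c_{k,l}(\al)$ of a fixed positive-definite $\SO(d)$-invariant form.

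The main computational step is to evaluate $c_{k,l}(\al)$. I would take a highest-weight-type harmonic vector $h_l=(P_1+iP_2)^l$ (complexify) and compute $(\left(\sum P_\rho^2\right)^k h_l v^\da_\al, \left(\sum P_\rho^2\right)^k h_l v^\da_\al)$. Moving the $P$'s across as $-\theta(P)=K$ reduces this to evaluating $K$'s on $P$-monomials via $[K_\mu,P_\nu]=2(\de_{\mu\nu}D-J_{\mu\nu})$, with a recursion in $k$. Alternatively, I would use the polynomial realization $\Psi^\da_\al$, where $P_\rho\mapsto -\pa_\rho$ acting on $|x|^{-2\al}$, so that $\left(\sum P_\rho^2\right)^k h_l$ corresponds to a derivative of $|x|^{-2\al}$. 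Generalizing \eqref{eq_laplace_beta}, I expect
\[
c_{k,l}(\al)=C_{k,l}\prod_{j=0}^{l+k-1}(\al+j)\prod_{j=1}^{k}\left(\al-\tfrac{d-2}{2}+l+k-j\right)
\]
with a positive constant $C_{k,l}$, consistent with \eqref{eq_form_deg1} and \eqref{eq_form_deg2} (take $n=1$ and $n=2$). Getting this product formula exactly right is the main obstacle. I would verify it by induction on $k$, using the commutator of $\sum_\mu K_\mu^2$ with $\left(\sum P_\rho^2\right)^k h_l$ and the fact that $K_\mu$ annihilates harmonic pieces up to $D$ and $J$ terms.

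Part (1) follows from the formula together with Lemma \ref{lem_irreducible_non_deg}. The zeros are exactly $\al\in\{0,-1,\dots\}\cup\{\dn,\dn-1,\dots\}$, since every such value is hit by some $(k,l)$. Irreducibility gives injectivity of $\Psi_\al$ via $\theta$. Comparing graded dimensions with $\R[x]$ then makes $\Psi_\al$ an isomorphism. For $\al>\dn$ all factors are positive.

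For (2) and (3), the vector $\left(\sum P_\rho^2\right)^{N+1}v^\da$ at $\al=\dn-N$ is singular: apply $K_\mu$ and use the formula above. The quotient by the submodule it generates removes exactly the summands with $k\ge N+1$. The remaining $c_{k,l}$ with $k\le N$ are nonzero, with the exception that for even $d$ the factor $\al+j$ may vanish, which restricts $K\le \frac{d-2}{2}-1$. So the quotient is nondegenerate, hence irreducible.

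For the final positivity claim, at $\al\geq\dn$ the surviving $c_{k,l}$ are all positive. At $\al=0$, $L(0)$ is trivial. For other values, some $c_{k,l}$ on a surviving summand is negative: for $0<\al<\dn$ take $k=1,l=0$, and for negative $\al$ take $l=1$, since $2\al<0$.
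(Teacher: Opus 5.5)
\textbf{Error in the central block-norm formula.} Your route differs from the paper's. The paper's proof of this theorem never computes block norms. It determines $D_{N,d}(\alpha)$ from its degree and leading coefficient (Lemma~\ref{lem_app_leading}) plus enough explicit null vectors ($\ker\Psi^\dagger_{-K}$ and $\Delta_P^{m+1}v^\dagger$), and uses the Zariski/double-zero argument of Lemma~\ref{lem_double_zero} for even $d$.

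Block-diagonalizing by $\mathrm{SO}(d)$-types is a legitimate and in some ways stronger alternative. It gives the full radical at every $\alpha$ and handles even $d$ without double counting. However, the block formula you write down, on which everything rests, is wrong. Take $h$ harmonic of degree $l$ in $P_1,\dots,P_d$ and write $\partial_\mu=\partial/\partial P_\mu$. The commutation relations give
\[
K_\mu\,\Delta_P^k h\,v_\alpha^\dagger=\Bigl(4k\bigl(\alpha+k-\tfrac{d}{2}\bigr)\Delta_P^{k-1}P_\mu h+2(\alpha+2k+l-1)\,\Delta_P^{k}\,\partial_\mu h\Bigr)v_\alpha^\dagger .
\]
Hence $\sum_\mu K_\mu^2\,\Delta_P^k h\,v_\alpha^\dagger=8k(d+2k+2l-2)(\alpha+k+l-1)\bigl(\alpha-\frac{d-2}{2}+k-1\bigr)\Delta_P^{k-1}h\,v_\alpha^\dagger$, and therefore
\[
c_{k,l}(\alpha)=C_{k,l}\,(\alpha)_{k+l}\Bigl(\alpha-\frac{d-2}{2}\Bigr)_k,\qquad C_{k,l}>0,\qquad (y)_j=y(y+1)\cdots(y+j-1).
\]
The $\frac{d-2}{2}$-factors do not depend on $l$, whereas your second product is shifted by $l$.

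For instance, $(\Delta_P P_1v_\alpha^\dagger,\Delta_P P_1v_\alpha^\dagger)=16(d+2)\,\alpha(\alpha+1)\bigl(\alpha-\frac{d-2}{2}\bigr)$, while your formula predicts the factor $\alpha-\frac{d-2}{2}+1$. Your checks at $n=1,2$ cannot detect this, because a block with $k,l\geq1$ first occurs at $n=3$. Compare the factor $(\alpha-\frac{d-2}{2})^{A_1}=(\alpha-\frac{d-2}{2})^{d}$ in \eqref{eq_det_formula} for $N=3$, which comes from exactly this $d$-dimensional block.

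There is also an a priori reason the shift is impossible. The radical is a submodule, so at $\alpha=\frac{d-2}{2}-N$ it contains $\mathbb{R}[P_1,\dots,P_d]\Delta_P^{N+1}v^\dagger$. This forces $c_{k,l}=0$ there for all $k\geq N+1$ and all $l$.

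\textbf{Why it matters, and the fix.} The error is not cosmetic. With your formula, $c_{1,N}$ vanishes at $\alpha=\frac{d-2}{2}-N$. Then $\Delta_P\mathrm{Harm}_N v^\dagger$ would be null without lying in the submodule generated by $\Delta_P^{N+1}v^\dagger$, so the quotients in (2) and (3) would be reducible. Your sentence that the $c_{k,l}$ with $k\leq N$ are nonzero contradicts your own formula.

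With the corrected formula, everything you claim follows:
\begin{itemize}
\item The zero set in (1) is unchanged.
\item At $\alpha=\frac{d-2}{2}-N$, the factor $(-N)_k$ vanishes exactly for $k\geq N+1$.
\item $(\alpha)_{k+l}$ vanishes only if $\alpha\in\mathbb{Z}_{\leq0}$, which is exactly why (3) requires $K\leq\frac{d-2}{2}-1$.
\item Positivity for $\alpha\geq\frac{d-2}{2}$ holds, and the negative blocks $(k,l)=(1,0)$ for $0<\alpha<\frac{d-2}{2}$ and $(0,1)$ for $\alpha<0$ work as you say.
\end{itemize}

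One small further point: for $d=2$, $\mathrm{Harm}_l$ with $l\geq1$ is irreducible but not absolutely irreducible over $\mathbb{R}$. So the claim that each block form is a scalar needs the symmetry of $(-,-)$, not Schur's lemma alone.
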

In this paper, we call $\al \in \R$ a generic case when $V(\al)^\da$ is irreducible (see Theorem\ref{prop_singular_vector}).
%The following proposition is also shown in Appendix:
% follows from the argument below \eqref{eq_form_deg2} (or Theorem \ref{thm_determinant} in Appendix):
%\begin{prop}\label{prop_indefinite}
%Let $d \geq 3$. Then, 
%%, then the non-degenerate bilinear form $(-,-)$ on $V(\al)$ is indefinite. 
%\end{prop}
%本論文では$\al \in \R$が 命題\ref{prop_singular_vector} の (1) のときを generic case と呼ぶ。
%Section \ref{}では generic な $\al$に対して、$d$-vertex algebra and conformally flat $d$-algebra 
%$(H_{d,\al},\m,\va)$を構成する。
%また$\al= \dn$ (with $d \geq 3$)の場合は Section \ref{}で扱う。
Hereafter, we will study $\sod$-module homomorphisms $\Psi_\al^\da:V(\al)^\da \rightarrow \R[x_1,\dots,x_d,|x|^\R]$ and $\Psi_\al:V(\al) \rightarrow \R[x_1,\dots,x_d]$.
Let $n\geq 0$ and $\R[x_1,\dots,x_d]_n$ be the subspace of $\R[x_1,\dots,x_d]_n$ consisting of polynomials of degree $n$.
The following lemma is useful:
\begin{lem}\label{transform_explicit}
For $\al \in \R$, define a linear map $T_\al: \R[x_1,\dots,x_d] \rightarrow \theta^* \R[x_1,\dots,x_d,|x|^\R]$ by
\begin{align}
T_\al(P(x)) =P(x)||x||^{-2\al-2n}= P\left(\frac{x}{||x||^2}\right)||x||^{-2\al}\label{eq_Tal_def}
\end{align}
for any $P(x) \in \R[x_1,\dots,x_d]_n$. Then, $T_\al$ is a $\sod$-module homomorphism,
where we regard $\R[x_1,\dots,x_d]$ and $\R[x_1,\dots,x_d,|x|^\R]$ as $\sod$-modules by $d_\al$.
\end{lem}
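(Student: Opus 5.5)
The plan is to verify the intertwining relation directly on the generators $D,J_{\mu,\nu},P_\mu,K_\mu$ of $\sod$. By linearity this is enough. We may also work one homogeneous piece $P\in\R[x_1,\dots,x_d]_n$ at a time. On $\theta^*\R[x_1,\dots,x_d,|x|^\R]$ the element $A$ acts by $d_\al(\theta(A))$, so the identity to prove is
\begin{align*}
d_\al(\theta(A))\,T_\al(P)=T_\al\bigl(d_\al(A)P\bigr)\qquad (A\in\sod,\ P\in \R[x_1,\dots,x_d]_n).
\end{align*}
Throughout I use the derivation rule $\pa_\mu(Q|x|^r)=(\pa_\mu Q)|x|^r+r x_\mu Q|x|^{r-2}$ from Section \ref{sec_Laurent}. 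I also use that $T_\al(P)=P|x|^{-2\al-2n}$ is homogeneous of degree $-2\al-n$.

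\emph{Dilations and rotations.} Since $\theta(D)=-D$, the left-hand side is $(E_x+\al)T_\al(P)=(-\al-n)T_\al(P)$. The right-hand side is $T_\al((-E_x-\al)P)=(-n-\al)T_\al(P)$, so the two agree. For $J_{\mu,\nu}$ we have $\theta(J_{\mu,\nu})=J_{\mu,\nu}$. The operator $x_\mu\pa_\nu-x_\nu\pa_\mu$ kills $|x|^r$ and preserves polynomial degree, so it commutes with multiplication by $|x|^{-2\al-2n}$, which gives the claim.

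\emph{Special conformal generators.} Here $\theta(K_\mu)=-P_\mu$, so $d_\al(\theta(K_\mu))=\pa_\mu$. The required identity is
\begin{align*}
\pa_\mu\bigl(P|x|^{-2\al-2n}\bigr)=T_\al\bigl(|x|^2\pa_\mu P-2x_\mu(n+\al)P\bigr).
\end{align*}
The argument of $T_\al$ on the right is homogeneous of degree $n+1$, so $T_\al$ multiplies it by $|x|^{-2\al-2n-2}$. The result is $(\pa_\mu P)|x|^{-2\al-2n}-2(n+\al)x_\mu P|x|^{-2\al-2n-2}$, which is exactly the left-hand side by the derivation rule.

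\emph{Translations.} Here $d_\al(\theta(P_\mu))=-d_\al(K_\mu)$. I must show that
\begin{align*}
-\bigl(|x|^2\pa_\mu-2x_\mu(E_x+\al)\bigr)\bigl(P|x|^{-2\al-2n}\bigr)=T_\al(-\pa_\mu P)=-(\pa_\mu P)|x|^{-2\al-2n+2}.
\end{align*}
On the left, $E_x+\al$ acts on $P|x|^{-2\al-2n}$ by the scalar $-\al-n$. Expanding $|x|^2\pa_\mu$ with the derivation rule gives $(\pa_\mu P)|x|^{-2\al-2n+2}-(2\al+2n)x_\mu P|x|^{-2\al-2n}$. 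The remaining term is $-2x_\mu(-\al-n)P|x|^{-2\al-2n}$, which cancels the second summand exactly. What is left is $(\pa_\mu P)|x|^{-2\al-2n+2}$, which matches after the overall sign.

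This is the step I expect to need the most care, since the signs from $\theta$ and the degree shift of $T_\al$ (from $n$ to $n-1$) must line up. It is nevertheless a one-line cancellation.

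Alternatively, one could use the second expression in \eqref{eq_Tal_def}, $T_\al(P)=P(x/|x|^2)|x|^{-2\al}$. This exhibits $T_\al$ as the action of the inversion $x\mapsto x/|x|^2$ twisted by the weight $|x|^{-2\al}$, and the inversion conjugates translations into special conformal transformations, which is why $\theta$ appears. The direct generator check above avoids having to make this geometric argument precise.
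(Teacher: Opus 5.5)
Your proof is correct and takes the same approach as the paper. You check the intertwining relation directly on the basis $D,J_{\mu,\nu},P_\mu,K_\mu$, using the derivation rule for $\partial_\mu(Q|x|^r)$ and the degree shift of $T_\alpha$. The only cosmetic difference is that the paper verifies the form $d_\alpha(A)\,T_\alpha(P)=T_\alpha\bigl(d_\alpha(\theta(A))P\bigr)$, which is equivalent to yours because $\theta^2=\mathrm{id}$; as a result, its $P_\mu$ and $K_\mu$ computations are your $K_\mu$ and $P_\mu$ computations respectively.
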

\begin{proof}
Let $\mu,\nu \in \{1,\dots,d \}$ and $P(x) \in \R[x_1,\dots,x_d]_n$.
Since $d_\al(J_{\mu,\nu})||x||^{-\al-n}=0$,
we have $d_\al(J_{\mu,\nu}) T_\al(P(x))=T_\al(d_\al(J_{\mu,\nu})P(x))$.
Similarly, by
\begin{align*}
d_\al(P_\mu)T_\al(P(x))&= - \pa_\mu \Bigl(P(x)||x||^{-2\al-2n} \Bigr)\\
&= ||x||^{-2\al-2n-2}\Bigl(-||x||^2\pa_\mu+2 x_\mu(\al+n)  \Bigr)P(x)\\
&= ||x||^{-2\al-2n-2}\left(d_\al(-K_\mu)P(x)\right)\\
&= T_\al \left(d_\al(\theta(P_\mu))P(x)\right)
\end{align*}
and
\begin{align*}
d_\al(K_\mu)T_\al(P(x))&=\Bigl(||x||^2\pa_\mu- 2x_\mu(E_d+\al)\Bigr)\Bigl(||x||^{-2\al-2n}P(x) \Bigr)\\
&= ||x||^{-2\al-2n} (-2x_\mu (\al+n) - 2x_\mu(-2\al-n+\al)+||x||^2\pa_\mu )P(x)\\
&=||x||^{-2\al-2(n-1)} \left(\pa_\mu P(x)\right)\\
&= T_\al\left(d_\al(\theta(K_\mu))P(x) \right)
\end{align*}
and 
\begin{align*}
d_\al(D)T_\al(P(x))&=-( E_d+\al)\Bigl(||x||^{-2\al-2n}P(x) \Bigr)\\
&=(\al+n)\Bigl(||x||^{-2\al-2n}P(x) \Bigr)\\
&=||x||^{-2\al-2n}\Bigl(d_\al(-D) P(x) \Bigr)\\
&=T_\al\Bigl(d_\al(\theta(D)) P(x) \Bigr).
\end{align*}
Hence, $T_\al$ is a $\sod$-module homomorphism.
%from $H_d$ to $\theta^* \R[x_1,\dots,x_d,|x|^\R]$
%and $T_H(1)= |x|^{-(d-2)}$.
%Thus, the image of $T_H$ is equal to $H_d^\vee$.
%Since $H_d$ and $H_d^\vee$ is irreducible,
%$T_H$ must coincide with $P^\vee \circ T_S \circ P^{-1}$.
\end{proof}

First, consider the case when $\al$ is generic.
Since $V(\al)$ is irreducible in this case, from the comparison of dimension, $\Psi_\al:V(\al)\rightarrow \R[x_1,\dots,x_d]$ is an isomorphism of $\sod$-modules.
Also, $\Psi_\al^\da:V(\al)^\da \rightarrow \R[x_1,\dots,x_d,|x|^\R]$ is an isomorphism onto the image, which is from Lemma \ref{transform_explicit} given by 
\begin{align} 
\bigoplus_{n\geq 0} ||x||^{-2\al-2n}\R[x_1,\dots,x_d]_n.
\end{align}
%On $\R[x^1,\dots,x^d]$, we have a non-degenerate bilinear form induced from the invariant bilinear form on $V(\al)^\dagger$.
%まず$\al$が generic な場合を考える。
%このとき$V(\al)$は既約であるから\eqref{eq_basis_Verma}と次元の比較から、$\Psi_\al:V(\al) \rightarrow \R[x^1,\dots,x^d]$は $\sod$-module の同型写像である。
%また$\Psi_\al^\da:V(\al)^\da \rightarrow \R[x^1,\dots,x^d,|x|^\R]$は像への同型写像であり、その像は、Lemma \ref{transform_explicit}より、
%\begin{align}
%\bigoplus_{n\geq 0} ||x||^{-\al-n}\R[x_1,\dots,x_d]_n
%\end{align}
%で与えられる。
%$\R[x^1,\dots,x^d]$には$V(\al)^\dagger$上の invariant bilinear form から誘導される、non-degenerate bilinear form が入る. 
%We denote it by $(-,-)_P$.
Hence, we have:
\begin{prop}\label{prop_psi_generic}
Assume that $\al$ is generic.
Then, $\Psi_\al:V(\al) \rightarrow \R[x_1,\dots,x_d]$ and $\Psi_\al^\da:V(\al)^\da \rightarrow \bigoplus_{n\geq 0} ||x||^{-2\al-2n}\R[x_1,\dots,x_d]_n$ are $\sod$-module isomorphisms.
Moreover, 
the dual basis of 
\begin{align*}
\{d_\al(P_{\mu_1}) \cdots d_\al(P_{\mu_n})v_\al^\da \}_{1\leq \mu_1 \leq \dots\leq \mu_n\leq d} \subset V(\al)_n^\da
\end{align*}
with respect to the non-degenerate invariant bilinear pairing $V(\al)^\da \otimes V(\al) \rightarrow \R$ is 
\begin{align}
\left\{\frac{1}{k_1! k_2!\cdots k_l!}x_{i_1}^{k_1}x_{i_2}^{k_2} \dots x_{i_l}^{k_l}\right\}
\label{eq_dual_basis}
\end{align}
where $n=\sum_{i=1}^l k_i$, $1 \leq l\leq d$, $1\leq i_1<i_2 < \dots <i_l \leq d$ and $k_i \geq 0$
and we identify $V(\al)$ as $\R[x^1,\dots,x^d]$.
\end{prop}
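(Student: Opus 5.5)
The plan is to treat the two isomorphism statements by irreducibility plus a dimension count, and then obtain the dual basis by moving the operators $P_\mu$ across the invariant pairing, where they act on polynomials as $-\pa_\mu$.

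\emph{Step 1: $\Psi_\al$ is an isomorphism.} Since $\al$ is generic, $V(\al)^\da$ is irreducible by Theorem~\ref{prop_singular_vector}. Hence $V(\al)\cong\theta^*V(\al)^\da$, via \eqref{eq_theta_al}, is irreducible as well. The map $\Psi_\al$ is a nonzero module homomorphism, because it sends $v_\al\mapsto1$, so it is injective. It preserves the $D$-grading: $V(\al)_n$ is sent into the $d_\al(D)$-eigenspace with eigenvalue $-\al-n$, which is $\R[x_1,\dots,x_d]_n$. By \eqref{eq_basis_Verma}, both $V(\al)_n$ and $\R[x_1,\dots,x_d]_n$ have dimension $\binom{n+d-1}{d-1}$, so $\Psi_\al$ is bijective in each degree.

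\emph{Step 2: $\Psi^\da_\al$ and its image.} Consider the composite $T_\al\circ\Psi_\al\circ\theta_\al^{-1}:V(\al)^\da\to\R[x_1,\dots,x_d,|x|^\R]$. By Lemma~\ref{transform_explicit} this is an $\sod$-homomorphism for the $d_\al$-action; the two twists by $\theta$ cancel. It sends $v_\al^\da\mapsto T_\al(1)=||x||^{-2\al}$. By the uniqueness in \eqref{eq_psi_dagger}, it therefore coincides with $\Psi^\da_\al$. Since $\Psi_\al$ and $\theta_\al$ are bijective and $T_\al$ is injective (it multiplies by $||x||^{-2\al-2n}$ in each degree), $\Psi^\da_\al$ is injective. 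Its image is $T_\al(\R[x_1,\dots,x_d])=\bigoplus_{n}||x||^{-2\al-2n}\R[x_1,\dots,x_d]_n$.

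\emph{Step 3: the dual basis.} In the generic case $L(\al)=V(\al)$ and $L(\al)^\da=V(\al)^\da$, so the pairing of Proposition~\ref{prop_bilinear_L} is defined on $V(\al)^\da\otimes V(\al)$. It is invariant and satisfies $\langle v_\al^\da,v_\al\rangle=1$. Let $p\in\R[x_1,\dots,x_d]_n\cong V(\al)_n$. Applying invariance $n$ times gives
\begin{align*}
\langle P_{\mu_1}\cdots P_{\mu_n}v_\al^\da,p\rangle=(-1)^n\langle v_\al^\da,d_\al(P_{\mu_n})\cdots d_\al(P_{\mu_1})p\rangle=\langle v_\al^\da,\pa_{\mu_1}\cdots\pa_{\mu_n}p\rangle ,
\end{align*}
where the last equality uses $d_\al(P_\mu)=-\pa_\mu$. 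The polynomial $\pa_{\mu_1}\cdots\pa_{\mu_n}p$ is a constant $c$, and $c=c\,\Psi_\al(v_\al)$, so the pairing equals $c$. Now take $p=x_{i_1}^{k_1}\cdots x_{i_l}^{k_l}/(k_1!\cdots k_l!)$ with $\sum k_j=n$. Applying the derivatives $\pa_{\mu_1}\cdots\pa_{\mu_n}$, which depend only on the multiset $\{\mu_j\}$, gives $1$ exactly when that multiset has multiplicity $k_j$ at $i_j$ for every $j$, and $0$ otherwise. This is precisely the claimed duality \eqref{eq_dual_basis}.

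The only delicate point is Step 2, namely identifying $\Psi^\da_\al$ with the transform $T_\al$ so that its image is explicit. Lemma~\ref{transform_explicit} together with the uniqueness of induced-module maps handles this. The other steps are routine bookkeeping of signs and gradings.
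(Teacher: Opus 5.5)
Your proof is correct and follows essentially the paper's route: irreducibility plus a degreewise dimension count for $\Psi_\al$, Lemma~\ref{transform_explicit} to identify the image of $\Psi_\al^\da$, and moving the $P_\mu$ across the invariant pairing so that they act as derivatives on the monomials. Two of your steps make explicit what the paper leaves implicit: the identification $\Psi_\al^\da = T_\al\circ\Psi_\al\circ\theta_\al^{-1}$ via uniqueness, and the bookkeeping of the sign $(-1)^n$ against $d_\al(P_\mu)=-\pa_\mu$.
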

\begin{proof}
It suffices to show that \eqref{eq_dual_basis} gives the dual basis. Since
\begin{align*}
(d_\al(P_{\mu_1}) \cdots d_\al(P_{\mu_n})v_\al^\da, x_{\nu_1}\dots,x_{\nu_n}) &=
(v_\al^\da, d_\al(P_{\mu_n}) \cdots d_\al(P_{\mu_1})
 x_{\nu_1}\dots,x_{\nu_n})_P \\
 &=
(v_\al^\da, \pa_{\mu_1}\cdots \pa_{\mu_n} x_{\nu_1}\dots,x_{\nu_n})_P,
\end{align*}
the assertion holds.
\end{proof}

Hereafter in this section, we assume that $\al = \dn$.
Then, by Theorem \ref{prop_singular_vector}, \eqref{eq_laplace_beta} and \eqref{eq_laplace_beta2}, $\Psi_\al$ and $\Psi_\al^\dagger$ factor through
$\Psi_\al^\da: L(\al)^\da \rightarrow \R[x_1,\dots,x_d,|x|^\R]$ and $\Psi_\al: L(\al) \rightarrow \R[x_1,\dots,x_d]$.
Denote the images of $\Psi_\al^\da$ and $\Psi_\al$ by $H(\al)^\dagger$ and $H(\al)$.
By Proposition \ref{prop_bilinear_L}, we have the following $\sod$-module isomorphisms:
\begin{align}
\begin{split}
\Psi_\al^\dagger&:L(\al)^\dagger {\rightarrow} H(\al)^\dagger\\
\Psi_\al&:L(\al) \rightarrow H(\al)\\
f_\al&: L(\al)^\dagger \rightarrow L(\al)^\vee.
\end{split}\label{eq_three_isom}
\end{align}
%とくに多項式の空間$H(\al)$と$H(\al)^\dagger$には non-degenerate pairing が入る。
%$H(\al), H(\al)^\dagger$を明示的に与えて、この章を終える。
Set
\begin{align}
\mathrm{Harm}_{d,n} = \left\{P(x)\in \R[x_1,\dots,x_d]_n \mid \left(\sum_\rho \left(\frac{d}{dx_\rho}\right)^2\right) P(x) =0\right\},
\end{align}
which consists of harmonic polynomials of degree $n$.
Then, we have:
\begin{prop}\label{prop_psi_dn}
%$\al \in \R$が generic ならば、
%\begin{align}
%H(\al)^\dagger &= \R[x_1,\dots,x_d]\\
%H(\al) &= \bigoplus_{n \geq 0}||x||^{-\al-n} \R[x_1,\dots,x_d]_n.
%\end{align}
Let $d \geq 3$. Then, $H\left(\dn\right) \subset \R[x_1,\dots,x_d]$
coincides with the set of harmonic polynomials, that is,
\begin{align}
H\left(\dn\right) &= \bigoplus_{n \geq 0} \mathrm{Harm}_{d,n}\\
H\left(\dn\right)^\da &= \bigoplus_{n \geq 0} ||x||^{-d+2-2n} \mathrm{Harm}_{d,n}.\label{eq_Harm_dagger}
\end{align}
\end{prop}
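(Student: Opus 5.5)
We argue in three steps: show that the image of $\Psi_\al$ lies in the harmonic polynomials, compare graded dimensions using the irreducibility of $L(\dn)$, and then transport the result to $\Psi_\al^\da$ via Lemma \ref{transform_explicit}. Throughout, $\al=\dn$ with $d\geq 3$.

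\emph{Step 1: the image of $\Psi_\al$ is harmonic.} Write $\Delta_x=\sum_\rho \pa_\rho^2$. We show that
\begin{align*}
\mathcal{H}=\bigoplus_{n\geq0}\mathrm{Harm}_{d,n}
\end{align*}
is a $\sod$-submodule of $\R[x_1,\dots,x_d]$ under $d_\al$. The operators $d_\al(P_\mu)=-\pa_\mu$ and $d_\al(J_{\mu,\nu})$ commute with $\Delta_x$, and $d_\al(D)$ preserves homogeneous degree, so these all preserve $\mathcal{H}$. The only real check is $K_\mu$. We compute the commutator
\begin{align*}
[\Delta_x,\ ||x||^2\pa_\mu-2x_\mu(E_x+\al)]
\end{align*}
using $[\Delta_x,||x||^2]=4E_x+2d$, $[\Delta_x,x_\mu]=2\pa_\mu$ and $[\Delta_x,E_x]=2\Delta_x$. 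This gives
\begin{align*}
(4E_x+2d)\pa_\mu-4\pa_\mu(E_x+\al)-4x_\mu\Delta_x .
\end{align*}
Since $E_x\pa_\mu=\pa_\mu(E_x-1)$, the terms not involving $\Delta_x$ combine to $(2d-4-4\al)\pa_\mu$, which vanishes exactly at $\al=\dn$. Hence $K_\mu$ preserves $\mathcal{H}$, and $\mathcal{H}$ is a submodule. It contains $1=\Psi_\al(v_\al)$, and $V(\al)$ is generated by $v_\al$, so $H(\dn)=\mathrm{Im}\,\Psi_\al\subset\mathcal{H}$.

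\emph{Step 2: equality by a dimension count.} This is the main point. Since $\Psi_\al$ factors through the irreducible module $L(\dn)$ and is nonzero, it is injective on $L(\dn)$. It therefore suffices to show that $\dim L(\dn)_n\geq \dim \mathrm{Harm}_{d,n}$ for every $n$. By Proposition \ref{prop_bilinear_L}, $L(\dn)\cong (L(\dn)^\da)^\vee$, so the graded dimensions of $L(\dn)$ and $L(\dn)^\da$ agree. By Theorem \ref{prop_singular_vector}, $L(\dn)^\da$ is the quotient of $V(\dn)^\da$ by the submodule generated by $(\sum_\rho P_\rho^2)v^\da_\al$. In degree $n$ this submodule is spanned by elements $P_{\mu_1}\cdots P_{\mu_{n-2}}(\sum_\rho P_\rho^2)v^\da_\al$: the $P$'s commute, and moving $K$, $D$, $J$ past $\sum_\rho P_\rho^2$ stays inside this span because the vector is singular. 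Its degree-$n$ piece therefore has dimension at most $\dim\R[x]_{n-2}$, so
\begin{align*}
\dim L(\dn)^\da_n\geq \dim\R[x]_n-\dim\R[x]_{n-2}=\dim\mathrm{Harm}_{d,n},
\end{align*}
where the last equality is the classical fact that $\Delta_x:\R[x]_n\to\R[x]_{n-2}$ is surjective. Combined with injectivity and Step 1, this forces $H(\dn)=\mathcal{H}$.

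\emph{Step 3: the dagger side.} Lemma \ref{transform_explicit} gives a $\sod$-homomorphism $T_\al:\theta^*\R[x]\to\R[x,|x|^\R]$. Restrict it to $\mathcal{H}\cong L(\dn)$. The restriction is injective, and it sends $1$ to $||x||^{-2\al}=\Psi^\da_\al(v^\da_\al)$. The module $\theta^*L(\dn)\cong L(\dn)^\da$ is irreducible and generated by the vector corresponding to $v_\al^\da$. By uniqueness of the homomorphism from $V(\al)^\da$ determined by the image of $v^\da_\al$, we get $\Psi^\da_\al=T_\al\circ(\text{iso})$. Hence
\begin{align*}
H(\dn)^\da=T_\al(\mathcal{H})=\bigoplus_{n\geq0}||x||^{-d+2-2n}\mathrm{Harm}_{d,n},
\end{align*}
which is \eqref{eq_Harm_dagger}.
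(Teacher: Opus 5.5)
Your proof is correct. Steps 1 and 3 match the paper. The paper computes the same commutator $[\sum_\rho d_\al(P_\rho)^2,d_\al(K_\mu)]=2(d-2-2\al)\pa_\mu-4x_\mu\Delta$, and it treats the dagger side as a direct consequence of Lemma~\ref{transform_explicit}. Your uniqueness argument, identifying $\Psi_\al^\da$ with $T_\al$ composed with $\theta^*L(\dn)\cong L(\dn)^\da$, is that remark made explicit.

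You genuinely differ in the surjectivity onto $\bigoplus_n\Harm_{d,n}$. The paper does not count dimensions. It uses that each $\Harm_{d,n}$ is an irreducible $\SO(d)$-module, so the image meets $\Harm_{d,n}$ either trivially or fully. It then shows that no nonzero $P\in\Harm_{d,n}$ is annihilated by all $d_{\dn}(K_\mu)$: contracting $\bigl(||x||^2\pa_\mu-2(n+\dn)x_\mu\bigr)P=0$ with $x_\mu$ gives $(n+d-2)P=0$. Hence, starting from $1$, the image cannot stop at any degree. That argument needs only the classical irreducibility of spherical harmonics and a one-line computation. It is independent of Appendix~A, and it could conversely be used to recover $\dim L(\dn)_n=\dim\Harm_{d,n}$.

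Your route runs the other way. You import the structure of the maximal submodule $N(\dn)^\da$, generated by $\sum_\rho P_\rho^2\,v_\al^\da$, from Theorem~\ref{prop_singular_vector}, and you replace $\SO(d)$-irreducibility by the surjectivity of $\Delta_x:\R[x]_n\to\R[x]_{n-2}$. This is not circular, since the determinant formula of Appendix~A does not use this proposition. It gives a clean dimension count, but it leans on the heaviest result in the paper. Also, the place where $d\geq 3$ is needed is hidden inside that theorem, rather than visible as the factor $n+d-2$.
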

\begin{proof}
%Assume that $\al$ is generic. Then, by Theorem \ref{thm_singular_vector},  $\Psi_\al:V(\al)^\dagger \rightarrow \R[x_1,\dots,x_d]$ is injective.
%Since the degree $n$ subspace of $V(\al)^\dagger$ has a basis $\{K_{i_1}\dots K_{i_d}\}_{1\leq i_1 \leq i_2\leq \dots\leq i_n\leq d}$, the dimension of $V(\al)_n^\dagger$ and $\R[x_1,\dots,x_d]_n$ coincides.
%Since the image of $\Psi_\al:V(\al) \rightarrow \R[x_1,\dots,x_d,|x|^\R]$ is spanned by
%\begin{align*}
%\frac{d}{dx_{i_1}}\dots\frac{d}{dx_{i_n}} ||x||^{-\al},
%\end{align*}
%$H(\al)$ is a subspace of $\bigoplus_{n \geq 0}||x||^{-\al-n} \R[x_1,\dots,x_d]_n$. Then, by the above argument, the assertion follows.
First, we observe that , for any $\mu \in \{1,\dots,d\}$ and $\al \in \R$,
\begin{align*}
\left[\sum_{\rho}d_\al(P_\rho)^2, d_\al(K_\mu)\right]
&=\left[\Delta_x, ({||x||^2}\pa_\mu-2x_\mu (E_d+\al))\right] \\
%&=\pa_\mu (2E_d+2\al) +2x_\mu \Delta - (2E_d +d)\pa_\mu \\
&=2(d-2-2\al)\pa_\mu -4x_\mu \Delta.
\end{align*}
Thus, the space of Harmonic polynomials $\mathrm{Harm}_d = \bigoplus_{n\geq 0}\mathrm{Harm}_{d,n} \subset \R[x_1,\dots,x_d]$ is preserved by the action of $\sod$ if $\al=\frac{d-2}{2}$.
Since $1 \in \mathrm{Harm}_d$, the image of $\Psi_\al: L\left(\dn\right) \rightarrow \R[x_1,\dots,x_d]$ is in $\mathrm{Harm}_d$.

It is a well-known fact in the theory of spherical harmonics that $\mathrm{Harm}_{d,n}$ is an irreducible representation of $\mathrm{SO}(d)$.
Assume $P(x) \in \mathrm{Harm}_{d,n}$ satisfies
\begin{align*}
0= d_\dn(K_\mu)P(x) = \left(||x||^2\pa_\mu -2(n+\dn) x_\mu  \right)P(x)
\end{align*}
for all $\mu =1,\dots,d$. Then, $\sum_\mu x_\mu \pa_\mu P(x) = 2(n+\dn)P(x)$.
Since the degree of $P(x)$ is $n$, $(n+d-2)P(x) =0$. Hence, $P(x) =0$ if $d\geq 3$, and thus, 
$\Psi_\al: L\left(\dn\right) \rightarrow \mathrm{Harm}_d$ is an isomorphism for $d\geq 3$.
\eqref{eq_Harm_dagger} is a direct consequence of Lemma \ref{transform_explicit}.
\end{proof}

\begin{rem}
\label{rem_harmonic_dagger}
Since $||x||^{-d+2}$ is in the kernel of $\sum_\rho(\pa_\rho^2)$ by \eqref{eq_laplace_beta}
and $\sum_\rho(\pa_\rho^2)$ and $p_\mu$ commute,
the image of $\Psi_\al^\da$ are all harmonic.
\end{rem}

\subsection{Construction of scalar fields}\label{sec_construction}

%\begin{itemize}
%\item
%\todo
%生成演算子として正しい$D$の次数を持っているのは、$v_\al$すなわち$L(\al)$側であり、これを$L(\al)^\da$と書くのが正しい!!
%\item
%なので $\Psi^\al$などの記号も逆にするべきか？
%\item
%以下、そのように修正されたと思ってかく。
%\end{itemize}
Let \(d\geq 2\). Throughout Sections \ref{sec_construction} and  \ref{sec_construction3}, we assume that \(\alpha>0\) and that either \(\alpha\) is generic or $\alpha=\frac{d-2}{2}$ with \(d>2\) (For the reason for imposing the condition \(\alpha>0\), see Remark~\ref{rem_upper}.)
%($\al >0$を課す理由は Remark \ref{rem_upper}を参照).
%Let $d \geq 2$ and $\al \in \R$ be generic or $\al=\dn$ in the case of $d >2$.
%Assume $$.
%Let $\mathfrak{H}$ be a finite dimensional vector space over $\R$ equipped with a symmetric non-degenerate bilinear form 
%$(-,-):\mathfrak{H}\times \mathfrak{H} \rightarrow \R$.
Set $$\hat{\mathfrak{A}}_{d,\al}= L(\al)^\dagger
\oplus L(\al) \oplus \R c.$$
Define a Lie bracket on $\hat{\mathfrak{A}}_{d,\al}$ by
\begin{align*}
[a,a'] &= 0 \\
[a,a^\da] &= -[a^\da,a]= \langle a^\da,a\rangle c \\
[a^\da,a'^\da] &= 0 \\
[a,c] &= 0 \\
[a^\da,c] &= 0,
\end{align*}
for $a,a' \in L(\al)$ and $a^\da,a'^\da \in L(\al)^\da$,
where $\langle-,-\rangle:L(\al)^\da \times L(\al) \rightarrow \R$ is the non-degenerate pairing, which satisfies
\begin{align}
\langle a^\da,Aa \rangle = - \langle A a^\da, a \rangle
\label{eq_pairing1}
\end{align}
for any $A \in \sod$ (see Proposition \ref{prop_bilinear_L}).
Then, $\hat{\mathfrak{A}}_{d,\al}$ is a Lie algebra.
Set
\begin{align*}
\Ad^+ = L(\al) \oplus \R c,
\end{align*}
which is a subalgebra of $\Ad$.
%which we call a $d$-dimensional affine Heisenberg algebra
%associated with $\mathfrak{H}$.
Define an action of $\sod$ on $\hat{\mathfrak{A}}_{d,\al}$ by
\begin{align*}
A\cdot a &= Aa \\
A \cdot a^\da &= A a^\da \\
A\cdot c&=0,
\end{align*}
for $A \in \sod$. 
Denote by $\mathrm{Der}(\hat{\mathfrak{A}}_{d,\al})$
the set of derivations of the Lie algebra $\hat{\mathfrak{A}}_{d,\al}$.
For $A\in \sod$, by \eqref{eq_pairing1},
\begin{align*}
[A a,a^\da]
+[a,A a^\da]
 &= (\langle A a^\da,a \rangle+ \langle a^\da,A a\rangle) c =0.
\end{align*}
Then, we have:
\begin{prop}\label{derivation_affine}
The above map $\sod \rightarrow \End\,\Ad$ gives a
Lie algebra homomorphism $\sod \rightarrow \mathrm{Der}\;\Ad$. The subalgebra $\Ad^+$ is preserved by this action.
\end{prop}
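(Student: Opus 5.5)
The plan is to verify two facts: that each $A\in\sod$ acts on $\Ad$ by a derivation, and that the assignment $A\mapsto(A\cdot-)$ is a Lie algebra homomorphism. Preservation of $\Ad^+$ then follows directly.

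First, linearity of $A\mapsto(A\cdot -)$ is clear. For the homomorphism property, note that the action on $\Ad=L(\al)^\da\oplus L(\al)\oplus\R c$ is the direct sum of the module actions on $L(\al)^\da$ and on $L(\al)$, together with the zero action on $\R c$. Each summand is a representation of $\sod$, and so is the trivial one. Hence
\begin{align*}
[A,B]\cdot X=A\cdot(B\cdot X)-B\cdot(A\cdot X)
\end{align*}
holds summand by summand, for all $X\in\Ad$.

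Second, to show each $A\cdot-$ is a derivation, we must check
\begin{align*}
A\cdot[X,Y]=[A\cdot X,Y]+[X,A\cdot Y]
\end{align*}
for all $X,Y$. By bilinearity it suffices to take $X,Y$ among the generators $a,a'\in L(\al)$, $a^\da,a'^\da\in L(\al)^\da$, and $c$.

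In every case except the mixed pair $(a,a^\da)$, the left side vanishes because the bracket is zero. The right side also vanishes:
\begin{itemize}
\item $L(\al)$ and $L(\al)^\da$ are each $\sod$-stable, and each is abelian.
\item $c$ is central and $A\cdot c=0$.
\end{itemize}
For the mixed pair, the left side is $\langle a^\da,a\rangle A\cdot c=0$. The right side is
\begin{align*}
[Aa,a^\da]+[a,Aa^\da]=\bigl(\langle a^\da,Aa\rangle+\langle Aa^\da,a\rangle\bigr)c,
\end{align*}
which vanishes by the invariance \eqref{eq_pairing1} from Proposition \ref{prop_bilinear_L}. The pair $(a^\da,a)$ follows by antisymmetry.

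Finally, $\Ad^+=L(\al)\oplus\R c$ is preserved because $A\cdot a=Aa\in L(\al)$ and $A\cdot c=0$.

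There is no real obstacle here. The only substantive input is the invariance \eqref{eq_pairing1} of the pairing, which is already established, so the remaining work is careful bookkeeping of the sign conventions in the bracket.
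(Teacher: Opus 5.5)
Your proposal is correct and follows the paper's argument. The only nontrivial check is the mixed case $[Aa,a^\da]+[a,Aa^\da]=(\langle a^\da,Aa\rangle+\langle Aa^\da,a\rangle)c=0$, which uses the invariance \eqref{eq_pairing1}; this is exactly the computation the paper records just before the statement, and the remaining cases and the homomorphism property are routine bookkeeping that the paper leaves implicit.
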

Let $\R \1$ be a one-dimensional  representation of
$\Ad^+$ defined by
\begin{align}
c. \1&=\1 \nonumber \\
a .\1&=0 \label{eq_verma}
\end{align}
for any $a \in L(\al)$.
We denote the induced module $\mathrm{Ind}^\Ad \R\va$
by $H_{d,\al}$
and the action of $\Ad$ on $\Hd$ by $\chi:\Ad \rightarrow \End \Hd$.
Define a $\sod$-module structure on $\R\va$ by
\begin{align}
X \va =0\quad\quad\text{ for any }X\in \sod. \label{eq_trivial_sod_vac}
\end{align}
Since the relation (\ref{eq_verma}) and \eqref{eq_trivial_sod_vac} are preserved by
the $\sod$-action, $\Hd$ is also a $\sod$-module.
More explicitly, for $a_1^\da,\dots,a_k^\da \in L(\al)^\da$ and $A\in \sod$, we have
$A (a_1^\da a_2^\da\dots a_k^\da \va)=(A a_1^\da)a_2^\da \dots a_k^\da \va+a_1^\da(Aa_2^\da)\dots a_k^\da\va+\dots+ a_1^\da a_2^\da\dots(Aa_k^\da)\va$.
In fact, by the PBW theorem, $\Hd$ is isomorphic to the symmetric algebra over $L(\al)^\da$ as $\sod$-modules.
Then, we have:
\begin{lem}\label{chi_covariant}
For $A\in \sod$ and $a \in \Ad$ and $v \in \Hd$,
$[A,\chi(a)]v=\chi(Aa)v$.
\end{lem}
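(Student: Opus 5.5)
The plan is a direct computation on the spanning set of $\Hd$, reducing everything to the derivation property of Proposition \ref{derivation_affine}. The statement concerns the operator $\chi(a)$ for $a\in\Ad$, with $A\in\sod$ acting on $\Hd$ by the $\sod$-module structure just defined. By the PBW theorem, $\Hd$ is spanned by vectors
\[
v=a_1^\da\cdots a_k^\da\va,\qquad a_i^\da\in L(\al)^\da,
\]
and the $\sod$-action on $v$ is the Leibniz rule written above. By linearity in $a$, it suffices to treat three cases: $a=c$, $a=a^\da\in L(\al)^\da$, and $a\in L(\al)$. The identity to prove is
\[
A\chi(a)v-\chi(a)Av=\chi(Aa)v.
\]

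The case $a=c$ is immediate. The element $c$ acts by $1$ on $\Hd$ and $Ac=0$, so both sides vanish.

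For $a=a^\da\in L(\al)^\da$, we have $\chi(a^\da)v=a^\da a_1^\da\cdots a_k^\da\va$. Applying the Leibniz formula to this product gives $(Aa^\da)a_1^\da\cdots a_k^\da\va$ plus the terms obtained by applying $A$ to each $a_i^\da$. The terms of the second kind are exactly $\chi(a^\da)Av$, so the difference is $\chi(Aa^\da)v$, as required.

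For $a\in L(\al)$, I would argue by induction on $k$.

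\begin{itemize}
\item When $k=0$, $\chi(a)\va=0$ and $\chi(Aa)\va=0$ because $Aa\in L(\al)$ (Proposition \ref{derivation_affine} says $\Ad^+$ is preserved), and $A\va=0$. So both sides vanish.
\item For the inductive step, write $v=a_1^\da w$. In $\Hd$,
\[
\chi(a)a_1^\da w=a_1^\da\chi(a)w+\langle a_1^\da,a\rangle w .
\]
Apply $A$ and use the Leibniz rule to get $A(a_1^\da u)=(Aa_1^\da)u+a_1^\da Au$, together with the induction hypothesis for $w$.
\item After this, the identity reduces to
\[
\langle Aa_1^\da,a\rangle+\langle a_1^\da,Aa\rangle=0,
\]
which is the invariance \eqref{eq_pairing1}.
\end{itemize}

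A cleaner packaging of the last case avoids explicit bookkeeping. Define on $\Hd$ the operator $\tilde A$ given by the derivation action of $A$ on the universal enveloping algebra, transported through the induced-module structure. Then $\tilde A$ satisfies $[\tilde A,\chi(a)]=\chi(Aa)$ by the derivation property, and it agrees with the given action on $\va$. It therefore coincides with the given action on the cyclic vector, hence everywhere. Well-definedness of $\tilde A$ on the induced module is exactly the preservation of \eqref{eq_verma} and \eqref{eq_trivial_sod_vac}, which the paper already notes.

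The only step requiring care is the $L(\al)$ case, where the commutator term must be matched against the invariance of the pairing; the rest is routine.
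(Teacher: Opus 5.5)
Your proof is correct, and your ``cleaner packaging'' is essentially the paper's own implicit argument: the $\sod$-action on $\Hd$ is the one induced from the derivation action of Proposition~\ref{derivation_affine} on $U(\Ad)$ together with the trivial action on $\R\va$, so $[A,\chi(a)]=\chi(Aa)$ is just the derivation property, and your PBW computation unpacks this with the invariance \eqref{eq_pairing1} as the only real input. The one phrase to tighten is ``agrees on $\va$, hence everywhere'': this step requires both operators to satisfy the same commutation relation with $\chi(L(\al)^\da)$, which your $L(\al)^\da$ case supplies, and it is automatic if the action on $\Hd$ is defined as the transported derivation action, as in the paper.
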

\begin{rem}\label{rem_upper}
We note that \(H_{d,\alpha}\) is a compact \(\mathfrak{so}(d+1,1)\)-module if and only if \(\alpha>0\).

The lower boundedness of the \(D\)-spectrum and the finite-dimensionality of its eigenspaces are not strictly necessary for developing the theory. Just as the notion of a conformal vertex algebra provides a relaxation of that of a vertex operator algebra, one could appropriately weaken the compactness condition, in which case much of the discussion would continue to apply to generic parameters \(\alpha<0\). We nevertheless restrict ourselves to compact modules in the present paper, since working in this greater generality would make the reconstruction of vertex operators from correlation functions in Section~2 substantially more cumbersome.
%We note that $\Hd$ is a compact $\sod$-module if and only if $\al>0$.
%頂点作用素代数を緩めた概念として、共形頂点代数という概念があるように、理論を構築する上で$D$の固有値が下に有界であることや固有空間の有限性は必ずしも必須ではなく、適切に定義を緩めることで、$\al <0$の generic parameter についても多くの議論はそのまま適用できる。しかし、多くの定理がより煩雑になってしまうため、我々はより便利な compact 性を考える。
\end{rem}
Hereafter, we will define a conformally covariant vertex operator on $\Hd$.
For $n \in \Z_{\geq 0}$,
let $\{e_1^n,\dots,e_{l_n}^n \}$ be a basis of $L(\al)_n$
and $\{f_1^n,\dots,f_{l_n}^n\}$ be the dual basis of $L(\al)_n^\da$ with respect to the pairing \eqref{eq_pairing1}.
and set $I_n=\sum_{i=1}^{l_n} f_i^n \otimes e_i^n \in L(\al)^\da \otimes L(\al)$,
which is independent of a choice of basis.
Since $\Psi_\al (e_i^n) \in \R[x_1,\dots,x_d]_n$ and $\chi(f_i^n) \in \End \Hd$,
\begin{align*}
(\chi \otimes \Psi_\al) I_n \in \End \Hd[x_1,\dots,x_d]_n
\end{align*}
and similarly by Lemma \ref{transform_explicit}
\begin{align*}
(\Psi_\al^\da \otimes \chi) I_n \in ||x||^{-2\al-2n} \End H_{d,\al}[x_1,\dots,x_d]_n.
\end{align*}

Set
\begin{align*}
\phi^-(x)&= \sum_{n=0}^\infty (\chi \otimes \Psi_\al) I_n \\
\phi^+(x)&=\sum_{n=0}^\infty (\Psi_\al^\da \otimes \chi)I_n  \\
\phi(x)&= \phi^+(x)+\phi^-(x).
\end{align*}

%\begin{rem}
%In physics, $\frac{1}{2l+d-2}$ appears
%Because the map $(S_d)_l \rightarrow (H_d)_l$ is chosen in correct.
%Thus, issue are clarified in section \ref{sec_polynomial} (see also Remark \ref{rem_correct_inner}).
%\end{rem}

Since $\phi^+(x)$ consists of annihilation operators,
for any $v \in \Hd$, $\phi^+(x)v \in \frac{1}{||x||^{2\al}}\Hd[x_1,\dots,x_d,||x||^{-1}]$, a finite sum.
For example,
\begin{align*}
\phi^+(x) \phi=\frac{1}{||x||^{2\al}}1 \\
\phi^+(x) P_\mu \phi= 2\al \frac{x_\mu}{||x||^{2\al+2}}1.
\end{align*}
Thus, $\phi(x)v \in \Hd((x_1,\dots,x_d,|x|^\R))$ for any $v \in \Hd$.
\begin{prop}\label{phi_covariance}
The vertex operator $\phi(x)$ satisfies 
$[A,\phi(x)] = -d_\al(A) \phi(x)$ for any $A\in\sod$, that is, for any $\mu,\nu \in \{1,2,\dots,d\}$,
\begin{align*}
[P_\mu, \phi(x)] &= \frac{d}{dx_\mu}\phi(x),\\
[J_{\mu,\nu}, \phi(x)]&=(x_\nu\pa_\mu-x_\mu\pa_\nu)\phi(x),\\
[D, \phi(x)] &= (E_x +\al)\phi(x),\\
%\Delta \phi(x) &= 0.
[K_\mu,\phi(x)]&=
(-||x||^2\frac{d}{dx^\mu}+2 x_\mu (E_x+\al))\phi(x),
\end{align*}
and $\phi^+(x)v \in \Hd[x_1,\dots,x_d,|x|^\R]$ for any $v \in \Hd$.
Moreover, if $\al=\dn$ with $d>2$, then
\begin{align}
\sum_\rho \left(\frac{d}{dx_\rho}\right)^2 \phi(x) =0.\label{eq_laplace_phi}
\end{align}
\end{prop}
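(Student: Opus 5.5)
The plan is to prove the covariance of $\phi^-$ and $\phi^+$ separately. The key observation is that each is the image of an $\sod$-invariant tensor under a map that intertwines the $\sod$-actions.

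First, the tensor $I_n=\sum_i f_i^n\otimes e_i^n$ is the canonical element of $L(\al)^\da_n\otimes L(\al)_n$. Since the pairing satisfies \eqref{eq_pairing1}, the element $I=\sum_n I_n$ is annihilated by the diagonal action of $\sod$ on the completed tensor product $L(\al)^\da\hat\otimes L(\al)$:
\[
\sum_i (Af_i)\otimes e_i+\sum_i f_i\otimes Ae_i=0 .
\]
We check this by pairing with $e_j\otimes f_k$ and using that $A$ maps $L(\al)_n$ into $L(\al)_{n-1}\oplus L(\al)_n\oplus L(\al)_{n+1}$. Because of this degree shift, the identity is only valid for the full sum over $n$, not degreewise; this is why $\phi^\pm$ are defined as infinite sums.

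For $\phi^-$, apply $\chi\otimes\Psi_\al$ to this identity. By Lemma \ref{chi_covariant}, $\chi(Af)=[A,\chi(f)]$. Since $\Psi_\al$ is an $\sod$-module map for the action $d_\al$, we have $\Psi_\al(Ae)=d_\al(A)\Psi_\al(e)$. Together these give $[A,\phi^-(x)]=-d_\al(A)\phi^-(x)$. For $\phi^+$ the same argument applies with $\Psi_\al^\da\otimes\chi$, using that $\Psi_\al^\da$ intertwines with $d_\al$ (by its definition \eqref{eq_psi_dagger}, or via Lemma \ref{transform_explicit}).

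We must also check that these manipulations are legitimate on each $v\in\Hd$. Applied to $v$, only finitely many annihilation terms in $\phi^+$ are nonzero. Each coefficient of a fixed monomial degree in $\phi^-$ involves a single $n$, and $d_\al(A)$ changes polynomial degree by at most one. Hence both sides agree in each graded component of $\Hd((x_1,\dots,x_d,|x|^\R))$.

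Adding the two results gives $[A,\phi(x)]=-d_\al(A)\phi(x)$. Unwinding the definitions of $d_\al(P_\mu),d_\al(J_{\mu,\nu}),d_\al(D),d_\al(K_\mu)$ yields the four displayed formulas, with the sign conventions matching since $-d_\al(P_\mu)=\pa_\mu$, and so on. The assertion that $\phi^+(x)v$ is a finite sum in $\Hd[x_1,\dots,x_d,|x|^\R]$ holds because $\chi(e)$ for $e\in L(\al)_n$ annihilates $v$ once $n$ exceeds the $D$-degree of $v$ minus the lowest degree (as $\al>0$). Each surviving term lies in $||x||^{-2\al-2n}\R[x]_n$ by Lemma \ref{transform_explicit}.

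For $\al=\dn$ with $d>2$, Proposition \ref{prop_psi_dn} shows that the image of $\Psi_\al$ consists of harmonic polynomials, so $\Delta_x\phi^-=0$. Remark \ref{rem_harmonic_dagger} shows that every element of $H(\al)^\da$ is harmonic, so $\Delta_x\phi^+=0$. This gives \eqref{eq_laplace_phi}.

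The main obstacle is the first step: making rigorous the invariance of the infinite tensor $I$ despite the degree shifts of $P_\mu,K_\mu$, and controlling convergence of the infinite sum defining $\phi^-$. I would handle this by comparing coefficients degree by degree with a dual-basis computation.
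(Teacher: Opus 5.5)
Your proposal is correct and is essentially the paper's proof. The invariance of $I=\sum_n I_n$ that you check by pairing is exactly Lemma~\ref{lem_In}: degreewise for $J_{\mu,\nu}$ and $D$, and with the index shift $I_n\leftrightarrow I_{n\pm1}$ for $P_\mu$ and $K_\mu$. The paper combines this with Lemma~\ref{chi_covariant} and the $d_\al$-equivariance of $\Psi_\al,\Psi_\al^\da$ just as you do, and likewise gets harmonicity from Proposition~\ref{prop_psi_dn} and Remark~\ref{rem_harmonic_dagger}.
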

In order to prove the proposition, we need the following lemma:
\begin{lem}\label{lem_In}
For $n \in \Z_{\geq 0}$ and $\mu,\nu \in \{1,\dots,d \}$,
\begin{align*}
(J_{\mu,\nu} \otimes 1+1\otimes J_{\mu,\nu}) I_n &=  0\\
(D\otimes 1 + 1\otimes D) I_n&=0 \\
(P_\mu \otimes 1) I_n +(1\otimes P_\mu) I_{n+1} &=0 \\
(K_\mu \otimes 1) I_{n+1} +(1\otimes K_\mu) I_{n}&=0.
\end{align*}
\end{lem}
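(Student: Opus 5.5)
We regard $I_n$ as the canonical element of $L(\al)_n^\da\otimes L(\al)_n$ corresponding to the identity map under the non-degenerate pairing $\langle-,-\rangle$ of Proposition \ref{prop_bilinear_L}. We then pair each identity against test vectors. Concretely, a tensor $T\in L(\al)^\da\otimes L(\al)$ (finite sum) vanishes iff $(\id\otimes\langle g,-\rangle)\ldots$; more conveniently, we use the linear isomorphism
\[
\iota: L(\al)^\da\otimes L(\al)\to \Hom(L(\al),L(\al)), \qquad \iota(f\otimes e)(v)=\langle f,v\rangle e.
\]
Under this map $\iota(I_n)=\pi_n$ is the projection onto $L(\al)_n$ along the $D$-grading. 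Note that $L(\al)_n$ pairs only with $L(\al)_n^\da$, since $D$ is anti-self-adjoint for the pairing and the eigenvalues are $-\al-n$ and $\al+n$. For $A\in\sod$ the invariance $\langle Af,v\rangle=-\langle f,Av\rangle$ gives
\[
\iota((A\otimes 1)T)=-\iota(T)\circ A,\qquad \iota((1\otimes A)T)=A\circ\iota(T).
\]

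Each identity then becomes a statement about projections.

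(1) For $J_{\mu,\nu}$: the required identity is $J_{\mu,\nu}\pi_n-\pi_nJ_{\mu,\nu}=0$. This holds because $J_{\mu,\nu}$ commutes with $D$ and so preserves each $L(\al)_n$.

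(2) For $D$: the required identity is $D\pi_n-\pi_nD=0$, which is immediate.

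(3) For $P_\mu$: the required identity is $-\pi_nP_\mu+P_\mu\pi_{n+1}=0$. Since $[D,P_\mu]=P_\mu$, the operator $P_\mu$ raises the $D$-eigenvalue by one, so it maps $L(\al)_{m}$ into $L(\al)_{m-1}$ (eigenvalue $-\al-m+1$). Both sides therefore equal $P_\mu$ restricted to $L(\al)_{n+1}$ followed by inclusion into $L(\al)_n$, and are zero on the other components. The equality holds.

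(4) For $K_\mu$: the required identity is $-\pi_{n+1}K_\mu+K_\mu\pi_n=0$. Here $K_\mu$ maps $L(\al)_m$ into $L(\al)_{m+1}$, and the same argument applies.

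Finally we check that $\iota$ is injective on each finite piece, so that these equalities of operators give the tensor identities. This follows from the non-degeneracy of the pairing on $L(\al)_n^\da\times L(\al)_n$, which comes from the isomorphism $f_\al:L(\al)^\da\cong L(\al)^\vee$. We also verify that the grading conventions match: $L(\al)_n$ is the $D=-\al-n$ eigenspace and $L(\al)^\da_n$ is the $D=\al+n$ eigenspace.

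The main point requiring care is the sign in the invariance, i.e. confirming that $(A\otimes1)$ corresponds to $-\iota(T)\circ A$. Any sign slip here would break identities (3) and (4), so this is the step we check most carefully. The rest is bookkeeping of degrees.
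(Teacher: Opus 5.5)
Your proposal is correct and is essentially the paper's proof. The paper likewise contracts the first tensor factor against a test vector $a$ of the relevant degree, and uses $\langle Af,a\rangle=-\langle f,Aa\rangle$ together with the dual-basis property; this is exactly evaluating your map $\iota$ at $a$ and comparing, for instance, $-\pi_n P_\mu$ with $P_\mu\pi_{n+1}$. One minor quibble: $\iota$ is only injective, not an isomorphism, but injectivity on the finite-dimensional pieces $L(\al)^\da_p\otimes L(\al)_q$ is all you actually use.
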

\begin{proof}
For $n\in \Z_{\geq 0}$,
let $\{e_1^l,\dots,e_{l_n}^n \}$ be a basis of $L(\al)_n$
and $\{f_1^l,\dots,f_{l_n}^n\}$ be the dual basis of $L(\al)_n^\da$.
Let $\mu,\nu \in \{1,\dots,d \}$.
For any $a \in L(\al)_n$, by \eqref{eq_pairing1}, we have:
\begin{align*}
\sum_{i=1}^{l_n} \langle J_{\mu,\nu}f_i^n, a\rangle e_i^n
=\sum_{i=1}^{l_n} -\langle f_i^n, J_{\mu,\nu} a\rangle e_i^n =-J_{\mu,\nu}a = - \sum_{i=1}^{l_n} \langle f_i^n, a\rangle J_{\mu,\nu}e_i^n,
\end{align*}
which implies $(J_{\mu,\nu}\otimes 1 +1\otimes J_{\mu,\nu})I_n=0$
and similarly $(D\otimes 1 +1\otimes D)I_n=0$.
For any $a \in L(\al)_{n+1}$,
\begin{align*}
\sum_{i=1}^{l_n} \langle P_{\mu}f_i^n, a\rangle 
e_i^n
=\sum_{i=1}^{l_n} - \langle f_i^n, P_\mu a\rangle 
e_i^n =-P_\mu a = - \sum_{i=1}^{l_{n+1}} \langle f_i^{n+1},a\rangle P_\mu e_i^{n+1},
\end{align*}
which implies that
$(P_\mu \otimes 1) I_n +(1\otimes P_\mu) I_{n+1}=0$
and similarly 
$(K_\mu \otimes 1) I_{n+1} +(1\otimes K_\mu) I_{n}$.
\end{proof}

\begin{proof}[the proof of Proposition \ref{phi_covariance}]
Let $\mu,\nu \in \{1,\dots,d \}$.
By Lemma \ref{chi_covariant} and Lemma \ref{lem_In}, 
\begin{align*}
[J_{\mu,\nu},\phi(x)]
&=[J_{\mu,\nu}, \sum_{n=0}^\infty \Bigr( (\chi \otimes \Psi_\al) I_n+ (\Psi_\al^\da \otimes \chi)I_n \Bigr) \\
&=\sum_{n=0}^\infty \Bigr( (\chi \otimes \Psi_\al)(J_{\mu,\nu}\otimes 1) I_n+ (\Psi_\al^\da \otimes \chi)(1\otimes J_{\mu,\nu})I_n \Bigr) \\
&=- \sum_{n=0}^\infty \Bigr( (\chi \otimes 
\Psi_\al)(1\otimes J_{\mu,\nu}) I_n+ (\Psi_\al^\da \otimes \chi)(J_{\mu,\nu}\otimes 1)I_n \Bigr) \\
&=- d_\al (J_{\mu,\nu}) \sum_{n=0}^\infty \Bigr( (\chi \otimes \Psi_\al) I_n+ (\Psi_\al^\da \otimes \chi)I_n \Bigr) \\
&= -(x_\mu \pa_\nu- x_\nu \pa_\mu) \phi(x)
\end{align*}
and similarly $[D,\phi(x)] = (\sum_{\rho =1}^d x_\rho \pa_\rho +\al) \phi(x)$.
Set $I_{-1}=0$.
Then,
\begin{align*}
[P_{\mu},\phi(x)]
&=[P_{\mu}, \sum_{n=0}^\infty \Bigr( (\chi \otimes \Psi_\al) I_n+ (\Psi_\al^\da \otimes \chi)I_n \Bigr) \\
&=\sum_{n=0}^\infty (\chi \otimes \Psi_\al)(P_{\mu} \otimes 1) I_n+ (\Psi_\al^\da \otimes \chi)(1\otimes P_{\mu})I_n \\
&=- \sum_{n=0}^\infty (\chi \otimes \Psi_\al)(1 \otimes P_{\mu}) I_{n+1}+
(\Psi_\al^\da \otimes \chi)(P_{\mu} \otimes 1 )I_{n-1}.
\end{align*}
by $(1 \otimes P_\mu) I_0=0$
\begin{align*}
%- &\sum_{n=0}^\infty (\chi \otimes \Psi_\al)(1 \otimes P_{\mu}) I_{n+1}+
%(\Psi_\al^\da \otimes \chi)(P_{\mu} \otimes 1 )I_{n-1}\\
&=- \sum_{n=0}^\infty (\chi \otimes \Psi_\al)(1 \otimes P_{\mu}) I_{n}+
(\Psi_\al^\da \otimes \chi)(P_{\mu} \otimes 1 )I_{n} \\
&=-d_\al(P_\mu) \sum_{n=0}^\infty \Bigr( (\chi \otimes \Psi_\al) I_n+ (\Psi_\al^\da \otimes \chi)I_n \Bigr) \\
&= \pa_\mu \phi(x).
\end{align*}
Thus, $[P_\mu, \phi(x)]= \pa_\mu \phi(x)$
and similarly, $[K_\mu,\phi(x)]=-d_\al(K_\mu) \phi(x).$
%By Lemma \ref{H_dual_delta},
%$\Delta H_d = \Delta H_d^\vee=0$.
%Thus, $\Delta \phi(x)=0$.
If $\al=\dn$ with $d>2$, then by Proposition \ref{prop_psi_dn} and Remark \ref{rem_harmonic_dagger}, the assertion holds.
\end{proof}

Letting $y$ be another formal variable, we can consider $\sod$-module homomorphisms $\Psi_{\al,y}: L(\al) \rightarrow \R[y_1,\dots,y_d]$, $\Psi_{\al,y}^\da: L(\al)^\da \rightarrow \R[y_1,\dots,y_d,|y|^\R]$ and vertex operator $\phi(y)$.
The following proposition is important:
\begin{prop}\label{prop_commutator}
Let $\al$ be generic or $\al=\dn$ with $d>2$. Then,
\begin{align*}
[\phi^+(x),\phi^-(y)] = \exp\left(-\sum_\rho y_\rho \frac{d}{dx_\rho}\right)\frac{1}{||x||^{2\al}}.
\end{align*}
\end{prop}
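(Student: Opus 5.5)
The plan is to expand both $\phi^\pm$ in the dual bases defining $I_n$, use the Heisenberg relations to reduce the commutator to a sum over $n$, and then identify each term with the degree-$n$ Taylor coefficient in $y$ of $|x-y|^{-2\al}$. Write
\[
\phi^+(x)=\sum_{n\geq 0}\sum_{i}\Psi^\da_{\al,x}(f_i^n)\,\chi(e_i^n),\qquad
\phi^-(y)=\sum_{m\geq 0}\sum_j \chi(f_j^m)\,\Psi_{\al,y}(e_j^m).
\]
Since $[\chi(e_i^n),\chi(f_j^m)]=\langle f_j^m,e_i^n\rangle c=\delta_{n,m}\delta_{i,j}$ on $\Hd$, and the polynomial coefficients are central, the commutator collapses to a scalar:
\[
[\phi^+(x),\phi^-(y)]=\sum_{n\geq0} T_n(x,y),\qquad T_n:=(\Psi^\da_{\al,x}\otimes\Psi_{\al,y})I_n .
\]
For each $n$ I will show that
\[
T_n=S_n,\qquad S_n(x,y):=\sum_{1\le\mu_1\le\dots\le\mu_n\le d}\frac{y^{k}}{k!}\,(-\pa_{x})^{k}\,\|x\|^{-2\al}.
\]
Here $k$ is the multi-index counting the $\mu_i$, $y^k=\prod_\rho y_\rho^{k_\rho}$, $k!=\prod_\rho k_\rho!$ and $(-\pa_x)^k=\prod_\rho(-\pa_{x_\rho})^{k_\rho}$. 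Summing over $n$ then gives exactly $\exp(-\sum_\rho y_\rho\pa_{x_\rho})\|x\|^{-2\al}$.

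In the generic case this is immediate from Proposition \ref{prop_psi_generic}, because $I_n$ does not depend on the choice of basis. I take $f^n$ to be the monomials $P_{\mu_1}\cdots P_{\mu_n}v_\al^\da$. Since $d_\al(P_\mu)=-\pa_\mu$ and $\Psi^\da_\al$ is an $\sod$-homomorphism, $\Psi^\da_\al(P_{\mu_1}\cdots P_{\mu_n}v_\al^\da)=(-\pa_{\mu_1})\cdots(-\pa_{\mu_n})\|x\|^{-2\al}$. The dual basis \eqref{eq_dual_basis} consists of $y^k/k!$, which gives $S_n$ directly.

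For $\al=\dn$ with $d>2$ I will give a basis-free argument, which in fact works in both cases. First I check that $S_n$ lies in $H(\al)^\da\otimes H(\al)$. The $x$-factors $(-\pa_x)^k\|x\|^{-2\al}$ are exactly the images $\Psi^\da_\al(P^k v^\da_\al)$. For the $y$-factors, note that $S_n$ is the degree-$n$ part in $y$ of the Taylor series of $\|x-y\|^{-2\al}$, so $\Delta_y S_n$ equals the same sum with $\Delta_x\|x\|^{-2\al}$ in place of $\|x\|^{-2\al}$, and this vanishes by \eqref{eq_laplace_beta}. Hence $S_n\in H(\al)^\da_n\otimes\mathrm{Harm}_{d,n}$, and $\mathrm{Harm}_{d,n}=H(\al)_n$ by Proposition \ref{prop_psi_dn}.

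Next, since $\Psi^\da_\al$ and $\Psi_\al$ are isomorphisms onto their images by \eqref{eq_three_isom}, and the pairing $L(\al)^\da_n\otimes L(\al)_n\to\R$ is non-degenerate, it suffices to show that $T_n$ and $S_n$ give the same result when the $y$-factor is contracted with an arbitrary $f=P_{\mu_1}\cdots P_{\mu_n}v^\da_\al\in L(\al)^\da_n$; such elements span $L(\al)^\da_n$. For this I need the formula
\[
\langle f,e\rangle=\pa_{\mu_1}\cdots\pa_{\mu_n}\Psi_\al(e)\qquad(e\in L(\al)_n),
\]
where the right side is a constant. It follows as in the proof of Proposition \ref{prop_psi_generic}: move the $P$'s across using $\langle Au^\da,v\rangle=-\langle u^\da,Av\rangle$ and $d_\al(P_\mu)=-\pa_\mu$, then use $\langle v^\da_\al,v_\al\rangle=1$ on the constant term.

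With this formula, contracting the $y$-factor of $T_n$ with $f$ gives $\sum_i\Psi^\da_\al(f_i^n)\langle f,e_i^n\rangle=\Psi^\da_\al(f)$. Contracting the $y$-factor of $S_n$ with $f$ means applying $\pa_{y_{\mu_1}}\cdots\pa_{y_{\mu_n}}$ to the monomials $y^k/k!$, which picks out the single term $(-\pa_{x_{\mu_1}})\cdots(-\pa_{x_{\mu_n}})\|x\|^{-2\al}=\Psi^\da_\al(f)$. So the two contractions agree, and $T_n=S_n$.

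The main obstacle is the non-generic value $\al=\dn$. There $V(\al)\to\R[y]$ is not an isomorphism, so the dual basis of Proposition \ref{prop_psi_generic} is not available, and one must verify that the pairing formula descends to the quotient $L(\al)$. One must also check that $S_n$ genuinely lies in the harmonic subspace, which is the role of the Laplacian computation above.
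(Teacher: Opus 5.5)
Your argument is correct. In the generic case it is the paper's proof: you use the basis $P_{\mu_1}\cdots P_{\mu_n}v_\al^\da$ and the dual basis \eqref{eq_dual_basis} from Proposition \ref{prop_psi_generic}.

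For $\al=\dn$ you take a genuinely different route. The paper, in Appendix \ref{sec_polynomial}, proceeds in three steps:
\begin{enumerate}
\item It writes the commutator as $\sum_n|x|^{-(d-2)-2n}G^d_n(x,y)$, where $G^d_n$ is the reproducing kernel of $\mathrm{Harm}_{d,n}$ for the invariant form $(-,-)_H$.
\item It computes the constant $c_{d,n}=\frac{d-2}{2n+d-2}$ relating $(-,-)_H$ to the $L^2(S^{d-1})$ inner product, using $L(1)^l$, $\Ld(1)^l$ and explicit sphere integrals.
\item It invokes the spherical harmonic addition theorem to reach $C_n^{(\dn)}$, hence the Taylor coefficient via \eqref{eq_app_Gegen3}.
\end{enumerate}
You instead observe that the pairing $L(\al)^\da_n\otimes L(\al)_n\to\R$ is the Fischer-type pairing $\langle P_{\mu_1}\cdots P_{\mu_n}v_\al^\da,e\rangle=\pa_{\mu_1}\cdots\pa_{\mu_n}\Psi_\al(e)$. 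This pairing descends to $L(\al)$ because $\Psi_\al$ does. It gives $(\mathrm{id}\otimes\pa_y^m)T_n=\Psi_\al^\da(P^m v_\al^\da)$ with no choice of basis, so your argument is uniform in $\al$ and needs neither spherical harmonics nor the addition theorem.

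Your harmonicity check for $S_n$ is in fact dispensable. The functionals $\pa_y^m$ with $|m|=n$ already form the basis of $(\R[y_1,\dots,y_d]_n)^*$ dual to $\{y^k/k!\}$. So equality of these contractions forces $T_n=S_n$ directly, and harmonicity of $S_n$ in $y$ then follows as a consequence rather than being an input.

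What the paper's longer route buys is the explicit comparison of the invariant form on $L(\dn)^\da$ with the $L^2(S^{d-1})$ inner product (Lemma \ref{cdl}), which the paper wants for its own sake. For the commutator formula alone, your argument is shorter and cleaner.
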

\begin{proof}
Let $\{e_1^n,\dots,e_{l_n}^n \}$ be a basis of $L(\al)_n$
and $\{f_1^n,\dots,f_{l_n}^n\}$ be the dual basis of $L(\al)_n^\da$ for $n \in \Z_{\geq 0}$.
% Then,
%\begin{align*}
%[\phi^+(x),\phi^-(y)] = \sum_{n \geq 0}\sum_{i=1}^{l_n} ||x||^{-\al-n}
% \Psi_{\al,x}(e_i^n)\Psi_{\al,y}(e_n^i).
%\end{align*}
%Recall that there is a tautological $\sod$-module isomorphism $f_\al:L(\al) \rightarrow L(\al)^\da=\theta^*L(\al)$. Set
%$f_i^n = f_\al(e_n^i)$. 
%Then, by the definitions of the invariant bilinear forms in Proposition \ref{prop_bilinear_L},
%$\{f_i^n \in L(\al)_n^\dagger\}_{i=1,\dots,l_n}$ is the dual basis of $\{e_i^n\}$. 
%
%Let $\{e_1^l,\dots,e_m^l \}$ be a basis of $(S_d)_l$
%and $\{f_1^l,\dots,f_m^l\}$ be the dual basis of $(S_d)_l^*$ for $l \in \Z_{\geq 0}$.
Then,
\begin{align}
\begin{split}
[\phi^+(x), \phi^-(y)]
&=\sum_{n,m=0}^\infty [(\Psi_x^\vee \otimes \chi)I_n, 
(\chi \otimes \Psi_y) I_{m}] \\
&=\sum_{n,m=0}^\infty [(\Psi_x^\vee \otimes \chi)I_n, 
(\chi \otimes \Psi_y) I_{m}] \\
&=\sum_{n,m=0}^\infty \sum_{i,j} [(\Psi_x^\vee \otimes \chi)f_i^n\otimes e_i^n, 
(\chi \otimes \Psi_y) f_j^m\otimes e_j^m] \\
%&=\sum_{l=0}^\infty \sum_{i,j} [P_x^\vee(f_i^l) \chi(e_i^l), 
%P_y(e_j^l) \chi(f_j^l) ] \\
&=\sum_{l=0}^\infty \sum_{i,j} \Psi_x^\vee(f_i^n)\Psi_y(e_j^m) [\chi(e_i^n), \chi(f_j^m) ] \\
&=\sum_{n,m=0}^\infty \sum_{i,j} \Psi_x^\vee(f_i^n)\Psi_y(e_j^m) \langle f_j^n,e_i^m \rangle \\
&=\sum_{n=0}^\infty \sum_{i} \Psi_x^\vee(f_i^n)\Psi_y(e_i^n).
\end{split}
\label{eq_commutator_psi}
\end{align}
Assume that $\al$ is generic.
Taking the basis $\{f_i^n\}$ as $\{d_\al(P_{\mu_1})\dots d_\al(P_{\mu_n})||x||^{-2\al}\}$, then the assertion follows from Proposition \ref{prop_psi_generic}. The case $\al =\dn$ is shown in Appendix B.
\end{proof}

We end this section by introducing an invariant bilinear form $(-,-)_{\Hd}$ on $\Hd$.
%$L(\al)^\da$上の不変双線形形式$(-,-)$を$\Hd$上に延長して、この章を終える。
Recall that we have an isomorphism \eqref{eq_theta_al}
\begin{align*}
\theta_\al: \theta^* L(\al) \rightarrow L(\al)^\da, v_\al \mapsto v_\al^\da.
\end{align*}
It is easy to see that $\theta_\al$ gives an anti-automorphism $\Theta$ of Lie algebra $\Ad$ by
\begin{align*}
\Theta: a^\da \mapsto  \theta_{\al}^{-1}(a^\da),\quad a\mapsto \theta_\al(a),\quad c\mapsto c,
\end{align*}
which is an involution $\Theta^2=\id_{\Ad}$.
Let $\Hd^\vee$ be the restricted dual of $\Hd$ as in \eqref{eq_restricted_dual}, and regard it as an $\Ad$-module by
$X.u(\bullet) = u(\Theta(X)\bullet)$ for $u\in\Hd^\vee$ and $X\in \Ad$.
Let $\va^* \in \Hd^\vee$ be the unique vector such that $\va^*(\va)=1$ and $\va^*(X\bullet)=0$ for any $X \in L(\al)^\da$.
Then, by the universality of $\Hd$, there is a unique $\Ad$-module homomorphism
$\Hd \rightarrow \Hd^\vee$ which sends $\va$ to $\va^*$.
Since $\Hd$ and $\Hd^\vee$ are simple $\Ad$-modules, we have:
\begin{prop}\label{prop_Hd_bilinear}
There is a unique non-degenerate bilinear form $(-,-)_{\Hd}:\Hd \otimes \Hd \rightarrow \R$ such that:
\begin{enumerate}
\item
$(\va,\va)_{\Hd}=1$;
\item
$(X v,w)_{\Hd}=(v,\Theta(X)w)_{\Hd}$ for any $v,w\in \Hd$ and $X\in \Ad$.
\end{enumerate}
Moreover, $(-,-)_{\Hd}$ is symmetric and it satisfies
\begin{align}
\begin{split}
(v,\phi^-(x)w) &= ||x||^{-2\al}(\phi^+\left(\frac{x}{||x||^2}\right)v,w)\\
(v,\phi^+(x)w) &= ||x||^{-2\al}(\phi^-\left(\frac{x}{||x||^2}\right)v,w)
\end{split}
\label{eq_dual_phi}
\end{align}
for any $v,w\in\Hd$.
\end{prop}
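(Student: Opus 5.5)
The plan is to realize the form as the pairing induced by the $\Ad$-module map $\Hd\to\Hd^\vee$ sending $\va\mapsto\va^*$. Write $\iota:\Hd\to\Hd^\vee$ for this map and set $(v,w)_{\Hd}=\iota(w)(v)$.

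\textbf{Invariance and normalization.} Since $\iota$ is an $\Ad$-module homomorphism and $\Ad$ acts on $\Hd^\vee$ by $X.u(\bullet)=u(\Theta(X)\bullet)$, we get
\[
(v,Xw)_{\Hd}=\iota(Xw)(v)=\iota(w)(\Theta(X)v)=(\Theta(X)v,w)_{\Hd}.
\]
Because $\Theta$ is an involution, this is equivalent to (2). Property (1) is immediate from $\iota(\va)=\va^*$.

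\textbf{Uniqueness.} Any form satisfying (2) is determined by its value on $\va\otimes\va$. Indeed, $\Hd$ is spanned by the vectors $a_1^\da\cdots a_k^\da\va$, and (2) moves each $a_i^\da$ across as $\Theta(a_i^\da)\in L(\al)$. These then act as annihilators, commuting past the creation operators via the Heisenberg relations.

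\textbf{Nondegeneracy.} I take for granted, as stated just before the proposition, that $\Hd$ and $\Hd^\vee$ are simple. Then $\iota$ is nonzero, hence injective. Since $\iota$ preserves $D$-degree and the graded pieces are finite-dimensional by compactness (Remark \ref{rem_upper}), $\iota$ is bijective, so the form is nondegenerate.

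\textbf{Symmetry.} Define $(v,w)'=(w,v)_{\Hd}$. It satisfies (1), and satisfies (2) because $\Theta^2=\id$. By uniqueness, $(-,-)'=(-,-)_{\Hd}$.

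\textbf{The identity \eqref{eq_dual_phi}.} This is the substantive part. By symmetry of the form and the symmetric role of $x\mapsto x/||x||^2$ (an involution), the second identity follows from the first with $v,w$ swapped. So it suffices to show
\[
(v,\phi^-(x)w)=||x||^{-2\al}\Bigl(\phi^+\Bigl(\frac{x}{||x||^2}\Bigr)v,w\Bigr).
\]
Write $\phi^-(x)=\sum_n\sum_i\Psi_\al(e_i^n)(x)\,\chi(f_i^n)$. Applying (2) gives
\[
(v,\phi^-(x)w)=\sum_{n,i}\Psi_\al(e_i^n)(x)\,\bigl(\chi(\theta_\al^{-1}f_i^n)v,w\bigr).
\]
So I must show
\[
\sum_i\Psi_\al(e_i^n)(x)\otimes\theta_\al^{-1}(f_i^n)
=||x||^{-2\al}\sum_i\Psi^\da_\al(f_i^n)\Bigl(\frac{x}{||x||^2}\Bigr)\otimes e_i^n .
\]

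By Lemma \ref{transform_explicit},
\[
||x||^{-2\al}P\Bigl(\frac{x}{||x||^2}\Bigr)=||x||^{-2\al}\,||x||^{-2n}P(x)\cdot||x||^{2\al}\cdots .
\]
More precisely, $T_\al(P)(x/||x||^2)=P(x)\,||x||^{2\al}$, so substituting the inversion into an element of $\Psi^\da_\al(L(\al)^\da_n)$ and multiplying by $||x||^{-2\al}$ returns the polynomial $T_\al^{-1}$ of it. The identity thus reduces to the equality of the canonical elements
\[
(\Psi_\al\otimes\theta_\al^{-1})I_n=(T_\al^{-1}\Psi^\da_\al\otimes\id)\tau(I_n),
\]
where $\tau$ is the flip. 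Both sides are canonical basis-independent tensors built from $\sod$-equivariant maps: $\Psi^\da_\al=T_\al\circ\Psi_\al\circ\theta_\al^{-1}$ up to normalization, as both are $\sod$-maps $\theta^*L(\al)\to\R[x,|x|^\R]$ sending $v_\al\mapsto||x||^{-2\al}$, which agree by irreducibility. Substituting this, the right side becomes $(\Psi_\al\theta_\al^{-1}\otimes\id)\tau(I_n)$.

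The remaining point is the symmetry
\[
(\theta_\al^{-1}\otimes\id)\tau I_n=(\id\otimes\theta_\al^{-1})I_n
\]
in $L(\al)\otimes L(\al)$. This amounts to symmetry of the bilinear form $\langle\theta_\al a,b\rangle$ on $L(\al)$, which is Proposition \ref{prop_bilinear_Verma}(3) transported through $\theta_\al$ and $f_\al$.

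I expect the main obstacle to be tracking the identification $\Psi^\da_\al=T_\al\Psi_\al\theta_\al^{-1}$ in the case $\al=\dn$, where one must work on the quotients $L(\al)$ and harmonic polynomials rather than on Verma modules; Proposition \ref{prop_psi_dn} handles this.
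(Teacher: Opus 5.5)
Your proposal is correct and follows essentially the same route as the paper. In both, the form is induced by the $\Ad$-map $\Hd\to\Hd^\vee$, $\va\mapsto\va^*$, and \eqref{eq_dual_phi} is obtained in three moves: transfer $\chi(f_i^n)$ across via $\Theta$, rewrite $\Psi_\al$ as $\Psi_\al^\da$ at $x/||x||^2$ using $T_\al$, and use that $I_n$ is invariant under $\Theta\otimes\Theta$ followed by the flip. The paper compresses this into three lines; you justify the implicit steps explicitly, namely $\Psi^\da_\al=T_\al\Psi_\al\theta_\al^{-1}$ and the flip-symmetry coming from the symmetry of $(-,-)$ on $L(\al)^\da$.

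The only blemish is bookkeeping. Since $I_n\in L(\al)^\da\otimes L(\al)$, expressions such as $(\Psi_\al\otimes\theta_\al^{-1})I_n$ are ill-typed. Each $\tau$ in your reduction should sit on the other side of the equation. The displayed identity you derive just before that step is the correct one.
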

\begin{proof}
We will check \eqref{eq_dual_phi}.
By \eqref{eq_Tal_def}, 
\begin{align*}
(v, (\chi \otimes \Psi_x )(I_n) w)& = ((\chi \otimes \Psi_x )((\Theta\otimes \id)I_n) v, w)\\
&=||x||^{-2\al} ((\chi \otimes \Psi_{x/||x||^2}^\da )((\Theta\otimes \Theta)I_n) v, w)\\
&=||x||^{-2\al} ((\Psi_{x/||x||^2}^\da \otimes \chi )(I_n) v, w).
\end{align*}
Hence, the assertion holds.
\end{proof}

Now, we will construct the vertex operator on $\Hd$. We identify $\Hd$ as the symmetric algebra 
\begin{align*}
\mathrm{Sym}\,L(\al)^\da = \bigoplus_{N \geq 0}
\mathrm{Sym}^N L(\al)^\da
\end{align*}
as $\sod$-modules,
where $\mathrm{Sym}^N L(\al)^\da$ is the space of symmetric polynomials of degree $N$ in $L(\al)^\da$.
By using $\phi(x)$, we construct the vertex operator
\begin{align} 
Y(-,x):\Hd \rightarrow \End\Hd[[x_1,\dots,x_d,|x|^\R]]\label{eq_vertex_def} 
\end{align} 
by induction on $N$.

%\subsection{Construction of vertex operator}
%\label{sec_construction2}
%We continue with the notations from the previous section.
%Let $\al$ be generic or $\al=\dn$ with $d\geq 3$.
%We will identify $\Hd$ as the symmetric algebra 
%\begin{align*}
%\mathrm{Sym}\,L(\al)^\da = \bigoplus_{N \geq 0}
%\mathrm{Sym}^N L(\al)^\da
%\end{align*}
%as $\sod$-modules,
%where $\mathrm{Sym}^N L(\al)^\da$ is the space of symmetric polynomials of degree $N$ in $L(\al)^\da$.
%In this section, we use $\phi(x)$ to construct the vertex operator: 
%\begin{align} 
%Y(-,x):\Hd \rightarrow \End\Hd[[x_1,\dots,x_d,|x|^\R]]\label{eq_vertex_def} 
%\end{align} 
%by induction on $N$.

%この章ではまず初めに$\phi(x)$を用いて、頂点作用素:
%\begin{align}
%Y(-,x):\Hd \rightarrow \End\Hd[[x^1,\dots,x^d,|x|^\R]]\label{eq_vertex_def}
%\end{align}
%を次数$N$についての帰納法により構成する。
In the case of $N=0$, $\R \1 = \mathrm{Sym}^0 L(\al)^\da$, set
\begin{align*}
Y(\1, x) =\mathrm{id}_{\Hd}.
\end{align*}
Define a linear map $F_x^\pm: L(\al)^\dagger = L(\al)^\dagger \rightarrow \Hom_\R\left(\Hd, \Hd((x_1,\dots,x_d,|x|^\R)) \right)$ by
\begin{align*}
F_x^\pm(P_{\mu_1}\dots P_{\mu_n}v_\al^\da) &= \frac{d}{dx_{\mu_1}}\dots \frac{d}{dx_{\mu_n}} \phi^\pm(x)
\end{align*}
for any $1\leq \mu_1 \leq \dots \leq \mu_n \leq d$,
which is well-defined as a linear map for $\al=\dn$ by \eqref{eq_laplace_phi} and Theorem \ref{prop_singular_vector}.

Then, define the linear map \eqref{eq_vertex_def} by induction on $N$ as follows:
Assume that $N>0$ and that the vertex operator is defined up to $N-1$.
%$N>0$であって、$N-1$までは頂点作用素が定義されていると仮定する。
Then, for $a_1, a_2, a_N \in L(\al)^\da$, set
\begin{align}
Y(a_1 a_2\dots a_N,x) = Y(a_2 \dots a_N,x) F_x^+(a_1) + F_x^-(a_1)Y(a_2 \dots a_N,x).\label{eq_normally}
\end{align}
Since $\phi^+(x)v \in \mathrm{Sym}^{M-1} L(\al)^\da[x_1,\dots,x_d,|x|^\R]$ for any $v \in \mathrm{Sym}^{M} L(\al)^\da$ and $M>0$,
\eqref{eq_normally} is well-defined, that is, each coefficient of $x$ is a finite sum as elements in $\End\Hd$.
Since, for example,
\begin{align}
Y(a_1a_2,x) = F_x^-(a_1)F_x^-(a_2)+F_x^-(a_1)F_x^+(a_2)+F_x^+(a_1)F_x^+(a_2)+F_x^-(a_2)F_x^+(a_1),
\label{eq_example_normal_order}
\end{align}
and $[F_x^+(a),F_x^+(a')]=0$, $[F_x^-(a),F_x^-(a')]=0$,
\eqref{eq_normally} is independent of the order of the products of $a_1 \dots a_N$.
Hence, \eqref{eq_normally} define a linear map:
\begin{align*}
\mathrm{Sym}^N L(\al)^\da \rightarrow \End\, \Hd[[x_1,\dots,x_d,|x|^\R]].
\end{align*}

%$F_x^+(a)$は次数を下げる微分作用素からなり消滅演算子と呼ばれ、$F_x^-(a)$は生成演算子と呼ばれる。
%\eqref{eq_example_normal_order}にあるように、頂点作用素は消滅演算子を右側に、生成演算子を左側に並べなおすことで定義される。
%このような合成は正規順序積と呼ばれる。

%In this section, we will prove the following result:
%\begin{prop}\label{thm_vertex_d}
%The vertex operator $Y(-,x):\Hd \rightarrow \End\,\Hd[[x_1,\dots,x_d,|x|^\R]]$ given in \eqref{eq_normally} and $\1 \in \Hd$ satisfies Definition \ref{def_vertex_prealgebra}, and thus, defines a conformal b $d$-vertex algebra.
%\end{prop}

\begin{lem}\label{lem_vertex_creation}
The vertex operator $Y(-,x)$ satisfies Definition (V1), that is, $Y(v,x)\1 \in \Hd[[x_1,\dots,x_d]]$ and
\begin{align*}
\lim_{x\to 0}Y(v,x)\1 =v
\end{align*}
for any $v\in \Hd$.
\end{lem}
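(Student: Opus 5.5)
We prove the statement by induction on $N$, working with $v \in \mathrm{Sym}^N L(\al)^\da$ and using the recursive definition \eqref{eq_normally}. The case $N=0$ is immediate: $Y(\1,x)=\id_{\Hd}$, so $Y(\1,x)\1=\1$.

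The basic input is that $\phi^+(x)$ annihilates the vacuum. Indeed, $\phi^+(x)=\sum_n(\Psi_\al^\da\otimes\chi)I_n$ has its operator part $\chi(e_i^n)$ with $e_i^n\in L(\al)$, and $L(\al)\cdot\1=0$ by \eqref{eq_verma}. Hence $F_x^+(a)\1=0$ for every $a\in L(\al)^\da$, since $F_x^+(a)$ is a derivative of $\phi^+(x)$. Moreover, $F_x^+(a)$ lowers the polynomial degree in $\mathrm{Sym}\,L(\al)^\da$ by one, and by the commutation relation $[\chi(e),\chi(f)]=\langle f,e\rangle c$ it acts on $\mathrm{Sym}^{\geq1}$ as a derivation. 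Applying \eqref{eq_normally} to $\1$ then gives
\begin{align*}
Y(a_1\cdots a_N,x)\1=F_x^-(a_1)Y(a_2\cdots a_N,x)\1 ,
\end{align*}
and iterating yields $Y(a_1\cdots a_N,x)\1=F_x^-(a_1)\cdots F_x^-(a_N)\1$.

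Next we identify $F_x^-$ explicitly. By definition,
\begin{align*}
\phi^-(x)=\sum_n\sum_i \chi(f_i^n)\,\Psi_\al(e_i^n)(x),
\end{align*}
where $\Psi_\al(e_i^n)\in\R[x_1,\dots,x_d]_n$. This lies in $\End\Hd[[x_1,\dots,x_d]]$, with no $|x|^r$ factors, and each coefficient of $x$ is a finite sum. Derivatives preserve this property, so each $F_x^-(a)$ is a formal power series in $x$, and so is their product applied to $\1$ (coefficients remain finite sums because of the degree bounds). This proves $Y(v,x)\1\in\Hd[[x_1,\dots,x_d]]$.

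For the limit, we evaluate at $x=0$. Only the $n=0$ term of $\phi^-$ survives: $I_0=v_\al^\da\otimes v_\al$ (up to normalization of the dual basis, since $\langle v_\al^\da,v_\al\rangle=1$) and $\Psi_\al(v_\al)=1$, so $\phi^-(0)=\chi(v_\al^\da)$. For $a=P_{\mu_1}\cdots P_{\mu_n}v_\al^\da$, the main check is
\begin{align*}
F_x^-(a)|_{x=0}=\partial_{\mu_1}\cdots\partial_{\mu_n}\phi^-(x)|_{x=0}=\chi(a).
\end{align*}
We would prove this using Proposition \ref{phi_covariance}, namely $\partial_\mu\phi(x)=[P_\mu,\phi(x)]$. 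Since $[P_\mu,\cdot]$ preserves the creation part, this gives $\partial_\mu\phi^-=[P_\mu,\phi^-]$. Combining with Lemma \ref{chi_covariant} and induction on $n$,
\begin{align*}
\partial_{\mu_1}\cdots\partial_{\mu_n}\phi^-(x)|_{x=0}=[P_{\mu_1},[\cdots[P_{\mu_n},\chi(v_\al^\da)]]]=\chi(P_{\mu_1}\cdots P_{\mu_n}v_\al^\da).
\end{align*}
This step is the one requiring care: we must justify separating $\phi^\pm$, which holds because the $\sod$-action preserves both $L(\al)$ and $L(\al)^\da$, and we must ensure well-definedness when $\al=\dn$, which follows from \eqref{eq_laplace_phi}. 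Alternatively, one can read the identity off directly from Proposition \ref{prop_psi_generic}: the dual basis element pairing with $x^0$ is the degree-zero component, and differentiation picks out exactly the dual vector to the monomial $x_{\mu_1}\cdots x_{\mu_n}$.

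Putting this together, the limit $x\to0$ of $F_x^-(a_1)\cdots F_x^-(a_N)\1$ is $\chi(a_1)\cdots\chi(a_N)\1$. Under the PBW identification this equals $a_1\cdots a_N=v$, which proves the statement.
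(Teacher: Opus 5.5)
Your proposal is correct and follows essentially the same route as the paper: $F_x^+(a)\1=0$ reduces $Y(v,x)\1$ to $F_x^-(a_1)\cdots F_x^-(a_N)\1$, which is a power series in $x$, and the constant term is then computed from $\partial_\mu\phi^-(x)=[P_\mu,\phi^-(x)]$. The only difference is minor: the paper evaluates these commutators on $\1$ using $P_\mu\1=0$ and writes out only the single-generator case, whereas you use Lemma \ref{chi_covariant} to obtain the operator identity $F_x^-(a)|_{x=0}=\chi(a)$, which also handles products of several generators explicitly.
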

\begin{proof}
Since $\pa_{\mu_1}\dots \pa_{\mu_n}\phi^+(x)\1=0$,
\begin{align*}
Y(a_1\dots a_n,x)\1= F_x^-(a_1)\dots F_x^-(a_n)\1 \in \Hd[[x^1,\dots,x^d]].
\end{align*}
By Proposition \ref{phi_covariance} and \eqref{eq_trivial_sod_vac}, we have
\begin{align*}
\lim_{x\to 0}Y(P_{\mu_1}\dots P_{\mu_n} v_\al^\da,x)\1&=
\lim_{x \to 0} \pa_{\mu_1}\dots \pa_{\mu_n}\phi^-(x)\1\\
&=\lim_{x \to 0} [P_{\mu_1},[P_{\mu_2},\dots [P_{\mu_n},\phi^-(x)]\dots ]\1\\
&=P_{\mu_1} \dots P_{\mu_n}(\lim_{x\to 0}\phi^-(x)\va) = P_{\mu_1}\dots P_{\mu_n} v_\al^\da.
\end{align*}
Hence, the assertion holds.
\end{proof}

\begin{lem}\label{lem_vertex_cov}
The vertex operator $Y(-,x)$ is conformally covariant.
\end{lem}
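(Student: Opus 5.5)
The plan is to verify the two conditions of Definition \ref{def_cov} for the normally ordered products \eqref{eq_normally}, by induction on $N$. Both conditions are phrased via \eqref{eq_cov_def}, i.e.\ as commutator formulas $[A,Y(v,x)]=Y(v,x)$-terms. For $N=0$ they are trivial, since $Y(\1,x)=\id$ and $A\1=0$.

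The basic input is a covariance statement for the building blocks $F_x^\pm(a)$ with $a\in L(\al)^\da$. First I would show $F_x^\pm(P_\mu a)=\pa_\mu F_x^\pm(a)$. This is immediate from the definition on the spanning vectors $P_{\mu_1}\cdots P_{\mu_n}v_\al^\da$, and these partial derivatives commute. Next I would show that, for each $A\in\sod$,
\begin{align*}
[A,F_x^\pm(a)]=F_x^\pm(Aa)+\mathcal{D}_A F_x^\pm(a),
\end{align*}
where $\mathcal{D}_A$ is the differential operator appearing in \eqref{eq_cov_def}. For instance $\mathcal{D}_{D}=E(x)$, and $\mathcal{D}_{K_\mu}=2x_\mu E(x)-||x||^2\pa_\mu$ together with the twisted argument $(K_\mu+2x_\mu D+2\sum_\rho x_\rho J_{\rho,\mu})a$.

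For $a=v_\al^\da$, this is exactly Proposition \ref{phi_covariance}, applied separately to $\phi^+$ and $\phi^-$. The separation is legitimate because the proof of Proposition \ref{phi_covariance} treats the $(\chi\otimes\Psi_\al)$ and $(\Psi^\da_\al\otimes\chi)$ parts independently via Lemma \ref{lem_In}. One also uses $Dv_\al^\da=\al v_\al^\da$, $J v_\al^\da=0$ and $Kv_\al^\da=0$ to match $-d_\al(A)$ with the form \eqref{eq_cov_def}.

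For general $a=P_{\mu_1}\cdots P_{\mu_n}v_\al^\da$ I would argue by induction on $n$. Differentiate the identity in $x_\mu$ and use $[A,P_\mu]\in\sod$ together with the commutation of $\pa_\mu$ with $\mathcal{D}_A$. Equivalently, $F^\pm_x$ is a twisted intertwiner, and $\{A : \text{formula holds}\}$ is stable under the bracket with $P_\mu$. This is the same computation as Remark \ref{rem_shift_automorphism}: $\exp(-\sum x_\rho\ad P_\rho)$ intertwines $\ad P_\mu$ with $\pa_\mu$.

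For general $N$, I would apply the Leibniz rule to \eqref{eq_normally}. If both $F_x^\pm(a_1)$ and $Y(a_2\cdots a_N,x)$ satisfy the twisted commutator formula, then their products do too. The homogeneous term then assembles into $Y(A(a_1a_2\cdots a_N),x)$ by the derivation property of the $\sod$-action on $\Sym L(\al)^\da$. The differential-operator terms add up, since each $\mathcal{D}_A$ is a first-order derivation in $x$, so $\mathcal{D}_A(fg)=(\mathcal{D}_Af)g+f(\mathcal{D}_Ag)$. The only care needed is for $K_\mu$: the twisted element $(\exp(-\sum x_\rho\ad P_\rho)K_\mu)$ acts on the product by a derivation as well, since it is an element of $\sod\otimes\R[x]$. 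The translation property $Y(P_\mu v,x)=\pa_\mu Y(v,x)$ follows in the same way from the Leibniz rule.

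The main obstacle I expect is the well-definedness issue at $\al=\dn$. There the building-block formula for $F^\pm_x(Aa)$ must be compatible with the relation $\sum_\rho P_\rho^2 v^\da_\al=0$. This is handled by \eqref{eq_laplace_phi}, since $F^\pm_x$ kills the submodule and the covariance identity descends to the quotient. I would also check convergence: each coefficient is a finite sum because $F^+_x$ lowers the $\Sym$-degree, so all manipulations are formal identities in $\End\Hd[[x_1,\dots,x_d,|x|^\R]]$.
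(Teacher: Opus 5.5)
Your proposal is correct and follows essentially the same route as the paper. It proves translation covariance by induction on $N$ via the Leibniz rule, takes the base case $[A,F_x^\pm(v_\al^\da)]$ from Proposition \ref{phi_covariance} (applied to $\phi^\pm$ separately), extends to $F_x^\pm(P_{\mu_1}\cdots P_{\mu_n}v_\al^\da)$ by differentiation, and does the inductive step on $N$ through \eqref{eq_normally} and the derivation property of the action on $\mathrm{Sym}\,L(\al)^\da$.

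The only difference is bookkeeping: you split off the vector field $\mathcal{D}_A$ instead of keeping the $P$-terms inside $\exp(-\sum_\rho x_\rho \ad P_\rho)A$. In the differentiation step, note that $\pa_\mu$ does not commute with $\mathcal{D}_A$ (e.g.\ $[\pa_\mu,E(x)]=\pa_\mu$). That commutator is exactly absorbed by $\pa_\mu\bigl(\exp(-\sum_\rho x_\rho \ad P_\rho)A\bigr)=[\exp(-\sum_\rho x_\rho \ad P_\rho)A,P_\mu]$, which is the identity behind \eqref{eq_com_A_cov}.
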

\begin{proof}
We first show $Y(P_\mu v,x)=\frac{d}{dx_\mu} Y(v,x)$ for any $v\in \Sym^N L(\al)^\da$ and $\mu =1,\dots,d$ by the induction on $N$. In the case of $N=0$, the claim follows from $Y(P_\mu \1,x)=\frac{d}{dx_\mu} \id_{\Hd}=0$. The case of $N=1$ follows from the definition. By the induction assumption, we have
\begin{align*}
\frac{d}{dx_\mu} Y(a_1 a_2\dots a_N,x)&= \frac{d}{dx_\mu} (Y(a_2 \dots a_N,x) F_x^+(a_1) + F_x^-(a_1)Y(a_2 \dots a_N,x))\\
&=Y(P_\mu \cdot (a_2 \dots a_N),x) F_x^+(a_1) + F_x^-(a_1)Y(P_\mu\cdot( a_2 \dots a_N),x)\\
&+Y(a_2 \dots a_N,x) F_x^+(P_\mu a_1) + F_x^-(P_\mu a_1)Y(a_2 \dots a_N,x)\\
&=Y(P_\mu(a_1 a_2\dots a_N),x).
\end{align*}
Hence, the claim holds.
Next, for any $A \in \sod$, we will show that $[A, Y(a,x)]-Y(\exp(-\sum_\rho x_\rho \ad P_\rho)A v,x)=0$ for any $v \in \Sym^N L(\al)^\da$ by the induction on $N$. The case of $N=0$ is trivial.
By Proposition \ref{phi_covariance} and Lemma \ref{lem_covariance_Lie},
\begin{align*}
[A,F_x^\pm(v_\al^\da,x)] = F_x^\pm((\exp(-\sum_\rho x_\rho \ad P_\rho)A) v_\al^\da,x)
\end{align*}
holds since $K_\mu v_\al^\da= J_{\mu\nu}v_\al^\da=0$ and $D v_\al^\da=\al v_\al^\da$.
Hence,
\begin{align}
\begin{split}
[A, F_x^\pm(P_{\mu_1} \dots P_{\mu_n} v_\al^\da,x)]&= \pa_{\mu_1}\dots \pa_{\mu_n} [A,F_x^\pm(v_\al^\da,x)]\\
&= \pa_{\mu_1}\dots \pa_{\mu_n} F_x^\pm((\exp(-\sum_\rho x_\rho \ad P_\rho)A) v_\al^\da,x)\\
&= F_x^\pm((\exp(-\sum_\rho x_\rho \ad P_\rho)A) P_{\mu_1} \dots P_{\mu_n}v_\al^\da,x).
\end{split}
\label{eq_com_A_cov}
\end{align}
Thus, the case of $N=1$ holds. For $N>1$, by the induction assumption, we have
\begin{align*}
&[A, Y(a_1 a_2\dots a_N,x)]\\
&= [A, (Y(a_2 \dots a_N,x) F^+(a_1) + F^-(a_1)Y(a_2 \dots a_N,x))]\\
&=Y((\exp(-\sum_\rho x_\rho \ad P_\rho)A) \cdot (a_2 \dots a_N),x) F_x^+(a_1) + F_x^-(a_1)Y((\exp(-\sum_\rho x_\rho \ad P_\rho)A)\cdot( a_2 \dots a_N),x)\\
&+Y(a_2 \dots a_N,x) F_x^+((\exp(-\sum_\rho x_\rho \ad P_\rho)A) a_1) + F_x^-((\exp(-\sum_\rho x_\rho \ad P_\rho)A) a_1)Y(a_2 \dots a_N,x)\\
&=Y((\exp(-\sum_\rho x_\rho \ad P_\rho)A)(a_1 a_2\dots a_N),x).
\end{align*}
Hence, the assertion holds.
\end{proof}

\subsection{Construction of \(d\)-CC systems}
\label{sec_construction3}
In this section, based on the vertex operator constructed in the preceding section, we construct a $d$-CC system.
The most technical part is the convergence of the sums appearing in the cluster decomposition. We first study this convergence problem using Gegenbauer polynomials.

%この章では先ほどの章で構成した vertex operator を基に、$d$-CC system を構成する。
%最もテクニカルなパートは cluster decomposition に現れる和の収束性である。まず初めにこの収束性の問題を Gegenbauer polynomial を用いて考える。

For $\al \in \R$,
the Gegenbauer polynomials $\{C_n^{(\al)}(t) \in \R[t] \}_{n =0,1,\dots}$
are defined in terms of their generating function
\begin{align}
\frac{1}{(1-2rt+r^2)^{\al}}= \sum_{n=0}^\infty C_n^{(\al)}(t)r^n.
\label{eq_app_Gegen1}
\end{align}
\begin{lem}\label{lem_conv_prim}
For any $t \in [-1,1]$, the series $\sum_{n=0}^\infty C_n^{(\al)}(t)z^n$  is absolutely and locally uniformly convergent in $\{z\in\C\mid |z|<1\}$.
\end{lem}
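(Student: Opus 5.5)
The plan is to show that the power series $\sum_n C_n^{(\al)}(t)z^n$ has radius of convergence at least $1$ for every fixed $t\in[-1,1]$. Absolute and locally uniform convergence on $\{|z|<1\}$ then follows from standard facts about power series inside their disk of convergence.

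First I will identify the singularities of the generating function. Fix $t\in[-1,1]$ and consider $g_t(z)=(1-2tz+z^2)^{-\al}$ as a function of the complex variable $z$. The quadratic $1-2tz+z^2$ has roots $z_\pm=t\pm i\sqrt{1-t^2}$, and these satisfy $|z_\pm|=1$; when $t=\pm1$ the two roots coincide at $\pm1$. Hence $1-2tz+z^2\neq 0$ on the open unit disk $\{|z|<1\}$. Since this disk is simply connected, there is a holomorphic branch of $\log(1-2tz+z^2)$ on it normalized by $\log 1=0$ at $z=0$. Therefore $g_t(z)=\exp(-\al\log(1-2tz+z^2))$ is holomorphic on $\{|z|<1\}$ and equals $1$ at $z=0$.

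Next I will match this function with the series. By Cauchy--Taylor, the Taylor series of $g_t$ at $0$ converges absolutely to $g_t$ on $\{|z|<1\}$, and it converges uniformly and absolutely on every closed disk $\{|z|\le r\}$ with $r<1$. For this to give the lemma, the Taylor coefficients must be $C_n^{(\al)}(t)$. The definition \eqref{eq_app_Gegen1} is a formal identity in $r$ whose coefficients are polynomials in $t$, obtained from the binomial expansion of $(1-u)^{-\al}$ with $u=2rt-r^2$. For small $|z|$, the same binomial expansion with $u=2tz-z^2$ and $|u|<1$ computes the principal branch, and it agrees with our chosen branch near $0$. Rearranging this absolutely convergent double series shows that the Taylor coefficients of $g_t$ at $0$ are exactly $C_n^{(\al)}(t)$.

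Finally, since any compact subset of the open disk lies in some $\{|z|\le r\}$ with $r<1$, the series converges absolutely and locally uniformly on $\{|z|<1\}$. If a quantitative version is wanted, a Cauchy estimate on $|z|=r'$ with $r<r'<1$ gives $|C_n^{(\al)}(t)|\le M r'^{-n}$, which yields the same conclusion directly. The only step needing care is the branch and coefficient identification; everything else is standard. The edge cases $t=\pm1$ cause no trouble, because the double root still lies on the unit circle.
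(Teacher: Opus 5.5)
Your proof is correct and follows essentially the same route as the paper. The paper writes $t=\cos\theta$ and observes $1-2tz+z^2=(1-e^{i\theta}z)(1-e^{-i\theta}z)$, so the zeros lie on the unit circle and the generating function is holomorphic on $\{|z|<1\}$; you additionally spell out the branch choice and the identification of the Taylor coefficients with $C_n^{(\alpha)}(t)$, which the paper leaves implicit.
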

\begin{proof}
Set $t=\cos(\theta)$. The assertion follows from $(1-2zt+z^2)=(1-e^{i\theta}z)(1-e^{-i\theta}z)$.
\end{proof}
The importance of these polynomials comes from the following observation.
Set
\begin{align*}
(x,y)=\sum_{i=1}^d x_i y_i.
\end{align*}
Since 
\begin{align}
\frac{1}{|x-y|^{2\al}}&= ((x_1-y_1)^2+\dots+(x_d-y_d)^2)^{-\al} =||x||^{-2\al}\left(1-2\frac{(x,y)}{|x|^2}+\frac{|y|^2}{|x|^2}\right)^{-\al},
\label{eq_app_Gegen2}
\end{align}
applying the generating function of the Gegenbauer polynomials
with $t=\frac{(x,y)}{|x||y|}, r=\frac{|y|}{|x|}$,
\begin{align}
\frac{1}{|x-y|^{2\al}}\Bigl|_{|x|>|y|}=\sum_{l=0}^\infty C_l^{(\al)}\left(\frac{(x,y)}{|x||y|}\right)\frac{|y|^l}{|x|^{l+2\al}},
\label{eq_app_Gegen3}
\end{align}
where the left-hand-side converges in $|x|>|y|$ by Lemma \ref{lem_conv_prim}.
To obtain the strong cluster decomposition, we need the following convergence property, which is stronger than Lemma \ref{lem_conv_prim}.

We note that, for each \(x\neq 0\),
$C_l^{(\alpha)}
\left(
\frac{(x,y)}{|x||y|}
\right)
\frac{|y|^l}{|x|^l}
$
extends to a homogeneous polynomial of degree \(l\) in \(y\). Indeed, since \(C_l^{(\alpha)}(t)\) is a linear combination of \(t^{l-2k}\), the apparent singularity at \(y=0\) cancels. In particular, the right-hand side of \eqref{eq_app_Gegen3} is well defined also at \(y=0\).
%strong cluster decomposition を得るためには、我々はLemma \ref{lem_conv_prim}より強い収束性が必要である。
\begin{lem}\label{lem_Gegenbauer_convergence}
Set
\[
 A_n^{(\alpha)}(x,y)
 =
 |x|^{-2\alpha}
 \left(\frac{|y|}{|x|}\right)^n
 C_n^{(\alpha)}
 \left(\frac{(x,y)}{|x||y|}\right).
\]
%which is a polynomial of degree $n$ in $y$.
%where the right-hand side is understood as its homogeneous
%polynomial extension in $y$.
Then, for any multi-indices $I,J$,
\[
 \sum_{n\geq0}
 \partial_x^I\partial_y^J
 A_n^{(\alpha)}(x,y)
\]
is absolutely and locally uniformly convergent in $|x|>|y|$.
Moreover,
\[
 \partial_x^I\partial_y^J
 \frac1{|x-y|^{2\alpha}}
 =
 \sum_{n\geq0}
 \partial_x^I\partial_y^J
 A_n^{(\alpha)}(x,y).
\]
\end{lem}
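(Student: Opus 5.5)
The plan is to complexify and reduce everything to Cauchy estimates for a single holomorphic generating function. By homogeneity, $A_n^{(\alpha)}(x,\lambda y)=\lambda^n A_n^{(\alpha)}(x,y)$, and by \eqref{eq_app_Gegen1}–\eqref{eq_app_Gegen3} we have, for real $x\neq 0$ and real $\lambda$ with $|\lambda||y|<|x|$,
\[
 G(x,y,\lambda):=\Bigl(\sum_{i=1}^d (x_i-\lambda y_i)^2\Bigr)^{-\alpha}=\sum_{n\ge0}A_n^{(\alpha)}(x,y)\lambda^n .
\]
So $A_n^{(\alpha)}(x,y)$ is the $n$-th Taylor coefficient in $\lambda$ of $G$ at $\lambda=0$. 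Since each $A_n^{(\alpha)}$ is a polynomial in $y$ (of degree $n$) and in $x_i$, $|x|^{-1}$, it extends holomorphically to complex $(x,y)$ near real points with $x\neq0$.

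Fix a compact $K\subset\{|x|>|y|\}$. Choose $\rho>1$ with $\rho\max_K |y|/|x|<1$.

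\textbf{Step 1: nonvanishing.} For real $(x,y)\in K$ and complex $|\lambda|\le\rho$, write $t=\cos\theta=(x,y)/(|x||y|)$ and $r=|y|/|x|$. Then
\[
 Q(x-\lambda y)=|x|^2(1-\lambda r e^{i\theta})(1-\lambda re^{-i\theta}),
\]
which is nonzero since $|\lambda r|<1$; when $y=0$ it equals $|x|^2\neq0$. By compactness of $K\times\{|\lambda|\le\rho\}$ and continuity, $Q(x-\lambda y)$ stays away from $0$, with $|Q|\ge c>0$, on $U\times\{|\lambda|\le\rho'\}$. Here $U$ is a neighborhood of $K$ in $\C^{2d}$ and $1<\rho'<\rho$.

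\textbf{Step 2: branch.} Shrink $U$ to a finite union of small polydiscs around points of $K$. On each polydisc times the disc $\{|\lambda|\le \rho'\}$ (a convex, hence simply connected, set), $\log Q$ has a holomorphic branch, which we normalize to agree with the real logarithm at real points with $\lambda=0$. This gives a holomorphic $G$, which is bounded by some $M$ there. The one point to check carefully is this branch choice: $G$ must be single-valued and must agree with the real function $|x-\lambda y|^{-2\alpha}$ at real points. I expect this to be the main technical step. It is handled because on real $(x,y)$ the map $\lambda\mapsto Q$ is a polynomial nonvanishing on a disc, so the branch is determined by continuity from $\lambda=0$, where $Q=|x|^2>0$.

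\textbf{Step 3: estimates and convergence.} Cauchy's formula gives
\[
 A_n^{(\alpha)}(x,y)=\frac1{2\pi i}\oint_{|\lambda|=\rho'}G(x,y,\lambda)\lambda^{-n-1}\,d\lambda
\]
for $(x,y)\in U$, since both sides are holomorphic in $(x,y)$ and agree at real points. Hence $|A_n^{(\alpha)}|\le M\rho'^{-n}$ on $U$. Cauchy estimates on a smaller neighborhood $U'\Subset U$ of $K$ then give
\[
 |\partial_x^I\partial_y^J A_n^{(\alpha)}|\le C_{I,J}\,\rho'^{-n}
\]
uniformly on $U'$. Thus $\sum_n\partial_x^I\partial_y^JA_n^{(\alpha)}$ is dominated by a geometric series and converges absolutely and uniformly on $K$, which proves the first assertion.

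For the identity, $\sum_n A_n^{(\alpha)}(x,y)=G(x,y,1)$ converges uniformly on $U'$ to a holomorphic function, which equals $|x-y|^{-2\alpha}$ at real points by \eqref{eq_app_Gegen3}. Weierstrass's theorem on termwise differentiation of uniformly convergent holomorphic series then gives $\partial_x^I\partial_y^J|x-y|^{-2\alpha}=\sum_n\partial_x^I\partial_y^JA_n^{(\alpha)}(x,y)$.
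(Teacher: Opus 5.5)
Your proof is correct and follows the paper's argument. Both proofs bound $A_n^{(\alpha)}$ by Cauchy's estimate for the generating function $\lambda\mapsto Q(x-\lambda y)^{-\alpha}$ on a circle of radius $R>1$ with $R\sup_K |y|/|x|<1$, which gives geometric decay uniformly on $K$. The only difference is how you get the derivative bounds: you complexify $(x,y)$ and use Cauchy estimates in those variables, which is what forces the polydisc branch bookkeeping. The paper instead keeps $(x,y)$ real and differentiates under the contour integral, so it needs a branch only on a disc in $\lambda$. Your explicit termwise-differentiation argument for the identity spells out a step the paper leaves implicit.
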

\begin{proof}

Let $K$ be a compact subset of $\{|x|>|y|\}$ and put
$q=\sup_{(x,y)\in K}\frac{|y|}{|x|}<1$.
Choose $R>1$ such that $Rq<1$.
For a complex variable $z$, set
\begin{align*}
Q_{x,y}(z)= |x|^2-2z(x,y)+z^2|y|^2,
\end{align*}
which is a holomorphic function on $z$ and 
$Q_{x,y}(z) \neq 0$ for any $|z|<R$ by the proof of Lemma \ref{lem_conv_prim}.
Thus, by choosing the branch of $Q_{x,y}(z)^{-\al}$ so that its value at $z=0$ is $\frac{1}{|x|^{2\al}} \in \R_{>0}$, we obtain
\begin{align*}
F(z;x,y) = \frac{1}{Q_{x,y}(z)^\al}
\end{align*}
as a holomorphic function on $|z|<R$. Moreover, this function depends smoothly on $(x,y)\in K$.
%よって $Q_{x,y}(z)^{-\al}$の分岐を$z=0$において$\frac{1}{|x-y|^{2\al}} \in \R_{>0}$になるように選ぶことで、
%\begin{align*}
%F(z;x,y) = \frac{1}{Q_{x,y}(z)^\al}
%\end{align*}
%が、$|z|<R$上の正則関数として定まる。また、この関数は$(x,y)\in K$には滑らかに依存する。
%Since $|x|^2+2z(x,y)+|y|^2 \neq 0$ if $|z|<R$,
%$F(z;x,y)$ is a holomorphic function on $|z|<R$.
%ここで我々は$F(0;x,y) = \frac{1}{|x-y|^{2\al}}$となる branch を選んだ。
By applying \eqref{eq_app_Gegen1}
with $t=\frac{(x,y)}{|x||y|}$ and $r=z\frac{|y|}{|x|}$
and by Lemma \ref{lem_conv_prim},
\[
F(z;x,y)
 =
 \sum_{n\geq0} A_n^{(\alpha)}(x,y)z^n,
\]
where the right-hand side converges uniformly in $\{|z|<R\}$.
For any multi-indices $I,J$, the function
$\partial_x^I\partial_y^J F(z;x,y)$ is uniformly bounded for $(x,y)\in K$ and $|z|=R$.
Hence, by the Cauchy estimate for the coefficients,
there is a constant $C_{I,J,K}>0$ such that
\[
 \sup_{(x,y)\in K}
 \left|
 \partial_x^I\partial_y^J A_n^{(\alpha)}(x,y)
 \right|
 \leq C_{I,J,K}R^{-n}.
\]
Consequently,
$\sum_{n\geq0}
 \partial_x^I\partial_y^J A_n^{(\alpha)}(x,y)$
is absolutely and uniformly convergent on $K$.
\end{proof}

We now return to the construction of the $d$-CC system. By Proposition \ref{prop_commutator} and Lemma \ref{lem_Gegenbauer_convergence},
\begin{align*}
[F_x^+(v_\al^\da),F_y^-(v_\al^\da)] = \sum_{n \geq 0} A_n^{(\al)}(x,y)
\end{align*}
and hence, for any $\mu_1,\dots,\mu_N, \nu_1,\dots,\nu_M\in \{1,\dots,d\}$,
\begin{align}
[F_x^+(P_{\mu_1}\cdots P_{\mu_N} v_\al^\da),F_y^-(P_{\nu_1}\cdots P_{\nu_M} v_\al^\da)] = \sum_{n \geq 0}  \pa_{x_{\mu_1}}\cdots \pa_{x_{\mu_N}}\pa_{y_{\nu_1}}\cdots \pa_{y_{\nu_M}} A_n^{(\al)}(x,y)
\label{eq_general_com}
\end{align}
follows. The right-hand side of \eqref{eq_general_com} converges, for $|x|>|y|$, to $\pa_{x_{\mu_1}}\cdots \pa_{x_{\mu_N}}\pa_{y_{\nu_1}}\cdots \pa_{y_{\nu_M}}\frac{1}{|x-y|^{2\al}}$, and in particular admits an analytic continuation to a real analytic function on $\Conf_2(\R^d)$. Therefore, one can define the linear map
\begin{align*}
\kappa: L(\alpha)^\dagger\otimes L(\alpha)^\dagger \rightarrow C^\om(\Conf_2(\R^d),\C),\quad (a,b)\mapsto \kappa_{a,b}(x,y)
\end{align*}
by $\kappa_{a,b}(x,y)|_{|x|>|y|} = [F_x^+(a),F_y^-(b)]$.
% We note that, by Lemma \ref{lem_Gegenbauer_convergence}, the sum $[F_x^+(a),F_y^-(b)]$, taken with respect to the degree in $y$, converges absolutely and uniformly for $|x|>|y|$.
We denote by $A_{a,b}^{|x|>|y|}(x,y)$ the formal series defined by $[F_x^+(a),F_y^-(b)]$.
The following is immediate from the definition.

\begin{lem}\label{lem_kappa_sym}
For any $a,b\in L(\al)^\da$,
\begin{align*}
\kappa_{a,b}(x,y)=\kappa_{b,a}(y,x).
\end{align*}
\end{lem}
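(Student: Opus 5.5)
By linearity in $a$ and $b$, it is enough to treat basis vectors $a=P_{\mu_1}\cdots P_{\mu_N}v_\al^\da$ and $b=P_{\nu_1}\cdots P_{\nu_M}v_\al^\da$. When $\al=\dn$, the relations in $L(\al)^\da$ are respected on both sides: the right-hand side of \eqref{eq_general_com} is annihilated by the Laplacian, by \eqref{eq_laplace_phi} and Theorem \ref{prop_singular_vector}.

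For these basis vectors, \eqref{eq_general_com} together with Lemma \ref{lem_Gegenbauer_convergence} identifies $\kappa_{a,b}$ on the region $|x|>|y|$. There it is the real analytic function
\[
\kappa_{a,b}(x,y)=\pa_{x_{\mu_1}}\cdots \pa_{x_{\mu_N}}\pa_{y_{\nu_1}}\cdots \pa_{y_{\nu_M}}\frac{1}{|x-y|^{2\al}}.
\]
The right-hand side is real analytic on all of $\Conf_2(\R^d)$, and the open set $\{|x|>|y|\}$ is nonempty. Since $\Conf_2(\R^d)$ is connected for $d\geq 2$, the identity theorem for real analytic functions shows that this formula holds on all of $\Conf_2(\R^d)$. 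In other words, $\kappa_{a,b}(x,y)=\pa_x^I\pa_y^J G(x,y)$, where $G(x,y)=|x-y|^{-2\al}$ and $I,J$ are the multi-indices recording $\mu$ and $\nu$.

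Interchanging the roles of the two arguments gives $\kappa_{b,a}(y,x)=\pa_y^I\pa_x^J G(y,x)$. Here the derivatives indexed by $\mu$, which come from $a$, now act on the variable $y$ sitting in the second slot. Because $G(x,y)=G(y,x)$ and partial derivatives commute, we get
\[
\kappa_{b,a}(y,x)=\pa_x^J\pa_y^I G(x,y)=\pa_y^I\pa_x^J G(x,y).
\]
Comparing with $\kappa_{a,b}(x,y)=\pa_x^I\pa_y^J G(x,y)$, the two expressions match provided the labelling of multi-indices is tracked carefully: the derivatives coming from $a$ must always act on the variable at which $a$ is placed. This bookkeeping, including the convention for which variable is the first slot, is the only step needing care. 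Once it is checked, the lemma follows directly from the symmetry $|x-y|=|y-x|$.
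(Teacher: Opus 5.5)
Your overall route is the right one, and it is what the paper means by ``immediate from the definition''. By \eqref{eq_general_com}, Lemma \ref{lem_Gegenbauer_convergence} and the identity theorem on the connected set $\Conf_2(\R^d)$, one has $\kappa_{a,b}(x,y)=\pa_x^I\pa_y^J G(x,y)$ with $G(x,y)=|x-y|^{-2\al}$, where $I$ comes from $a$ and $J$ from $b$.

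The problem is the last paragraph, the only place where the lemma is actually proved. There the bookkeeping you single out as needing care is done incorrectly and then waved through. In $\kappa_{b,a}(y,x)$ the first slot carries $b$, with multi-index $J$, and is evaluated at $y$. The second slot carries $a$, with multi-index $I$, and is evaluated at $x$. So $y$ sits in the first slot, not the second, and your formula $\kappa_{b,a}(y,x)=\pa_y^I\pa_x^J G(y,x)$ is wrong. Taken at face value it gives $\kappa_{b,a}(y,x)=\pa_x^J\pa_y^I G(x,y)$, which is not $\kappa_{a,b}(x,y)$. For example, with $a=P_1v_\al^\da$ and $b=v_\al^\da$ it yields $\pa_{y_1}G=-\pa_{x_1}G$, the negative of $\kappa_{a,b}(x,y)$ (recall $\al>0$). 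As written, your computation contradicts the lemma, and the sentence ``the two expressions match provided the labelling is tracked carefully'' asserts the conclusion without deriving it.

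The fix is one line. Write $\kappa_{b,a}(u,v)=\pa_u^J\pa_v^I G(u,v)$ and substitute $(u,v)=(y,x)$:
\[
\kappa_{b,a}(y,x)=\pa_y^J\pa_x^I G(y,x)=\pa_x^I\pa_y^J G(x,y)=\kappa_{a,b}(x,y).
\]
This uses $G(y,x)=G(x,y)$ and the commutativity of partial derivatives. Your analytic-continuation step is genuinely needed and is fine. The reason is that $\kappa_{b,a}(y,x)$ is defined through the different series $[F_y^+(b),F_x^-(a)]$ on the region $|y|>|x|$, so the two sides can only be compared as real analytic functions on $\Conf_2(\R^d)$.
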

We emphasize that although $A_{a,b}^{|x|>|y|}(x,y)$ and $A_{b,a}^{|y|>|x|}(y,x)$ have the same analytic continuation, they are entirely different series.

Using this map $\kappa$, we construct a $d$-CC system.
For $a_i \in L(\al)^\da$, from the definition, the vertex operator $Y(a_1 \dots a_r,x)$ is the product of $F_{x}(a_i)$. The order of the product is such that the annihilation operator $F_x^+(a_i)$ is on the right and the creation operator $F_x^-(a_i)$ is on the left (see \eqref{eq_example_normal_order}). The product of vertex operators of such an order is called the {\bf normal ordered product}.
In general, we can also consider normal ordered products between vertex operators with different formal variables. Such a normal ordered product is denoted by $:Y(v_1,x_1)Y(v_2,x_2) \dots Y(v_n,x_n):$, e.g.,
%一般に異なる変数を持つ頂点作用素の間の正規順序積を考えることもできる。こうした正規順序積を$:Y(v_1,x_1)Y(v_2,x_2) \dots Y(v_n,x_n):$とかく。
\begin{align*}
:Y(a_1\dots a_n,x_1) Y(b_1\dots b_m,x_2): 
= \sum_{[m]=I_1 \sqcup I_2} (\Pi_{i \in I_1} F_{x_2}^-(b_i)) Y(a_1\dots a_n,x_1) 
(\Pi_{j \in I_2} F_{x_2}^+(b_j)).
\end{align*}

The formula that connects the just product and the normal ordered product is called the {\bf Wick formula}, which is well-known in physics, and can describe their relationship inductively. For example,
\begin{align*}
Y(a_1\dots a_n,x_1) Y(b,x_2) &= Y(a_1\dots a_n,x_1) (F_{x_2}^+(b)+F_{x_2}^-(b))\\
&=Y(a_1\dots a_n,x_1) F_{x_2}^+(b)+[Y(a_1\dots a_n,x_1) ,F_{x_2}^-(b)]+F_{x_2}^-(b)Y(a_1\dots a_n,x_1)\\
&=:Y(a_1\dots a_n,x_1)Y(b,x_2): +[Y(a_1\dots a_n,x_1) ,F_{x_2}^-(b)]\\
&=:Y(a_1\dots a_n,x_1)Y(b,x_2): +
 \sum_{i =1}^n A_{a_i,b}^{|x_1|>|x_2|}(x_1,x_2)Y(a_1\dots \hat{a_i}\dots a_n,x_1).
\end{align*}
%Since
%\begin{align*}
%[Y(a_1\dots a_n,x_1) ,Y(b,x_2)] = \sum_{i =1}^n \kappa_{a_i,b}(x_1,x_2) Y(a_1\dots \hat{a_i}\dots a_n,x_1),
%\end{align*}
%we have
%\begin{align*}
%[Y(a_1\dots a_n,x_1) ,F_{x_2}^+(P_{\mu_1} \dots P_{\mu_k}v_\al^\da)] = \pa_{\mu_1}^{x_2} \dots \pa_{\mu_k}^{x_2} [Y(a_1\dots a_n,x_1),\phi^+(x_2)]
%\end{align*}
%and $[F_{x_1}^-(a_i),\phi^+(x_2)]$ is the derivative of $\frac{1}{||x_1-x_2||^\al}|_{|x_1|>|x_2|}$ by Proposition \ref{prop_commutator}, we have
%\begin{align*}
%&Y(a_1\dots a_n,x_1) Y(b,x_2) =:Y(a_1\dots a_n,x_1)Y(b,x_2): \\
%&+
%\sum_{k=1}^n Y(a_1\dots \hat{a_k}\dots a_n,x_1) \times \text{the derivative of $\frac{1}{||x_1-x_2||^\al}\Bigl|_{|x_1|>|x_2|}$ determined from $a_k$ and $b$}.
%\end{align*}
In general, $Y(a_1\dots a_n,x_1) Y(b_1\dots b_m,x_2)$ is the sum of all possible contractions of $a_i$ and $b_j$ and the rest of the normal ordered products.
%Although the Wick formula can be written explicitly in combinatorial
%terms, we omit it here.
It is important to note that the coefficients appearing in the normal
ordered products are independent of the ordering of the fields after
analytic continuation by Lemma \ref{lem_kappa_sym}.
%It is important to note that the coefficients of the normal ordered product in the Wick formula does not depend on the order of the product up to analytic continuations, that is, they differs only by the domain of the expansions.
For example, 
\begin{align*}
Y(a,x)Y(b,y) &=:Y(a,x)Y(b,y): +A_{a,b}^{|x|>|y|}(x,y),\\
Y(b,y)Y(a,x) &=:Y(a,x)Y(b,y): + A_{b,a}^{|y|>|x|}(y,x).
\end{align*}
%$Y(v_\al^\da v_\al^\da,x_2) Y((P_\mu v_\al^\da) v_\al^\da,x_1)
%=:Y((P_\mu v_\al^\da) v_\al^\da,x_1)Y(v_\al^\da v_\al^\da,x_2): 
%+ 2:Y(P_\mu v_\al^\da,x_1)Y(v_\al^\da,x_2):\frac{1}{||x_1-x_2||^\al}\Bigl|_{|x_2|>|x_1|}
%+ 2:Y(v_\al^\da,x_1)Y(v_\al^\da,x_2):\frac{-2\al x_1^\mu}{||x_1-x_2||^{\al+1}}\Bigl|_{|x_2|>|x_1|}+\frac{-4\al x_1^\mu}{||x_1-x_2||^{2\al+1}}\Bigl|_{|x_1|>|x_2|}.$
%Here, we used $\kappa_{a,b}(x,y)=\kappa_{b,a}(y,x)$ as functions on $\Conf_2(\R^d)$.
%
%.
%More precisely, if 
%$Y(a_1\dots a_n,x_1) Y(b_1\dots b_m,x_2)$ and $Y(b_1\dots b_m,x_2)Y(a_1\dots a_n,x_1)$ is a coefficient 
%$\frac{ 1}{||x_1-x_2|||^\al}$, which are the same except for the domain of the expansion of the derivative.
%In other words, they define the same operator-valued function except for the analytic connection.
%Wick formula を組み合わせ論的に明示的与えることは簡単であるが、ここでは省略する。
%重要なことは正規順序積の中身は積の順序に寄らないことであり、よって
%$Y(a_1\dots a_n,x_1) Y(b_1\dots b_m,x_2)$と$Y(b_1\dots b_m,x_2)Y(a_1\dots a_n,x_1)$の正規順序積は、係数である
%$\frac{1}{||x_1-x_2||^\al}$の微分の展開の領域を除けば同じである。
%言い換えると、これらは解析接続を除いて同じ作用素値の関数を定めている。
We define
\begin{align}
\m:\Conf_n(\R^d) \times \Hd^{\otimes n} \rightarrow \overline{\Hd}\label{eq_m_nop}
\end{align}
by thinking of the coefficients in the Wick formula as real analytic functions without series expansion, e.g.,
\begin{align*}
\m_{x_{[3]}}&(a,b,c)=:Y(a,x_1)Y(b,x_2)Y(c,x_3):\va+
\kappa_{a,b}(x_1,x_2) Y(c,x_3)\va \\
&+\kappa_{a,c}(x_1,x_3) Y(b,x_2)\va
+\kappa_{b,c}(x_2,x_3) Y(a,x_1)\va.
\end{align*}
%\begin{align*}
%\m_{x_{[3]}}&(v_\al^\da v_\al^\da,v_\al,v_\al)=:\phi(x_1)^2 \phi(x_2) \phi(x_3):\va+\frac{1}{||x_2-x_3||^\al}:\phi(x_1)^2 :\va\\
%&+\frac{2}{||x_1-x_2||^\al}:\phi(x_1)\phi(x_3):\va
%+\frac{2}{||x_1-x_3||^\al}:\phi(x_1)\phi(x_2):\va
%+\frac{2}{||x_1-x_2||^\al||x_1-x_3||^\al}\va.
%\end{align*}

\begin{prop}\label{prop_wick_correlation}
For any \(n\geq 1\), the map
\[
\m:\Conf_n(\R^d)\rightarrow
\Hom_\R(H_{d,\alpha}^{\otimes n},\overline{H_{d,\alpha}}),
\qquad
x_{[n]}\mapsto \m_{x_{[n]}}
\]
has the following properties.
\begin{enumerate}
\item For any \(u\in H_{d,\alpha}^\vee\) and \(v_i\in H_{d,\alpha}\),
\[
x_{[n]}\mapsto
\langle u,\m_{x_{[n]}}(v_1,\ldots,v_n)\rangle
\]
is real analytic on \(\Conf_n(\R^d)\).
\item The map $\m$ is permutation invariant (see Definition \ref{def_conformal_algebra}).
\item The composition of vertex operators $Y(v_1,x_1)\cdots Y(v_n,x_n)\va$ is absolutely and 
locally uniformly convergent in \(|x_1|>\cdots>|x_n|\)
and satisfies
\[
\m_{x_{[n]}}(v_1,\ldots,v_n)|_{|x_1|>\cdots>|x_n|}
=
Y(v_1,x_1)\cdots Y(v_n,x_n)\va
\]
for any $v_i \in H_{d,\alpha}$.
\end{enumerate}
\end{prop}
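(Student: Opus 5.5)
The plan is to make the Wick-formula definition of $\m$ explicit and then read off all three properties from it. By linearity it suffices to treat $v_i=a^{(i)}_1\cdots a^{(i)}_{k_i}$ with all $a^{(i)}_p\in L(\al)^\da$.

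A \emph{contraction pattern} $\gamma$ is a set of disjoint pairs $\{(i,p),(j,q)\}$ with $i\neq j$; the pairs join generators sitting at different points. For such a $\gamma$ I will write
\begin{align*}
\m_{x_{[n]}}(v_1,\dots,v_n)=\sum_{\gamma}\Bigl(\prod_{\{(i,p),(j,q)\}\in\gamma}\kappa_{a^{(i)}_p,a^{(j)}_q}(x_i,x_j)\Bigr)\prod_{(i,p)\notin\gamma}F^-_{x_i}(a^{(i)}_p)\va .
\end{align*}
In the normal ordered product applied to $\va$, every surviving annihilator sits to the right and acts on $\va$, so it kills it. This is the same fact used in the proof of Lemma \ref{lem_vertex_creation}. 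The creation operators commute with each other, so the last factor is independent of order.

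For (1), I pair with $u\in H_{d,\al}^\vee$ of fixed degree. Since $F^-_{x_i}(a)$ has components raising the $D$-degree by at least $\al+\deg$, only finitely many monomials in the $x_i$ survive. Hence $\langle u,\prod F^-\va\rangle$ is a polynomial in the $x_i$. Each $\kappa$ is real analytic on $\Conf_2(\R^d)$, so every term is real analytic on $\Conf_n(\R^d)$, and there are finitely many patterns.

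For (2), a permutation $\si$ maps the set of patterns for $(v_{\si1},\dots,v_{\si n})$ bijectively onto the set of patterns for $(v_1,\dots,v_n)$. The $\kappa$-factors match by Lemma \ref{lem_kappa_sym}, and the creation part matches because the $F^-$ commute.

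For (3), I first prove the formal identity
\begin{align*}
Y(v_1,x_1)\cdots Y(v_n,x_n)\va=\sum_\gamma \prod_{\{(i,p),(j,q)\}\in\gamma,\ i<j}A^{|x_i|>|x_j|}_{a^{(i)}_p,a^{(j)}_q}(x_i,x_j)\prod_{(i,p)\notin\gamma}F^-_{x_i}(a^{(i)}_p)\va .
\end{align*}
I do this by induction on $n$, using \eqref{eq_normally} and the Wick formula displayed before the statement. I push each annihilator $F^+_{x_i}$ to the right. It meets only creation operators $F^-_{x_j}$ with $j>i$ and finally kills $\va$. Each such crossing produces the scalar $[F^+_{x_i}(a),F^-_{x_j}(b)]=A^{|x_i|>|x_j|}_{a,b}(x_i,x_j)$, expanded in the region $|x_i|>|x_j|$. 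Scalars commute past everything, so the bookkeeping closes.

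Then I pair with $u$. For a fixed $\gamma$, each $A$-factor is, by \eqref{eq_general_com}, the series $\sum_{m\ge0}\pa^I_x\pa^J_yA^{(\al)}_m(x_i,x_j)$. By Lemma \ref{lem_Gegenbauer_convergence} this series is absolutely and locally uniformly convergent on $|x_i|>|x_j|$, and its sum is the corresponding $\kappa$. The product of finitely many such series is therefore an absolutely and locally uniformly convergent multi-series on $|x_1|>\cdots>|x_n|$. Its sum is $\prod\kappa$, multiplied by the polynomial coming from the creation part.

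The step I expect to be the main obstacle is reconciling this multi-series with the iterated summation over intermediate degrees that "$Y(v_1,x_1)\cdots Y(v_n,x_n)\va$" means, namely inserting $\sum_\D\pr_\D$ between consecutive factors. The plan is to show that for each fixed $u$, every tuple of intermediate $D$-degrees corresponds to a finite block of the multi-index $(m_{ij})$ of Gegenbauer degrees. The creation parts contribute only finitely many terms, and the intermediate degree between positions $k$ and $k+1$ is determined by the $m_{ij}$ with $i\le k<j$ plus a bounded shift. The blocks for distinct degree tuples are disjoint. Absolute convergence of the whole multi-series then justifies regrouping into these blocks in any order. Local uniform convergence is inherited from the uniform Cauchy bounds $C_{I,J,K}R^{-m}$ in Lemma \ref{lem_Gegenbauer_convergence}. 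This also gives absolute convergence of each iterated $\sum_\D$.
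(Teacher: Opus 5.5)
Your proposal is correct and follows the paper's route. Part (1) comes from the polynomiality of $\langle u,\prod F^-\va\rangle$ together with the analyticity of the $\kappa$'s, part (2) from the order-independence of normal ordering and Lemma \ref{lem_kappa_sym}, and part (3) from the Wick expansion combined with Lemma \ref{lem_Gegenbauer_convergence}; your regrouping of the Gegenbauer multi-series into finite blocks indexed by the intermediate degrees just makes explicit the step the paper's one-line proof of (3) leaves implicit.
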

\begin{proof}
For any $u \in H_{d,\al}^\vee$ and $v_1,\dots,v_n\in H_{d,\al}$,
\begin{align*}
\langle u, :Y(v_1,x_1) \dots Y(v_n,x_n):\va\rangle
\end{align*}
is just a polynomial, and in particular, a real analytic function on $\Conf_n(\R^d)$.
Hence, (1) holds. 
Since normal ordered products are independent of the order, (2) follows from Lemma \ref{lem_kappa_sym}.
(3) follows from the definition and Lemma \ref{lem_Gegenbauer_convergence}.
\end{proof}

\begin{lem}\label{lem_wick_covariance}
The maps \(\m_{x[n]}\) satisfy the translation invariance and conformal
covariance axioms in Definition \ref{def_conformal_algebra}.
Moreover,
\begin{align}
\m_{(0)}(v)=v,\qquad
\m_{x_{[n]},y} (v_1,\ldots,v_n,\mathbf 1)=\m_{x_{[n]}}(v_1,\ldots,v_n).
\label{eq_vacuum_prop_lem}
\end{align}
\end{lem}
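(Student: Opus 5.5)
The plan is to reduce both covariance statements to the corresponding facts for the vertex operators and for the contraction functions $\kappa$, using the Wick-type description of $\m$. By multilinearity it suffices to take $v_i=a^{(i)}_1\cdots a^{(i)}_{k_i}$ with $a^{(i)}_j\in L(\al)^\da$. Then
\[
\m_{x_{[n]}}(v_1,\dots,v_n)=\sum_{\Gamma}\prod_{\{(i,j),(i',j')\}\in\Gamma}\kappa_{a^{(i)}_j,a^{(i')}_{j'}}(x_i,x_{i'})\;:\prod_{i} Y(v_i^{\Gamma},x_i):\va ,
\]
where $\Gamma$ runs over partial matchings between factors sitting at distinct points, and $v_i^\Gamma$ is the product of the uncontracted factors at $x_i$. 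Since every term is multilinear in the $a^{(i)}_j$, both sides of each axiom can be compared term by term.

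For translation invariance, $P_\mu$ acts on $v_i$ as a derivation of $\Sym\,L(\al)^\da$. In the term for $\Gamma$, the factor $a^{(i)}_j$ is replaced by $P_\mu a^{(i)}_j$. If that factor is uncontracted, we use Lemma \ref{lem_vertex_cov}, namely $Y(P_\mu w,x)=\pa_\mu Y(w,x)$ applied through $F^\pm$. If it is contracted, we use $\kappa_{P_\mu a,b}(x,y)=\pa_{x_\mu}\kappa_{a,b}(x,y)$. The latter follows from \eqref{eq_general_com}: it holds on $|x|>|y|$ and then everywhere by analytic continuation. Summing over $j$ reproduces $\frac{d}{dx^{(i)}_\mu}$ of the whole expression by the Leibniz rule.

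For conformal covariance, I write $X\,:\!\prod Y(w_i,x_i)\!:\va=[X,:\!\prod Y(w_i,x_i)\!:]\va$, using $X\va=0$. This commutator equals $\sum_i :\!\cdots Y(X^{(x_i)}w_i,x_i)\cdots\!:\va$ with $X^{(x)}=\exp(-\sum_\rho x_\rho\ad P_\rho)X$. This is Lemma \ref{lem_vertex_cov} applied to each $F^\pm$ factor, and it is compatible with normal ordering because the commutator with $X$ is a derivation, and equation \eqref{eq_com_A_cov} treats $F^+$ and $F^-$ separately. The remaining identity needed is the invariance of the contractions:
\[
\kappa_{X^{(x)}a,b}(x,y)+\kappa_{a,X^{(y)}b}(x,y)=0 .
\]
I would prove this on $|x|>|y|$ from the Jacobi identity $[X,[F^+_x(a),F^-_y(b)]]=0$, since the bracket is a scalar, together with Lemma \ref{lem_vertex_cov}/\eqref{eq_com_A_cov}. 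Analytic continuation then gives it on $\Conf_2(\R^d)$. Here $X^{(x)}a$ has polynomial coefficients in $x$, so linearity extends $\kappa$ to it. Inserting this into the Wick expansion and collecting, via Leibniz, the action of $X^{(x_i)}$ on each factor of $v_i$ yields the conformal covariance axiom. The main obstacle is the bookkeeping here. The mismatch is that $X^{(x_i)}$ acting on $v_i$ distributes over contracted and uncontracted factors, while $X$ acting on the left only sees the normal-ordered part. The contracted contributions must cancel in pairs using the displayed invariance of $\kappa$. I would organize this carefully by fixing $\Gamma$ and summing over which factor $X$ hits.

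Finally, $\m_{(0)}(v)=Y(v,0)\va=v$ follows from Lemma \ref{lem_vertex_creation}, because $Y(v,x)\va\in\Hd[[x]]$ and the $n=1$ case has no contractions. For the vacuum identity, $\va$ is the empty product, so it contributes no factors to any contraction, and $Y(\va,y)=\id$ drops out of the normal-ordered product. Hence the Wick sums for $(v_1,\dots,v_n,\va)$ and $(v_1,\dots,v_n)$ coincide term by term.
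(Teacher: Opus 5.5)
Your proposal is correct, but it follows a different route from the paper's. The paper never works with the general Wick expansion. It uses Proposition \ref{prop_wick_correlation}(3) to identify $\m_{x_{[n]}}(v_1,\dots,v_n)$ with $Y(v_1,x_1)\cdots Y(v_n,x_n)\va$ on the open set $|x_1|>\cdots>|x_n|>0$. There, translation invariance and conformal covariance are immediate from Lemma \ref{lem_vertex_cov}: commute $X$ (or the derivative) through each factor and use $X\va=0$. It then extends the identities to all of $\Conf_n(\R^d)$ by real analyticity.

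You instead verify the axioms term by term in the finite Wick sum. This forces you to establish two extra two-point identities:
\begin{itemize}
\item the derivative rule $\kappa_{P_\mu a,b}=\partial_{x_\mu}\kappa_{a,b}$, together with its second-slot analogue, which follows the same way or from Lemma \ref{lem_kappa_sym};
\item the invariance $\kappa_{X^{(x)}a,b}(x,y)+\kappa_{a,X^{(y)}b}(x,y)=0$.
\end{itemize}
Your Jacobi-identity argument for the invariance is sound, since $[F^+_x(a),F^-_y(b)]$ is scalar. You then need the pairwise cancellation bookkeeping, which works as you describe once $\Gamma$ is fixed.

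The trade-off is as follows. The paper's proof is shorter and avoids the combinatorics of matchings. However, it rests on the multi-point convergence statement of Proposition \ref{prop_wick_correlation}(3). It also tacitly commutes $X$ and $\partial/\partial x^{(p)}_\mu$ with the convergent iterated sums. Your proof uses only analytic continuation on $\Conf_2(\R^d)$, plus finite algebraic identities whose normal-ordered terms have polynomial components. It is therefore more self-contained, and it makes explicit that the contraction functions are themselves covariant. Your treatment of $\m_{(0)}(v)=v$ and of the vacuum identity coincides with the paper's.
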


\begin{proof}
\eqref{eq_vacuum_prop_lem} follows immediately from the definition of
normal ordered products and from \(F_x^+(a)\mathbf 1=0\) and Lemma \ref{lem_vertex_creation}.
The translation and conformal covariance follow from Lemma \ref{lem_vertex_cov}.
In fact, the required identities hold in the non-empty region \(|x_1|>\cdots>|x_n|>0\) by Proposition \ref{prop_wick_correlation}.
Since both sides are real analytic functions on \(\Conf_n(\R^d)\), they hold everywhere.
\end{proof}

\begin{lem}\label{lem_wick_cluster}
The maps \(\m_{x_{[n]}}\) satisfy the strong cluster decomposition in the
form of Proposition \ref{prop_scd0}.
\end{lem}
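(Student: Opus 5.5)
The plan is to reduce to monomials and to compute both sides of \eqref{eq_cluster0} explicitly by the Wick formula. By multilinearity we may assume $u\in (H_{d,\al})_{\D_0}^*$ and that every $a_i$ is a monomial in $\Sym\,L(\al)^\da$ whose factors are of the form $P_{\mu_1}\cdots P_{\mu_k}v_\al^\da$. Write $y^{(j)}=x^{(n+j)}$ and fix $x_{[n+m]}\in U_{n,m}^0$.

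\textbf{Step 1: the inner correlation function as a graded sum.} By the definition \eqref{eq_m_nop},
\[
\m_{y_{[m]}}(a_{n+1},\dots,a_{n+m})
=\sum_{\Gamma}\Big(\prod_{(p,q)\in\Gamma}\kappa_{p,q}(y^{(j)},y^{(j')})\Big)\prod_{r\notin\Gamma}F^-_{y^{(j_r)}}(b_r)\va .
\]
Here $\Gamma$ runs over partial matchings of the $L(\al)^\da$-factors $b_r$ occurring in $a_{n+1},\dots,a_{n+m}$, and only creation operators survive because $F^+\va=0$. The coefficients $\kappa$ do not depend on the grading. Each creation factor $F^-_{y}(b)=\sum_k(\chi\otimes\Psi_{\al,y})(\dots)$ splits into $D$-homogeneous pieces: the piece lying in $L(\al)^\da_k$ is a homogeneous polynomial of degree $k$ in $y$. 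Hence $\pr_\D$ of the inner correlation function is the finite sum, over multidegrees $(k_r)_{r\notin\Gamma}$ with $\sum_r(\al+k_r)=\D$, of explicit elements of $\Sym L(\al)^\da$ with polynomial coefficients in the $y^{(j)}$.

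\textbf{Step 2: the outer correlation function on such a vector.} Substitute this vector $w$ into $\m_{(x_{[n]},0)}(a_1,\dots,a_n,w)$. We have $Y(w,0)\va=w$ by Lemma~\ref{lem_vertex_creation}, and $w$ is a product of creation modes $f\in L(\al)^\da_k$. A contraction of a factor $F^+_{x^{(i)}}(a)$ with such a mode $f$ produces the scalar $\langle \cdot ,\cdot\rangle$-pairing. Summing over a dual basis of $L(\al)_k$ reassembles, exactly as in the computation \eqref{eq_commutator_psi} and Proposition~\ref{prop_commutator}, the degree-$k$ term
\[
\pa^{I}_{x}\pa^{J}_{y}A^{(\al)}_{k}(x^{(i)},y^{(j)})
\]
appearing in \eqref{eq_general_com}. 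Consequently the $\D$-th summand of \eqref{eq_cluster0} is a finite sum over Wick patterns. Each pattern consists of contractions internal to the $x$-group (the functions $\kappa$), contractions internal to the $y$-group (the functions $\kappa$), cross contractions (single Gegenbauer terms $\pa\pa A_k$), and uncontracted creation operators paired against $u$. Only finitely many total degrees of uncontracted operators contribute, since $u$ has fixed degree $\D_0$.

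\textbf{Step 3: resummation and convergence.} Summing over $\D$ is then a rearrangement of a finite sum of multiple series
\[
\sum_{k_1,\dots,k_c\ge0}\ \prod_{s=1}^{c}\pa\pa A^{(\al)}_{k_s}(x^{(i_s)},y^{(j_s)})\times(\text{terms independent of the }k_s),
\]
where $c$ is the number of cross contractions. On $U^0_{n,m}$ every pair satisfies $|x^{(i)}|>|y^{(j)}|$. On a compact $K\subset U^0_{n,m}$, Lemma~\ref{lem_Gegenbauer_convergence} (in fact its proof) gives $\sup_K|\pa\pa A_k|\le C R^{-k}$ with some $R>1$, while the remaining factors are continuous and hence bounded on $K$. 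Therefore the multiple series converges absolutely and uniformly on $K$, and any grouping by $\D$ inherits both properties. The limit of each series is $\prod_s\pa\pa\,|x^{(i_s)}-y^{(j_s)}|^{-2\al}$, which is $\prod_s\kappa(x^{(i_s)},y^{(j_s)})$. The resulting total sum over Wick patterns is precisely the Wick expansion defining $\langle u,\m_{x_{[n+m]}}(a_1,\dots,a_{n+m})\rangle$, using Lemma~\ref{lem_kappa_sym} to disregard the order of the points.

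\textbf{Main obstacle.} The delicate step is Step 2: showing that, once the intermediate state is projected by $\pr_\D$ and reinserted at the origin, contractions with the outer annihilation operators reproduce term by term the Gegenbauer coefficients $A_k$ graded by $k$. This also has to handle derivatives and, for $\al=\dn$, the quotient by the harmonic relation. I would first verify it for $a_i=v_\al^\da$, where it is literally \eqref{eq_commutator_psi}, and then apply $\pa$'s, using the covariance relation $F^\pm(P_\mu a)=\pa_\mu F^\pm(a)$ from Lemma~\ref{lem_vertex_cov}. The rest is combinatorial bookkeeping: matching the Wick patterns on both sides bijectively.
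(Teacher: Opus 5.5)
Your proposal is correct and follows the paper's own proof. Both Wick-expand the full correlator, observe that only the mixed contractions $\kappa_{a,b}(x_i,y_j)$ change (each is replaced by its expansion in $|x_i|>|y_j|$), invoke Lemma \ref{lem_Gegenbauer_convergence} for absolute and locally uniform convergence on $U^0_{n,m}$, and regroup by total degree. Your Step 2 is a useful addition: it identifies the reinsertion of $\pr_\Delta$ of the inner state at $0$, contracted against outer annihilators, with the degree-$k$ terms $\pa^I_x\pa^J_y A^{(\al)}_k$ via \eqref{eq_commutator_psi}, a matching the paper only asserts.
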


\begin{proof}
%It suffices to prove the assertion for monomials
%\(v_i=a_{i,1}\cdots a_{i,N_i}\).
We consider the region
\[
U^0_{n,m}=
\{(x_1,\ldots,x_n,y_1,\ldots,y_m)\mid
\min_i |x_i|>\max_j |y_j|\}.
\]
The Wick expansion of
\[
\m_{x_{[ n]},y_{[m]}}(v_1,\ldots,v_n,w_1,\ldots,w_m)
\]
is a finite sum of products of contractions and normal ordered products. The contractions are of three
types: those among the \(x\)-variables, those among the \(y\)-variables, and
those connecting an \(x\)-variable with a \(y\)-variable.
The first two types are unchanged in the cluster expansion. The only
difference is that each mixed contraction
\[
\kappa_{a,b}(x_i,y_j)
\]
is replaced by its expansion in the region \(|x_i|>|y_j|\). By Lemma
\ref{lem_Gegenbauer_convergence}, this expansion is locally uniformly and
absolutely convergent on \(U^0_{n,m}\).
Grouping the resulting terms according to the sums over the total degrees gives precisely
\[
\sum_{\Delta}
\m_{x_{[n]},0}
\left(
v_1,\ldots,v_n,
\pr_\Delta \m_{y_{[m]}}(w_1,\ldots,w_m)
\right).
\]
\end{proof}

Now, we have the following result:
\begin{thm}\label{thm_CF_al}
Let $\alpha>0$ and $d\geq 2$. Assume that either \(\alpha\) is generic or $\alpha=\frac{d-2}{2}$ if $d>2$.
Then, the triple \((H_{d,\alpha},\m,\mathbf 1)\) is a \(d\)-CC system.
In particular, \((H_{d,\alpha},Y(-,x),\mathbf 1)\) is a conformal \(d\)-vertex algebra.
\end{thm}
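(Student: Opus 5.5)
The plan is to check the axioms of Definition \ref{def_conformal_algebra} one at a time for the maps $\m$ of \eqref{eq_m_nop}, using the lemmas already established in this section. Then the second assertion will follow at once from Theorem \ref{thm_from}.

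First, $H_{d,\alpha}$ is a compact $\sod$-module by Remark \ref{rem_upper}, since $\alpha>0$. In more detail:
\begin{itemize}
\item $H_{d,\alpha}\cong\Sym L(\alpha)^\dagger$, and $L(\alpha)^\dagger$ has $D$-spectrum contained in $\alpha+\Z_{\geq0}$ with finite-dimensional eigenspaces. So $\Sym^N$ has spectrum $\geq N\alpha$, which gives (C1) and (C2).
\item (C3) holds because $L(\alpha)^\dagger$ is realized inside polynomials times powers of $|x|$, on which $\SO(d)$ acts honestly.
\item The vacuum $\1$ lies in $H_0$ and spans a trivial representation by \eqref{eq_trivial_sod_vac}.
\end{itemize}

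Next come the local axioms:
\begin{itemize}
\item Analyticity and permutation invariance are Proposition \ref{prop_wick_correlation} (1) and (2).
\item The unit axiom, the vacuum property, translation invariance and conformal covariance are Lemma \ref{lem_wick_covariance}.
\end{itemize}
Translation invariance and conformal covariance are first obtained on the region $|x_1|>\cdots>|x_n|$, via Proposition \ref{prop_wick_correlation}(3) and Lemma \ref{lem_vertex_cov}. They then extend to all of $\Conf_n(\R^d)$ by analytic continuation, since that region is a nonempty open set in a connected manifold, and $\Conf_n(\R^d)$ is connected for $d\geq2$.

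The one remaining axiom is the strong cluster decomposition. By Proposition \ref{prop_scd0}, it suffices to prove it with gluing point $w=0$, using absolute and locally uniform convergence on $U^0_{n,m}$; this is exactly Lemma \ref{lem_wick_cluster}. The case $n=0$, with $U_{0,m}$, also has to be handled. Here the sum is over $\pr_\Delta$ of $\m_{y_{[m]}}(\dots)$ translated by $w$. Pairing with $u\in H^\vee$, this reduces to finitely many terms via the local nilpotence of $P_\mu^*$, as in the proof of Proposition \ref{prop_scd0}, together with Lemma \ref{lem_translation}.

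I expect the main obstacle to be making Lemma \ref{lem_wick_cluster} rigorous. Pairing with $u\in H_{\Delta_0}^*$ truncates the normal ordered parts to finitely many terms. What remains is a finite sum of products of mixed contractions $\kappa_{a,b}(x_i,y_j)$, each expanded as the series in Lemma \ref{lem_Gegenbauer_convergence}.

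The difficulty is that the $\Delta$-grading of the intermediate state mixes the homogeneous degrees $n$ of several such series at once. The sum over $\Delta$ therefore becomes a multiple sum, regrouped along total degree. I would handle this as follows:
\begin{itemize}
\item Use the Cauchy bound $C R^{-n}$ from the proof of Lemma \ref{lem_Gegenbauer_convergence}, uniform on compact subsets $K\subset U^0_{n,m}$, with a common $R>1$ chosen so that $R\max|y_j|/\min|x_i|<1$ on $K$.
\item A product of finitely many such geometric bounds is summable as a multiple series.
\item Hence any rearrangement, in particular the grouping by $\Delta$, converges absolutely and uniformly on $K$ to the product of the analytic functions. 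That product is exactly $\langle u,\m_{x_{[n]},y_{[m]}}\rangle$.
\end{itemize}
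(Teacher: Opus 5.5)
Your proposal is correct and follows the paper's route. The paper gives no separate proof: the theorem is obtained by assembling Proposition~\ref{prop_wick_correlation}, Lemma~\ref{lem_wick_covariance}, and Lemma~\ref{lem_wick_cluster} (via Proposition~\ref{prop_scd0}), and then applying Theorem~\ref{thm_from}. Your additional details fill in points the paper leaves implicit: the compactness check behind Remark~\ref{rem_upper}, the trivial $n=0$ case of the cluster axiom, and the absolute-summability argument that justifies regrouping the mixed-contraction series by total degree in Lemma~\ref{lem_wick_cluster}.
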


\appendix

\section{Determinant formula and singular vectors}\label{sec_singular}
In this appendix we prove Theorem \ref{prop_singular_vector}.
The reducibility and singular vectors of the scalar parabolic Verma modules considered here are known from the general theory.
After complexification to $\mathfrak{so}(d+2)_\C$, they are described explicitly in \cite[Proposition~4.14 and Corollary~4.15]{KOSS};
see also \cite[Section~3.1]{PTY} for the corresponding null primary descendants in physics terminology.
In particular, the two exceptional families of parameters occurring in Theorem~\ref{prop_singular_vector} agree with these classifications.

For our purposes, however, we need an explicit determinant formula for the invariant bilinear form, together with its
positivity properties. We therefore give a direct derivation below. The general determinant formula for parabolic Verma modules goes back to
Jantzen \cite{Jantzen77}; see also \cite[Section~2.2]{YamazakiDet} for its application to conformal field theory.

%We believe that this result has already been shown, but the positivity of the bilinear form and the explicit presentation of the singular vectors are important for the purpose of this paper that a complete proof is given here.
All notation in this appendix follows Section \ref{sec_Verma}. Let $d \geq 2$ and $N \geq 0$ be integers. For $\al\in\R$, let $V(\al)^\da$ be the parabolic Verma modules introduced in Section \ref{sec_Verma}, equipped with the invariant bilinear form $(-,-)$ of Proposition \ref{prop_bilinear_Verma}.
%
%この章では Proposition \ref{prop_singular_vector}を証明する。この章の記号は全て Section \ref{sec_Verma}に従うものとする。
%Let $d \geq 2$ and $N \geq 0$ integers, $V(\al)$ を parabolic Verma module for $\al\in\R$, $(-,-)$を不変双線形形式 (Proposition \ref{prop_bilinear_Verma})とする。
Let $\R[P_1,\dots,P_d]$ be the polynomial ring with variables $P_1,\dots,P_d$.
Denote by $V_{N,d}$ the degree $N$ subspace of $\R[P_1,\dots,P_d]$,
which has a basis of the form:
\begin{align}
\left\{P_{\mu_1}P_{\mu_2}\cdots P_{\mu_N}\right\}\quad \quad\quad \text{with } 1 \leq \mu_1 \leq \mu_2 \leq \dots \mu_N \leq d.  \label{eq_app_basis_Verma}
\end{align}
The following lemma is clear:
\begin{lem}\label{lem_app_dim}
$A_{N} = \dim V_{N,d} = \binom{d-1+N}{d-1} = \frac{(d-1+N)(d-2+N)\cdots (1+N)}{(d-1)!}$.
\end{lem}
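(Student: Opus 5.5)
This is the standard count of monomials of degree $N$ in $d$ commuting variables. The plan is to give a bijection with a stars-and-bars configuration and then rewrite the binomial coefficient.

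First, by \eqref{eq_app_basis_Verma}, a basis of $V_{N,d}$ is indexed by the non-decreasing sequences $1\le\mu_1\le\cdots\le\mu_N\le d$. Equivalently, it is indexed by the exponent vectors $(k_1,\dots,k_d)\in\Z_{\ge0}^d$ with $\sum_\rho k_\rho=N$, via $P_1^{k_1}\cdots P_d^{k_d}$. These monomials are linearly independent in the polynomial ring, so $\dim V_{N,d}$ equals the number of such vectors.

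Second, I count these vectors by encoding each one as a word of $N$ stars and $d-1$ bars: write $k_1$ stars, then a bar, then $k_2$ stars, and so on. This is a bijection onto the arrangements of $N+d-1$ symbols in which exactly $d-1$ are bars. Hence the count is $\binom{N+d-1}{d-1}$. Alternatively, one could use the generating function $\prod_{\rho}(1-t)^{-1}=(1-t)^{-d}$ and read off the coefficient of $t^N$.

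Finally, expanding gives
\[
\binom{N+d-1}{d-1}=\frac{(N+d-1)!}{(d-1)!\,N!}=\frac{(d-1+N)(d-2+N)\cdots(1+N)}{(d-1)!}.
\]
The last equality comes from cancelling $N!$ against the factors $1,\dots,N$ of $(N+d-1)!$, which leaves the $d-1$ factors $N+1,\dots,N+d-1$.

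None of these steps is a real obstacle. The only point to check is that the ordered-index basis \eqref{eq_app_basis_Verma} corresponds bijectively to the exponent vectors. This holds because in the polynomial ring the $P_\rho$ commute, so each monomial has a unique sorted representative.
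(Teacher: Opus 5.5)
Your proposal is correct. The paper states this lemma as ``clear'' and gives no proof; your count of degree-$N$ monomials in the commuting variables $P_1,\dots,P_d$ via exponent vectors and stars-and-bars, followed by cancelling $N!$ from $(N+d-1)!$, is exactly the standard justification the paper leaves implicit.
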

For each $\al \in \R$, let $g_\al: V_{N,d} \rightarrow V(\al)_N^\da$ be the $\R$-linear isomorphism given by $g_\al(P_{\mu_1}P_{\mu_2}\cdots P_{\mu_N}) = P _{\mu_1}P_{\mu_2}\cdots P_{\mu_N}v_\al^\da$.
By $g_\al$ pulling back the invariant bilinear form on $V(\al)^\da$, a family of bilinear forms $(-,-)_\al$ is defined on $V_{N,d}$.
%
%
%各$\al \in \R$に対して、let $g_\al: V_{N,d} \rightarrow V(\al)_N$ be the $\R$-linear isomorphism given by $g_\al(P_{\mu_1}P_{\mu_2}\cdots P_{\mu_N}) = P_{\mu_1}P_{\mu_2}\cdots P_{\mu_N}v_\al$.
%$g_\al$によって$V(\al)^\da$上の不変内積を引き戻すことで、$V_{N,d}$上に内積 $(-,-)_\al$が定まる。
The following recursive relation is fundamental to the study of the bilinear forms:
%双線形形式$(-,-)$を調べるうえで次の帰納的な関係が基本的である:
\begin{lem}\label{lem_app_inductive}
For any $N \geq 0$, $\al\in \R$ and $\mu_i,\nu_j \in \{1,\dots,d\}$,
\begin{align*}
&(P_{\mu_1}P_{\mu_2}\cdots P_{\mu_N}, P_{\nu_1}P_{\nu_2}\cdots P_{\nu_N})_\al \\
&=2(\al+N-1)\sum_{j=1}^N \delta_{\mu_1,\nu_j} (P_{\mu_2}\cdots P_{\mu_N}, P_{\nu_1}\cdots\hat{P_{\nu_j}} \cdots P_{\nu_N})_\al\\
&-2 \sum_{k<l} \delta_{\nu_k,\nu_l} (P_{\mu_2}\cdots P_{\mu_N}, P_{\mu_1} P_{\nu_1}\cdots\hat{P_{\nu_k}} \cdots \hat{P_{\nu_l}} \cdots P_{\nu_N})_\al.
\end{align*}
\end{lem}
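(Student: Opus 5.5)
The plan is to reduce the degree by one using the invariance of the form, and then to commute the resulting $K_{\mu_1}$ through the string of $P$'s.

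By Proposition \ref{prop_bilinear_Verma}(2) and $\theta(P_\mu)=-K_\mu$, we have
\begin{align*}
(P_{\mu_1}u,w)=-(u,\theta(P_{\mu_1})w)=(u,K_{\mu_1}w)
\end{align*}
for $u,w\in V(\al)^\da$. Taking $u=P_{\mu_2}\cdots P_{\mu_N}v_\al^\da$ and $w=P_{\nu_1}\cdots P_{\nu_N}v_\al^\da$, it therefore suffices to compute $K_{\mu}P_{\nu_1}\cdots P_{\nu_N}v_\al^\da$ (with $\mu=\mu_1$) as an element of $V(\al)^\da_{N-1}$, written in the basis \eqref{eq_app_basis_Verma}.

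Since $K_\mu v_\al^\da=0$, we write $K_\mu P_{\nu_1}\cdots P_{\nu_N}v_\al^\da=\sum_{j}P_{\nu_1}\cdots P_{\nu_{j-1}}[K_\mu,P_{\nu_j}]P_{\nu_{j+1}}\cdots P_{\nu_N}v_\al^\da$. From \eqref{eq_com_Lie} we get $[K_\mu,P_\nu]=2(\de_{\mu,\nu}D+J_{\nu,\mu})$.

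We first treat the $D$-part. Moving $D$ to the right past $N-j$ factors $P$ and using $Dv_\al^\da=\al v_\al^\da$ gives $2\de_{\mu,\nu_j}(\al+N-j)\prod_{i\neq j}P_{\nu_i}v_\al^\da$.

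We then treat the $J$-part. We move $J_{\nu_j,\mu}$ to the right using $[J_{\rho,\mu},P_\si]=-\de_{\rho,\si}P_\mu+\de_{\mu,\si}P_\rho$ and $J v_\al^\da=0$. This produces, for each $l>j$, the term
\begin{align*}
2\bigl(-\de_{\nu_j,\nu_l}P_\mu+\de_{\mu,\nu_l}P_{\nu_j}\bigr)\prod_{i\neq j,l}P_{\nu_i}v_\al^\da .
\end{align*}
All $P$'s commute, so these can be reordered freely.

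Next we regroup. The piece $\de_{\mu,\nu_l}P_{\nu_j}\prod_{i\neq j,l}P_{\nu_i}$ equals $\de_{\mu,\nu_l}\prod_{i\neq l}P_{\nu_i}$. It occurs once for each $j<l$, so for fixed $l$ it contributes $2(l-1)\de_{\mu,\nu_l}\prod_{i\neq l}P_{\nu_i}v_\al^\da$. Adding this to the $D$-contribution $2(\al+N-l)\de_{\mu,\nu_l}$ gives exactly $2(\al+N-1)\de_{\mu,\nu_l}$, independently of $l$. This is the first sum in the lemma. The remaining piece is $-2\sum_{k<l}\de_{\nu_k,\nu_l}P_{\mu}\prod_{i\neq k,l}P_{\nu_i}v_\al^\da$, which gives the second sum.

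Finally, pairing with $P_{\mu_2}\cdots P_{\mu_N}v_\al^\da$ and pulling back by $g_\al$ yields the stated formula.

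The only delicate point is the bookkeeping in the regrouping step. One must check that the $(l-1)$ copies coming from the $J$-commutators combine with the $D$-eigenvalue $\al+N-l$ into the uniform coefficient $\al+N-1$. One must also keep the sign conventions of $[P_\mu,K_\nu]$ and $[J_{\mu,\nu},P_\rho]$ in \eqref{eq_com_Lie} straight. As a sanity check, I would compare the result with \eqref{eq_form_deg1} and \eqref{eq_form_deg2} for $N=1,2$.
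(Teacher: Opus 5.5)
Your proposal is correct and follows the same route as the paper. You use invariance to turn $P_{\mu_1}$ into $K_{\mu_1}$ on the other side, then commute $K_{\mu_1}$ through the $P_{\nu_j}$ using $[K_\mu,P_\nu]=2(\de_{\mu,\nu}D+J_{\nu,\mu})$. Your explicit regrouping, where the $l-1$ terms from the $J$-commutators combine with $\al+N-l$ into the uniform coefficient $\al+N-1$, supplies bookkeeping the paper leaves implicit; you also keep the factor $2$ on the $J$-term, which the paper's intermediate line drops but its final formula uses.
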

\begin{proof}
By Proposition \ref{prop_bilinear_Verma}, we have:
\begin{align*}
&(P_{\mu_1}P_{\mu_2}\cdots P_{\mu_N}v_\al^\da, P_{\nu_1}P_{\nu_2}\cdots P_{\nu_N}v_\al^\da)\\
&=(P_{\mu_2}\cdots P_{\mu_N}v_\al^\da, K_{\mu_1}P_{\nu_1}P_{\nu_2}\cdots P_{\nu_N}v_\al^\da)\\
&=\sum_{j=1}^n (P_{\mu_2}\cdots P_{\mu_N}v_\al^\da, P_{\nu_1}\cdots (2D \delta_{\mu_1,\nu_j} -J_{\mu_1,\nu_j}) \cdots P_{\nu_N}v_\al^\da)\\
&=2(\al+N-1)\sum_{j=1}^N \delta_{\mu_1,\nu_j} (P_{\mu_2}\cdots P_{\mu_N}v_\al^\da, P_{\nu_1}\cdots\hat{P_{\nu_j}} \cdots P_{\nu_N}v_\al^\da)\\
&-2 \sum_{k<l} \delta_{\nu_k,\nu_l} (P_{\mu_2}\cdots P_{\mu_N}v_\al^\da, P_{\mu_1} P_{\nu_1}\cdots\hat{P_{\nu_k}} \cdots \hat{P_{\nu_l}} \cdots P_{\nu_N}v_\al^\da).
\end{align*}
\end{proof}
From Lemma \ref{lem_app_inductive} we immediately see that
the dependence of the bilinear forms $(-,-)_\al$ on $V_{N,d}$ with respect to $\al$ is a polynomial of degree at most $N$.
% the bilinear form $(-,-)_\al$ on $V_{N,d}$ depends on the polynomial of degree at most $N$ with respect to $\al$. 
Therefore, by thinking of $\al$ as a variable, we get the polynomial-valued bilinear form 
\begin{align*} 
(-,-)_N:V_{N,d} \otimes V_{N,d} \rightarrow \R[\al] 
\end{align*} The purpose of this section is to examine $(-,-)_N$.
%Lemma \ref{lem_app_inductive}から直ちに分かることは$V_{N,d}$上の bilinear form $(-,-)_\al$ は、$\al$について高々$N$次の多項式で依存している点である。よって$\al$を変数と思うことで多項式値の bilinear form
%\begin{align*}
%(-,-)_N:V_{N,d} \otimes V_{N,d} \rightarrow \R[\al]
%\end{align*}
%を得る。この章の目的は$(-,-)_N$を調べることである。
The following lemma follows easily from Lemma \ref{lem_app_inductive}:
\begin{lem}\label{lem_app_leading}
The image of $(-,-)_N$ is a polynomial of degree at most $N$ and 
\begin{align*}
(P_{\mu_1}^{k_1}P_{\mu_2}^{k_2}\cdots P_{\mu_p}^{k_p}, P_{\nu_1}^{l_1}P_{\nu_2}^{l_2}\cdots P_{\nu_q}^{l_q})_N = 2^N \al(\al+1)\dots (\al+N-1) \delta_{p,q} \Pi_{i=1}^p \left((k_i!)\delta_{k_i,l_i}\delta_{\mu_i,\nu_i}\right)+\mathrm O(\al^{N-1})
\end{align*}
holds for any $1 \leq p,q \leq d$, $0<k_i, l_j $, $1 \leq \mu_1 < \mu_2 < \cdots < \mu_p\leq d$ and $1 \leq \nu_1 < \nu_2 < \cdots < \nu_q\leq d$
with $N= \sum_{i=1}^p k_i = \sum_{j=1}^q l_j$.
\end{lem}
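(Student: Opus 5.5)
The plan is induction on $N$ using the recursion of Lemma \ref{lem_app_inductive}. The recursion expresses a degree-$N$ entry of $(-,-)_N$ through degree-$(N-1)$ entries. Its coefficients are either $2(\al+N-1)$, which has degree one in $\al$, or the constant $-2$. The case $N=0$ is $(1,1)_0=1$, and $N=1$ is \eqref{eq_form_deg1}.

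Assume every entry of $(-,-)_{N-1}$ has degree at most $N-1$. Each entry of $(-,-)_N$ is then a combination of degree-$(N-1)$ entries with coefficients of degree at most one, so it has degree at most $N$. Moreover, the terms with coefficient $-2$ contribute only $\mathrm O(\al^{N-1})$. For the leading coefficient we may therefore drop the second sum and keep only
\[
2(\al+N-1)\sum_{j}\delta_{\mu_1,\nu_j}\,(P_{\mu_2}\cdots P_{\mu_N},P_{\nu_1}\cdots\hat{P_{\nu_j}}\cdots P_{\nu_N})_{N-1}.
\]
We strengthen the inductive claim so that it also records the top coefficient. For monomials $M=\prod_i P_{\mu_i}^{k_i}$ and $M'$, the claim is that the coefficient of $\al^N$ equals $2^N\,\delta_{M,M'}\prod_i k_i!$.

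For the induction step, write $M=P_{\mu_1}M_1$ with $M_1$ of degree $N-1$. Each $j$ with $\nu_j=\mu_1$ yields the degree-$(N-1)$ monomial $M'/P_{\mu_1}$, and all such $j$ give the same monomial. The number of such $j$ is $l$, the exponent of $P_{\mu_1}$ in $M'$. By induction the $\al^{N-1}$-coefficient of $(M_1,M'/P_{\mu_1})_{N-1}$ is nonzero only when $M_1=M'/P_{\mu_1}$, that is when $M=M'$. In that case it equals $2^{N-1}\prod_i k_i!/k_1$, because the exponent of $P_{\mu_1}$ has dropped from $k_1$ to $k_1-1$. Multiplying by the factor $2\al$, which is the top part of $2(\al+N-1)$, and by the multiplicity $l=k_1$ gives $2^N\prod_i k_i!$. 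This is exactly the claimed leading coefficient. Since $2^N\al(\al+1)\cdots(\al+N-1)=2^N\al^N+\mathrm O(\al^{N-1})$, the statement of Lemma \ref{lem_app_leading} follows: $\delta_{p,q}$ together with the $\delta_{k_i,l_i}\delta_{\mu_i,\nu_i}$ just encode $M=M'$ under the increasing-index normalization.

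The only delicate step is the counting: the multiplicity $l$ from the sum over $j$ must cancel exactly the factor lost from $k_1!$. The chosen normalization, which pulls off the first factor $P_{\mu_1}$ and requires $M=M'$, makes this clean. I expect no other obstacles.
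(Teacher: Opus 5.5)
Your proof is correct and follows the same route as the paper. The paper simply says the lemma follows easily from the recursion of Lemma \ref{lem_app_inductive}, and your induction supplies the omitted details: you discard the constant-coefficient second sum and track the $\al^N$-coefficient, noting that the multiplicity $k_1$ from the sum over $j$ cancels the drop from $k_1!$ to $(k_1-1)!$.
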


\begin{dfn}\label{def_determinant}
Consider a basis of $V_{N,d}$ of the following form:
%$V_{N,d}$の基底として次の形のものを考える:
\begin{align}
\frac{1}{\sqrt{2}^N \sqrt{k_1!k_2!\cdots k_p!}} P_{\mu_1}^{k_1}P_{\mu_2}^{k_2}\cdots P_{\mu_p}^{k_p}
\label{eq_app_basis_normal}
\end{align}
for any $1 \leq p \leq d$, $0<k_i$, $1 \leq \mu_1 < \mu_2 < \cdots < \mu_p\leq d$ with $N= \sum_{i=1}^p k_i$.
Let $D_{N,d}(\al) \in \R[\al]$ be the determinant of the matrix representation of $(-,-)_N$ with respect to the basis of the form \eqref{eq_app_basis_normal}.
\end{dfn}

\begin{cor}\label{cor_app_det}
The degree of the polynomial $D_{N,d}(\al) \in \R[\al]$ is $N A_{N}$ and the coefficient of $\al^{NA_{N}}$ is $1$.
\end{cor}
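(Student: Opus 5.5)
The plan is to read off the top-degree part of the Gram matrix directly from Lemma \ref{lem_app_leading}. Write $G_N(\al)$ for the $A_N\times A_N$ matrix of $(-,-)_N$ in the basis \eqref{eq_app_basis_normal}. Each entry of $G_N(\al)$ is a polynomial in $\al$ of degree at most $N$. Moreover, the normalization constants $\frac{1}{\sqrt{2}^N\sqrt{k_1!\cdots k_p!}}$ are chosen so that Lemma \ref{lem_app_leading} gives
\begin{align*}
G_N(\al) = \al(\al+1)\cdots(\al+N-1)\,\mathrm{Id}_{A_N} + R(\al),
\end{align*}
where every entry of $R(\al)$ has degree at most $N-1$ in $\al$. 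Indeed, for two basis vectors indexed by $(\mu,k)$ and $(\nu,l)$, the leading coefficient in Lemma \ref{lem_app_leading} is
\begin{align*}
2^N\prod_i k_i!\,\delta_{k_i,l_i}\delta_{\mu_i,\nu_i}\,\delta_{p,q}.
\end{align*}
Dividing by $2^N\sqrt{\prod k_i!}\sqrt{\prod l_i!}$ leaves the Kronecker delta of the two basis labels. This holds because the monomials $P_{\mu_1}^{k_1}\cdots P_{\mu_p}^{k_p}$ with strictly increasing $\mu_i$ are exactly the basis \eqref{eq_app_basis_Verma}, so the delta conditions say precisely that the two monomials coincide.

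Next I expand $\det G_N(\al)$ as a sum over permutations $\si$ of the $A_N$ basis labels. Each term is a product of $A_N$ entries, each of degree at most $N$, so $\deg D_{N,d}\le NA_N$. Equality in a term requires every factor to have degree exactly $N$, which forces every factor to be a diagonal entry (the off-diagonal entries lie in $R$ and have degree at most $N-1$). Hence only $\si=\id$ contributes to the coefficient of $\al^{NA_N}$. That coefficient is the product of the leading coefficients of the diagonal entries $\al(\al+1)\cdots(\al+N-1)+O(\al^{N-1})$, each of which is $1$. The coefficient of $\al^{NA_N}$ is therefore $1$, and in particular $D_{N,d}$ has degree exactly $NA_N$.

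The only step needing care is the normalization in the first paragraph. I need to check that Lemma \ref{lem_app_leading}, as stated for the unnormalized monomials, yields exactly $1$ (rather than $\prod k_i!$ or $2^N$) as the leading diagonal coefficient after normalization, and that the factor $\prod_i k_i!$ in the lemma is the correct one. If that lemma were only known up to a nonzero constant on the diagonal, the argument would still give degree $NA_N$ with a nonzero leading coefficient, and the value $1$ would follow from the normalization \eqref{eq_app_basis_normal}. One caveat is that $A_N=\dim V_{N,d}$ is given by Lemma \ref{lem_app_dim}, so the basis has exactly $A_N$ elements; for $N=0$ the statement is trivial.
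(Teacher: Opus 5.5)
Your argument is correct and is exactly the reasoning the paper leaves implicit after Lemma \ref{lem_app_leading}: in the normalized basis \eqref{eq_app_basis_normal} the Gram matrix is $\al(\al+1)\cdots(\al+N-1)$ times the identity plus entries of degree at most $N-1$, so only the identity permutation contributes to $\al^{NA_N}$, and it does so with coefficient $1$. Your closing hedge is unnecessary. As phrased it is also not quite right, since rescaling by fixed constants cannot by itself make an arbitrary leading coefficient equal $1$. It does no harm, because Lemma \ref{lem_app_leading} gives the diagonal leading coefficient $2^N\prod_i k_i!$ exactly, and the factor $\bigl(\sqrt{2}^N\sqrt{k_1!\cdots k_p!}\bigr)^2$ cancels it.
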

Immediately from the above proposition, we see that $V(\al)^\da$ is irreducible except for countably many $\al$.
Below, think of $\al$ as a formal variable (an element of the polynomial ring $\R[\al]$).

Let $\al_0\in \R$ and recall that $N(\al_0)^\da \subset V(\al_0)^\da$ is the proper maximal submodule and $L(\al_0)^\da$ is the maximal quotient of $V(\al_0)^\da$, that is, $L(\al_0)^\da = V(\al_0)^\da/N(\al_0)^\da$.
Set
\begin{align*}
N(\al_0)_N^\da = V(\al_0)_N^\da\cap N(\al_0)^\da.
\end{align*}
It is important to note that if $v \in V_{N,d}$ satisfies $g_{\al_0}(v) \in N(\al_0)^\da$, then
\begin{align}
(v, w)_{\al_0} =0 \quad\quad\text{ for any }w \in V_{N,d}.\label{eq_N_1_null}
\end{align}
In other words, the polynomial $D_{N,d}(\al)$ has zero of multiplicity at least $\dim N(\al_0)_N^\da$ at $\al=\al_0$.
Therefore, if we can explicitly find real numbers $\al_i \in\R$ such that $\sum_i \dim N(\al_i)_N^\da$ is equal to the degree of $D_{N,d}(\al)$ in Corollary \ref{cor_app_det}, we can determine $D_{N,d}(\al)$.
%言い換えると、多項式$D_{N,d}(\al)$は$\al=\al_0$において少なくとも重複度$\dim N(\al_0)_N^\da$のゼロ点を持つ。
%よって Corollary \ref{cor_app_det}の次数分だけ $N(\al_0)_N^\da$がゼロでないような実数$\al_0\in \R$を明示的に見つけることができれば、我々は$D_{N,d}(\al)$を決定することができる。

%以下、$\al_0 =0,-1,-2,\dots,$の場合と $\al_0 = \dn, \dn-1,\dn-2,\dots $の場合を調べる。
Hereafter, we examine the cases $\al_0 =0,-1,-2,\dots,$ and $\al_0 = \dn,\dn-1,\dn-2,\dots,$.
First, when $\al_0= -K \in \Z_{\leq 0}$, the $\sod$-module homomorphism $\Psi_{-K}^\da: V(-K)^\da \rightarrow \R[x_1,\dots,x_d]$ satisfies
\begin{align*} 
\Psi_{-K}^\da(v_{-K}^\da)=((x_1)^2+\dots+(x_d)^2)^K
\end{align*}
Hence, the image of $V(-K)^\da$ is spanned by the derivatives of the polynomial
\begin{align} 
(\pa_{\mu_1} \dots \pa_{\mu_L}) ((x_1)^2+\dots+(x_d)^2)^K,\label{eq_image_K} 
\end{align}
which is a finite dimensional representation of $\sod$. Since $V(-K)_0^\da$ (the space of lowest weight vectors) is 1-dimensional, the image is an irreducible representation. Thus, $\ker \Psi_{-K}^\da = N(-K)^\da \subset V(-K)^\da$ is given as
follows:
%まず $\al_0= -K \in \Z_{\leq 0}$のとき、$\sod$-module homomorphism $\Psi_{-K}^\da: V(-K)^\da \rightarrow \R[x^1,\dots,x^d]$は
%\begin{align*}
%\Psi_{-K}^\da(v_{-K}^\da)=((x^1)^2+\dots+(x^d)^2)^K
%\end{align*}
%を満たす。$V(-K)^\da$の像は多項式の微分
%\begin{align}
%(\pa_{\mu_1} \dots \pa_{\mu_L}) ((x^1)^2+\dots+(x^d)^2)^K\label{eq_image_K}
%\end{align}
%で張られる。これは$\sod$の有限次元表現であり、$V(-K)_0^\da$ (the space of highest weight vectors)が1次元であることから像は既約加群である。よって$\ker \Psi_{-K}^\da = N(-K)^\da \subset V(-K)^\da$は次のように与えられる:
\begin{lem}\label{lem_null_minus}
For any $n \geq 0$, 
\begin{align*}
\dim N(-K)_n^\da=
\begin{cases}
0 & n \leq K\\
A_n-A_{2K-n} & n > K,
\end{cases} 
\end{align*}
where we set $A_p=0$ for any $p<0$.
\end{lem}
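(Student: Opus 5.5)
The plan is to compute $\dim N(-K)_n^\da$ as the dimension of the kernel of $\Psi_{-K}^\da$ in degree $n$. By the discussion just before the lemma, $N(-K)^\da=\ker\Psi_{-K}^\da$, so
\[
\dim N(-K)_n^\da = A_n - \dim \Psi_{-K}^\da\bigl(V(-K)_n^\da\bigr),
\]
and it suffices to compute the rank of $\Psi_{-K}^\da$ in each degree $n$. Since $\Psi_{-K}^\da(P_{\mu_1}\cdots P_{\mu_n}v_{-K}^\da)=(-1)^n\pa_{\mu_1}\cdots\pa_{\mu_n}|x|^{2K}$, the restriction of $\Psi_{-K}^\da$ to degree $n$ is, up to sign, the linear map
\[
T_n:\R[x_1,\dots,x_d]_n\to\R[x_1,\dots,x_d]_{2K-n},\qquad Q\mapsto Q(\pa)\,|x|^{2K},
\]
where we identify $V(-K)_n^\da\cong V_{n,d}$ with $\R[x]_n$ via $P_\mu\mapsto x_\mu$. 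We will show that $T_n$ is injective for $n\le K$ and surjective for $K\le n\le 2K$; for $n>2K$ the target is zero. This gives $\operatorname{rank}T_n=A_{\min(n,2K-n)}$ (with $A_p=0$ for $p<0$), which is exactly the claimed formula.

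To analyse $T_n$, introduce the symmetric bilinear form
\[
B(Q,R)=\Delta^K(QR)\in\R,\qquad Q\in\R[x]_n,\ R\in\R[x]_{2K-n}.
\]
Since $|x|^{2K}(\pa)=\Delta^K$ and the Fischer pairing $[f,g]=f(\pa)g$ is symmetric on polynomials of the same degree, we have
\[
R(\pa)\bigl(Q(\pa)|x|^{2K}\bigr)=[QR,|x|^{2K}]=\Delta^K(QR)=B(Q,R).
\]
The key positivity input is that the functional $P\mapsto\Delta^KP$ on $\R[x]_{2K}$ is $\mathrm{O}(d)$-invariant. Hence it is a positive multiple of $P\mapsto\int_{S^{d-1}}P\,d\sigma$, because the space of invariant functionals on $\R[x]_{2K}$ is one-dimensional (invariants in $\R[x]_{2K}$ being spanned by $|x|^{2K}$). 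The positive sign is checked by evaluating both functionals on $|x|^{2K}$.

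Injectivity for $n\le K$ then follows. If $T_nQ=0$, apply $R(\pa)$ with $R=Q\,|x|^{2(K-n)}$ to get
\[
0=B\bigl(Q,Q|x|^{2(K-n)}\bigr)=c\int_{S^{d-1}}Q^2\,d\sigma,\qquad c>0,
\]
so $Q=0$. Surjectivity for $K\le n\le 2K$ follows by the same argument with the roles exchanged. If $R\in\R[x]_{2K-n}$ is $B$-orthogonal to the image of $T_n$, take $Q=R\,|x|^{2(n-K)}$ to get $\int_{S^{d-1}}R^2\,d\sigma=0$, hence $R=0$. Since $B$ restricted to $\R[x]_{2K-n}$ is given by applying $R(\pa)$ to the image, this means no nonzero $R$ annihilates the image, so $T_n$ is onto.

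Combining the two cases gives $\dim N(-K)_n^\da=0$ for $n\le K$ and $\dim N(-K)_n^\da=A_n-A_{2K-n}$ for $n>K$. The main point needing care is the identification of $\Delta^K$ on degree-$2K$ polynomials with a positive multiple of the spherical integral. I would justify it via the uniqueness of $\mathrm{O}(d)$-invariant functionals as above; alternatively one can compute directly on the Fischer decomposition $P=\sum_j|x|^{2j}h_{2K-2j}$ into harmonic components, where only the $j=K$ term survives under both functionals.
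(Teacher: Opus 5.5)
Your proof is correct. Its overall skeleton matches the paper's: both identify $N(-K)^\dagger$ with $\ker\Psi_{-K}^\dagger$, use PBW to view the degree-$n$ part as a map $\R[x_1,\dots,x_d]_n\to\R[x_1,\dots,x_d]_{2K-n}$, and reduce the lemma to injectivity for $n\le K$ and surjectivity for $n>K$, followed by the dimension count of Lemma \ref{lem_app_dim}.

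You differ in how that key step is proved. The paper only says it is ``easily seen'' from the theory of highest weight representations of the semisimple Lie algebra $\mathfrak{so}(d+1,1)$, citing Humphreys, and gives no details. You give a self-contained elementary argument instead. The identity $R(\partial)Q(\partial)|x|^{2K}=\Delta^K(QR)$, obtained from the symmetric Fischer pairing, converts the question into one about a single bilinear form. Since $\Delta^K$ on $\R[x]_{2K}$ is a positive multiple of the spherical mean, that form is controlled by positivity:
\begin{itemize}
\item For injectivity, you test against $Q|x|^{2(K-n)}$.
\item For surjectivity, you test against $R|x|^{2(n-K)}$ and use the (implicit but standard) nondegeneracy of the Fischer pairing on $\R[x]_{2K-n}$.
\end{itemize}
In both cases the conclusion reduces to $\int_{S^{d-1}}Q^2\,d\sigma>0$ for $Q\neq 0$.

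What each route buys: yours avoids weights and Weyl-group symmetry entirely and proves the rank formula $\operatorname{rank}T_n=A_{\min(n,2K-n)}$ directly and verifiably. The paper's is shorter and fits the representation-theoretic framework of the appendix, but leaves the actual verification to the reader. The one input you share with the paper without reproving is $N(-K)^\dagger=\ker\Psi_{-K}^\dagger$, which rests on irreducibility of the finite-dimensional image. That is legitimate, since it is established just before the lemma.
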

\begin{proof}
Denote by $\R[x_1,\dots,x_d]_l$ the degree $l$ subspace of $\R[x_1,\dots,x_d]$.
Then, by Poincare-Birkhoff-Witt theorem, $V(-K)_n^\da$ can be identified with $\R[x_1,\dots,x_d]_n$ and the image of $\Psi_{-K}^\da |_{V(-K)_n^\da}$ is in $\R[x_1,\dots,x_d]_{2K-n}$ by \eqref{eq_image_K}. Therefore,
if one can prove
\begin{align*} 
\Psi_{-K}^\da |_{V(-K)_n^\da}: \R[x_1,\dots,x_d]_{n}\rightarrow \R[x_1,\dots,x_d]_{2K-n}
\end{align*} 
is injective for $n \leq K$ and surjective for $n >K$, the assertion follows from Lemma \ref{lem_app_dim}.
This is easily seen from the theory of highest weight representations of (semi)simple Lie algebra $\sod$ (see \cite[Section 21.4]{Hum}).
%
%
%
%Then, by Poincare-Birkhoff-Witt theorem, $V(-K)_n$は$\R[x^1,\dots,x^d]_n$と同一視でき、$\Psi_{-K}^\da |_{V(-K)_n^\da}$の像は、\eqref{eq_image_K}より $\R[x^1,\dots,x^d]_{2K-n}$に入る。よって
%\begin{align*}
%\Psi_{-K}^\da |_{V(-K)_n^\da}: \R[x^1,\dots,x^d]_{n}\rightarrow \R[x^1,\dots,x^d]_{2K-n}
%\end{align*}
%が$n \leq K$で単射であり$n >K$で全射であることを言えば、Lemma \ref{lem_app_dim}より命題が従う。
%これは (semi)simple Lie algebra $\sod$ の最高ウェイト表現の理論から簡単に分かる (see \cite[Section 21.4]{Hum})。
%%
%\begin{itemize}
%\item
%$\sod$の有限次元表現の highest weight theory から簡単に分かるはず (?)
%\item
%$d_{-K}$によって$1$はwt $K$, $||x||^K$は wt $-K$の h.w.v (l.w.v).
%\item
%単射性は 有限次元なので singular vector が 真ん中の隣 ($K+1$)にあることが分かるはず。
%\item
%単射であることから像の加群を$M$とおくと$\dim M_{n} = \dim (\R[x^1,\dots,x^d]_{n})$が$n \leq K$で分かる。
%\item
%折り返しの対称性から $\dim M_{2K-n} = \dim M_n$であり、よって全射である。
%\end{itemize}
\end{proof}

Next consider the case $\al_0 = \dn,\dn-1,\dn-2,\dots$.
Set 
\begin{align*}
\D_P = \sum_{\rho=1}^d P_\rho^2.
\end{align*}
In Section \ref{sec_Verma}, we have already shown that
\begin{align*}
\D_P v_{\dn}^\da \in N\left(\dn\right)_2^\da
\end{align*}
and $\dim N(\dn)_2 \geq 1$ (see \eqref{eq_laplace_beta}).
More generally, we have:
\begin{lem}\label{lem_sing_DP}
For any $m \geq 0$, $\D_P^{m+1} v_{\dn-m}^\da \in N\left(\dn-m\right)^\da$.
In particular, $\dim N\left(\dn-m\right)_{l}^\da \geq A_{l-2m-2}$ for any $l \geq 2m+2$.
\end{lem}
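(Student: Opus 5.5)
The plan is to show that $w:=\D_P^{m+1}v_{\al}^\da$, with $\al=\dn-m$, is a \emph{singular vector}: it is annihilated by every $K_\mu$ and every $J_{\mu,\nu}$, and it is a $D$-eigenvector. The submodule it generates is then proper, and so lies in $N(\al)^\da$. The dimension bound will follow from a PBW count.

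First I will compute the commutator $[K_\mu,\D_P]$ from the relations \eqref{eq_com_Lie}. Since $[K_\mu,P_\rho]=2(\de_{\mu,\rho}D+J_{\rho,\mu})$, we get
\[
[K_\mu,\D_P]=\sum_\rho\bigl([K_\mu,P_\rho]P_\rho+P_\rho[K_\mu,P_\rho]\bigr).
\]
Moving $D$ and $J_{\rho,\mu}$ to the right using $DP_\mu=P_\mu D+P_\mu$ and $[J_{\rho,\mu},P_\rho]=-P_\mu+\de_{\mu\rho}P_\rho$ (summed over $\rho$ this gives $-(d-1)P_\mu$), I expect
\[
[K_\mu,\D_P]=4P_\mu\Bigl(D-\tfrac{d-2}{2}\Bigr)+4\sum_\rho P_\rho J_{\rho,\mu}.
\]

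Next I will compute $K_\mu\D_P^k v_\al^\da$ by expanding
\[
K_\mu\D_P^k v_\al^\da=\sum_{i=0}^{k-1}\D_P^{k-1-i}\,[K_\mu,\D_P]\,\D_P^{i}v_\al^\da,
\]
using $K_\mu v_\al^\da=0$. The factor $\D_P^i v_\al^\da$ is killed by every $J_{\rho,\mu}$. This holds because $J_{\mu,\nu}v_\al^\da=0$ and $\D_P$ commutes with $\mathfrak{so}(d)$, being the $\SO(d)$-invariant quadratic form in the $P$'s. The same factor is a $D$-eigenvector with eigenvalue $\al+2i$. Since the $P$'s commute, this gives the closed formula
\[
K_\mu\D_P^k v_\al^\da=4\sum_{i=0}^{k-1}\Bigl(\al+2i-\tfrac{d-2}{2}\Bigr)P_\mu\D_P^{k-1}v_\al^\da
=4k\Bigl(\al+k-1-\tfrac{d-2}{2}\Bigr)P_\mu\D_P^{k-1}v_\al^\da .
\]
For $k=m+1$ and $\al=\dn-m$ the scalar vanishes, so $K_\mu w=0$. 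We also have $J_{\mu,\nu}w=0$ and $Dw=(\al+2m+2)w$. This bookkeeping of the commutator is the only real computation, and the main risk is a sign or index slip there. I would cross-check the formula against the case $m=0$, which must reproduce the null vector of \eqref{eq_form_deg2} and \eqref{eq_laplace_beta}.

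Finally, by PBW the submodule generated by $w$ equals $U(\sod)w=\R[P_1,\dots,P_d]\,w$, since $\sod^-$ acts on $w$ by scalars. Hence it is concentrated in $D$-degrees $\geq 2m+2$ and does not contain $v_\al^\da$. It is therefore contained in the maximal submodule $N(\al)^\da$, which contains every proper submodule.

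For the dimension count, note that $V(\al)^\da\cong\R[P_1,\dots,P_d]$ as an $\R[P]$-module via $g_\al$, and this module is torsion-free. So multiplication by $\D_P^{m+1}$ is injective from $V_{l-2m-2,d}$ into $V(\al)^\da_l$. This gives $\dim N(\al)^\da_l\ge A_{l-2m-2}$ by Lemma \ref{lem_app_dim}.
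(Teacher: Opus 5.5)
Your proof is correct, and it takes a different route from the paper's.

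\textbf{The paper's route.} The paper never checks that the vector is singular. It uses the realization $\Psi^\da_\al\colon V(\al)^\da\to\R[x_1,\dots,x_d,|x|^\R]$, with $v_\al^\da\mapsto\|x\|^{-2\al}$ and $d_\al(P_\rho)=-\pa_\rho$. Then $\Psi^\da_\al(\D_P^{m+1}v_\al^\da)=(\sum_\rho\pa_\rho^2)^{m+1}\|x\|^{-2\al}$. For $\al=\dn-m$ this vanishes by iterating \eqref{eq_laplace_beta}: the factor $\be-\dn$ becomes zero at the last application, where $\be=\al+m=\dn$. So the vector lies in $\ker\Psi^\da_\al$. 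That kernel is a submodule not containing $v_\al^\da$, hence it lies in $N(\al)^\da$. The dimension bound is left implicit in the paper; it is exactly your freeness count over $\R[P_1,\dots,P_d]$. This route is nearly a one-liner because the realization and the Laplacian identity are already available. The same mechanism, vanishing under a polynomial realization, is reused for $\al=-K$ and in Lemma \ref{lem_double_zero}.

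\textbf{What your route adds.} Your computation stays inside $U(\sod)$ and gives more. The closed formula
$K_\mu\D_P^kv_\al^\da=4k(\al+k-1-\dn)P_\mu\D_P^{k-1}v_\al^\da$
holds for all $\al$ and $k$. From it you get three things:
\begin{itemize}
\item $\D_P^{m+1}v_\al^\da$ is an honest singular vector.
\item The submodule it generates is exactly the free module $\R[P_1,\dots,P_d]\,\D_P^{m+1}v_\al^\da$.
\item Since $P_\mu\D_P^{k-1}v_\al^\da\neq0$, the vector $\D_P^kv_\al^\da$ with $k\geq1$ is singular precisely when $\al=\dn-k+1$.
\end{itemize}
The kernel argument gives none of these directly. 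I checked your commutator $[K_\mu,\D_P]=4P_\mu(D-\dn)+4\sum_\rho P_\rho J_{\rho,\mu}$ against \eqref{eq_com_Lie}, and your $m=0$ sanity check against \eqref{eq_form_deg2}; both are right.
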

\begin{proof}
Recall that there is the unique $\sod$-module homomorphism $\Psi^\da: V(\dn-m)^\da \rightarrow \R[x_1,\dots,x_d,|x|^\R]$ which sends $v_{\dn-m}^\da$ to $||x||^{-d+2+2m}$ (see \eqref{eq_psi_al}).
By \eqref{eq_laplace_beta}, 
\begin{align*}
\Psi(\D_P^{m+1} v_{\dn-m}^\da) = \left(\sum_\rho \pa_\rho^2\right)^{m+1} ||x||^{- d+2+2m}=0.
\end{align*}
Hence, the assertion holds.
\end{proof}
Note that if $d$ is even and we take a sufficiently large $m$, we get $\dn -m \in \Z_{\leq 0}$ and the singular vectors of Lemma \ref{lem_sing_DP} are contained in the first examined singular vectors $N(-K)^\da \subset V(- K)^\da$. We  will see later that they actually contribute to $D_{N,d}(\al)$ with multiplicity two.
First, we will prove Theorem \ref{prop_singular_vector} for the simpler case where $d$ is odd.
%
%さて$d$が偶数の場合、十分に大きな$m$を取ると、$\dn -m \in \Z_{\leq 0}$となって Lemma \ref{lem_sing_DP}の singular vector は 最初に調べた singular vector たち $N(-K)^\da \subset V(-K)^\da$に含まれてしまう。しかし実はこれらは重複度2で寄与していることを後で見る。
%まず最初により簡単な$d$が奇数の場合に命題\ref{prop_singular_vector}を証明する。
\begin{lem}\label{prop_determinant_odd}
Let $d \geq 3$ be an odd integer.
Then, for any $N \geq 0$,
\begin{align}
D_{N,d}(\al) =  \left(\prod_{k=0}^{N-1} (\al+k)^{A_N- A_{2k-N}}\right)
 \left(\prod_{m=0}^{[N/2]-1} (\al - \dn+m)^{A_{N-2m-2}}\right), \label{eq_det_formula_odd}
\end{align}
where $[N/2]$ is the largest integer not exceeding $N/2$.
Moreover, for any $m \in \Z_{\geq 0}$,
\begin{align*}
N\left( \dn - m \right)_{k}^\da =
\begin{cases}
0 & (k < 2m+2),\\
\R[P_1,\dots,P_d]_{k-2m-2} \Delta_P^{m+1} v_{\dn-m}^\da & (k \geq 2m+2).
\end{cases}
\end{align*}
\end{lem}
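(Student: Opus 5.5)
The plan is a degree count for $D_{N,d}(\al)$. Corollary \ref{cor_app_det} says $D_{N,d}(\al)$ is monic of degree $NA_N$. We exhibit enough zeros, with multiplicities, to exhaust this degree; this pins down $D_{N,d}$ and forces all the dimension inequalities to be equalities.

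\emph{Step 1: multiplicity of zeros.} First I make \eqref{eq_N_1_null} quantitative. Let $\al_0\in\R$ and let $r=\dim N(\al_0)_N^\da$. The radical of $(-,-)_{\al_0}$ on $V_{N,d}$ contains $g_{\al_0}^{-1}(N(\al_0)_N^\da)$; in fact it equals it, since the radical of the invariant form is a submodule not containing $v_{\al_0}^\da$. Choose a constant change of basis of $V_{N,d}$ whose first $r$ vectors span this radical. In the polynomial Gram matrix $(-,-)_N$ the first $r$ columns then vanish at $\al=\al_0$, so each is divisible by $(\al-\al_0)$. Hence $(\al-\al_0)^r$ divides $D_{N,d}(\al)$, because a constant basis change only rescales the determinant.

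\emph{Step 2: lower bounds at the two families.}
\begin{itemize}
\item For $\al_0=-k$ with $0\le k\le N-1$, Lemma \ref{lem_null_minus} gives multiplicity at least $A_N-A_{2k-N}$.
\item For $\al_0=\dn-m$ with $0\le m\le [N/2]-1$, Lemma \ref{lem_sing_DP} gives multiplicity at least $A_{N-2m-2}$. Here the subspace $\R[P_1,\dots,P_d]_{N-2m-2}\Delta_P^{m+1}v_{\dn-m}^\da$ has this dimension because $V(\al)^\da$ is free over $\R[P_1,\dots,P_d]$ by PBW.
\end{itemize}
Since $d$ is odd, $\dn-m$ is a half-integer while $-k$ is an integer, so all these roots are pairwise distinct.

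\emph{Step 3: the count.} We compute
\[
\sum_{k=0}^{N-1}(A_N-A_{2k-N})+\sum_{m=0}^{[N/2]-1}A_{N-2m-2}.
\]
As $k$ runs over $0,\dots,N-1$, the index $2k-N$ runs over $-N,-N+2,\dots,N-2$. Using $A_p=0$ for $p<0$, the nonzero terms $A_{2k-N}$ are exactly the $A_{N-2-2m}$ with $0\le m\le[N/2]-1$. The two sums therefore cancel, and the total is $NA_N=\deg D_{N,d}$. Combined with monicity, this proves \eqref{eq_det_formula_odd}.

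\emph{Step 4: the submodules.} Every inequality in Steps 1–2 must now be an equality. So for $k\ge 2m+2$ we get $\dim N(\dn-m)_k^\da=A_{k-2m-2}$, and the inclusion from Lemma \ref{lem_sing_DP} becomes an equality. For $k<2m+2$, apply \eqref{eq_det_formula_odd} with $N=k$: the factor $(\al-\dn+m)$ does not occur, since it would need $m\le[k/2]-1$. Also $\dn-m$ is not a root of any integer factor. So $D_{k,d}(\dn-m)\neq0$, and by Step 1 this gives $N(\dn-m)_k^\da=0$.

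The step needing the most care is Step 1: the identification of the radical with $N(\al_0)^\da$, and the divisibility argument for the determinant. The rest is bookkeeping.
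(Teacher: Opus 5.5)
Your proposal is correct and follows the paper's proof. Both arguments bound the vanishing order of $D_{N,d}(\al)$ at $\al=-k$ and at $\al=\dn-m$ using Lemmas~\ref{lem_null_minus} and~\ref{lem_sing_DP}, then match the total against the degree $NA_N$ and the monicity from Corollary~\ref{cor_app_det}. Your Steps 1 and 4 also usefully spell out two points the paper leaves implicit: why $\dim N(\al_0)_N^\da$ bounds the root multiplicity, and why the equality of degrees forces the stated description of $N(\dn-m)_k^\da$.
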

\begin{proof}
By Lemma \ref{lem_null_minus}, for any $0 \leq K < N$, the degree of zeros in $\al = -K$ of the polynomial $D_{N,d}(\al)$ is at least $A_N- A_{2K-N}$. Similarly, from Lemma \ref{lem_sing_DP}, the degree of zeros in $\al = \dn - m$ is at least $A_{N-2m-2}$.
%By Lemma \ref{lem_null_minus}, for any $0 \leq k < N$, 多項式$D_N(\al)$ の $\al=k$におけるゼロの重複度は少なくとも $A_N- A_{2K-N}$. 同様に Lemma \ref{lem_sing_DP}より、$\al = \dn - m$ におけるゼロの重複度は少なくとも$A_{N-2m-2}$.
Hence, $D_{N,d}(\al)$ is divisible by the right-hand side of \eqref{eq_det_formula_odd}.
Since
\begin{align*}
NA_N - \sum_{k=0}^{N-1} (A_N-A_{2k-N}) -\sum_{m=0}^{[N/2]} A_{N-2m-2}=\sum_{k=0}^{N-1} A_{2k-N} -\sum_{m=0}^{[N/2]} A_{N-2m-2}=0,
\end{align*}
the assertion follows from Corollary \ref{cor_app_det}.
\end{proof}

Hereafter, we will show 
\begin{align}
D_{N,d}(\al) =  \left(\prod_{k=0}^{N-1} (\al+k)^{A_N- A_{2k-N}}\right)
 \left(\prod_{m=0}^{[N/2]-1} (\al - \dn+m)^{A_{N-2m-2}}\right). \label{eq_det_formula_even}
\end{align}
for any even integer $d\geq 2$ and $N\geq 0$. The problem is that if $d$ is even, then $\dn -m \in \Z_{\leq 0}$, and the determinant cannot be calculated by the simple dimension counting of null vectors. In this case, specific null vectors must be counted twice.
%
%
%問題は$d$が偶数の場合は$\dn -m \in \Z_{\leq 0}$となり、単純な null vector の次元の数え上げでは determinant が計算できない点にある。この場合は、特定の null vector を二重に数えないといけない。

More precisely, for any $m \in \Z$ with $\dn>m \geq 0$, by the proof of Lemma \ref{prop_determinant_odd}, $D_{N,d}(\al)$ is divisible by $(\al-\dn+m)^{A_{N-2m-2}}$.
Hence, in order to prove \eqref{eq_det_formula_even}, it suffices to show that
\begin{enumerate}
\item[AD1)]
If $0 \leq k \leq \frac{N-d}{2}$, $D_{N,d}(\al)$ is divisible by $(\al+k)^{A_{N}-A_{2k-N}+A_{N-2k-d}}$.
\item[AD2)]
If $\frac{N-d}{2} < k \leq N-1 $, $D_{N,d}(\al)$ is divisible by $(\al+k)^{A_{N}-A_{2k-N}}$.
\end{enumerate}
(AD2) follows from Lemma \ref{lem_null_minus}. We will show (AD1).

Let $k \in \Z_{\geq 0}$ with $k \leq \frac{N-d}{2}$.
Recall that $g_{-k}: V_{N,d} \rightarrow V(-k)_N^\da$ is a linear isomorphism.
Set
\begin{align*}
N_1^k &= g_{-k}^{-1}(N(-k)_N)\\
N_2^k &=\R[P_1,\dots,P_d]_{N-2k-d} \Delta_P^{k+\frac{d}{2}},
\end{align*}
which are subsets of $V_{N,d}$. Since $\left(\sum_\rho \pa_\rho^2\right)^{k+\frac{d}{2}} ||x||^{2k} =0$,
we have:
\begin{align}
N_2^k \subset N_1^k \subset V_{N,d}.\label{eq_N_inc}
\end{align}

\begin{lem}\label{lem_double_zero}
Let $d\geq 2$ be an even integer and $m \geq 0$ with $m \geq \dn$. Then, for any $w \in V_{N,d}$ and 
$1 \leq \mu_1 \leq \cdots \leq \mu_{N-2m-2}$
\begin{align}
(P_{\mu_1} \cdots P_{\mu_{N-2m-2}} \Delta_P^{m+1}, w)_N \in \R[\al]
\label{eq_double_zero}
\end{align}
is divisible by $(\al+m-\dn)^2$.
\end{lem}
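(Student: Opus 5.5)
The plan is to reduce the pairing \eqref{eq_double_zero} to a pairing of $\Delta_P^{m+1}v_\al^\da$ against a degree $2m+2$ vector, and then produce one factor of $(\al+m-\dn)$ from a commutator and a second one from the nullity of all low-lying vectors at $\al_0=\dn-m$. Throughout, work in $V(\al)^\da$ with $\al$ a formal variable, so that all pairings are polynomials in $\R[\al]$ by Lemma \ref{lem_app_inductive}. Put $K=m-\dn\in\Z_{\geq 0}$, so that $\al_0=\dn-m=-K$.

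\emph{Reduction.} By linearity we may take $w=P_{\nu_1}\cdots P_{\nu_N}$. By Proposition \ref{prop_bilinear_Verma}(2) and $\theta(P_\mu)=-K_\mu$, we have $(P_\mu u,v)=(u,K_\mu v)$. Moving the prefix $P_{\mu_1}\cdots P_{\mu_{N-2m-2}}$ across therefore gives
\[
(P_{\mu_1}\cdots P_{\mu_{N-2m-2}}\Delta_P^{m+1}v_\al^\da,\,w v_\al^\da)
=(\Delta_P^{m+1}v_\al^\da,\,u),\qquad u=K_{\mu_{N-2m-2}}\cdots K_{\mu_1}P_{\nu_1}\cdots P_{\nu_N}v_\al^\da .
\]
Here $u\in V(\al)^\da_{2m+2}$. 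Expanding $u$ in the PBW basis \eqref{eq_app_basis_Verma} gives coefficients in $\R[\al]$. It thus suffices to prove that $(\Delta_P^{m+1}v_\al^\da,P^Jv_\al^\da)$ is divisible by $(\al+K)^2$ for every monomial $P^J=P_{j_1}\cdots P_{j_{2m+2}}$.

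\emph{First factor.} The key identity to establish is
\[
K_\mu\Delta_P^{n}v_\al^\da=c_n(\al)\,P_\mu\Delta_P^{n-1}v_\al^\da,\qquad c_n(\al)=4n\left(\al+n-1-\dn\right)
\]
(the precise normalisation is immaterial; only the linear factor matters). I would prove it by induction on $n$, using $[K_\mu,P_\rho]=2(\delta_{\mu\rho}D+J_{\mu\rho})$ from \eqref{eq_com_Lie}. Alternatively one can check it for generic $\al$ through the injective map $\Psi^\da_\al$ together with \eqref{eq_laplace_beta}, and then conclude by polynomiality in $\al$. Taking $n=m+1$ gives $c_{m+1}(\al)=4(m+1)(\al+K)$. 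By symmetry (Proposition \ref{prop_bilinear_Verma}(3)) and invariance,
\[
(\Delta_P^{m+1}v_\al^\da,P^Jv_\al^\da)=(K_{j_1}\Delta_P^{m+1}v_\al^\da,\,P_{j_2}\cdots P_{j_{2m+2}}v_\al^\da)
=4(m+1)(\al+K)\,(P_{j_1}\Delta_P^{m}v_\al^\da,\,P_{j_2}\cdots P_{j_{2m+2}}v_\al^\da).
\]

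\emph{Second factor.} The remaining pairing lives in degree $2m+1$. At $\al=-K$, Lemma \ref{lem_null_minus} gives $\dim N(-K)^\da_n=A_n-A_{2K-n}=A_n$ whenever $n>2K$, that is, $V(-K)^\da_n=N(-K)^\da_n$. Since $d\geq 2$, we have $2K=2m-d+2<2m+1$, so every vector of degree $2m+1$ is null at $\al=-K$. By \eqref{eq_N_1_null}, the pairing $(P_{j_1}\Delta_P^mv_\al^\da,\,P_{j_2}\cdots P_{j_{2m+2}}v_\al^\da)\in\R[\al]$ therefore vanishes at $\al=-K$ and is divisible by $(\al+K)$. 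Combining this with the first factor gives divisibility by $(\al+K)^2=(\al+m-\dn)^2$.

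The main obstacle I expect is the commutator identity for $K_\mu\Delta_P^n v_\al^\da$, specifically getting the linear factor $(\al+n-1-\dn)$ exactly right. I would first do the case $n=1$ by hand; this should reproduce the $4\al(2\al-d+2)$-type eigenvalue from \eqref{eq_form_deg2}. The general case should follow from $[K_\mu,\Delta_P]=4P_\mu D+(\text{terms in }J)\cdot P$ together with the fact that $J_{\mu\rho}$ acting on the rotation-invariant vector $\Delta_P^{n-1}v_\al^\da$ gives zero. The rest is bookkeeping with invariance and Lemma \ref{lem_null_minus}.
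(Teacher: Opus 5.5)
Your proof is correct, and it takes a different route from the paper.

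\textbf{The paper's route.} The paper deforms in the dimension. It regards the pairing as a polynomial $P_w(\al,D)$ in $\al$ and the ambient dimension $D\geq d$. It shows that $P_w$ vanishes in two places:
\begin{itemize}
\item on the Zariski-dense set $\{(\frac{l}{2}-m-1,l)\mid l\geq d\}$, because $\Delta_P^{m+1}v^\da$ is singular at $\al=\frac{l-2}{2}-m$ in dimension $l$;
\item on the line $\al=-(m-\dn)$, because $(\sum_\rho\pa_\rho^2)^{m+1}|x|^{2(m-\dn)}=0$ in every dimension.
\end{itemize}
The double zero then comes from the two distinct linear factors $(\al-\frac{D}{2}+m+1)(\al+m-\dn)$ becoming equal at $D=d$.

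\textbf{Your route.} You stay in the fixed dimension $d$. After moving the $P$-prefix across, the identity
\[
K_\mu\Delta_P^{m+1}v_\al^\da=4(m+1)(\al+m-\dn)\,P_\mu\Delta_P^{m}v_\al^\da
\]
produces one explicit factor. The second factor comes from the leftover pairing, which lives in degree $2m+1>2K$. That is above the top degree of the finite-dimensional quotient $L(-K)^\da$, so everything in that degree is null at $\al=-K$.

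\textbf{Comparison.} The two ingredients are the same in spirit: the singularity of $\Delta_P^{m+1}v^\da$, and the finite-dimensional quotient at $\al=-K$. Your version does not need to justify that the pairing depends polynomially on $D$, nor the Zariski-closure step. It also makes both factors explicit, and as a by-product recovers Lemma~\ref{lem_sing_DP} directly. The paper's version needs no computation of $K_\mu\Delta_P^n v$. It also presents the double zero as the collision of two families of null vectors, which is how the determinant count in (AD1) is framed.

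\textbf{A small slip.} From \eqref{eq_com_Lie} one gets $[K_\mu,P_\rho]=2(\delta_{\mu\rho}D+J_{\rho\mu})$, not $J_{\mu\rho}$. With your sign, the case $n=1$ would give $4\al+2d$ instead of $4(\al-\dn)$. With the correct sign,
\[
[K_\mu,\Delta_P]=4P_\mu D+4\sum_\rho P_\rho J_{\rho\mu}-2(d-2)P_\mu .
\]
The constant term, which arises from reordering $J$ past $P$, is what produces the shift by $\dn$. This gives $c_n=c_{n-1}+4(\al+2n-2-\dn)$, hence $c_n(\al)=4n(\al+n-1-\dn)$ as you claimed. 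That agrees with the computation through $\Psi^\da_\al$ and \eqref{eq_laplace_beta}, which I checked independently.
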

\begin{proof}
%We may assume that
%%\begin{align*}
%%v = P_{\mu_1} \cdots P_{\mu_{N-2k-d}} \Delta_P^{k+\frac{d}{2}}
%%\end{align*}
%%and
%\begin{align*}
%w = P_{\nu_1}\cdots P_{\nu_N}
%\end{align*}
%with  $1 \leq \nu_1 \leq \cdots \leq \nu_N$ and fix it.
By the natural inclusion map $V_{N,d} \hookrightarrow V_{N,D}$ for $D \geq d$, \eqref{eq_double_zero} can be defined for any dimension $D \geq d$ for fixed $w \in V_{N,d}$.
Since the dependence of \eqref{eq_double_zero} on $D$ is only in the definition of the Laplacian $\D_P=\sum_{\rho=1}^D P_\rho^2$, we can regard \eqref{eq_double_zero} as a polynomial of $\al$ and $D$ for each $w$.
Hence, there exists a 2-variable polynomial $P_w(x,y) \in \R[x,y]$ such that $P_w(\al,D)$ coincides with \eqref{eq_double_zero} for any $D \geq d$.
%$P_D(\al)$の$D$に関しての依存性は Laplacian $\D_P=\sum_{\rho=1}^D P_\rho^2$の定義にのみ現れて、とくに$D$について高々$m$次の多項式である。
%%(The determinant \eqref{}は$d$の多項式にはならないことに注意せよ。)
%よってある2変数多項式 $P(x,y) \in \R[x,y]$があって、$P(\al,D)=P_D(\al)$を任意の$D \geq d$に対して満たすものが存在する。
Set
\begin{align*}
S_m = \{(\frac{l}{2}-m-1 ,l) \in \R^2 \mid l\in \Z, l \geq d\}.
\end{align*}
%which is a countable subset on the line 
Then, it is easy to show that the Zariski closure of $S_m$ is the line
\begin{align*}
\{(x,y)\in \R^2\mid x - y/2 + m +1 =0\}.
\end{align*}
By Lemma \ref{lem_sing_DP}, $P_w(\al,D)=0$ for any $(\al,D)\in S_m$.
Hence, $P_w(\al,D) \in \R[\al,D]$ is divisible by $\al - D/2 + m +1$.
Similarly, $P_w(\al,D)$ is divisible by $\al+ l$ for $0 \leq l \leq m$ by 
\begin{align*}
\left(\sum_{\rho=1}^D P_\rho^2 \right)^{m+1}||x||^{2l}=0
\end{align*}
for any $D$. Hence, $P_w(\al,D)$ is divisible by $(\al-D/2+m+1)(\al+m-\dn)$. Hence, the assertion holds.
\begin{comment}
\begin{align*}
[K_\mu^2,P_\nu^2] &=  \delta_{\mu,\nu} (8 P_\mu K_\mu D + 4D(2D+1)) + 
\end{align*}
\begin{align}
\begin{split}
[K_\mu,\D_P] &= \sum_\rho [K_\mu,P_\rho^2] 
=P_\mu 2(2D+1)^2 P_\rho J_{\mu,\rho}
\end{split}
\end{align}
\end{comment}
\end{proof}

Let $v_1,\dots,v_{A_{N}}$ be a basis of $V_{N,d}$ such that:
\begin{enumerate}
\item
$\{v_1,\dots,v_{\dim N_2^k}\}$ is a basis of $N_2^k$;
\item
$\{v_{\dim N_2^k+1},\dots, v_{\dim N_1^k}\}$ together with $\{v_1,\dots,v_{\dim N_2^k}\}$ is a basis of $N_1^k$.
\end{enumerate}
Then, by Lemma \ref{lem_double_zero} and \eqref{eq_N_1_null},
the determinant of the matrix representation of $(-,-)_N:V_{N,d} \otimes V_{N,d} \rightarrow \R[\al]$ with respect to the basis $\{v_1,\dots,v_{A_{N}}\}$ is divisible by $(\al+k)^{\dim N_1^k + \dim N_2^k}$.
Since the difference between this determinant and $D_{N,d}(\al)$ is just a non-zero scalar multiple, (AD1) holds. Hence, we have:
\begin{thm}\label{thm_determinant}
Let $d \geq 2$ be an integer. Then, for any $N \geq 0$,
\begin{align}
D_{N,d}(\al) =  \left(\prod_{k=0}^{N-1} (\al+k)^{A_N- A_{2k-N}}\right)
 \left(\prod_{m=0}^{[N/2]-1} (\al - \dn+m)^{A_{N-2m-2}}\right). \label{eq_det_formula}
\end{align}
Moreover, if $m\in \Z_{\geq 0}$ satisfies $\dn -m \notin \Z_{\leq 0}$, then
\begin{align*}
N\left( \dn - m \right)_{k}^\da =
\begin{cases}
0 & (k < 2m+2),\\
\R[P_1,\dots,P_d]_{k-2m-2} \Delta_P^{m+1} v_{\dn-m}^\da & (k \geq 2m+2).
\end{cases}
\end{align*}
\end{thm}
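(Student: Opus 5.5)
The determinant formula \eqref{eq_det_formula} is already established by the preceding argument for odd $d$ (Lemma \ref{prop_determinant_odd}) and for even $d$ via (AD1) and (AD2). I would therefore organize the proof around the second assertion, the description of $N(\dn-m)^\da_k$. The plan is a dimension count: an upper bound on $\dim N(\dn-m)_k^\da$ extracted from the order of vanishing of $D_{k,d}(\al)$ at $\al=\dn-m$, matched against the lower bound already given by Lemma \ref{lem_sing_DP}.

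First, the lower bound. By Lemma \ref{lem_sing_DP}, $\R[P_1,\dots,P_d]_{k-2m-2}\Delta_P^{m+1}v_{\dn-m}^\da$ lies in $N(\dn-m)^\da$. Since $\R[P_1,\dots,P_d]$ is a polynomial ring, hence a domain, and $g_\al$ is an isomorphism, this subspace has dimension exactly $A_{k-2m-2}$.

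Second, the upper bound. The hypothesis $\dn-m\notin\Z_{\le 0}$ means that $\al_0=\dn-m$ coincides with none of the roots $-k'$ appearing in the first product of \eqref{eq_det_formula}. It also coincides with no other root $\dn-m'$ with $m'\neq m$. Hence the order of vanishing of $D_{k,d}(\al)$ at $\al_0$ is exactly $A_{k-2m-2}$, interpreted as $0$ when $k<2m+2$.

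To convert this into a bound on $\dim N(\al_0)_k^\da$, I would use a Jantzen-type elementary argument. By \eqref{eq_N_1_null}, $N(\al_0)^\da_k$ is the radical of $(-,-)_{\al_0}$ on $V(\al_0)^\da_k$: this holds because the radical of the invariant form is a submodule not containing $v_{\al_0}^\da$, and conversely. Choose a basis of $V_{k,d}$ whose first $r=\dim N(\al_0)_k^\da$ vectors span the radical at $\al_0$. Then the Gram matrix $G(\al)$ has $r$ rows vanishing at $\al_0$, so $(\al-\al_0)^r$ divides $\det G(\al)$. This only yields $r\le$ order of vanishing, which is exactly the direction we need: $r\le A_{k-2m-2}$. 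Combined with the lower bound, this gives equality, and therefore the equality of subspaces.

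The main point to check carefully is the exactness of the multiplicity count, i.e. that no coincidence occurs among the root sets when $\dn-m\notin\Z_{\le0}$. For odd $d$ this is automatic, since $\dn$ is a half-integer. For even $d$ the condition means $m<\dn$, so $\dn-m$ is a positive integer distinct from every $-k'$ with $k'\ge0$. I expect this bookkeeping, together with the correct identification of $N(\al_0)^\da_k$ with the radical, to be the only subtle step. Everything else follows from results already in the paper.
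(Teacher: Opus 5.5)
Your proposal is correct and follows the paper's route. The determinant formula is exactly what the odd-case lemma together with (AD1)/(AD2) establishes. The description of $N(\dn-m)^\da_k$, which the paper leaves implicit after ``Hence, we have'', comes from precisely your dimension count: the lower bound $A_{k-2m-2}$ from Lemma~\ref{lem_sing_DP} is matched against the upper bound given by the exact vanishing order of $D_{k,d}(\al)$ at $\al=\dn-m$ via \eqref{eq_N_1_null}. Your bookkeeping is also correct: you check that $\dn-m\notin\Z_{\le 0}$ excludes coincidences with the roots $-k'$, and that the vanishing order is $0$ when $k<2m+2$.
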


Now, we will prove Theorem \ref{prop_singular_vector}.
Let $\al_0 \in \R$. 
By Theorem \ref{thm_determinant},  $D_{N,d}(\al_0)=0$ for some $N \geq 0$ if and only if
\begin{align}
\al_0 \in \{0,-1,-2,\dots \} \cup \{\frac{d-2}{2}, \frac{d-2}{2}-1,\frac{d-2}{2}-2,\dots \}.\label{eq_app_spec}
\end{align}
Hence, the $V(\al_0)^\da$ is reducible if and only if $\al_0$ is in \eqref{eq_app_spec} by Lemma \ref{lem_irreducible_non_deg}.
%The generators of $N(\al_0)^\da$ for the reducible cases are given in Theorem \ref{thm_determinant}.
It remains to prove the (semi)-positivity for $\al \geq \dn$.

We first observe that by Lemma \ref{lem_app_leading} $(-,-)_N$ is positive-definite for sufficiently large $\al$.
Since the bilinear form $(-,-)_N$ changes continuously with respect to the parameter $\al \in \R$,
the positivity is preserved while $D_{N,d}(\al)\neq 0$.
Since $\dn$ is the largest value among \eqref{eq_app_spec}, $(-,-)_N$ is positive-definite (resp. positive-semidefinite ) for any $\al > \dn$ (resp. $\al =\dn$).
In the case of $\al=\dn$, since the induced bilinear form on $L\left(\dn\right)$ is non-degenerate,
it is positive-definite.

Conversely, assume that the bilinear form $(-,-)$ on $L(\al)^\da$ is positive-definite. Then, from \eqref{eq_form_deg1}, $\al \geq 0$.
If $\al=0$, then $L(0)^\da = \R$, the bilinear form is positive.
%逆に$L(\al)$上のbilinear form $(-,-)$が正定値とする。このとき \eqref{eq_form_deg1}より、$\al \geq 0$である。
Since
\begin{align*}
D_{2,d}(\al) = \al^{A_2}(\al+1)^{A_2-1} (\al-\dn),
\end{align*}
the determinant is negative for $\dn>\al>0$. Hence, $\al$ satisfies $\al=0$ or $\al \geq \dn$. Hence, the assertion holds.

\section{Spherical harmonics and Gegenbauer polynomials}\label{sec_polynomial}
In this section, we prove Proposition \ref{prop_commutator} in the case
\(\al=\dn\) with \(d\geq 3\), namely
\begin{align*}
[\phi^+(x),\phi^-(y)]= \exp\left(-\sum_\rho y_\rho \frac{d}{dx_\rho}\right) \frac{1}{||x||^{d-2}}.
\end{align*}
All notation used in this section follows Section \ref{sec_Verma} and
Section \ref{sec_construction}.

%この章では$\al =\dn$ with $d\geq 3$ の場合に命題\ref{prop_commutator}
%\begin{align*}
%[\phi^+(x),\phi^-(y)]= \exp\left(\sum_\rho y_\rho \frac{d}{dx_\rho}\right) \frac{1}{||x||^{d-2}}
%\end{align*}
%を示す。
%この章における記号は全てSection \ref{sec_Verma} and Section \ref{sec_construction}に従う。
Set $\mathrm{Harm}_{d} = \bigoplus_{n \geq 0} \mathrm{Harm}_{d,n}$, the space of harmonic polynomials
and $N_{d,n} = \dim \mathrm{Harm}_{d,n}$.
By Proposition \ref{prop_psi_dn}, we can identify $L(\dn)$ as $\mathrm{Harm}_{d} $, which induces the bilinear form
\begin{align*}
(-,-)_H:\mathrm{Harm}_{d} \times \mathrm{Harm}_{d} \rightarrow \R.
\end{align*}
Let $\{p_i^n(x)\}_{i=1,\dots, N_{d,n}}$ be an orthonormal basis of $\mathrm{Harm}_{d,n}$ with respect to $(-,-)_H$ and set 
\begin{align*}
G_n^d(x,y)=\sum_{i=1}^{N_{d,n}}p_i^n(x)p_i^n(y).
\end{align*}
Then, by \eqref{eq_commutator_psi} and Lemma \ref{transform_explicit}, we have
\begin{align*}
[\phi^+(x), \phi^-(y)]=\sum_{n=0}^\infty |x|^{-(d-2)-2n} G_n^d(x,y).
\end{align*}
Hence, to prove Proposition \ref{prop_commutator},
it suffices to show the following by \eqref{eq_app_Gegen2} and \eqref{eq_app_Gegen3}.
\begin{prop}\label{polynomial_sum}
For any $n \geq 0$,
\begin{align*}
G_n^d(x,y)= C_n^{(\frac{d-2}{2})}\left(\frac{(x,y)}{|x||y|}\right)|x|^n|y|^n,
\end{align*}
where
$C_n^{(\frac{d-2}{2})}(t)$ is the Gegenbauer polynomials \eqref{eq_app_Gegen1}.
\end{prop}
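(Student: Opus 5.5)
The idea is to avoid computing the inner product $(-,-)_H$ on $\mathrm{Harm}_{d,n}$ directly. Instead I would identify the kernel $G_n^d(x,y)$ through the canonical pairing $\langle-,-\rangle:L(\dn)^\da\otimes L(\dn)\to\R$ of Proposition \ref{prop_bilinear_L}. That pairing can be evaluated explicitly on the generators $P_{\mu_1}\cdots P_{\mu_n}v_\al^\da$, exactly as in the proof of Proposition \ref{prop_psi_generic}.

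Throughout, $\al=\dn$ and $d\geq 3$. By Theorem \ref{prop_singular_vector} the form on $L(\dn)^\da$ is positive-definite, so a real orthonormal basis $\{p_i^n\}$ of $\mathrm{Harm}_{d,n}$ exists. By \eqref{eq_commutator_psi}, $|x|^{-(d-2)-2n}G_n^d(x,y)=\sum_i\Psi^\da_{\al,x}(f_i^n)\Psi_{\al,y}(e_i^n)$ for any basis $\{e_i^n\}$ of $L(\al)_n\cong\mathrm{Harm}_{d,n}$ with dual basis $\{f_i^n\}\subset L(\al)^\da_n$. This sum is independent of the choice of basis, so it suffices to compute it for one convenient description.

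The first step is to evaluate the pairing on generators. Using $\langle Au^\da,v\rangle=-\langle u^\da,Av\rangle$ and $d_\al(P_\mu)=-\pa_\mu$, I would show that for every $e\in L(\al)_n$, regarded as a harmonic polynomial via $\Psi_\al$,
\[
\langle P_{\mu_1}\cdots P_{\mu_n}v_\al^\da,\,e\rangle=(\pa_{\mu_1}\cdots\pa_{\mu_n}e)(0)\,\langle v_\al^\da,v_\al\rangle=\pa_{\mu_1}\cdots\pa_{\mu_n}e .
\]
The right-hand side is a constant because $e$ is homogeneous of degree $n$. The computation only moves each $P_\mu$ across the pairing and uses $\langle v_\al^\da,1\rangle=1$, so it is insensitive to the quotient by $N(\dn)^\da$.

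The second step introduces, for fixed $y\in\R^d$, the vector
\[
T(y)=\sum_{|\be|=n}\frac{y^\be}{\be!}P^\be v_\al^\da\in L(\al)^\da_n .
\]
By the first step and Taylor's formula for a homogeneous polynomial, $\langle T(y),e\rangle=\sum_\be \frac{y^\be}{\be!}\pa^\be e=e(y)$ for every $e\in L(\al)_n$. Expanding $T(y)$ in the dual basis gives $T(y)=\sum_i\langle T(y),e_i^n\rangle f_i^n=\sum_i e_i^n(y)f_i^n$. Applying $\Psi^\da_{\al,x}$ then yields
\[
\sum_i\Psi^\da_{\al,x}(f_i^n)\,e_i^n(y)=\Psi^\da_{\al,x}(T(y))=\sum_{|\be|=n}\frac{y^\be}{\be!}(-\pa_x)^\be|x|^{-(d-2)}=\frac{1}{n!}\Bigl(-\sum_\rho y_\rho\pa_{x_\rho}\Bigr)^n|x|^{-(d-2)} .
\]
This is precisely the degree-$n$ part in $y$ of the Taylor expansion of $|x-y|^{-(d-2)}$. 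By \eqref{eq_app_Gegen2}, \eqref{eq_app_Gegen3} and the uniqueness of the homogeneous decomposition in $y$, it equals $|x|^{-(d-2)-2n}C_n^{(\frac{d-2}{2})}\bigl(\frac{(x,y)}{|x||y|}\bigr)|x|^n|y|^n$. Comparing with $|x|^{-(d-2)-2n}G_n^d(x,y)$ gives the proposition.

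I expect the main obstacle to be bookkeeping of conventions rather than any analysis. One point is to check that the identification of $L(\dn)$ with $\mathrm{Harm}_d$ used to define $(-,-)_H$ is compatible with the pairing, so that an orthonormal basis for $(-,-)_H$ corresponds to a pair of dual bases under $f_\al$ and $\Psi^\da_\al$; this should follow from Proposition \ref{prop_bilinear_L} and Lemma \ref{transform_explicit}. The other point is to track the signs coming from $d_\al(P_\mu)=-\pa_\mu$, which enter twice and cancel.
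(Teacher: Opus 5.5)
Your proof is correct, and it takes a genuinely different route from the paper's. The paper works with the $(-,-)_H$-orthonormal basis itself. Since invariant forms on the irreducible $\mathrm{SO}(d)$-module $\mathrm{Harm}_{d,n}$ are unique up to scalar, it writes $(-,-)_{S^{d-1}}=c_{d,n}(-,-)_H$. It then computes $c_{d,n}=\frac{d-2}{2n+d-2}$ (Lemma~\ref{cdl}) by evaluating both forms on $(x_1+ix_2)^n$ and $(x_1-ix_2)^n$, using an $\mathfrak{sl}_2$ computation on one side and an integral in spherical coordinates on the other. Finally it quotes the classical addition theorem, Theorem~\ref{classical_normal}.

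You never touch the sphere. Your reproducing vector $T(y)$ uses only the invariance of the pairing and Taylor's formula. It is exactly what lets the dual-basis argument of the generic case (Proposition~\ref{prop_psi_generic}) survive at $\alpha=\frac{d-2}{2}$. There, for $n\geq 2$, the monomials $x^\beta/\beta!$ no longer lie in $L(\alpha)\cong\mathrm{Harm}_d$, and the vectors $P^\beta v_\alpha^\dagger$ become linearly dependent in $L(\alpha)^\dagger$, so no dual basis of that explicit shape exists. In effect you prove Proposition~\ref{prop_commutator} uniformly in $\alpha$ and read the present statement off degree by degree. This is shorter and needs no external theorem.

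What the paper's route buys is Lemma~\ref{cdl} itself, the explicit comparison of the invariant form with the $L^2(S^{d-1})$ inner product, which the paper advertises as an output of Appendix~\ref{sec_polynomial}. From your argument, that constant can be recovered afterwards by combining your result with Theorem~\ref{classical_normal}.

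The compatibility point you flag is harmless. Transport $(-,-)_H$ to $L(\alpha)$ through $\theta_\alpha$; then $e_i=\Psi_\alpha^{-1}(p_i^n)$ and $f_i=\theta_\alpha(e_i)$ form dual bases. Moreover $\Psi_\alpha^\dagger\circ\theta_\alpha=T_\alpha\circ\Psi_\alpha$, since both are $\mathfrak{so}(d+1,1)$-homomorphisms on the cyclic module $\theta^*L(\alpha)$ sending $v_\alpha$ to $|x|^{-2\alpha}$. Hence $\Psi^\dagger_{\alpha,x}(f_i)=|x|^{-(d-2)-2n}p_i^n(x)$, which is exactly the step the paper uses.
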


Before we prove Proposition \ref{polynomial_sum}, we need a preliminary result.
Let $\omega$ be the standard measure on $S^{d-1}$, which is $\mathrm{SO}(d)$ invariant.
We normalize $\om$ by $\int_{S^{d-1}}\om=1$.
The restriction of a function on $\R^d$ to $S^{d-1}$
defines a $\mathrm{SO}(d)$-module homomorphism $\mathrm{Harm}_{d} \rightarrow C^\infty(S^{d-1},\R)$.
%\begin{rem}
%In fact, this gives an isomorphism $\mathrm{Harm}_{d} \rightarrow \mathfrak{o}_d$ as $\mathrm{SO}(d)$-modules (see Definition \ref{def_frak_o}).
%\end{rem}
For $n \in \Z_{\geq 0}$, 
define an inner product $(-,-)_{S^{d-1}}$ on $\mathrm{Harm}_{d,n}$
by
$$
(f(x),g(x))_{S^{d-1}} = \int_{S^{d-1}}\om(\xi) f(\xi)g(\xi),
$$
which is $\mathrm{SO}(d)$-invariant.
Since $\mathrm{Harm}_{d,n}$ is irreducible as 
a $\mathrm{SO}(d)$-module, $\mathrm{SO}(d)$-invariant bilinear form is unique up to a scalar multiplication.
Since, by Proposition \ref{prop_bilinear_L}, the bilinear form $(-,-)_H: \mathrm{Harm}_{d,n}\times \mathrm{Harm}_{d,n} \rightarrow \R$ is also $\mathrm{SO}(d)$-invariant,
there is a real number $c_{d,n} \in \R$ such that 
$c_{d,n}(-,-)_H = (-,-)_{S^{d-1}}.$
Hereafter, we will prove the following lemma:
\begin{lem}\label{cdl}
For any $d \geq 3$ and $n \in \Z_{\geq 0}$,
$c_{d,n}= \frac{d-2}{2n+d-2}$.
\end{lem}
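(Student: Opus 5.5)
The idea is to compute both $\mathrm{SO}(d)$-invariant forms on a single nonzero element $h\in\mathrm{Harm}_{d,n}$. Each will be compared with the \emph{Fischer pairing} $[p,q]=p(\partial)q$ on $\R[x_1,\dots,x_d]_n$. Since $\mathrm{Harm}_{d,n}$ is irreducible, one ratio computed on one $h$ determines $c_{d,n}$. Throughout, $\al=\dn$ and $(a)_n=a(a+1)\cdots(a+n-1)$.

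\emph{Step 1: $(-,-)_H$ in terms of the Fischer pairing.} I would first establish the Hobson-type identity
\begin{align*}
h(\partial)\,||x||^{-2\al}=(-2)^n(\al)_n\,h(x)\,||x||^{-2\al-2n}\qquad (h\in\mathrm{Harm}_{d,n}).
\end{align*}
This follows by induction on $n$ from $\partial_\mu ||x||^{-2\be}=-2\be x_\mu||x||^{-2\be-2}$ and harmonicity; only the top term survives because the Laplacian kills $h$. It gives $\Psi_\al^\da\bigl(h(P)v_\al^\da\bigr)=(-1)^n h(\partial)||x||^{-2\al}=2^n(\al)_n\,h\,||x||^{-2\al-2n}$, since $d_\al(P_\mu)=-\partial_\mu$. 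So under the identification $L(\al)^\da\cong H(\al)^\da$ of \eqref{eq_three_isom} and $T_\al$ of Lemma \ref{transform_explicit}, the element of $L(\al)^\da$ corresponding to $h$ is $h(P)v_\al^\da/(2^n(\al)_n)$. Next, the argument in the proof of Proposition \ref{prop_psi_generic} applies verbatim: it only uses invariance and $d_\al(P_\mu)=-\partial_\mu$. Hence $\langle q(P)v_\al^\da,\Psi_\al(w)\rangle=q(\partial)\Psi_\al(w)$, up to the sign $(-1)^n$ coming from \eqref{eq_pairing1}. Combining these with the definition of $(-,-)$ through $\theta_\al$ and $f_\al$ yields
\begin{align*}
(h,h)_H=\frac{h(\partial)h}{2^n(\al)_n}.
\end{align*}
A consistency check of the normalization is $n=1$, $h=x_\mu$: this gives $(x_\mu,x_\mu)_H=1/(2\al)$, which should match \eqref{eq_form_deg1}.

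\emph{Step 2: the sphere form in terms of the Fischer pairing.} I would prove
\begin{align*}
\int_{S^{d-1}}\om\,h^2=\frac{h(\partial)h}{2^n(d/2)_n}.
\end{align*}
One route is the Gaussian integral. Take $\int_{\R^d}e^{-|x|^2/2}h(x)^2dx$ and compute it in polar coordinates, using $\int_0^\infty r^{2n+d-1}e^{-r^2/2}dr$. Compare with the Fischer pairing via the Gaussian identity $\int e^{-|x|^2/2}p\,q=\bigl(e^{-\Delta/2}p\bigr)(\partial)\,q$ up to normalization; this reduces to $p(\partial)q$ when $p$ is harmonic. An alternative route is induction using $\Delta(|x|^2 q)=(2d+4m)q+|x|^2\Delta q$.

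\emph{Step 3: conclusion.} Dividing the two formulas gives
\begin{align*}
c_{d,n}=\frac{(\al)_n}{(d/2)_n}=\frac{\al}{\al+n}=\frac{d-2}{2n+d-2},
\end{align*}
where the middle equality uses $d/2=\al+1$, so the Pochhammer ratio telescopes.

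The main obstacle is Step 1. There one must track the signs and normalizations exactly: the involution $\theta$, the sign in \eqref{eq_pairing1}, the factor $(-1)^n$ from $d_\al(P_\mu)=-\partial_\mu$, and the map $T_\al$. These must combine to a positive constant, consistent with the positivity in Theorem \ref{prop_singular_vector}. I would fix all conventions by checking the cases $n=0$ and $n=1$ against \eqref{eq_form_deg1} before running the general computation.
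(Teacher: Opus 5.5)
Your proof is correct, but it takes a different route from the paper.

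\textbf{The paper's route.} The paper evaluates both forms on the single pair $z^n,\bar z^n$ with $z=x_1+ix_2$, using the bilinear extension.
\begin{itemize}
\item It writes $z^n$ and $\bar z^n$ as multiples of $d_{\dn}(L(1))^n1$ and $d_{\dn}(\Ld(1))^n1$, and moves $L(1)$ across the invariant form as $\Ld(-1)$.
\item The $\mathfrak{sl}_2$ lowest-weight computation $\Ld(-1)^n\Ld(1)^n1=n!(\dn)_n1$ then gives $(z^n,\bar z^n)_H=n!/(\dn)_n$.
\item An explicit iterated integral of powers of sines in spherical coordinates gives $(z^n,\bar z^n)_{S^{d-1}}=n!/(\frac d2)_n$.
\item Irreducibility of $\mathrm{Harm}_{d,n}$ turns this single ratio into $c_{d,n}$.
\end{itemize}

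\textbf{Your route and the trade-off.} You compare both forms with the Fischer pairing and obtain closed formulas on all of $\mathrm{Harm}_{d,n}$. These agree with the paper's values: for $h_1=z^n$, $h_2=\bar z^n$ one has $h_1(\partial)h_2=2^nn!$. Your version gives more. It identifies $(-,-)_H$ in degree $n$ as $(2^n(\dn)_n)^{-1}$ times the Fischer product, so positivity is visible directly. It also needs irreducibility only to define $c_{d,n}$, and it does not use the $\mathfrak{sl}_2\oplus\mathfrak{sl}_2$ subalgebra. The paper's version is more self-contained: it needs neither Hobson's formula nor a Gaussian comparison. It also never has to track $\theta_\alpha$ or $T_\alpha$, because it only moves operators across the invariant form.

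\textbf{Step 1: the identification and the sign.} The identification you need is $\Psi_\alpha^\dagger\circ\theta_\alpha=T_\alpha\circ\Psi_\alpha$ on $\theta^*L(\alpha)$. This holds because both are $\sod$-module maps out of the cyclic module $\theta^*L(\alpha)$, and both send $v_\alpha$ to $||x||^{-2\alpha}$. There is no leftover sign. The $(-1)^n$ coming from \eqref{eq_pairing1} cancels the $(-1)^n$ coming from $d_\alpha(P_\mu)=-\partial_\mu$. Hence $\langle q(P)v_\alpha^\dagger,w\rangle=q(\partial)\Psi_\alpha(w)$ exactly, as in the proof of Proposition \ref{prop_psi_generic}.

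\textbf{Step 2: the Gaussian identity.} The one-sided identity you wrote is false in general. For example, take $p=1$ and $q=x_1^2$: the Gaussian integral is nonzero, while $p(\partial)q=x_1^2$ vanishes at $0$. With $\Delta_x=\sum_\rho\partial_\rho^2$, the correct general statement is
\[
(2\pi)^{-d/2}\int e^{-|x|^2/2}pq\,dx=\bigl((e^{\Delta_x/2}p)(\partial)\,e^{\Delta_x/2}q\bigr)(0).
\]
For $p$ harmonic and $q$ homogeneous of the same degree, this reduces to $p(\partial)q$ by degree counting. That is the only case you use, so the argument stands once this is corrected.
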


Then, Proposition \ref{polynomial_sum} follows from Lemma \ref{cdl} with the following well-known theorem, which is called the spherical harmonic addition theorem (for the proof, we refer, for example, \cite[(1.2.8)]{DaiXu}):
\begin{thm}\label{classical_normal}
Let $\{q_i(x)\}_{i=1,\dots,N_{d,n}}$ be an orthonormal basis
of $\mathrm{Harm}_{d,n}$ with respect to $(-,-)_{S^{d-1}}$.
Then, 
$\sum_{i=1}^{N_{d,n}}q_i(x)q_i(y) =\frac{2n+d-2}{d-2} C_n^{(\dn)}\left(\frac{(x,y)}{|x||y|}\right)|x|^n |y|^n$.
\end{thm}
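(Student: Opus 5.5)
The plan is the classical reproducing-kernel argument for zonal harmonics. Set
\[
Z_n(x,y)=\sum_{i=1}^{N_{d,n}}q_i(x)q_i(y).
\]
First, $Z_n$ does not depend on the choice of orthonormal basis, since it is the kernel of the orthogonal projection onto $\mathrm{Harm}_{d,n}$. Because $(-,-)_{S^{d-1}}$ is $\mathrm{SO}(d)$-invariant, the same basis-independence gives $Z_n(gx,gy)=Z_n(x,y)$ for all $g\in\mathrm{SO}(d)$. Also, $Z_n$ is a polynomial, homogeneous of degree $n$ and harmonic in each variable separately.

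\textbf{Step 1 (zonal form).} Fix a unit vector $\eta$. The function $Z_n(\cdot,\eta)$ is invariant under the stabilizer $\mathrm{SO}(d-1)$ of $\eta$. Since $\mathrm{SO}(d-1)$ acts transitively on each sphere $\{\xi\in S^{d-1}\mid (\xi,\eta)=t\}$ (this uses $d\geq 3$), the restriction of $Z_n(\cdot,\eta)$ to $S^{d-1}$ depends only on $t=(\xi,\eta)$. By homogeneity,
\[
Z_n(x,y)=f\!\left(\tfrac{(x,y)}{|x||y|}\right)|x|^n|y|^n
\]
for some function $f$ of one variable. Restricting to $x=s\eta+\xi'$ with $\xi'\perp\eta$ shows that $f$ is a polynomial of degree at most $n$ containing only powers $t^{n-2k}$.

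\textbf{Step 2 (uniqueness).} Write $g(x)=\sum_k c_k (x,\eta)^{n-2k}|x|^{2k}$. The Laplacian of each term is a combination of $(x,\eta)^{n-2k-2}|x|^{2k}$ and $(x,\eta)^{n-2k}|x|^{2k-2}$ with explicit coefficients. The condition $\Delta g=0$ therefore becomes a two-term recursion which determines $c_{k+1}$ from $c_k$. This recursion is nondegenerate because $d\geq 3$. Hence the space of such zonal harmonics of degree $n$ is at most one-dimensional.

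\textbf{Step 3 (the Gegenbauer polynomial is a zonal harmonic).} I claim that $C_n^{(\dn)}\!\left(\tfrac{(x,y)}{|x||y|}\right)|x|^n|y|^n$ is harmonic in $y$. For $|y|<|x|$ the function $y\mapsto |x-y|^{2-d}$ is harmonic, and by \eqref{eq_app_Gegen3} with $\al=\dn$ its homogeneous component of degree $n$ in $y$ is $C_n^{(\dn)}(t)|y|^n|x|^{-n-d+2}$. Since $\Delta_y$ preserves degree in $y$, each homogeneous component is harmonic in $y$. Multiplying by $|x|^{2n+d-2}$, which is constant in $y$, gives the claim. The expression is symmetric in $x$ and $y$, so it is harmonic in $x$ as well, and it has the zonal form of Step 1. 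By Step 2,
\[
Z_n=\lambda_n\, C_n^{(\dn)}\!\left(\tfrac{(x,y)}{|x||y|}\right)|x|^n|y|^n
\]
for some constant $\lambda_n$.

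\textbf{Step 4 (the constant).} By invariance, $Z_n(\xi,\xi)$ is constant for $\xi\in S^{d-1}$. Integrating $\sum_i q_i(\xi)^2$ against the normalized measure $\om$ gives $Z_n(\xi,\xi)=N_{d,n}$. The generating function gives $C_n^{(\dn)}(1)=\binom{n+d-3}{n}$. The dimension is
\[
N_{d,n}=A_n-A_{n-2}=\binom{n+d-1}{d-1}-\binom{n+d-3}{d-1},
\]
since $\Delta:\R[x]_n\to\R[x]_{n-2}$ is surjective. Elementary algebra should show $N_{d,n}=\frac{2n+d-2}{d-2}\binom{n+d-3}{n}$, and this would give $\lambda_n=\frac{2n+d-2}{d-2}$, as claimed.

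I expect the main obstacle to be Step 2, specifically checking that the recursion coefficients never vanish. If that check becomes messy, I would instead combine the irreducibility of $\mathrm{Harm}_{d,n}$ (already used in the paper) with the standard fact that the $\mathrm{SO}(d-1)$-fixed vectors in it form a one-dimensional space.
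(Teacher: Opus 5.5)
The paper does not prove this statement. It quotes the classical addition theorem for spherical harmonics from \cite[(1.2.8)]{DaiXu}, and uses it together with Lemma~\ref{cdl} to identify $G_n^d$ with the Gegenbauer kernel. Your proposal gives the standard reproducing-kernel proof of that classical fact, and it is correct.

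What your route buys is a self-contained Appendix~\ref{sec_polynomial}. It uses only ingredients already in the paper: the expansion \eqref{eq_app_Gegen3}, Lemma~\ref{lem_Gegenbauer_convergence} (to apply $\Delta_y$ termwise before separating homogeneous components along rays), the surjectivity of $\Delta$ behind the count $A_n-A_{n-2}$, and $\mathrm{SO}(d)$-invariance.

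The step you feared in Step~2 is harmless. One has
$\Delta\bigl((x,\eta)^{n-2k}|x|^{2k}\bigr)=(n-2k)(n-2k-1)(x,\eta)^{n-2k-2}|x|^{2k}+2k(2n-2k+d-2)(x,\eta)^{n-2k}|x|^{2k-2}$.
So the recursion reads $(n-2j)(n-2j-1)c_j+2(j+1)(2n-2j+d-4)c_{j+1}=0$ for $0\le j\le \lfloor n/2\rfloor-1$, and $2n-2j+d-4\geq n+d-2>0$. The hypothesis $d\geq 3$ is not needed there. It is used in Step~1 (transitivity of $\mathrm{SO}(d-1)$ on $\{(\xi,\eta)=t\}$, and evenness in the component orthogonal to $\eta$) and in Step~3 (harmonicity of $|x-y|^{2-d}$).

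The identity you left to elementary algebra also holds:
$A_n-A_{n-2}=\binom{n+d-2}{d-2}+\binom{n+d-3}{d-2}=\frac{(2n+d-2)\,(n+d-3)!}{(d-2)!\,n!}=\frac{2n+d-2}{d-2}\binom{n+d-3}{n}$.
Together with $C_n^{(\dn)}(1)=\binom{n+d-3}{n}$, read off from $(1-r)^{-(d-2)}$, this gives $\lambda_n=\frac{2n+d-2}{d-2}$.

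Your Step~4 fixes the constant by a trace, $\int Z_n(\xi,\xi)\,\omega(\xi)=N_{d,n}$. This is exactly where the normalization $\int_{S^{d-1}}\omega=1$ enters. The paper fixes the complementary constant $c_{d,n}$, relating $(-,-)_H$ to $(-,-)_{S^{d-1}}$, by evaluating both forms on the single pair $(x_1\pm ix_2)^n$.
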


Recall that in \eqref{eq_L_sod} we introduce $\{L(i),\Ld(i)\}_{i =-1,0,1}$ which are vectors in $\sod_\C$ satisfying the $\mathrm{sl}_2$-commutator relations.
By setting $z=x_1+ix_2$ and $\z=x_1-ix_2$, we have
\begin{align*}
d_\dn(L(1))= -\dn z -z^2\pa_z - \sum_{i=3}^d (zx_i\pa_i - x_i^2 \pa_\z),\\
d_\dn(\Ld(1))= -\dn \z - \z^2\pa_\z -  \sum_{i=3}^d (\z x_i\pa_i - x_i^2 \pa_z).
\end{align*}
Let $(\Harm_d)_\C$ be the space of harmonic polynomials with complex coefficients
and define a bilinear form on $(\Harm_d)_\C$ by linearly extending 
the bilinear form on $\Harm_d$. We remark that we do not consider Hermitian inner products as usual.
%Since the Laplacian is $\C$-linear, it is easy to show that $d_\dn(L(1)^l)1$ and $d_\dn(L(1)^l)1$ are Harmonic polynomials.

Let $(y)_l$ be the Pochhammer symbol
defined by 
\[
(y)_l= \begin{cases}
   1  & (l=0) \\
   y(y+1)\cdot (y+l-1) & (l \geq 1).
\end{cases}
\]
Then, we have:
\begin{lem}
For any $l \in \Z_{\geq 0}$, $d_\dn(L(1)^l)1=(-1)^l (\dn+l-1)(\dn+l-2)\dots
\dn(x_1+ix_2)^l$ and $d_\dn(\Ld(1)^l)1= (-1)^l(\dn+l-1)(\dn+l-2)\dots
\dn (x_1 - ix_2)^l$ and 
$((x_1+ix_2)^l,(x_1-ix_2)^l)_H=\frac{l!}{(\dn)_l}$.
\end{lem}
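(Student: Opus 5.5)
The plan is a direct computation in the differential-operator realization. Throughout, $\al=\dn$ with $d\geq 3$, and $L(\dn)$ is identified with $\Harm_d$ through $\Psi_\al$ (Proposition \ref{prop_psi_dn}). Since $d_\al$ is a Lie algebra homomorphism and $\Psi_\al$ intertwines, the $\sod_\C$-action on $(\Harm_d)_\C$ is given by the operators $d_\dn(\cdot)$. The vector $1$ corresponds to $v_\al$, and the complexified form is $\C$-bilinear.

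\emph{Step 1 (the two formulas for $d_\dn(L(1)^l)1$).} I would argue by induction on $l$ using the explicit expression for $d_\dn(L(1))$ in the coordinates $z,\z,x_3,\dots,x_d$. Apply it to $c\,z^l$. The terms $x_i\pa_i z^l$ and $x_i^2\pa_\z z^l$ vanish for $i\geq 3$, and $z^2\pa_z z^l=l z^{l+1}$. Hence
\begin{align*}
d_\dn(L(1))\,z^l=-(\dn+l)\,z^{l+1},
\end{align*}
and iterating gives $d_\dn(L(1)^l)1=(-1)^l(\dn)_l\,z^l$, which is the stated product. The formula for $\Ld(1)$ follows in the same way, or by complex conjugation.

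\emph{Step 2 (invariance rule on $L(\al)$).} I would transport the form $(-,-)$ from $L(\al)^\da$ to $L(\al)\cong\Harm_d$ via $\theta_\al:\theta^*L(\al)\to L(\al)^\da$. The form $(-,-)_H$ is this transported form, with normalization $(1,1)_H=(v_\al^\da,v_\al^\da)=1$. Because $\theta_\al$ intertwines the $\theta$-twisted action and $\theta^2=\id$, the rule $(Au,v)=-(u,\theta(A)v)$ of Proposition \ref{prop_bilinear_Verma} persists on $\Harm_d$. From \eqref{def_theta_add} one has $\theta(L(1))=-\ft(P_1+iP_2)=-\Ld(-1)$. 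Therefore
\begin{align*}
(L(1)u,v)_H=(u,\Ld(-1)v)_H.
\end{align*}
This sign and transport bookkeeping is the step I expect to need the most care, since an error here changes the answer by a sign or by the factor $(\dn)_l^2$.

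\emph{Step 3 (evaluation).} Moving $L(1)^l$ across $l$ times gives
\begin{align*}
(d_\dn(L(1)^l)1,\,d_\dn(\Ld(1)^l)1)_H=(1,\,d_\dn(\Ld(-1))^l d_\dn(\Ld(1)^l)1)_H .
\end{align*}
We have $d_\dn(\Ld(-1))=\ft d(P_1+iP_2)=-\pa_\z$. By Step 1, $d_\dn(\Ld(1)^l)1=(-1)^l(\dn)_l\,\z^l$, so applying $(-\pa_\z)^l$ yields $l!\,(\dn)_l$, a constant. The right-hand side therefore equals $l!\,(\dn)_l\,(1,1)_H=l!\,(\dn)_l$. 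On the other hand, by Step 1 the left-hand side equals $(\dn)_l^2\,(z^l,\z^l)_H$, since the signs $(-1)^l(-1)^l$ cancel. Dividing gives $((x_1+ix_2)^l,(x_1-ix_2)^l)_H=l!/(\dn)_l$.
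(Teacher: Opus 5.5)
Your proposal is correct and follows the paper's proof: it uses the same induction $d_\dn(L(1))z^l=-(\dn+l)z^{l+1}$ and the same move of $L(1)^l$ across the form as $\Ld(-1)^l$ via $\theta(K_\mu)=-P_\mu$. The last step also reduces, as in the paper, to evaluating $\Ld(-1)^l\Ld(1)^l1=l!\,(\dn)_l$. The differences are cosmetic: you get that constant by applying $-\pa_\z$ directly to $\z^l$ rather than through the $\mathrm{sl}_2$ relations with $d_\dn(\Ld(0))1=-\frac{d-2}{4}$, and you make explicit why the invariance rule transports from $L(\dn)^\da$ to $\Harm_d$, which the paper leaves implicit.
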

\begin{proof}
We will show $d_\dn(L(1)^l)1=(-1)^l (\dn)_l (x_1+ix_2)^l$ by induction on $l$.
For $l=0$, the assertion is clear.
Assume that the induction hypothesis is true for $l$.
Then,
\begin{align*}
d_\dn(L(1))z^l 
&=(-\dn z - z^2\pa_z - \sum_{i=3}^d (zx_i\pa_i -x_i^2 \pa_\z))z^l \\
&=-(l+\dn)z^{l+1}.
\end{align*}
By Proposition \ref{prop_bilinear_Verma}, we have
\begin{align*}
(d_\dn(L(1))^l1, d_\dn(\Ld(1))^l1)_H
&=(d_\dn(\ft(K_1+iK_2))^l1, d_\dn(\Ld(1))^l 1)_H \\
&=(1, d_\dn(\ft(P_1+iP_2))^l d_\dn(\Ld(1))^l 1)_H \\
&=(1,d_\dn(\Ld(-1))^l d_\dn(\Ld(1))^l 1)_H.
\end{align*}
Since $\Ld(-1),\Ld(0),\Ld(1)$ satisfy the commutator relation of $\mathrm{sl}_2$ and $d_\dn(\Ld(-1))1=0$ and 
$d_\dn(\Ld(0))1= - \frac{d-2}{4}$,
we have
\begin{align*}
d_\dn(\Ld(-1))^l d_\dn(\Ld(1))^l 1
&= l! (\dn+l-1)\dots (\dn +1) \dn 1.
\end{align*}
Thus, the assertion holds.
\end{proof}

\begin{lem}
For any $l \in \Z_{\geq 0}$,
$((x_1+ix_2)^l,(x_1-ix_2)^l)_{S^{d-1}}= \frac{l!}{(\frac{d}{2})_l} $.
\end{lem}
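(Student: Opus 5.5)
We need to compute $\int_{S^{d-1}} z^l \bar z^l\,\omega$ with $z=x_1+ix_2$, normalized $\omega$. Since $z^l\bar z^l=(x_1^2+x_2^2)^l$ restricted to the sphere, the claim reduces to
\[
\int_{S^{d-1}}(\xi_1^2+\xi_2^2)^l\,\omega(\xi)=\frac{l!}{(\frac d2)_l}.
\]
The plan is to compute this with a Gaussian integral. Note that $(x_1+ix_2)^l$ and $(x_1-ix_2)^l$ are harmonic, so they lie in $(\Harm_{d,l})_\C$ and the pairing is the bilinearly extended $L^2$ form, as required.

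\textbf{Step 1 (Gaussian identity).} Let $f(x)=(x_1^2+x_2^2)^l$, which is homogeneous of degree $2l$. Passing to polar coordinates,
\[
\int_{\R^d} f(x)e^{-|x|^2}\,dx=|S^{d-1}|\int_0^\infty r^{2l+d-1}e^{-r^2}\,dr\cdot\int_{S^{d-1}}f\,\omega .
\]
The radial factor equals $\tfrac12\Gamma(l+\tfrac d2)$, and the case $l=0$ fixes the normalization: $\pi^{d/2}=|S^{d-1}|\cdot\tfrac12\Gamma(\tfrac d2)$. Dividing the two identities gives
\[
\int_{S^{d-1}}f\,\omega=\frac{\Gamma(\frac d2)}{\Gamma(l+\frac d2)}\cdot\pi^{-d/2}\int_{\R^d}f(x)e^{-|x|^2}\,dx .
\]

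\textbf{Step 2 (evaluate the Gaussian integral).} The integral factorizes. The variables $x_3,\dots,x_d$ contribute $\pi^{(d-2)/2}$. The remaining planar integral is computed in polar coordinates on $\R^2$:
\[
\int_{\R^2}(x_1^2+x_2^2)^le^{-|x|^2}\,dx_1\,dx_2=2\pi\int_0^\infty \rho^{2l+1}e^{-\rho^2}\,d\rho=\pi\, l! .
\]
Hence $\pi^{-d/2}\int_{\R^d}f\,e^{-|x|^2}\,dx=l!$, and therefore
\[
\int_{S^{d-1}}f\,\omega=\frac{l!\,\Gamma(\frac d2)}{\Gamma(l+\frac d2)}=\frac{l!}{(\frac d2)_l}.
\]

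There is no real obstacle here; the only care needed is the normalization $\int\omega=1$, which is handled by the $l=0$ case. Combined with the previous lemma, this gives
\[
c_{d,l}=\frac{(\dn)_l}{(\frac d2)_l}=\frac{d-2}{2l+d-2},
\]
which proves Lemma \ref{cdl}, since $((x_1+ix_2)^l,(x_1-ix_2)^l)_H\ne0$.
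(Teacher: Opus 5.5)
Your proof is correct, but it reaches the result by a different route than the paper.

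The paper uses hyperspherical coordinates. It writes $x_1+ix_2=\sin\theta_1\cdots\sin\theta_{d-2}\,e^{i\theta_{d-1}}$ and integrates against the explicit normalized volume form with $\Omega_{d-1}=2\pi^{d/2}/\Gamma(\frac d2)$. This reduces the claim to evaluating $\frac{2\pi}{\Omega_{d-1}}\prod_{i=1}^{d-2}\int_0^\pi\sin^{2l+i}\theta\,d\theta$. The paper leaves implicit the step where these Wallis integrals telescope to $\pi^{(d-2)/2}\,l!/\Gamma(l+\frac d2)$.

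Your Gaussian argument needs no coordinate parametrization of $S^{d-1}$ and no surface-area constant. The $l=0$ case fixes the normalization of $\omega$. The radial integral directly produces the ratio $\Gamma(\frac d2)/\Gamma(l+\frac d2)=1/(\frac d2)_l$. The factor $l!$ comes from a planar integral, because the integrand $(x_1^2+x_2^2)^l$ involves only $x_1,x_2$.

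What each approach buys: yours makes the origin of each factor in the answer transparent and works uniformly for all $d\ge 2$ (for $d=2$ it correctly gives $1$). The paper's computation is more hands-on but depends on the product of one-dimensional sine integrals, which it does not write out.

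Your closing deduction $c_{d,l}=(\frac{d-2}{2})_l/(\frac d2)_l=\frac{d-2}{2l+d-2}$ agrees with the paper. It correctly uses that the relation $c_{d,l}(-,-)_H=(-,-)_{S^{d-1}}$ extends complex-bilinearly to the complexified harmonic polynomials.
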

\begin{proof}
Let us consider the spherical coordinate:
\begin{align*}
x_1&=\sin \theta_1 \dots \sin \theta_{d-2} \sin \theta_{d-1}\\
x_2&=\sin \theta_1 \dots \sin \theta_{d-2} \cos \theta_{d-1}\\
x_3&=\sin \theta_1 \dots \cos \theta_{d-2}\\
\cdots \\
x_{d-1}&= \sin \theta_1 \cos \theta_2 \\
x_{d}&=\cos \theta_1,
\end{align*}
where $\theta_i \in [0,\pi]$ for $i=1,\dots,d-2$ and $\theta_{d-1} \in [0,2\pi]$.
Then, the volume form $\om$ normalized by $\int_{S^{d-1}}  \om =1$
is $$\om =\frac{1}{\Omega_{d-1}} \sin^{d-2} \theta_1\sin^{d-3} \theta_2 \cdots 
\sin \theta_{d-2} d\theta_1\cdots d\theta_{d-1},$$
where $\Omega_{d-1}= \frac{2\pi^{d/2}}{\Gamma(d/2)}$
and $\Gamma$ is the gamma function.
Then,
since $x_1+ix_2= \sin \theta_1 \dots \sin \theta_{d-2} \exp(i \theta_{d-1})$,
\begin{align*}
\int_{S^{d-1}} (x_1+ix_2)^l (x_1-ix_2)^l\om
&=\frac{1}{\Omega_{d-1}} \int 
\sin^{2l+d-2} \theta_1\sin^{2l+d-3} \theta_2 \cdots 
\sin^{2l+1} \theta_{d-2} d\theta_1 \cdots d\theta_{d-1}\\
&=\frac{2\pi}{\Omega_{d-1}} \Pi_{i=1}^{d-2}
\int_0^\pi \sin^{2l + i} \theta d\theta =\frac{l!}{(\frac{d}{2})_l}.
\end{align*}

\end{proof}
Combining the above lemmas,
we have
$$
c_{d,l}=\frac{l!}{(\frac{d}{2})_l} / \frac{l!}{(\dn)_l}
= \frac{d-2}{d-2+2l}.
$$
Hence, we have
$$
G_{d,l}(x,y)=C_l^{(\dn)}\left(\frac{(x,y)}{|x||y|}\right)|x|^l |y|^l,
$$
and Proposition \ref{polynomial_sum} holds.
%$C_l^{(\dn)}(1)= \binom{-(d-2)}{k}(-1)^k=\binom{d-3+l}{l}$.
%$q_iq_i(1)=N_{d,l}=\frac{2l+d-2}{l}\binom{d-3+l}{l-1}$.

\section{Massless scalar  representation}
\label{sec_massless}
Assume $d \geq 3$ and set
\begin{align*}
\Delta=\dn.
\end{align*}
In this section, we show that, by complexifying the action of $\sod$ on $L(\Delta)^\da$, one obtains a unitary representation of an appropriate covering group of $\mathrm{SO}_e(d,2)$, the conformal group of Minkowski space.
We also show that the restriction of this representation to the Poincar\'e group is the positive-energy massless scalar representation. These facts imply that the Hilbert space of states of the CFT on Minkowski spacetime naturally agrees with the completion of the space of states of the CFT on Euclidean space.
Such an identification is known in $d=2$ from studies of the correspondence between full vertex operator algebras and Wightman fields; see \cite{AGT,AMT}.

We note that the representation-theoretic content of this section is already known from \cite{HSS,AngelopoulosLaoues}\footnote{
The minimal representation of $\mathrm{SO}_e(d,2)$ has been studied from various points of view.
From a physical point of view, it has been studied in connection with AQFT and conformal field theory as an extension of the massless scalar representation of the Poincar\'e group in \cite{SV,SS,HL}. 
Early work from the viewpoint of constructing unitary representations on spaces of solutions to the Klein--Gordon equation includes \cite{BZ}
and from the viewpoint of conformal geometry and conformally invariant differential equations was subsequently given by Kobayashi and \O rsted \cite{KO1,KO2}. In what follows, we base our presentation on \cite{HSS}, which gives a uniform realization for the conformal group in all dimensions.}, and contains no new results. However, since we are not aware of a reference that formulates these facts from the viewpoint of the relation between Euclidean CFT and Minkowski CFT, we include the discussion here.

%
%この章では、$\al=\dn$における$L(\dn)$への$\sod$の作用を複素化することで、ミンコフスキー空間上の共形群である$\mathrm{SO}_e(d,2)$の適切な covering group のユニタリ表現が得られることを見る。
%またその表現の Poincare group への制限が massless scalar field の一粒子状態のなす空間であることを示す。これらの事実は Minkowski時空上の CFT の状態のなすヒルベルト空間と、Euclidian 時空上の CFT の状態のなす空間(の完備化)が自然に一致していることを意味する。
%こうした一致は、$d=2$において (full) vertex operator algebra と Wightman field の間の対応の研究において知られており \cite{AGT,AMT}、この章の内容はその高次元の自由場への拡張である。
%この章の表現論的な内容は\cite{FSS,Angel}により\footnote{
%$\mathrm{SO}_e(d,2)$の minimal representation
%は様々な視点で調べられてきた。
%物理的な視点からは Poincare group の massless scalar 表現の拡張として、\cite{SV,SS,HL}によって AQFT や共形場理論と関連して調べられた。また Klein-Gordon equation の解空間にユニタリ表現を作るという視点からの初期の仕事には \cite{BZ} などがある。また\todo \cite{KO1,KO2} 以下では我々は全ての次元を一様に扱った\cite{HSS}に基づき記述する}、新しい内容を含まないが、Euclidian CFT と Minkowski CFTの関係性において、記述する文献は存在しなかったため、ここに述べる。
%Set
%\begin{align*}
%\Delta=\dn.
%\end{align*}

The complexification $L(\Delta)_\C^\da=L(\Delta)^\da\otimes_\R\C$ is an irreducible module over the complexification $\sod_\C$ of $\sod$.
Moreover, for any $A+iB \in \sod_\C$,
\begin{align*}
((A+iB)u,v)_L = -(u,(\theta(A)-i\theta(B))v)_L\qquad\text{ for }u,v\in L(\Delta)_\C^\da,
\end{align*}
where the positive-definite invariant bilinear form on \(L(\Delta)^\dagger\) is extended sesquilinearly to a Hermitian inner product on \(L(\Delta)_{\mathbb C}^\dagger\), which we denote by \((-, -)_L\).
%where we extend the positive-definite invariant bilinear form on $L(\Delta)^\da$ を Hermitian sesquilinear に拡張し、その form を$(-,-)_L$と書いた。
We also extend the automorphism $\theta$ of $\sod$ anti-linearly to $\sod_\C$, and set
\begin{align*}
\sod_\C^\theta = \{A \in \sod_\C \mid \theta(A)=A\}.
\end{align*}
Then $\sod_\C^\theta$ is a real Lie subalgebra of $\sod_\C$ isomorphic to $\mathfrak{so}(d,2)$, with basis
\begin{align}
J_{\mu,\nu}, iD, K_\mu-P_\mu, i(K_\mu+P_\mu), \qquad (\mu\neq \nu).
\label{app_basis_so_theta}
\end{align}
It follows immediately that, for any $X \in \sod_\C^\theta$,
\begin{align}
(Xu,v)_L+(u,Xv)_L=0
\label{eq_X_cov}
\end{align}
for any $u,v \in L(\Delta)_\C^\da$.

\begin{rem}
\label{rem_explicit_sod_d2}
The real Lie algebra isomorphism
\begin{align}
F^\theta:\sod_\C^\theta \rightarrow \so(d,2)
\label{eq_F_theta}
\end{align}
can be described explicitly as follows.
Let
$\eta=\operatorname{diag}(1,1,-1,\ldots,-1)$ be the quadratic form on $\R^{d+2}$ with standard basis
$\ep_{-1},\ep_0,\ep_1,\ldots,\ep_d$.
Following the convention of \cite[(2.2)]{AngelopoulosLaoues}, for
$A,B\in\{-1,0,1,\ldots,d\}$, let $X_{A,B}\in\so(d,2)$ be defined by
$(X_{A,B})_{CD}
=
\delta_{C,A}\eta_{B,D}
-
\delta_{C,B}\eta_{A,D}$.
Let $e_0,e_1,\ldots,e_{d+1}$ be the basis of $\R^{1,d+1}$
introduced in Section~\ref{sec_conf_inf}.
Define a $\C$-linear map
\[
T:\R^{1,d+1}\otimes_\R\C
\longrightarrow
\R^{2,d}\otimes_\R\C
\]
by
$T(e_0)=\ep_{-1}$, $T(e_\mu)=\ep_\mu$ $(1\leq\mu\leq d)$ and $T(e_{d+1})=i\ep_0$.
Then $T$ preserves the complex bilinear forms determined by
$\{-g_{\mu,\nu}\}$ and $\{\eta_{A,B}\}$, and thus, $T$ defines an isomorphism
$\sod_\C\rightarrow\so(d,2)_\C$ by $A\mapsto TAT^{-1}$.
The elements in \eqref{app_basis_so_theta} are mapped, respectively, to
\[
-X_{\mu,\nu},
-X_{-1,0},
-2X_{-1,\mu},
2X_{0,\mu}.
\]
%実リー代数の同型
%\begin{align}
%F^\theta:\sod_\C^\theta \rightarrow \so(d,2)
%\label{eq_F_theta}
%\end{align}
%は明示的には次のように与えられる.
%Let $\eta=\operatorname{diag}(1,1,-1,\ldots,-1)$
%be the quadratic form on $\R^{d+2}$ with standard basis
%$\ep_{-1},\ep_0,\ep_1,\ldots,\ep_d$, and let $X_{AB}\in\so(d,2)$, $A,B\in\{-1,0,1,\ldots,d\}$, be defined by
%$(X_{A,B})_{C D}
%=
%\delta_{C,A}\eta_{B,D}
%-
%\delta_{C,B}\eta_{A,D}$.
%また$\R^{1,d+1}$の基底を$e_0,e_1,\dots,e_{d+1}$とおく (see Section \ref{sec_conf_inf})。
%このとき$\C$-linear map 
%\begin{align*}
%T:\R^{1,d+1}\otimes_\R \C \rightarrow \R^{2,d}\otimes_\R\C
%\end{align*}
%を$T(e_0)=\ep_{-1}$, $T(e_\mu)=\ep_\mu$, $T(e_{d+1})=i\ep_0$によって定めると、$\{-g_{\mu,\nu}\}$, $\{\eta_{AB}\}$によって定まる bilinear form は保たれ、$A \mapsto TAT^{-1}$は、$\sod_\C$から$\so(d,2)_\C$への同型写像を与える。
%とくに\eqref{app_basis_so_theta}はそれぞれ
%\begin{align*}
%-X_{\mu,\nu}, -X_{-1,0}, -2X_{-1,\mu}, 2X_{0,\mu}
%\end{align*}
%に移る。
\end{rem}

Let $G_0=\mathrm{SO}_e(d,2)$, and let $K_0=\mathrm{SO}(d)\times \mathrm{SO}(2)$ be its maximal compact subgroup. Since $D.v_\Delta^\da=\frac{d-2}{2}v_\Delta^\da$, the action of $\exp(itD)$ factors through $\mathrm{SO}(2)$ when $d$ is even, and through the double cover of $\mathrm{SO}(2)$ when $d$ is odd. Accordingly, when $d$ is even, we set $G=\mathrm{SO}_e(d,2)$, while when $d$ is odd, we let $G$ be the connected real Lie group which is a double cover of $G_0$ obtained by covering the $\mathrm{SO}(2)$-factor. Its maximal compact subgroup $K$ is

\begin{align*}
K= \begin{cases}
\mathrm{SO}(d) \times \mathrm{SO}(2) & d\text{:even}, \\
\mathrm{SO}(d) \times \widetilde{\mathrm{SO}}^{(2)}(2) & d\text{:odd}.
\end{cases}
\end{align*}

The space $L(\Delta)_\C^\da$ is a direct sum of irreducible $\SO(d)$-modules, and hence $K$ acts unitarily on it. Moreover, this action is clearly compatible with the action of $\sod_\C \cong \mathfrak{so}(d,2)_\C$. Together with \eqref{eq_X_cov}, this gives the following proposition.
\begin{prop}\label{prop_sod2_unitary}
$L(\Delta)_\C^\da$ is a unitary Harish--Chandra $(\mathfrak{so}(d,2)_\C,K)$-module.
\end{prop}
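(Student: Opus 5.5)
We need to show that $L(\Delta)_\C^\da$ is a unitary Harish--Chandra $(\so(d,2)_\C,K)$-module. Concretely this means: $\mathfrak{k}_\C$ acts locally finitely and the action integrates to $K$; the $K$-isotypic components are finite-dimensional; the $K$-action is compatible with the $\so(d,2)_\C$-action (its differential agrees with the restriction to $\mathfrak{k}$, and $\mathrm{Ad}(k)$ is intertwined); and there is a positive-definite Hermitian form for which $\sod_\C^\theta\cong\so(d,2)$ acts skew-adjointly. The plan is to check these in turn, using the $D$-grading and the $\SO(d)$-structure already established.

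\textbf{Step 1: identify $\mathfrak{k}$.} Via the isomorphism $F^\theta$ of Remark \ref{rem_explicit_sod_d2}, the maximal compact subalgebra $\so(d)\oplus\so(2)$ of $\so(d,2)$ corresponds to the span of $J_{\mu,\nu}$ and $iD$. Indeed $-X_{\mu,\nu}$ and $-X_{-1,0}$ generate exactly the rotations in the positive-definite and negative-definite blocks of $\eta$.

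\textbf{Step 2: integrate $\mathfrak{k}$ to $K$.} Decompose $L(\Delta)_\C^\da=\bigoplus_{n\geq0}L(\Delta)^\da_{n,\C}$ into $D$-eigenspaces with eigenvalue $\Delta+n$. Each summand is finite-dimensional, and by Proposition \ref{prop_psi_dn} it is isomorphic to $\Harm_{d,n}\otimes\C$ as an $\SO(d)$-module, so $\so(d)$ integrates to $\SO(d)$ on it. The element $iD$ acts on $L(\Delta)^\da_{n,\C}$ by the scalar $i(\frac{d-2}{2}+n)$, so $\exp(tiD)$ is $2\pi$-periodic when $d$ is even and $4\pi$-periodic when $d$ is odd. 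This gives a representation of $\SO(2)$, respectively $\widetilde{\SO}^{(2)}(2)$. Since $D$ commutes with the $J_{\mu,\nu}$, the two actions commute, and we obtain an action of $K$. Each eigenspace is $K$-stable, so every vector is $K$-finite. Every $K$-type occurs inside a single finite-dimensional eigenspace: the $\SO(2)$-character determines $n$. Hence the multiplicities are finite.

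\textbf{Step 3: compatibility.} We must check $k\cdot X\cdot k^{-1}=\mathrm{Ad}(k)X$ for $k\in K$ and $X\in\so(d,2)_\C$. Both sides are analytic in $k$ and agree infinitesimally, since the $K$-action was obtained by exponentiating the $\mathfrak{k}$-action. Since $K$ is connected, they agree on all of $K$. To make this rigorous one restricts to finite-dimensional $K$-stable subspaces of the form $\bigoplus_{n\le M}L(\Delta)^\da_{n,\C}$, on which $X$ maps into a slightly larger such subspace.

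\textbf{Step 4: unitarity.} By Theorem \ref{prop_singular_vector}, the bilinear form on $L(\Delta)^\da$ is positive-definite, so its sesquilinear extension $(-,-)_L$ is a Hermitian inner product. Equation \eqref{eq_X_cov} says that $\sod_\C^\theta$ acts skew-Hermitianly with respect to it. For $K$ we also need $(-,-)_L$ to be $K$-invariant. This holds because distinct $D$-eigenspaces are orthogonal, which follows from \eqref{eq_X_cov} applied to $iD$ with its distinct real eigenvalues. On each eigenspace $K$ acts through a compact group by exponentials of skew-Hermitian operators, so it acts unitarily.

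\textbf{Main obstacle.} The main obstacle is bookkeeping rather than a real difficulty: matching $\mathfrak{k}$ correctly under $F^\theta$, and handling the double cover when $d$ is odd. The only genuinely non-formal inputs are the positivity of the form from Theorem \ref{prop_singular_vector} and the finiteness of the $D$-eigenspaces (compactness, condition (C2)).
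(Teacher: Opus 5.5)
Your proposal is correct and follows the paper's own argument: positivity of the form from Theorem \ref{prop_singular_vector}, skew-adjointness of $\sod_\C^\theta$ from \eqref{eq_X_cov}, and the $K$-action built from the $\SO(d)$-decomposition of the $D$-eigenspaces together with $\exp(itD)$, which factors through $\SO(2)$ or its double cover. The paper asserts the integration, admissibility and compatibility steps briefly ("clearly compatible"), and you simply spell them out; if your definition of a Harish--Chandra module also requires finite generation, that follows immediately from the irreducibility of $L(\Delta)^\da_\C$.
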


The unitary Harish--Chandra module in Proposition
\ref{prop_sod2_unitary} is the Harish--Chandra module of the
massless scalar representation constructed by
Hunziker--Sepanski--Stanke~\cite{HSS}.
Indeed, in their notation, put
\[
  n=d-1,
  \qquad
  r=\frac{1-n}{2}=-\Delta.
\]
They construct unitary representations $H^+$ and $H^-$ of $G$ on the positive- and negative-energy solution spaces of the Klein-Gordon equation (see \cite{HSS} for more detail).
The $K$-finite parts are identified as (see \cite[Definition 8.2 and Theorem 10.9]{HSS})
\begin{align}
\begin{split}
  (H^+)_{K}
  &\cong
  \bigoplus_{N\geq 0} \Harm_{d,N,\C} \otimes \C e^{i(N+\Delta)\phi},\\
    (H^-)_{K}
  &\cong
  \bigoplus_{N\geq 0} \Harm_{d,N,\C} \otimes \C e^{-i(N+\Delta)\phi}.
\end{split}
\label{eq_HSS_decomp}
\end{align}
%where $\Harm_{d,N,\C}$ is the space of harmonic polynomials with complex coefficients.
Moreover, $(H^+)_{K}$ (resp. $(H^-)_K$) is an irreducible lowest weight (resp. highest weight)
$(\so(d,2)_{\C},K)$-module with lowest weight $\dn\epsilon_0$ (highest weight $-\dn \epsilon_0$) \cite[Theorem 9.7]{HSS}.
Thus, as $\so(d,2)_\C$-modules, we have $L(\Delta)_\C^\da \cong (H^+)_K$
and $L(\Delta)_\C \cong (H^-)_K$, and by irreducibility the inner products on the two sides agree up to normalization.
In particular,
\begin{align}
\overline{L(\Delta)_{\C}^\da}^{\,(-,-)_{L}} \cong H^+,
\end{align}
as Hilbert spaces. Hence, we have:
\begin{prop}\label{prop_HC_isom}
As a unitary Harish--Chandra module, $L(\Delta)_\C^\da$ is isomorphic to the $(\mathfrak{so}(d,2)_\C,K)$-module associated with the unitary representation $H^+$ of $G$.
\end{prop}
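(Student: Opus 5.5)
The plan is to identify both sides as irreducible lowest weight $(\so(d,2)_\C,K)$-modules with the same lowest $K$-type. Then uniqueness of such modules, together with Schur's lemma for invariant Hermitian forms, gives the isomorphism of unitary Harish--Chandra modules.

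First, by Proposition \ref{prop_sod2_unitary}, $L(\Delta)_\C^\da$ is already a unitary Harish--Chandra module. Its $K$-structure can be read off from the grading. By Proposition \ref{prop_psi_dn} and \eqref{eq_Harm_dagger}, the $D$-eigenspace of eigenvalue $\Delta+N$ is isomorphic to $\Harm_{d,N,\C}$ as an $\SO(d)$-module. Under $F^\theta$ in Remark \ref{rem_explicit_sod_d2}, $iD$ is sent to $-X_{-1,0}$, the generator of the $\SO(2)$ (or $\widetilde{\SO}^{(2)}(2)$) factor of $K$. Hence each such eigenspace carries the character $e^{\pm i(N+\Delta)\phi}$, with the sign fixed by the convention for $X_{-1,0}$. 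Comparing with \eqref{eq_HSS_decomp} degree by degree, the $K$-types of $L(\Delta)_\C^\da$ and $(H^+)_K$ agree, with multiplicity one. Fixing the sign correctly, so that one lands in $H^+$ and not in $H^-$, is part of this check.

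Second, I would show that $v_\Delta^\da$ is a lowest weight vector of weight $\Delta\epsilon_0$. Inside $\sod_\C\cong\so(d,2)_\C$, the Cartan decomposition gives $\mathfrak{p}^\pm$ as the complex spans of the $P_\mu$ and of the $K_\mu$ respectively, since these are the $\pm1$ eigenspaces of $\mathrm{ad}\,D$ and hence of $\mathrm{ad}(-iX_{-1,0})$ up to sign. By \eqref{eq_dual_Verma}, $v_\Delta^\da$ is killed by all $K_\mu$ and by $\so(d)$, and $D v_\Delta^\da=\Delta v_\Delta^\da$. So $v_\Delta^\da$ spans the trivial $\SO(d)$-type tensored with the character of weight $\Delta$, and it is annihilated by $\mathfrak{p}^-$. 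This means $L(\Delta)_\C^\da$ is a quotient of the generalized Verma module $U(\so(d,2)_\C)\otimes_{U(\mathfrak{k}_\C+\mathfrak{p}^-)}\C_{\Delta\epsilon_0}$, which is the complexification of $V(\Delta)^\da$. By Theorem \ref{prop_singular_vector}, $L(\Delta)_\C^\da$ is the irreducible quotient, since it is irreducible over $\R$ and its complexification stays irreducible because the lowest $K$-type is one-dimensional.

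Third, I would invoke \cite[Theorem 9.7]{HSS}: $(H^+)_K$ is an irreducible lowest weight module with lowest weight $\dn\epsilon_0$. An irreducible lowest weight module is the unique irreducible quotient of the corresponding generalized Verma module. Hence there is a $(\so(d,2)_\C,K)$-isomorphism $L(\Delta)_\C^\da\cong(H^+)_K$, and it is $K$-equivariant because it intertwines $\mathfrak{k}_\C$ on finite-dimensional $K$-types. For the inner products, an invariant Hermitian form on an irreducible Harish--Chandra module is unique up to a real scalar by Schur's lemma applied to the lowest $K$-type. Both forms are positive there, so after rescaling the isomorphism is unitary, and the Hilbert completions agree.

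I expect the main obstacle to be the bookkeeping of conventions: matching $\theta$, $F^\theta$ and the sign of $X_{-1,0}$ so that $v_\Delta^\da$ is genuinely lowest (not highest) of weight $+\Delta\epsilon_0$. I would settle this by a direct computation of the $\so(2)$-weight of $v_\Delta^\da$ using $iD\mapsto -X_{-1,0}$.
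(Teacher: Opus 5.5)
Your proposal is correct and follows essentially the same route as the paper: both sides are recognized as irreducible lowest weight $(\mathfrak{so}(d,2)_\C,K)$-modules with lowest weight $\frac{d-2}{2}\epsilon_0$ (the paper cites Theorem 9.7 of Hunziker--Sepanski--Stanke for $(H^+)_K$), hence they are isomorphic, and the invariant inner products agree up to normalization by irreducibility. Your explicit check that $v_\Delta^\da$ is annihilated by $\mathfrak{p}^-$, with the correct sign coming from $iD\mapsto -X_{-1,0}$, and your argument that the complexification stays irreducible, spell out details the paper leaves implicit.
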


Hereafter, we will study the restriction of $H^+$ to the Poincar\'e group $P_d=\R^d \rtimes \mathrm{SO}_e(1,d-1)$.
We first recall the positive energy massless scalar representation of $P_d$. Set
\begin{align*}
N_+= \{(q_0,\dots,q_{d-1}) \in \R^{1,d-1} \mid q_0^2-\sum_{i=1}^{d-1} q_i^2=0,q_0>0 \},
\end{align*}
the forward light cone in the $d$-dimensional Minkowski space, and
\begin{align*}
d\mu(q) = \frac{1}{2q_0}d^{d-1}q
\end{align*}
be the measure on $N_+$.
Then, $L^2(N_+,d\mu)$ is an irreducible unitary representation of the Poincar\'e group $P_d$ by
\begin{align}
(\rho_N(a,\Lambda) f)(q)=e^{i\langle a, q \rangle_M}f(\Lambda^{-1}q)
\label{eq_app_momentum}
\end{align}
for $(a,\Lambda) \in \R^{d}\rtimes \mathrm{SO}_+(1,d-1)$, where $\langle a,q\rangle_M
=a_0q_0-\sum_{i=1}^{d-1}a_i q_i$.
In general, the irreducible unitary representations of the Poincar\'e group $P_d$ are classified, by Wigner's method, in terms of the $\mathrm{SO}_e(1,d-1)$-orbits in $\R^d$ and the irreducible unitary representations of their stabilizer subgroups, called {\it little groups}, and are constructed as induced representations \cite{Wigner,Mackey}.
The representation corresponding to the massless orbit $N_+=\mathrm{SO}_e(1,d-1).(1,1,0,\dots,0)$ and the trivial representation of the little group $\R^{d-2}\rtimes \mathrm{SO}(d-2)$ is called the {\it positive-energy massless scalar representation} and is unitarily equivalent to \eqref{eq_app_momentum}.
%一般に the Poincare group $P_d$の既約ユニタリ表現は、Wignerの方法によって、$\R^d$の$\mathrm{SO}_e(1,d-1)$軌道と、その stablizer subgroup (called little group) の既約表現によって分類され、little group の誘導表現として構成される\cite{Wigner,Mackey}。
%質量ゼロに対応する軌道$N_+=\mathrm{SO}_e(1,d-1).(1,1,0,\dots,0)$と、little group $\R^{d-2}\rtimes \mathrm{SO}(d-2)$の自明表現に対応する表現は positive-energy massless scalar representation と呼ばれ\eqref{eq_app_momentum}によって与えられる。

%ここではリー代数のレベルで
%Poincare algebra $\mathfrak p_d$の$\so(d,2)$への埋め込みを説明する。

As is well known, the Poincar\'e group $P_d$ is naturally embedded in $G$
as a standard subgroup; see \cite[Section 1.b]{AngelopoulosLaoues} or \cite[Section~2 and Corollary~3.2]{HSS}.
In terms of the realization of $\so(d,2)$ given in
Remark~\ref{rem_explicit_sod_d2}, for
$\mu,\nu\in\{0,\ldots,d-1\}$, set
\[
\mathcal P_\mu^\std
=
X_{\mu,-1}+X_{\mu,d},
\qquad
\mathcal M_{\mu,\nu}^\std
=
X_{\mu,\nu}.
\]
Then
\[
\mathfrak p_d
=
\operatorname{span}_{\R}
\{
\mathcal P_\mu^\std,\mathcal M_{\mu,\nu}^\std
\mid
0\leq\mu,\nu\leq d-1
\}
\subset \so(d,2)
\]
is the standard Poincar\'e subalgebra, isomorphic to
$\mathfrak p_d
\cong
\so(1,d-1)\ltimes\R^{1,d-1}$. This convention agrees with \cite[(1.8)]{AngelopoulosLaoues}.
Under the isomorphism \eqref{eq_F_theta}, the inverse images of these
generators in $\sod_\C^\theta$ are given by
\begin{align}
\begin{split}
P_0^{\mathrm{M}}
&=
iD+\frac{i}{2}(K_d+P_d),\\
P_i^{\mathrm{M}}
&=
-J_{id}+\frac{1}{2}(K_i-P_i),\\
M_{ij}^{\mathrm{M}}
&=
-J_{ij},\\
M_{0i}^{\mathrm{M}}
&=
\frac{i}{2}(K_i+P_i),
\end{split}
\qquad 1\leq i,j\leq d-1.
\label{eq_P_M_explicit}
\end{align}
%Indeed,
%\[
%F^\theta(P_\mu^{\mathrm{M}})
%=
%\mathcal P_\mu^\std,
%\qquad
%F^\theta(M_{\mu,\nu}^{\mathrm{M}})
%=
%\mathcal M_{\mu,\nu}^\std.
%\]

An important point is that the translation generators
$P_\mu^{\mathrm{M}}$ of the Poincar\'e group should not be confused
with the Euclidean translation generators $P_\mu$ of the conformal
Lie algebra $\sod$ (see also \cite{LM}).

\begin{comment}
よく知られているように$P_d$は$G$に標準的に埋め込まれている (see \cite[Section 1.b]{AngelopoulosLaoues}).
%\cite[Section~2 and Corollary~3.2]{HSS})。
Remark \ref{rem_explicit_sod_d2}における$\so(d,2)$の表示のもとでは、for $\mu,\nu\in\{0,\ldots,d-1\}$, set
\[
\mathcal P_\mu^\std=X_{-1,\mu}+X_{d,\mu},
\qquad
\mathcal M_{\mu\nu}^\std=X_{\mu\nu}.
\]
Then
\[
\mathfrak p_d
=
\operatorname{span}_{\R}
\{\mathcal P_\mu^\std,\mathcal M_{\mu\nu}^\std
\mid
0\leq\mu,\nu\leq d-1\}
\subset \so(d,2)
\]
is the standard Poincar\'e subalgebra $\mathfrak p_d\cong \R^{1,d-1}\rtimes\so(1,d-1)$.
とくに同型写像\eqref{eq_F_theta}によって、この Poincar\'e subalgebra の基底は$\sod_\C^\theta$の中で以下のように表される:
\begin{align}
\begin{split}
P_0^{\mathrm{M}}&= iD-\frac{i}{2}(K_d+P_d),\\
P_i^{\mathrm{M}} &= - J_{id} -\frac{1}{2}(K_i-P_i),\\
M_{ij}^{\mathrm{M}}&=J_{ij},\\
M_{0i}^{\mathrm{M}}&=\frac{i}{2}(K_i+P_i).
\end{split}
\label{eq_P_M_explicit}
\end{align}
この convention は \cite[(1.8)]{AngelopoulosLaoues}と符号をのぞき整合的である (see also \cite{LM}).
An important point is that the translations of the Poincar\'e group are different from the translations $P_\mu$ in the Euclidean conformal Lie algebra $\sod$ (see also \cite{LM}).
\end{comment}

%Poincare group の Lie algebra は 時間方向と空間方向の translation $P_0^{\mathrm{M}}$と$P_i^{\mathrm{M}}$ ($i=1,\dots,d-1$) と時空の回転$M_{ij}$ ($0\leq i<j \leq d-1$) を基底に持つ。埋め込み$\iota_{\std}$により、これらの基底は$\sod_\C$の基底において以下のように表される:
%この埋め込みにより、
%We consider the Lie subalgebra $\mathfrak p_d$ of $\sod_\C^\theta$ with the following basis: for $i,j \in \{1,\dots,d-1\}$ with $i\neq j$,

The following lemma follows by a direct computation and Remark \ref{rem_harmonic_dagger}:
\begin{lem}\label{lem_KG_ann}
The  differential-operator representation $d_{\dn}:\sod_\C \rightarrow \C[x_1,\pa_1,\dots,x_d,\pa_d]$ satisfies
$d_{\dn}\left((P_0^{\mathrm{M}})^2-\sum_{i=1}^{d-1} \left(P_i^{\mathrm{M}}\right)^2 \right)= - \frac{1}{4}
\left(|x|^2+2x_d+1 \right)^2 \left(\sum_{i=1}^d \pa_i^2\right)$.
In particular, its action on $H(\Delta)^\da=\mathrm{Im} \Psi_\Delta^\da$ in \eqref{eq_Harm_dagger} satisfies
\begin{align}
d_{\dn}\left((P_0^{\mathrm{M}})^2-\sum_{i=1}^{d-1} \left(P_i^{\mathrm{M}}\right)^2 \right)\Bigl|_{H(\Delta)^\da}=0.
\label{eq_rel_HSS}
\end{align}
\end{lem}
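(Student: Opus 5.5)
The plan is to compute directly with the explicit formulas for $d_\alpha$, organised so that most of the work is structural rather than brute force. Using \eqref{eq_P_M_explicit}, I will first write
\begin{align*}
d_\alpha(P_0^{\mathrm{M}})&= i\Bigl(-(1+x_d)(E_x+\alpha)+\tfrac12(|x|^2-1)\pa_d\Bigr),\\
d_\alpha(P_i^{\mathrm{M}})&= x_d\pa_i-x_i\pa_d+\tfrac12(|x|^2+1)\pa_i-x_i(E_x+\alpha),\qquad 1\le i\le d-1,
\end{align*}
keeping $\alpha$ general until the last step.

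Each of these operators has the form $V_\mu+\alpha f_\mu$, where $V_\mu$ is a conformal Killing vector field on $\R^d$ and $f_\mu$ is an affine function. They are the pullbacks of the Minkowski translations under a Cayley-type map that sends the point $-e_d$ to infinity; this is why $\Omega=|x|^2+2x_d+1=|x+e_d|^2$ appears.

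The computation of $Q_\alpha=d_\alpha\bigl((P_0^{\mathrm M})^2-\sum_i(P_i^{\mathrm M})^2\bigr)$ splits by order.

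\emph{Second order.} The principal symbol is $-\bigl(\langle V_0,\xi\rangle^2+\sum_i\langle V_i,\xi\rangle^2\bigr)$. I will show that the $d$ vector fields $V_0,V_1,\dots,V_{d-1}$ are pointwise orthogonal with common length $\tfrac12\Omega$. It suffices to check this at one point, for instance $x=0$, where $V_0=\tfrac{i}{2}\cdot(-\pa_d)$ up to the factor $i$ and $V_i=\tfrac12\pa_i$. The general case then follows from conformality of the fields, or from a short direct check of the inner products. This gives principal part $-\tfrac14\Omega^2\sum_\rho\pa_\rho^2$.

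\emph{First order.} This collects $\sum_\mu \epsilon_\mu(\mathrm{div}$-type terms of $V_\mu V_\mu)$ and $2\alpha\sum_\mu \epsilon_\mu f_\mu V_\mu$, with signs $\epsilon_\mu=\pm1$. It must equal the first-order part of $-\tfrac14\Omega^2\Delta$, which is zero. I will compute $\sum_\mu \epsilon_\mu (V_\mu f_\mu)$ and $\sum_\mu\epsilon_\mu\nabla_{V_\mu}V_\mu$ explicitly; both are vector fields that are quadratic in $x$.

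\emph{Zeroth order.} This is $\alpha\sum_\mu\epsilon_\mu V_\mu(f_\mu)+\alpha^2\sum_\mu\epsilon_\mu f_\mu^2$. Here $\sum_\mu\epsilon_\mu f_\mu^2$ is a Lorentzian quadratic in the affine functions $1+x_d$ and $x_i$, taken with the signs coming from the factor $i$. I expect it to be a multiple of $\Omega$, and hence cancel against the $\alpha$-linear term exactly when $\alpha=\frac{d-2}{2}$. This is the familiar conformal-weight condition for the Laplacian, the Yamabe operator in flat space.

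As a cross-check, and to cut the computation down, I will use equivariance. The quadratic element $(P_0^{\mathrm M})^2-\sum_i(P_i^{\mathrm M})^2$ commutes with the Poincar\'e subalgebra, so $Q_\alpha$ commutes with $d_\alpha(\mathfrak p_d)$. Consequently it is enough to compare lower-order coefficients at the single point $x=0$, together with a check on the functions $1$ and $x_\rho$. For example, applying $Q_\alpha$ to $1$ at $x=0$ must give $0$ precisely when $\alpha=\frac{d-2}{2}$.

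The main obstacle is the bookkeeping of the first- and zeroth-order terms, especially the cross terms between $\alpha f_\mu$ and $V_\mu$. I will handle these first by the evaluation at $x=0$ combined with the invariance argument above, and fall back on direct expansion if the invariance reduction turns out to be insufficient.

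Finally, \eqref{eq_rel_HSS} follows immediately. On $H(\Delta)^\da$, every element is harmonic by Remark \ref{rem_harmonic_dagger}, so $\sum_i\pa_i^2$ annihilates it, and hence so does $-\tfrac14\Omega^2\sum_i\pa_i^2$.
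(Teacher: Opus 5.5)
Your proposal is correct and follows the paper's route. The paper simply asserts the operator identity ``by direct computation'' and then, exactly as in your last step, uses that every element of $H(\Delta)^\da$ is harmonic (Remark~\ref{rem_harmonic_dagger}). Your order-by-order organisation of that computation works: in the shifted coordinate $y=x+e_d$, with $E_y=\sum_\rho y_\rho\pa_\rho$, one finds $d_\al(P_0^{\mathrm M})=\frac{i}{2}\bigl(|y|^2\pa_d-2y_d(E_y+\al)\bigr)$ and $d_\al(P_i^{\mathrm M})=\frac12\bigl(|y|^2\pa_i-2y_i(E_y+\al)\bigr)$, and the lower-order part of $Q_\al$ comes out as $-(\al-\frac{d-2}{2})|y|^2(E_y+\al)$. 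Note that this means the first-order term, and not only the zeroth-order one, is nonzero for general $\al$ and vanishes only at $\al=\frac{d-2}{2}$.
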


Angelopoulos--Laoues call a representation
$\rho:\so(d,2)\rightarrow \End V$ {\it massless} if
\begin{align}
\rho\left(
(\mathcal P_0^\std)^2
-\sum_{i=1}^{d-1}(\mathcal P_i^\std)^2
\right)
=0.
\label{eq_def_massless}
\end{align}
See \cite[Proposition~2.2]{AngelopoulosLaoues}.
They classify such massless representations satisfying suitable assumptions and study their integrability to
representations of the conformal group, as well as their relation to representations of the Poincar\'e group.

By Lemma~\ref{lem_KG_ann}, our representation $L(\Delta)_\C^\da$, and hence
$(H^+)_K$, is a massless representation of $\so(d,2)$ in this sense.
%Angelopoulos-Laoues は $\so(d,2)$の表現$\rho:\so(d,2) \rightarrow \End V$が
%\begin{align}
%\rho((\mathcal P_0^\std)^2- \sum_{i=1}^{d-1} (\mathcal P_i^\std)^2)
%\label{eq_def_massless}
%\end{align}
%を満たすとき、それを $\so(d,2)$の massless 表現と呼び\cite[Proposition~2.2]{AngelopoulosLaoues}、massless 表現の分類やそのリー群の表現へのリフト、Poincare group への制限などを調べた。
%Lemma \ref{lem_KG_ann} より、我々の表現$L(\Delta)_\C$よって$(H^+)_K$は $\so(d,2)$の massless 表現である。
%重要なことは、Poincare group の translation は 
%Euclidian conformal Lie algebra $\sod$の translation $P_\mu$ とは異なる点である (see \cite{LM})。
%我々は $\sod_\C^\theta$の次の基底を持つ部分リー代数$\mathfrak p_d$を取る: $i,j \in 1,\dots,d-1$ with $i\neq j$に対して、
%\begin{align*}
%P_0^{\mathrm{M}}&= iD-\frac{i}{2}(K_d+P_d),\\
%P_i^{\mathrm{M}} &= - J_{id} -\frac{1}{2}(K_i-P_i),\\
%M_{ij}^{\mathrm{M}}&=J_{ij},\\
%M_{0i}^{\mathrm{M}}&=\frac{i}{2}(K_i+P_i).
%\end{align*}
%これらは Poincare group のリー代数の交換関係を満たす。とくに $P_0^{\mathrm{M}}$と$P_i^{\mathrm{M}}$は、$\so(d,2)$同型$\cC:L(\Delta) \rightarrow (H^+)_K$によって、それぞれ$-\pa_t$および$-\pa_{y_i}$に移る\cite{}。
%\eqref{eq_app_KG}と合わせると、リー代数の表現
%$d\rho_{\mathrm{HSS}}|_{\mathfrak p_d}: \mathfrak{p}_d \rightarrow \End((H^+)_K)$
%は
%\begin{align}
%d\rho_{\mathrm{HSS}}\left((P_0^{\mathrm{M}})^2-
%\sum_{i=1}^{d-1} \left(
%P_i^{\mathrm{M}}\right)^2 \right)=0
%\label{eq_rel_HSS}
%\end{align}
%を満たす。
\begin{prop}
\label{prop_Hplus_Poincare}
The restriction of the unitary representation $H^+$ to the Poincar\'e group is unitarily equivalent to the positive energy massless scalar representation.
\end{prop}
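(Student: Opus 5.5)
The plan is to go through the Hunziker--Sepanski--Stanke realization of $H^+$, for which Proposition \ref{prop_HC_isom} provides the identification. In \cite{HSS} the group $G$ acts on the space of positive-energy solutions of the Klein--Gordon (wave) equation $\square f=0$ on $d$-dimensional Minkowski space, possibly after a conformal compactification. Restricted to the standard subgroup $P_d\subset G$ of \cite[Section~2 and Corollary~3.2]{HSS}, this action is the geometric one, $f\mapsto f\circ(a,\Lambda)^{-1}$; no conformal factor appears because Poincar\'e transformations are isometries. The first step is to check that the Poincar\'e subalgebra $\mathfrak p_d$ used there coincides with the span of $\mathcal P_\mu^\std,\mathcal M_{\mu,\nu}^\std$, and hence, under $F^\theta$, with the elements \eqref{eq_P_M_explicit}. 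This is a bookkeeping comparison of the conventions of \cite{HSS}, \cite{AngelopoulosLaoues} and Remark \ref{rem_explicit_sod_d2}. As a consistency check, Lemma \ref{lem_KG_ann} shows that the Casimir $(P_0^{\mathrm M})^2-\sum_i (P_i^{\mathrm M})^2$ vanishes, as it must on solutions of the wave equation.

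The second step is the Fourier description. A positive-energy solution can be written as
\[
f(a)=\int_{N_+} e^{-i\langle a,q\rangle_M}\, g(q)\, d\mu(q),
\]
where $g$ ranges over a dense subspace of $L^2(N_+,d\mu)$; for instance $g$ can be taken to be the Fourier transforms of the $K$-finite vectors described in \eqref{eq_HSS_decomp}. Translations and Lorentz transformations acting on $f$ then correspond exactly to $\rho_N(a,\Lambda)$ of \eqref{eq_app_momentum} acting on $g$, using the Lorentz invariance of $d\mu$. This gives a $P_d$-equivariant injective map $\iota$ from a dense, $P_d$-stable subspace of $H^+$ into $L^2(N_+,d\mu)$.

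It then remains to compare norms. The HSS inner product, transported by $\iota$, is a $P_d$-invariant positive Hermitian form on a dense subspace of $L^2(N_+,d\mu)$. Since $L^2(N_+,d\mu)$ is irreducible under $P_d$, Schur's lemma in its unbounded form (or a direct computation) shows that this form is a constant multiple of the $L^2$ form, and after rescaling $\iota$ extends to a unitary equivalence. The same conclusion also follows directly if one uses the explicit HSS norm, which is the Klein--Gordon flux integral over a spacelike hyperplane.

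I expect the main obstacle to be this last step, together with controlling the domains: one has to confirm that $\iota$ has dense image and that no component of $H^+$ is lost. If this becomes delicate, the fallback is an abstract argument. By Lemma \ref{lem_KG_ann}, the joint spectral measure of the Minkowski translations is supported on the cone $\{q^2=0\}$. Positivity of $P_0^{\mathrm M}$ on $H^+$ restricts the support further to $q_0\ge 0$. The point $q=0$ is excluded by the Howe--Moore/Mautner phenomenon, since a translation-invariant vector would be $G$-invariant, contradicting irreducibility. Mackey's imprimitivity theorem then presents $H^+|_{P_d}$ as a representation induced from a little-group representation $\sigma$ over $N_+$. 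One identifies $\sigma$ as trivial with multiplicity one by noting that the $K$-types in \eqref{eq_HSS_decomp} contain no spin degrees of freedom, i.e.\ all elements of $L(\Delta)^\da$ are scalar descendants of $v_\Delta^\da$.
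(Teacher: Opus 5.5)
Your main route is correct, but it is not the paper's.

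\textbf{The paper's route.} The proof never works with the analytic model of $H^+$. It uses Lemma~\ref{lem_KG_ann} to see that $(H^+)_K$ is massless in the sense of Angelopoulos--Laoues. From \eqref{eq_HSS_decomp} and the spectrum $\Delta+\Z_{\geq0}$ of $D$ (their $H_0=iX_{-1,0}$), it identifies the module as the case $s=0$, $\varepsilon=+1$ of \cite[Theorem~2.4]{AngelopoulosLaoues}. It then quotes \cite[Theorem~2.5 and Proposition~2.6]{AngelopoulosLaoues}: this $G$-representation is the unique conformal extension of $L^2(N_+,d\mu)$.

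\textbf{Your route.} You build the intertwiner explicitly. Poincar\'e elements act geometrically on positive-energy solutions, the Fourier transform lands on $N_+$, equivariance comes from Lorentz invariance of $d\mu$, and the norm is the Klein--Gordon flux. This gives an explicit unitary and a transparent reason why the forward light cone appears. The price is analytic bookkeeping that the paper avoids by citing a classification:
\begin{itemize}
\item You must know the HSS invariant norm is a multiple of the flux norm. The ``unbounded Schur'' shortcut is not automatic: a $P_d$-invariant positive form on a dense domain need not be a multiple of the $L^2$ form unless it is closable.
\item So rely on the direct flux computation, or on uniqueness of the invariant form on the irreducible $(\mathfrak{so}(d,2)_\C,K)$-module together with conformal invariance of the flux.
\item Once $\iota$ extends to an isometric $P_d$-intertwiner on all of $H^+$, surjectivity follows from irreducibility of $L^2(N_+,d\mu)$. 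Your worry about losing a component of $H^+$ therefore does not arise.
\end{itemize}

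\textbf{Your fallback.} It is incomplete at exactly the point where the paper leans on Angelopoulos--Laoues. The support and Mackey steps are fine. But ``the $K$-types carry no spin'' does not by itself show two things:
\begin{itemize}
\item that the little-group representation of $\R^{d-2}\rtimes\mathrm{SO}(d-2)$ is trivial, which requires excluding nontrivial helicity and continuous-spin representations;
\item that it occurs with multiplicity one, i.e.\ that $H^+$ stays irreducible on $P_d$.
\end{itemize}
Passing from $K$-types to the little-group data is precisely the content of \cite[Theorems~2.4 and 2.5]{AngelopoulosLaoues}.
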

\begin{proof}
By Lemma~ \ref{lem_KG_ann} and \cite[Proposition~2.2]{AngelopoulosLaoues},
the Harish--Chandra module \((H^+)_K\) is massless in the
sense of Angelopoulos--Laoues.
In their notation, set \(n=d\) and \(H_0=iX_{-1,0}\).
Then \(H_0\) corresponds to \(D\), whose spectrum is
\(\Delta+\mathbb Z_{\geq0}\).
The \(K\)-type decomposition in (C.4), together with
\cite[Theorem~2.4]{AngelopoulosLaoues}, identifies this module with
the scalar case \(s=0\) and the sign \(\varepsilon=+1\).
By \cite[Theorem~2.5 and Proposition~2.6]{AngelopoulosLaoues}, the corresponding \(G\)-representation is
the unique conformal extension of the positive-energy massless scalar
representation \(L^2(N_+,d\mu)\) of the Poincar\'e group.
Hence the assertion follows.
\end{proof}
Finally, we note that the symmetric Fock space
$\Gamma_s(H^+)$
carries the Wightman field of the free massless scalar theory satisfying the Wightman axioms, and hence provides the state space of the corresponding quantum field theory on Minkowski spacetime; see, for example, \cite[Chapter 10]{Arai}.
%\footnote{In \cite{HL}, the one-particle Hilbert space is realized as \(L^2(N_+,d\mu)\). By Proposition~\ref{prop_Hplus_Poincare}, however, we have \(H^+\cong L^2(N_+,d\mu)\).}.
%最後に$H^+ \cong L^2(N_+,d\mu)$の symmetric Fock space $\Gamma_s(H^+)$上には Wightman axioms を満たす Wightman field が定義でき、Minkowski 空間上の場の量子論の状態空間であることを注意されたい (see for example \cite{HL}).
\begin{cor}
\label{cor_app_Fock}
The Hilbert completion of the conformal $d$-vertex algebra
$H_{d,\Delta}\otimes_\R\C$ with respect to its canonical positive-definite
inner product is naturally isomorphic to the symmetric Fock
space $\Gamma_s(H^+)$.
\end{cor}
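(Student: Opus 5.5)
The identification goes through the Fock-space structure, using Proposition \ref{prop_HC_isom} for the one-particle space. By the PBW theorem, $H_{d,\Delta}\cong\mathrm{Sym}\,L(\Delta)^\da=\bigoplus_{N\ge0}\mathrm{Sym}^N L(\Delta)^\da$. The canonical form $(-,-)_{H_{d,\Delta}}$ of Proposition \ref{prop_Hd_bilinear} will be shown to respect this decomposition and to restrict to each summand as the symmetric tensor power of $(-,-)$ on $L(\Delta)^\da$. The claim then follows by completing summand by summand and invoking Proposition \ref{prop_HC_isom}.

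\textbf{Step 1: the form on $\mathrm{Sym}^N$.} I will compute $(a_1\cdots a_N\va,\,b_1\cdots b_M\va)_{H_{d,\Delta}}$ for $a_i,b_j\in L(\Delta)^\da$. By property (2) of Proposition \ref{prop_Hd_bilinear}, I can move $a_1$ to the right-hand side, where it becomes the annihilation operator $\Theta(a_1)=\theta_\Delta^{-1}(a_1)\in L(\Delta)$. Commuting this operator through $b_1\cdots b_M$ with the Heisenberg relation gives the factors $\langle b_j,\theta_\Delta^{-1}(a_1)\rangle$, and each of these equals $(b_j,a_1)$ by the construction of $(-,-)$ through $f_\Delta$ and $\theta_\Delta$. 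Inducting on $N$ yields the permanent formula
\[
\delta_{N,M}\sum_{\sigma\in S_N}\prod_{i}(a_i,b_{\sigma(i)}),
\]
which is the standard inner product on the symmetric algebra. The same computation shows that distinct $N$ are orthogonal.

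\textbf{Step 2: positivity and complexification.} Since $(-,-)$ on $L(\Delta)^\da$ is positive-definite by Theorem \ref{prop_singular_vector}, the permanent form is positive-definite on each $\mathrm{Sym}^N$; this is the canonical positive-definite inner product in the statement. I will extend it sesquilinearly to $H_{d,\Delta}\otimes_\R\C=\mathrm{Sym}\,L(\Delta)^\da_\C$. Sesquilinear extension commutes with taking symmetric powers, so each summand becomes the algebraic symmetric power $\mathrm{Sym}^N(L(\Delta)^\da_\C)$ with the Hilbert tensor inner product.

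\textbf{Step 3: completion.} Proposition \ref{prop_HC_isom} gives an isometry (after normalization) of $L(\Delta)^\da_\C$ onto a dense subspace of $H^+$, namely the $K$-finite vectors. Its symmetric tensor powers are isometries of $\mathrm{Sym}^N(L(\Delta)^\da_\C)$ onto dense subspaces of the symmetric tensor powers $H^{+\otimes_s N}$, because algebraic tensors of dense subspaces are dense in the Hilbert tensor product. Taking the orthogonal direct sum over $N$ and completing gives a unitary isomorphism onto $\Gamma_s(H^+)=\widehat{\bigoplus}_N H^{+\otimes_s N}$. The map is natural: it is $\mathrm{Sym}$ applied to the one-particle identification and respects the $(\so(d,2)_\C,K)$-actions.

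The main obstacle I expect is Step 1. The difficulty is pinning down conventions: confirming that $\Theta$ turns the creation operator $a$ into the annihilation operator $\theta_\Delta^{-1}(a)$ with the correct sign, and that the pairing $\langle b,\theta_\Delta^{-1}a\rangle$ agrees with the symmetric form $(b,a)$ rather than its negative. I would check this first in degree $N=M=1$ against \eqref{eq_form_deg1}, and fix any overall normalization of the one-particle inner product, which Proposition \ref{prop_HC_isom} only determines up to scale, by rescaling.
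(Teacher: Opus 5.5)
Your proposal is correct and takes the same route the paper intends: identify $H_{d,\Delta}\cong\mathrm{Sym}\,L(\Delta)^\da$, use the one-particle identification $\overline{L(\Delta)^\da_\C}\cong H^+$ from Proposition \ref{prop_HC_isom}, and pass to symmetric Fock space. The paper leaves your Step 1 implicit, namely that the form of Proposition \ref{prop_Hd_bilinear} is the permanent (Fock) form; your sign check, $[\theta_\Delta^{-1}(a),b]=\langle b,\theta_\Delta^{-1}(a)\rangle c=(b,a)c$, is right, and rescaling the one-particle intertwiner to an isometry correctly fixes the normalization on all $\mathrm{Sym}^N$ at once.
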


\noindent
\begin{center}
{\bf Acknowledgements}
\end{center}

Many of the results in this paper were obtained during my doctoral studies.
I would like to express my sincere gratitude to my PhD advisor, Masahito
Yamazaki.  I am also grateful to Yoh Tanimoto and Maria Stella Adamo for
encouraging me to write up this work and valuable discussions.
%この論文の多くの部分は博士課程の間に得られた。PhD adviser の Masahito Yamazaki 氏に感謝を述べたい。
%またこの論文を執筆するように後押しをしてくれた Yoh Tanimoto and Maria Stella Adamo 氏に感謝を述べたい。
This work is supported by Grant-in Aid for Early-Career Scientists (24K16911).

\end{document}